\numberwithin{equation}{section}
\numberwithin{figure}{section}
\theoremstyle{plain}
\newtheorem{thm}{\protect\theoremname}[section]
  \theoremstyle{remark}
  \newtheorem{rem}[thm]{\protect\remarkname}
  \theoremstyle{plain}
  \newtheorem{lem}[thm]{\protect\lemmaname}
  \theoremstyle{definition}
\newcommand{\Rmnum}[1]{\expandafter\@slowromancap\romannumeral#1@}\makeatother
\numberwithin{equation}{section}
\newcommand{\set}[1]{\left\{#1\right\}}
\newcommand{\defs}{:=}
\newcommand{\pt}[1]{\partial_{#1}}
\DeclareMathOperator{\di}{div}
\newcommand{\dif}{\mathrm{d}}
\DeclareSymbolFont{lettersA}{U}{pxmia}{m}{it}
\DeclareMathSymbol{\piup}{\mathord}{lettersA}{"19}
\newcommand{\Real}{\mathbb R}
\newcommand{\mb}[1]{\mathbf{#1}}
  \providecommand{\definitionname}{Definition}
  \providecommand{\lemmaname}{Lemma}
  \providecommand{\remarkname}{Remark}
\providecommand{\theoremname}{Theorem}
\begin{document}

\title[3-d shocks with large swirl]{Transonic shock solutions for steady 3-d axisymmetric full Euler flows with large swirl velocity in a finite cylindrical nozzle}

\author{Beixiang Fang}

\author{Xin Gao}

\author{Wei Xiang}

\author{Qin Zhao}

\address{B.X. Fang: School of Mathematical Sciences, MOE-LSC, and SHL-MAC,
Shanghai Jiao Tong University, Shanghai 200240, China }
\email{\texttt{bxfang@sjtu.edu.cn}}

\address{X. Gao: School of Mathematical Sciences, Key Laboratory of MEA (Ministry of Education) \& Shanghai Key Laboratory of PMMP, East China Normal University, Shanghai 200241, China}

\email{\texttt{xgao@math.ecnu.edu.cn}}

\address{W. Xiang: Department of Mathematics, City University of Hong Kong, Kowloon, Hong Kong, China}

\email{\texttt{weixiang@cityu.edu.hk}}

\address{Q. Zhao: School of Mathematics and Statistics, Wuhan University of Technology, Wuhan 430070, China}

\email{\texttt{qzhao@whut.edu.cn}}

\keywords{steady Euler system; 3-d axisymmetric swirling flows; transonic shocks; receiver pressure; existence;}
\subjclass[2010]{35A01, 35A02, 35B20, 35B35, 35B65, 35J56, 35L65, 35L67, 35M30, 35M32, 35Q31, 35R35, 76L05, 76N10}

\date{\today}

\begin{abstract}

This paper concerns the existence and location of three-dimensional axisymmetric transonic shocks with large swirl velocity for shock solutions of the steady compressible full Euler system in a cylindrical nozzle with prescribed receiver pressure.
As far as we know, it is the first mathematical result on the three-dimensional transonic shock with either large vorticity or large swirl velocity.
One of the key difficulties is the fact that the Euler system is elliptic-hyperbolic composite for the flow behind the shock front, and its elliptic part and hyperbolic part are strongly coupled in the lower order terms because of the large swirl velocity, such that they cannot be simply decoupled in the principal parts as the case for the flow without swirls or with small swirl velocity.
New decomposition techniques for the elliptic-hyperbolic composite system are developed to deal with this difficulty, and the solvability condition for the boundary value problem is deduced to determine the location of the shock front.
It finally turns out that the non-zero swirl velocity, which brings new challenging difficulties in the analysis, plays an essential and fundamental role in determining the location of the shock front.
Another key difficulty in the analysis is that there are no readily established shock solutions available, which is different from the flows without swirls, for which trivial piecewise constant shock solutions can be easily constructed.
Non-trivial special shock solutions are first constructed as background solutions under the assumption that the flow parameters depend only on the radial distance to the symmetric axis.
Necessary and sufficient conditions for the existence of such solutions are also found.
Even though the shock location can be arbitrarily shifted for the special shock solutions, it will be shown that the shock solution with a determined shock position can be established as the boundary data are perturbations of one of the established special shock solutions under certain conditions.

\end{abstract}

\maketitle
\tableofcontents{}

\section{Introduction}
In this paper, we are concerned with the existence and shock location of the transonic shock solutions for steady compressible three-dimensional (3-d) axisymmetric full Euler flows with large vorticity via large swirl velocity in a finite cylindrical nozzle with prescribed receiver pressure at the exit (see Figure \ref{fig:1}).
The steady compressible 3-d Euler flow is governed by the following system
\begin{align}
&\di(\rho \mathbf{u})=0,\label{E1}\\
&\di(\rho \mathbf{u}\otimes \mathbf{u})+\nabla p=0,\label{E2}\\
&\di(\rho(e+\frac12 |\mathbf{u}|^2+\displaystyle\frac{p}{\rho})\mathbf{u})=0,\label{E3}
\end{align}
where  $\mb{x}\defs(x_1,x_2,x_3)$ are the space variables, ``$\di $'' is the divergence operator with respect to $\mb{x}$, $\mathbf{u}=(u_1,u_2,u_3)^\top $ is the velocity field, $\rho$, $p$ and $e$ stand for the density, pressure, and the internal energy respectively.
Moreover, the fluid is assumed to be polytropic, which satisfies the following thermal relation
\[
p= A e^{\frac{s}{c_v}}{\rho}^{\gamma},
\]
where $s$ is the entropy, $\gamma>1$ is the adiabatic exponent, $c_v$ is the specific heat at constant volume and $A$ is a positive constant.

It is a longstanding open problem to establish the existence of transonic shock solutions to the system \eqref{E1}-\eqref{E3} in a finite 3-d nozzle and catch the location of the shock front under the boundary conditions proposed by Courant-Friedrichs in \cite{CR} as follows:
\begin{quotation}
	{For given supersonic states at the nozzle entry and receiver pressure at the nozzle exit, try to determine the steady flow pattern with a single transonic shock front, 
including the location of the shock front as well as the states of the flow ahead of and behind it, in a finite 3-d nozzle, satisfying the slip boundary condition on the nozzle wall.}
\end{quotation}

Therefore, roughly speaking, we will impose the following boundary conditions in this paper:
\begin{equation}\label{bou:rough}
\begin{array}{l}
\mbox{all incoming states at the entrance,}\\
\mbox{the slip boundary condition on the nozzle wall,}\\
\mbox{and receiver pressure at the outlet.}
\end{array}
\end{equation}
The shock problem is mathematically formulated as a free boundary problem to the Euler system \eqref{E1}-\eqref{E3} under the boundary conditions \eqref{bou:rough} with prescribed data, which will be specified in detail in Section 2.
The formulated free boundary problem is a strong nonlinear problem, and the location of the shock front is the very free boundary to be determined together with the solution.
In this paper, the flow is assumed to swirl around the symmetric axis with a large speed, and we are going to determine the location of the shock front under the prescribed boundary data and establish the existence of the shock solution to the formulated free boundary problem.

\begin{figure}[!ht]
	\centering
	\includegraphics[width=0.8\textwidth]{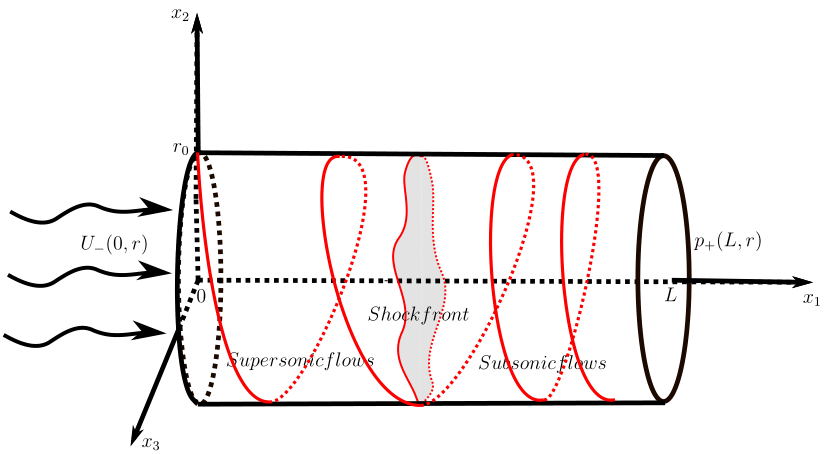}
	\caption{Transonic shock flow with large swirl velocity in a cylinder.\label{fig:1}}
\end{figure}
Even though there has been lots of literature on the existence of steady axisymmetric transonic shock solutions with zero-swirl velocity or with sufficiently small swirl velocity in a 3-d cylindrical nozzle (for instance, see  \cite{
FG1,LiXY2010,LiXinYin2011PJM,ParkRyu_2019arxiv,Yong,HPark,WS94} and references therein),
as far as we know, there is no rigorous result on the transonic flow with general swirl velocity.
In fact, it is still an open problem whether there exist shock solutions with general swirl velocity under the boundary conditions proposed by Courant-Friedrichs in \cite{CR}, and how we determine the admissible shock locations if such shock solutions exist.
This paper is devoted to this problem by establishing the existence of axisymmetric shock solutions to the system \eqref{E1}-\eqref{E3} with large swirl velocity in a finite cylindrical nozzle (see Figure \ref{fig:1}), where the location of the shock front is determined by the boundary data \eqref{bou:rough}.
In this paper, two major steps will be carried out as follows to deal with key difficulties and establish the existence of the shock solution with a determined location of the shock front.
\begin{enumerate}
\item[$\underline{Step\ 1:}$] The first step is to establish necessary and sufficient conditions for the existence of special shock solutions to the system \eqref{E1}-\eqref{E3} with large swirl velocity under the assumption that the states of the flow both ahead of and behind the shock front depend only on the radial distance to the symmetry axis.

One difficulty arising immediately from the large swirl velocity is that there is no readily established background shock solution available, which is different from the flows without swirls, for which trivial piecewise constant shock solutions can be easily established.
Such shock solutions cannot be trivially established because, across the shock front, the flow may fail to satisfy the boundary conditions proposed above and the Rankine-Hugoniot conditions derived from the Euler system \eqref{E1}-\eqref{E3} simultaneously.
Moreover, the shock front location of these special solutions can be arbitrarily shifted in the nozzle, so that it cannot be used as a background solution directly.
    
\item[$\underline{Step\ 2:}$] Based on the special shock solutions established in \emph{Step $1$}, the second step is devoted to determining the location of the shock front and establishing the existence of the shock solution to the system \eqref{E1}-\eqref{E3} for steady axisymmetric flows under small perturbation of the states at the nozzle entry as well as the receiver pressure at the exit.
\begin{itemize}
\item Since the location of the shock front for the special shock solution can be arbitrarily shifted, we have to determine the location of the shock front for the perturbed flow directly from the perturbed boundary data.
Actually, it is a longstanding mathematical challenge to find plausible mechanisms to determine the location of the shock front.
\item Because of the large swirl velocity, the elliptic part and the hyperbolic part of the Euler system for the flow behind the shock front are strongly coupled in the lower order terms, so that they cannot simply be decoupled in the principal parts as the case for the flow without swirl velocity or with small swirl velocity.
Hence, classical theory for elliptic PDEs and hyperbolic PDEs cannot be directly employed to determine the location of the shock front and solve the free boundary problem.
\item  The singularity along the symmetry axis brings additional difficulties in solving the free boundary problem.
\end{itemize}
New decomposition techniques for the Euler system will be developed in this paper to deal with these difficulties. The solvability conditions and approximate solutions for the boundary value problem of the elliptic-hyperbolic composite system will be derived. 
It turns out that, for the perturbation problem in this step, the non-zero swirl velocity of the unperturbed shock solution plays an essential role and contributes the key terms in determining the location of the shock front.
This provides an alternative mechanism, different from the one observed in \cite{FB63}, where the geometry of the nozzle boundary plays a key role.
\end{enumerate}

To overcome these challenging difficulties, in \emph{Step $1$}, we find that the Mach number function of the incoming flow ahead of the shock front plays an important role, and the special shock solutions can be constructed if and only if the Mach number function satisfies certain conditions.
In \emph{Step $2$}, approximate shock solutions (see \S\ref{sec:6.1}) are constructed to capture the location of the shock front directly from the perturbed boundary data. 
The key step is to solve the free boundary problem of the steady Euler system for the flow behind the shock front, which is an elliptic-hyperbolic composite system.
However, the large swirl velocity of the unperturbed flow, strongly coupled nonlinear lower-order terms, and the singularity along the symmetric axis bring new challenging difficulties that the methods and techniques developed in \cite{FB63,FG1} for the shock problem for the flow without swirl velocity cannot deal with.

As far as we know, there is no general mathematical theory for such problems, and we have to develop techniques dealing with them.
By observing that the deduced elliptic-hyperbolic composite system has special structural properties, we develop a partly-decouple technique by introducing two auxiliary problems described as follows.
One auxiliary problem is a boundary value problem of an elliptic system of first order, which is reduced from the elliptic part of the whole composite system by taking out the coupling $0$-th order terms.
In order to make sure this problem admits a solution, the boundary data should satisfy a solvability condition.
This solvability condition, where the large swirling velocity contributes essentially, serves to catch the location of the shock front.
It shows a different mechanism in determining the location of the shock front from the ones in \cite{FB63,FG1}.
The other auxiliary problem is reduced by subtracting the obtained solution to the previous auxiliary problem from the problem for the whole composite system with the strongly coupled non-local terms.
Techniques and methods are developed to deal with the difficulty coming from these coupling non-local terms such that the unique existence of the solution as well as the \textit{a priori} estimates can be established.

For the singularity along the symmetry axis, firstly, the solution to the steady Euler system for the flow ahead of the shock front cannot be directly considered as readily solved by applying the classical theory of hyperbolic systems because of the singularities along the symmetric axis, which is a part of the boundary.
Techniques are developed to solve initial-boundary value problems of hyperbolic systems with singularities on a part of the boundary.
Moreover, the singularity along the symmetry axis also brings difficulties for both auxiliary problems introduced above to deal with the boundary value problem of the elliptic-hyperbolic system for the flow behind the shock front.
We develop a method by considering the problems in domains of higher dimensions to overcome them.
Careful estimates are established to deal with the singularities of the solution along the symmetric axis.

Thanks to steady efforts made by many mathematicians, there have been plenty of results on gas flows with shocks in nozzles, for instance, see \cite{BF,Chen_S2008TAMS,GCM2007,GK2006,GM2003,GM2004,
GM2007,SC2005,SC2009,CY2008,FG1,FG3,FHXX,FL,FB63,FXX,FX,LXY2009,
LiXY2010,
LiXinYin2011PJM,
LXY2013,LiuYuan2009SIAMJMA,LiuXuYuan2016AdM,ParkRyu_2019arxiv,Yong,HPark,WS94, WengXin,
XinYanYin2011ARMA,XinYin2005CPAM,XinYin2008JDE,
Yuan2012NA,HZ1} and references therein.
For steady multi-dimensional flows with a shock in nozzles, in order to determine the position of the shock front, Courant and Friedrichs pointed out in \cite{CR} that, without rigorous mathematical analysis, additional conditions are needed to be imposed at the exit of the nozzle and the pressure condition is preferred among different possible options.
From then on, many mathematicians have been working on this issue and there have been many substantial progresses.
In particular, Chen-Feldman proved in \cite{GM2003} the existence of transonic shock solutions in a finite flat
nozzle for multi-dimensional potential flows with given potential value at the exit and an assumption that the shock front passes through a given point. Later in \cite{GK2006}, with given vertical component of the velocity at the exit, Chen-Chen-Song established the existence of the shock solutions for 2-d steady Euler flows. Both existence results are established under the assumption that the shock front passes through a given point.
Without this assumption and with a given pressure at the exit, recently in \cite{FB63}, Fang-Xin established the existence of the transonic shock solutions in an almost flat (but non-flat) nozzle. It is interesting that the results in \cite{FB63} indicate that, in a flat nozzle and for given pressure condition at the exit, there may exist more than one admissible positions of the shock front. So it is still open to determine the shock location in a flat nozzle. This is one of the main reason that we work on the flat nozzle in this paper.
For the problems in a diverging nozzle,
the position of the shock front can be uniquely determined for a given pressure at the exit under the assumption that the flow states depend only on the radius (see \cite{CR}). And the structural stability of this shock solution
has been established for 2-d case  in \cite{SC2009} by Chen and in \cite{LXY2009,
LXY2013} by Li-Xin-Yin.
For 3-d axisymmetric case, in \cite{WS94}, Weng-Xie-Xin proved the structure stability of the transonic shock solutions under a small perturbation of the swirl velocity; in \cite{Yong}, Park proved the structure stability under a small perturbation of the supersonic flow. Finally, Fang-Gao in \cite{FG1} established the existence of the 3-d axisymmetric flow in a small perturbed cylinder with zero-swirl velocity. 
See also \cite{
DWX,DW} for the 3-d compressible axisymmetric smooth Euler flow with non-zero swirl velocity in a nozzle and \cite{HKWX2021} for the stability of two dimentional transonic contact discontinuity in a finite nozzle.

Finally, for convenience of the reader, we briefly describe the structure of the paper.
In Section 2, the mathematical formulation of the problem for the existence of shock solutions for steady compressible 3-d axisymmetric full Euler flows with non-zero swirl velocity will be described.
Then the previously mentioned two steps, which will be carried out in this paper to establish the existence of the shock solutions with determined location of the shock front, will be described with more details and main theorems will be stated.
Section 3 is devoted to carrying out \emph{Step 1} to establish the existence and uniqueness of special shock solutions with non-zero swirl velocity.
Moreover, the properties of these established solutions will also be described.
Based on the special solutions established in Section 3, the remaining sections are devoted to carrying out \emph{Step 2}.
Several preliminary theorems will be firstly established in Section 4 and Section 5, which deal with the previously mentioned challenging difficulties in \emph{Step 2}. 
More precisely,
in Section 4,
we develop techniques to solve the initial-boundary value problem of the hyperbolic system with singularities on a part of the boundary, and then establish the existence of the solution for the axisymmetric Euler flow ahead of the shock front.
In Section 5,
we develop techniques to deal with the boundary value problem of the elliptic-hyperbolic composite system deduced from the linearized Euler system for the flow behind the shock front and establishes the existence of the solutions to the associating linearized problems.
With the help of the preliminary theorems in Section 4 and Section 5, we establish the existence of the shock solution with determined location of the shock front for the axisymmetric Euler flow with large swirl velocity under certain sufficient conditions and conclude the proof of the main theorem in this paper in Section 6.

\section{Mathematical formulation and main theorems}
In this section, we first formulate the transonic shock problem in a finite cylinder as a free boundary problem for the steady 3-d axisymmetric full Euler flows.
Then we state several lemmas, theorems and finally the main theorem one by one.

\subsection{Free boundary value problem for 3-d steady axisymmetric full Euler flows in a finite cylinder}

Let the nozzle be defined by
\begin{equation}
\mathcal {N}: = \left\{(x_1,x_2,x_3)\in {\mathbb{R}}^3: 0 < x_1 < L,\  0 \leq x_2^2 + x_3^2  < r_0^2\right\},
\end{equation}
with the entrance $ \mathcal{N}_{en} $, the exit $ \mathcal{N}_{ex}$, and the nozzle wall $ \mathcal{N}_w $ (see Figure \ref{fig:1})
\begin{equation*}
\begin{split}
\mathcal{N}_{en} &\defs \overline{\mathcal{N}} \cap \{(x_1,x_2,x_3)\in {\mathbb{R}}^3 : x_1 = 0\},\\
\mathcal{N}_{ex} &\defs \overline{\mathcal{N}} \cap \{(x_1,x_2,x_3)\in {\mathbb{R}}^3 : x_1 = L \}, \\
\mathcal{N}_w &\defs \overline{\mathcal{N}} \cap \{(x_1 ,x_2 ,x_3 )\in {\mathbb{R}}^3 : x_2^2 + x_3^2 = r_0^2\}.
\end{split}
\end{equation*}
Let $ (z, r, \varpi) $ be the cylindrical coordinate in $ \Real^{3} $ such that
\begin{align*}
	(x_1, x_2, x_3) = (z, r \cos \varpi, r\sin \varpi).
\end{align*}
Then the domain $ \mathcal {N} $ and its boundaries can be expressed in $(z,r)$-coordinates as  
\begin{equation}\label{eq801}
	\Omega:  = \{ (z, r)\in \mathbb{R}^2 : 0 < z < L,\ 0 \leq r < r_0 \}
\end{equation}
and
\begin{align*}
	&\Gamma_{en} = \{ (z, r)\in \mathbb{R}^2 :  z = 0,\,0 \leq r < r_0 \},\\
	&\Gamma_{a}= \{ (z, r)\in \mathbb{R}^2 : 0 < z < L,\, r = 0\},\\
	&\Gamma_{ex}= \{ (z, r)\in \mathbb{R}^2 :  z = L,\,0 \leq r < r_0\},\\
	&\Gamma_w = \{ (z, r)\in \mathbb{R}^2 : 0 < z < L,\,r = r_0\}.
\end{align*}
In the cylinder $ \mathcal {N} $, we expect the flow is axisymmetric, that is, if we define
\begin{align*}
	u &\defs u_1,\\
	v &\defs u_2 \cos \varpi + u_3 \sin \varpi,\\
	w &\defs u_3 \cos \varpi - u_2 \sin \varpi,
\end{align*}
then $(u,v,w)$ and $(p,\rho)$ are independent on $\varpi$. Here $w$ is called the swirl velocity.
Then the steady 3-d full Euler system \eqref{E1}-\eqref{E3} becomes the following steady axisymmetric Euler system for $(u,v,w,p,\rho)$,
\begin{align}
	&\partial_z(r \rho u)  + \partial_r (r \rho v) = 0,\label{CE1}\\
	& \partial_z (p+ \rho u^2) + \partial_r (\rho u v) + \frac{\rho u v}{r} =0,\label{CE2}\\
	& \partial_z (\rho u v) + \partial_r (p+\rho v^2)+\frac{\rho (v^2 -w^2)}{r} =0,\label{CE3}\\
	& \partial_z (\rho u w) + \partial_r (\rho v w) + 2\frac{\rho v w}{r} =0,\label{CE4}\\
	& \partial_z \Big(r \rho u (\frac12  (q^2 +w^2)  + i)\Big) +\partial_r \Big(r \rho v (\frac12  (q^2 +w^2)  + i)\Big) =0,\label{CE5}
\end{align}
where
$q^2 \defs u^2 + v^2$ and $i\defs \displaystyle\frac{\gamma p}{(\gamma -1)\rho}$ are the speed modulo the swirl velocity and enthalpy, respectively.
Because the flow is axisymmetric, we know that
\begin{equation}\label{bou:sym}
v(z,0)=w(z,0)=0.
\end{equation}

Straightforward computations yield that the vorticity along $x_1$-direction is
$$
\pt{x_2}u_3-\pt{x_3}u_2=\pt{r}w+\frac{w}{r}.
$$
Therefore, for the flow with large swirl velocity concerned in this paper, its vorticity is \emph{large} in general.
Finally, equation $\eqref{CE5}$ can be rewritten as the Bernoulli's law:
\begin{equation}
\partial_z \big(r \rho u B\big) +\partial_r \big(r \rho v B\big) =0,
\end{equation}
where  $B \defs \displaystyle\frac12  (q^2 +w^2) + i$ is the Bernoulli function.

The eigenvalues of equations \eqref{CE1}-\eqref{CE5} are
\begin{equation*}
\begin{array}{lrl}
\lambda_1 &=&\displaystyle\frac{uv}{u^2 - c^2} - \frac{c\sqrt{u^2 + v^2 -c^2}}{u^2 - c^2},\\[3mm]
\lambda_{2,3,4} &=&\displaystyle\frac{v}{u},\\[3mm]
\lambda_5 &= &\displaystyle\frac{uv}{u^2 - c^2} + \frac{c\sqrt{u^2 + v^2 -c^2}}{u^2 - c^2}.
\end{array}
\end{equation*}
Let $c^2 = \displaystyle\frac{\gamma p}{\rho}$ be the sonic speed. $M = \displaystyle\frac{\sqrt{u^2 + v^2}}{c}$ is called \emph{``quasi-Mach number''} in this paper.
When $M >1$,  $ \lambda_1 $ and $\lambda_5$ are real, so the equations \eqref{CE1}-\eqref{CE5} are hyperbolic. We call the flow \emph{``quasi-supersonic flow''}. When $M <1$, $ \lambda_1 $ and $ \lambda_5 $ are a pair of conjugate complex numbers, so the equations \eqref{CE1}-\eqref{CE5} are of elliptic-hyperbolic composited. We call the flow \emph{``quasi-subsonic flow''}.

Let
$\Gamma_s\defs\set{(z, r)\in \mathbb{R}^2 :  z=\varphi(r),\, 0 \leq r < r_0}$
be the shock front position, then the following Rankine-Hugoniot (R.-H.) conditions hold across $ \Gamma_s $:
\begin{align}
&[\rho u]-\varphi'[\rho v]=0,\label{CRH1}\\
&[p+\rho u^2]-\varphi'[\rho u v]=0,\label{CRH2}\\
&[\rho uv]-\varphi'[p+\rho v^2]=0,\label{CRH3}\\
&[\rho uw]-\varphi'[\rho v w]=0,\label{CRH4}\\
&[\frac12 (q^2 + w^2) + i]=0,\label{CRH5}
\end{align}
where $[\cdot]$ denotes the jump of concerned quantities across the shock front $\Gamma_s$. Moreover, it follows from \eqref{CRH1}, \eqref{CRH4} and \eqref{CRH5} that
\begin{align}
[w]=[\frac12 q^2+ i]=0.
\end{align}

Denote the states of the flow by
$$
U \defs (p,\theta, w, q, s)^\top,
$$
where $\theta = \arctan\displaystyle\frac{v}{u}$ is the flow angle and $q = \sqrt{u^2 + v^2}$. Then in light of \eqref{bou:rough} and \eqref{bou:sym} for reasonable boundary conditions, we will study the following free boundary condition governed by 3-d steady axisymmetric Euler equations with a transonic shock.

\medskip
{\bf {$\llbracket \textit{FBP}\rrbracket $}}:
Given a supersonic state $U_-(r)$ at the entrance $ \Gamma_{en}$, and a relatively high pressure $ p_+(r) $ at the exit $ \Gamma_{ex}$, to find a transonic shock solution $ \left( U_{-}(z,r);\ U_{+}(z,r);\ \varphi(r) \right) $ such that
\begin{enumerate}
\item The curve $ \Gamma_s=\set{z=\varphi(r):0\leq r<r_0} $ is the location of the shock front, which separates $ \Omega $ into two subdomains:
		\begin{align}
			& \Omega_- = \{ (z, r)\in \mathbb{R}^2 : 0 < z < \varphi(r), \, 0 \leq r < r_0\},\\
			& \Omega_+ = \{ (z, r)\in \mathbb{R}^2 : \varphi (r) < z < L,\, 0 \leq r < r_0\},
		\end{align}
	where $ \Omega_- $ denotes the region of the \emph{quasi-supersonic} flow ahead of the shock front, and $ \Omega_+ $ is the region of the \emph{quasi-subsonic} flow behind it.
\item $ U_{-}(z,r) $ satisfies \eqref{CE1}-\eqref{CE5} in $\Omega_{-}$, and the following boundary conditions:
		\begin{align}
			& U_{-}(0,r) = U_-(r) &\text{ on } & \Gamma_{en},\label{U-E0r}\\
			& \theta_{-}(z,r_0) = 0 &\text{ on } & \Gamma_{w}\cap\overline{\Omega_-},\\
			& \theta_-(z,0) =w_-(z,0)=0 &\text{ on } & \Gamma_{a}\cap\overline{\Omega_-}.\label{vwaxis}
		\end{align}
\item $ U_{+}(z,r) $ satisfies \eqref{CE1}-\eqref{CE5} in $\Omega_{+}$, and the following boundary conditions:
		\begin{align}
		& p_{+}(L,r) = p_+(r) &\text{ on } & \Gamma_{ex},\label{p+r}\\
		& \theta_{+}(z,r_0) = 0 &\text{ on } & \Gamma_{w}\cap\overline{\Omega_+},\\
		&\theta_+(z,0)=w_+(z,0) =0 &\text{ on } & \Gamma_{a}\cap\overline{\Omega_+}.
		\end{align}	
\item $ \big( U_{-}(z,r);\ U_{+}(z,r);\ \varphi(r) \big) $ satisfies the R.-H. conditions \eqref{CRH1}-\eqref{CRH5} across the shock front $\Gamma_s$.
\end{enumerate}

\begin{rem}
In the definition of {\bf {$\llbracket \textit{FBP}\rrbracket $}}, we say the shock solution is transonic when the flow is \emph{quasi-supersonic} ahead of the shock front and \emph{quasi-subsonic} behind of the shock front, instead of the real supersonic and subsonic flow with respect to real Mach number $\displaystyle\frac{\sqrt{u^2+v^2+w^2}}{c}$, because the type of the equations \eqref{CE1}-\eqref{CE5} depends on whether the quasi-Mach number is larger than $1$ or not and then the shock solutions in this definition are similar as the transonic shock solutions in the two-dimensional steady Euler flows.
\end{rem}

This paper is devoted to establishing the existence of shock solutions
$\big(U_{-}(z,r)$, $U_{+}(z,r)$; $\varphi(r)\big)$ to the problem {\bf {$\llbracket \textit{FBP}\rrbracket $}}, which are a perturbation of special shock solutions with large swirl velocity, and determining their shock locations by carrying out the following steps, which are quickly mentioned in section 1 too.

\subsection{Step 1: Existence of special shock solutions with large non-zero swirl}
As previously described, the first step
is to find special shock solutions only depending on $r$.
Namely, we are going to construct shock solutions $ \big( \overline{U}_{-}(r);\ \overline{U}_{+}(r);\ \overline{\varphi}(r) \big) $, where the location of the shock front $\overline{\varphi}(r)\equiv \bar{z}_{s} $ with $ \bar{z}_{s}\in (0,L) $ being an arbitrary constant, and
\begin{align}
&\overline{U}_-(r) \defs\big(\bar{p}_-(r), \,0, \,\bar{w}_-(r),\, \bar{q}_-(r), \,\bar{s}_-(r)\big)^\top,\label{U--E}\\
&\overline{U}_+(r)\defs \big(\bar{p}_+(r),\, 0,\, \bar{w}_+(r), \, \bar{q}_+(r), \,\bar{s}_+(r)\big)^\top.\label{U++E}
\end{align}

It is easy to check that $ \big( \overline{U}_{-}(r);\ \overline{U}_{+}(r)\big) $ is a solution of the Euler equations \eqref{CE1}-\eqref{CE5} if and only if $ \big( \overline{U}_{-}(r);\ \overline{U}_{+}(r)\big) $ satisfies the following ordinary differential equations
\begin{align}
    &\partial_r \bar{p}_-(r) = \frac{1}{r} \bar{\rho}_-(r) \bar{w}_-^2(r),\label{RHS-}\\
    & \partial_r \bar{p}_+(r) = \frac{1}{r} \bar{\rho}_+(r) \bar{w}_+^2(r).\label{RHS+}
  \end{align}
Moreover, since the shock front is a straight line ($ \bar{\varphi}'(r)\equiv 0 $), the R.-H. conditions $\eqref{CRH1}$-$\eqref{CRH5}$ yield that
\begin{align}
&\bar{p}_+=\Big((1+{\mu}^2)\overline{M}_-^2-{\mu}^2\Big)\bar{p}_-,\label{BCRH1}\\
&\bar{q}_+  = {\mu}^2\Big(\bar{q}_- + \frac{2}{\gamma-1}\frac{\bar{c}_{-}^2}{\bar{q}_-}\Big),\label{BCRH2}\\
&\bar{\rho}_+ =\frac{{\bar{\rho}_- \bar{q}_-}}{\bar{q}_+}=\frac{{\bar{\rho}_- \bar{q}_-^2}}{\bar{c}_{*}^2},\label{BCRH3}\\
&\bar{w}_+ = \bar{w}_-,\label{BCRH4}
\end{align}
where
\begin{align*}
\bar{c}_-^2 = \displaystyle\frac{\gamma \bar{p}_-}{\bar{\rho}_-},\quad
\overline{M}_-^2 = \displaystyle\frac{\bar{q}_-^2}{\bar{c}_-^2}=\frac{q_-^2p_-^{\frac{1}{\gamma}-1}}{\gamma A^{\frac{1}{\gamma}}e^{\frac{s_-}{\gamma c_v}}}, \quad
\bar{c}_{*}^2 = {\mu}^2\Big(\bar{q}_-^2 + \frac{2}{\gamma-1}\bar{c}_-^2\Big),\quad
{\mu}^2=\frac{\gamma-1}{\gamma+1}.
\end{align*}

We will prove the following theorem in section 3.
\begin{thm}\label{BGthm}
Either for given $C^7$-functions $(\bar{w}_-(r), \bar{q}_-(r))$ with $\bar{w}_-(0)=0$ and $\bar{q}_-(r)>0$ and for given boundary values $\overline{M}_-(0)$ and $\bar{p}_-(0)$ on the symmetry axis with $\overline{M}_-(0)>1$ and $\bar{p}_-(0)> 0$, or for given $C^7$-functions $(\bar{p}_-(r), \bar{s}_-(r))$ with $\bar{p}_-(0)>0$ and for given boundary value $\bar{q}_-(0)>0$ on the symmetry axis with $\overline{M}_-(0)>1$,
there exists a unique special piecewise $C^7$-shock solution $\big(\overline{U}_- (r);\ \overline{U}_+ (r);\ \overline{\varphi}(r)\big)$ of the forms \eqref{U--E} and \eqref{U++E} to equations \eqref{RHS-}-\eqref{RHS+} in domain $\Omega$ with R.-H. conditions \eqref{BCRH1}-\eqref{BCRH4} up to an arbitrary shift of the shock front location, that is, $ z=\overline{\varphi}(r)\equiv \bar{z}_{s} $ is the location of the shock front  for an arbitrary constant $ \bar{z}_{s}\in (0,L) $.
\end{thm}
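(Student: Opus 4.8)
The plan is to reduce the whole construction to a pair of ordinary differential equations in $r$ — one governing the Mach-number profile and one the pressure — and to obtain everything else algebraically. First I would note that for a field of the form \eqref{U--E} the flow angle is zero, so $v\equiv 0$, $u\equiv\bar q_-$, and all quantities are independent of $z$; substituting this into \eqref{CE1}--\eqref{CE5} one checks that \eqref{CE1}, \eqref{CE2}, \eqref{CE4} and \eqref{CE5} hold identically while \eqref{CE3} reduces exactly to \eqref{RHS-}, and likewise downstream with \eqref{RHS+}. The conditions $\theta=0$ on the wall and axis and $w(z,0)=0$ then hold automatically, the last one precisely because $\bar w_-(0)=0$. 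Thus the Euler system together with the boundary conditions for the special solution is completely captured by \eqref{RHS-}, \eqref{RHS+} and the jump relations.

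Second, since the shock front is the straight line $z=\bar z_s$ we have $\bar\varphi'\equiv 0$, so the Rankine--Hugoniot relations \eqref{CRH1}--\eqref{CRH5} collapse to the one-dimensional normal-shock relations, whose unique compressive (entropy-increasing) solution is exactly \eqref{BCRH1}--\eqref{BCRH4}. In particular $\bar w_+=\bar w_-$, while $\bar p_+,\bar q_+,\bar\rho_+$ are smooth positive functions of $\bar p_-,\bar q_-,\overline{M}_-$ whenever $\overline{M}_-^2>1$ and $\bar p_->0$, and then automatically $\overline{M}_+<1$. Hence the downstream state $\overline U_+(r)$ is determined pointwise in $r$ by $\overline U_-(r)$, and the only remaining requirement is that this downstream field also satisfy its own equation \eqref{RHS+}.

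This is the heart of the proof. Working with the first data set, I would differentiate the R.--H. expression for $\bar p_+$ in $r$, substitute $\bar\rho_+$ and $\bar w_+$ from \eqref{BCRH3}--\eqref{BCRH4}, and eliminate $\partial_r\bar p_-$ via \eqref{RHS-}; after dividing by $\bar p_->0$ this turns \eqref{RHS+} into a single first-order ODE for $m\defs\overline{M}_-^2$, schematically $(1+\mu^2)\,m'=\dfrac{\gamma\,\bar w_-^2\,m}{r\,\bar q_-^2}\Big(\dfrac{m}{\mu^2(m+\frac{2}{\gamma-1})}-(1+\mu^2)m+\mu^2\Big)$, coupled in triangular fashion to \eqref{RHS-} rewritten as $\partial_r\bar p_-=\dfrac{\gamma\,\bar w_-^2}{r\,\bar q_-^2}\,\bar p_-\,m$. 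Because $\bar w_-\in C^7$ with $\bar w_-(0)=0$, Hadamard's lemma gives $\bar w_-^2/r\in C^6$ up to the axis, so the right-hand sides are $C^6$ in $r$ and rational (and non-singular for $m>0$) in $(m,\bar p_-)$; the Picard--Lindel\"of theorem then provides a unique local $C^7$ solution with $m(0)=\overline{M}_-^2(0)$ and the prescribed $\bar p_-(0)$. To extend it to all of $[0,r_0)$ I would exploit two structural facts: the bracket above vanishes identically at $m=1$ for every $\gamma>1$, so $m\equiv 1$ is an equilibrium and $m(0)>1$ forces $m(r)>1$ throughout — this is precisely the Mach-number condition that makes a purely radial shock solution possible — and, moreover, the bracket is negative for $m>1$, so $m$ is non-increasing and trapped in $(1,m(0)]$, after which $\bar p_-$ solves a linear equation with bounded coefficient and stays positive and finite.

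Finally one recovers $\bar c_-^2=\bar q_-^2/m$, $\bar\rho_-=\gamma\bar p_- m/\bar q_-^2$ and $\bar s_-$ from the definition of $\overline{M}_-$, produces $\overline U_+$ (with $\overline{M}_+<1$) from the previous step, sets $\bar\varphi(r)\equiv\bar z_s$ for an arbitrary $\bar z_s\in(0,L)$, and deduces uniqueness of the shock solution (for each fixed $\bar z_s$) from uniqueness for the triangular ODE system. The second data set is handled by the same scheme: \eqref{RHS-} now delivers $\bar w_-=\sqrt{r\,\partial_r\bar p_-/\bar\rho_-}$ algebraically (so one needs $\partial_r\bar p_-\ge 0$, with a flat profile forced at the axis by $\bar w_-(0)=0$), and imposing \eqref{RHS+} becomes the governing first-order ODE for $Q\defs\bar q_-^2$ with $Q(0)=\bar q_-(0)^2$. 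I expect the main obstacle to be exactly this third step: recognizing that imposing \eqref{RHS+} on the Rankine--Hugoniot image of the upstream data closes into a single ODE, and then controlling that ODE globally on the finite interval $[0,r_0)$ — propagating $\overline{M}_->1$ and ruling out blow-up — while absorbing the $1/r$ singularity at the symmetry axis through the vanishing of $\bar w_-$ there and carrying the $C^7$ bookkeeping along at each step.
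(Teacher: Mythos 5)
Your proposal is sound and reaches the same destination as the paper, but you control the governing ODE by a qualitatively different argument. Both you and the paper reduce the special shock to the one--dimensional normal--shock jump plus the compatibility requirement that the Rankine--Hugoniot image of the upstream state also solve \eqref{RHS+}, and both of you recognize that, coupled with \eqref{RHS-}, this closes into a scalar ODE for the quasi--Mach number (you work with $m=\overline M_-^2$, the paper with $t=1/\overline M_-^2$; modulo the inversion your ``schematic'' bracket is exactly their \eqref{eq:m}). Where you diverge is in the global-existence step. The paper substitutes $T=1+\tfrac{2}{\gamma-1}t$, observes that the ODE becomes $\partial_r\ln(1-\mu^4T^2)=-(\gamma-1)\,\bar w_-^2/(r\bar q_-^2)$, and hence writes down $t(r)$ in closed form; boundedness, monotonicity, the invariance of $\{t<1\}$ and the $C^7$ regularity are then read off the explicit formula. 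You instead argue qualitatively on the phase line: $m=1$ is an equilibrium, the bracket is strictly negative for $m>1$, so the flow is monotone decreasing and trapped in $(1,m(0)]$, and global existence plus $\overline M_->1$ follows from the invariant region together with linearity of the $\bar p_-$ equation. Both routes are correct; the paper's buys an explicit representation (convenient later when one needs to read off sign information and derivative bounds for the perturbation analysis), while your trapping argument is more robust to changes in the nonlinearity and avoids the need to find an exact integral. One point worth keeping from your write-up: you note, correctly and more carefully than the paper does, that in the second data set one must require $\partial_r\bar p_-\ge 0$ (with a flat profile at the axis) for $\bar w_-=\sqrt{r\partial_r\bar p_-/\bar\rho_-}$ to deliver a real $C^7$ swirl; the paper folds this into its standing compatibility conditions \eqref{assumew0r0=0}--\eqref{assumepex} rather than stating it explicitly in Lemma~\ref{BGlemma4}.
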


\begin{rem}
It is easy to see that solutions $\big(\overline{U}_- (r);\ \overline{U}_+ (r)\big)$ of equations \eqref{RHS-}-\eqref{RHS+} with $\theta_{\pm}=0$ are solutions of the full Euler system \eqref{CE1}-\eqref{CE5}.
\end{rem}

\begin{rem}
In order to establish the existence of quasi-supersonic solutions in Theorem \ref{mainthmU-}, the $C^7$-regularity of background solutions is required.
\end{rem}

It should be pointed out that for the state $\overline{U}_- (r)$ ahead of the shock front satisfying the equation \eqref{RHS-}, one can obtain the state $\overline{U}_+ (r)$ behind the shock front by the R.-H. conditions \eqref{BCRH1}-\eqref{BCRH4}. But such state $\overline{U}_+ (r)$ does not necessarily satisfies the equation \eqref{RHS+}. Therefore, we need to derive an equation for a quantity which is equivalent to equation \eqref{RHS+} if \eqref{RHS-} and \eqref{BCRH1}-\eqref{BCRH4} hold.
Fortunately, we find that $\displaystyle\frac{1}{\overline{M}_-^2(r)}$ plays a key role
in finding the special shock solution, as showed in the following lemma.
\begin{lem}\label{lem:1}
Assume \eqref{RHS-} and \eqref{BCRH1}-\eqref{BCRH4} hold, then for given $C^7$-functions $(\bar{w}_-(r), \bar{q}_-(r))$, $\overline{U}_+ (r)$ satisfies the equation \eqref{RHS+} if and only if $\displaystyle\frac{1}{\overline{M}_-^2(r)}$ satisfies the following equation
\begin{align}\label{eq:m}
\partial_r \Big(\frac{1}{\overline{M}_-^2(r)}\Big)
=\frac{\gamma-1}{4\bar{q}_-^2(r)}\frac{\bar{w}_-^2(r)}{r}\Big( \frac{(\gamma +1)^2}{\gamma-1 + \frac{2}{\overline{M}_-^2(r)}} - \big(\gamma-1 + \frac{2}{\overline{M}_-^2(r)}\big)\Big).
\end{align}
\end{lem}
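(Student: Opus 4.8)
The plan is to treat everything as a function of $r$ along the shock line and to show that the single scalar ODE \eqref{eq:m} for $m(r):=1/\overline{M}_-^2(r)$ is precisely the compatibility condition for \eqref{RHS+}. First I would express all the ``$+$'' quantities appearing in \eqref{RHS+} in terms of the ``$-$'' quantities via the algebraic R.-H. relations \eqref{BCRH1}--\eqref{BCRH4}. Using $\bar c_-^2=\gamma\bar p_-/\bar\rho_-$ and $\overline M_-^2=\bar q_-^2/\bar c_-^2$, I would rewrite \eqref{BCRH1} as $\bar p_+ = \big((1+\mu^2)\overline M_-^2-\mu^2\big)\bar p_-$, \eqref{BCRH3} as $\bar\rho_+ = \bar\rho_-\bar q_-/\bar q_+$ with $\bar q_+$ given by \eqref{BCRH2}, and $\bar w_+=\bar w_-$ by \eqref{BCRH4}. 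The key is that $\bar q_+$, $\bar\rho_+$, $\bar p_+$ can all be written as $\bar q_-$ (a given function) times rational expressions in $\overline M_-^2$ alone; in particular one gets clean identities such as $\bar q_+/\bar q_- = \mu^2\big(1+\tfrac{2}{\gamma-1}m\big) = \tfrac{(\gamma-1)+2m}{\gamma+1}$ and $\bar p_+/\bar p_- = \tfrac{(\gamma+1)-2\mu^2 m}{\gamma+1}\cdot\overline M_-^2 = \tfrac{(\gamma+1)-2\mu^2 m}{(\gamma-1)+\cdots}$... — I would organize these so that $\bar\rho_+\bar w_+^2 = \bar\rho_-\bar w_-^2\cdot\big(\bar q_-/\bar q_+\big)$ is a manifestly explicit function of $\bar w_-,\bar q_-,m$.

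Next I would differentiate \eqref{RHS+}, i.e.\ $\partial_r\bar p_+ = \tfrac1r\bar\rho_+\bar w_+^2$, after substituting $\bar p_+ = \big((1+\mu^2)\overline M_-^2-\mu^2\big)\bar p_-$. On the left, $\partial_r\bar p_+$ produces a term with $\partial_r\bar p_-$, which I replace by $\tfrac1r\bar\rho_-\bar w_-^2$ using \eqref{RHS-} (this is where the hypothesis \eqref{RHS-} enters), plus a term with $\partial_r\big((1+\mu^2)\overline M_-^2-\mu^2\big)\cdot\bar p_- = -(1+\mu^2)\overline M_-^4\,\partial_r m\cdot\bar p_-$. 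On the right, $\bar\rho_+\bar w_+^2$ is the explicit expression found above. Using $\bar p_-=\bar\rho_-\bar c_-^2/\gamma = \bar\rho_-\bar q_-^2/(\gamma\overline M_-^2)=\bar\rho_-\bar q_-^2 m/\gamma$ to clear $\bar\rho_-$ and $\bar p_-$ uniformly, the equation \eqref{RHS+} becomes, after cancelling the common factor $\bar\rho_-\bar w_-^2 /r$ (which is nonzero since $\bar q_->0$ and, away from $r=0$, $\bar w_-\ne0$ generically), an ODE of the form
\begin{equation*}
-(1+\mu^2)\,\frac{\bar q_-^2}{\gamma}\,\overline M_-^2\,\partial_r m \;=\; \text{(explicit rational function of }m)\;-\;1,
\end{equation*}
and then one checks this is algebraically identical to \eqref{eq:m}. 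Conversely, reading the same chain of equivalences backwards shows \eqref{eq:m} $\Rightarrow$ \eqref{RHS+}, since each substitution step used only \eqref{RHS-} and the algebraic relations, all of which are reversible.

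The main obstacle is purely computational bookkeeping: keeping the many rational expressions in $\overline M_-^2$ (equivalently in $m$) consistent, and verifying that the left-hand side factor $-(1+\mu^2)\overline M_-^2\,\partial_r m$ combined with the correct power of $\bar q_-$ matches the prefactor $\tfrac{\gamma-1}{4\bar q_-^2}$ in \eqref{eq:m}, while the right-hand side collapses to $\tfrac{(\gamma+1)^2}{(\gamma-1)+2m} - \big((\gamma-1)+2m\big)$. Two bookkeeping points deserve care: (i) one must not divide by $\bar w_-$ when $\bar w_-(r)=0$ (e.g.\ at $r=0$, where $\bar w_-(0)=0$), so strictly the equivalence of the two ODEs is first established on $\{r:\bar w_-(r)\ne 0\}$ and then extended by continuity of both sides in $r$ using the assumed $C^7$ regularity; (ii) the algebraic positivity $\bar c_*^2>0$, $\bar q_+>0$, $\bar\rho_+>0$ should be noted so that all the divisions performed are legitimate. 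Beyond these, the proof is a direct verification, and I would present it by displaying the three or four intermediate identities rather than every line of algebra.
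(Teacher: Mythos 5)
Your proposal is correct and follows essentially the same route as the paper: express $\bar{p}_+$ and $\bar{\rho}_+\bar{w}_+^2$ in terms of the ``$-$'' state and $\overline{M}_-^2$ via \eqref{BCRH1}--\eqref{BCRH4}, differentiate, eliminate $\partial_r\bar{p}_-$ using \eqref{RHS-}, and reduce to an ODE for $1/\overline{M}_-^2$ (the paper routes through $\partial_r(\bar\rho_-\bar q_-^2)$ as an intermediate, which is a cosmetic difference). One small simplification worth noting: the factor $\bar\rho_-\bar w_-^2/r$ is \emph{not} common to the term containing $\partial_r\overline M_-^2$, so you never actually need to divide by $\bar w_-^2$ --- the $\bar w_-^2/r$ simply remains as a multiplicative prefactor on the right side of \eqref{eq:m}, which removes your worry (i) about $\bar w_-=0$ entirely.
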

Thus, for given $C^7$-functions $\bar{w}_-(r)$ and $\bar{q}_-(r)$, \eqref{eq:m} is a closed system for $\overline{M}_-(r)$. Combining with \eqref{RHS-} and \eqref{BCRH1}-\eqref{BCRH4}, one can determine $C^7$-states $\bar{p}_-(r)$, $\bar{\rho}_-(r)$ and $\overline{U}_+ (r)$. It is the following lemma. 
\begin{lem}\label{lem:2}
For given $C^7$-functions $(\bar{w}_-(r), \bar{q}_-(r))$ with $\bar{w}_-(0)=0$ and $\bar{q}_-(r)>0$ and for given boundary values $\overline{M}_-(0)$ and $\bar{p}_-(0)$ on the axis with $\overline{M}_-(0)>1$ and $\bar{p}_-(0)> 0$, there exist unique $C^7$-functions $(\bar{p}_-(r), \bar{s}_-(r))$ in the supersonic domain
$\Omega_-^{\bar{z}_s}\defs
\{ (z, r)\in \mathbb{R}^2 : 0 < z < \bar{z}_s,\, 0 < r < r_0\}$,
such that \eqref{RHS-} and \eqref{eq:m} hold. Moreover, $\overline{U}_+ (r)$ given by
 \eqref{BCRH1}-\eqref{BCRH4} satisfies equation \eqref{RHS+}.
\end{lem}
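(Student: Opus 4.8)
The plan is to collapse the system into a short chain of scalar ordinary differential equations in $r$, solved in the order $\overline{M}_-\to\bar{p}_-\to(\bar{\rho}_-,\bar{s}_-)$, using the compatibility $\bar{w}_-(0)=0$ to neutralize the $1/r$ factors at the symmetry axis and using the algebraic form of the right-hand side of \eqref{eq:m} to obtain the a priori bound that keeps the flow quasi-supersonic. First I would set $m(r):=1/\overline{M}_-^2(r)$ and read \eqref{eq:m} as the Cauchy problem $m'(r)=a(r)F(m(r))$, $m(0)=1/\overline{M}_-(0)^2\in(0,1)$, where $a(r):=\frac{\gamma-1}{4\bar{q}_-^2(r)}\cdot\frac{\bar{w}_-^2(r)}{r}$ and $F(m):=\frac{(\gamma+1)^2}{\gamma-1+2m}-(\gamma-1+2m)$. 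Since $\bar{w}_-\in C^7$ with $\bar{w}_-(0)=0$, Hadamard's lemma gives $\bar{w}_-(r)=r\,h(r)$ with $h\in C^6$, so that $\bar{w}_-^2(r)/r=r\,h(r)^2\in C^6$ and hence $a\in C^6$ (using $\bar{q}_->0$); as $F$ is smooth on $\{\gamma-1+2m>0\}$, the classical Picard theorem provides a unique local $C^1$ solution, which a bootstrap in $m'=aF(m)$ promotes to $C^7$.

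The heart of the matter is global existence on $[0,r_0)$ together with the bound $0<m(r)<1$, i.e. $\overline{M}_-(r)>1$. Here I would use the algebraic observation $F(1)=\frac{(\gamma+1)^2}{\gamma+1}-(\gamma+1)=0$: the constant function $m\equiv1$ is also a solution of $m'=aF(m)$, so, since $m(0)<1$, uniqueness for the ODE keeps $m(r)<1$ on the whole maximal interval of existence. On the region $0<m<1$ one has $a\ge0$ and $F(m)>0$, hence $m'\ge0$; thus $m$ is nondecreasing, $m(r)\ge m(0)>0$, and $m$ stays trapped in the compact set $[m(0),1]$, so the solution does not blow up and extends to all of $[0,r_0)$.

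With $m$ in hand I would recover the remaining states algebraically. From $\overline{M}_-^2=\bar{q}_-^2/\bar{c}_-^2$ one has $\bar{c}_-^2=\bar{q}_-^2 m$, and with $\bar{\rho}_-=\gamma\bar{p}_-/\bar{c}_-^2$ equation \eqref{RHS-} becomes the scalar linear ODE $\partial_r\bar{p}_-=b(r)\bar{p}_-$, where $b(r):=\frac{\gamma\bar{w}_-^2(r)}{r\,\bar{q}_-^2(r)m(r)}=\frac{\gamma\,r\,h(r)^2}{\bar{q}_-^2(r)m(r)}\in C^6$; integrating gives the unique $C^7$ solution $\bar{p}_-(r)=\bar{p}_-(0)\exp\big(\int_0^r b\big)>0$ for the prescribed $\bar{p}_-(0)$. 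Then $\bar{\rho}_-:=\gamma\bar{p}_-/(\bar{q}_-^2 m)>0$ and $\bar{s}_-:=c_v\ln\big(\bar{p}_-/(A\bar{\rho}_-^\gamma)\big)$ are $C^7$, and by construction the quasi-Mach number formed from these states satisfies $\bar{q}_-^2/\bar{c}_-^2=1/m$, so $(\bar{p}_-,\bar{s}_-)$ indeed solves \eqref{RHS-} and \eqref{eq:m}; running this chain of determinations backwards yields uniqueness. Finally, since \eqref{RHS-} and \eqref{eq:m} now hold, Lemma \ref{lem:1} immediately gives that $\overline{U}_+(r)$ defined through \eqref{BCRH1}--\eqref{BCRH4} satisfies \eqref{RHS+}.

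I expect the main obstacle to be the regularity and solvability at the symmetry axis $r=0$: the $1/r$ factors make sense only because $\bar{w}_-(0)=0$, and one must check carefully that this single vanishing condition, via Hadamard's lemma, propagates enough smoothness through both the nonlinear ODE for $m$ and the linear ODE for $\bar{p}_-$ to deliver solutions that are $C^7$ up to and including the symmetry axis, rather than merely on $(0,r_0)$.
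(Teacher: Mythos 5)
Your proof is correct and reaches the same conclusions, but via a genuinely different route than the paper. The paper linearizes the ODE \eqref{eq:m} by the substitution $T(r)=1+\frac{2}{\gamma-1}t(r)$ (with $t=1/\overline{M}_-^2$), observes that $\partial_r\big(\ln(1-\mu^4T^2)\big)=-(\gamma-1)\frac{\bar{w}_-^2}{r\bar{q}_-^2}$, and integrates to obtain an explicit closed-form expression for $t(r)$; the bounds $0<t(r)<1$ then drop out of the explicit formula, and $\bar p_-$, $\bar\rho_-$, $\bar s_-$ are likewise written in closed form. You instead treat $m'=a(r)F(m)$ as an abstract Cauchy problem: Picard gives local existence and uniqueness (with the $C^7$ regularity recovered by bootstrap after using Hadamard's lemma to neutralize the $1/r$ singularity), and the structural facts $F(1)=0$, $a\ge 0$, $F>0$ on $(0,1)$ give a barrier-and-trapping argument that pins $m$ in $[m(0),1)$ and yields global existence without ever solving the equation. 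Both arguments are sound; yours is arguably more robust and less dependent on spotting a lucky integrating substitution, while the paper's explicit representation is not purely cosmetic — the closed-form solution for $\bar p_-$ and the analogous explicit formulas in Lemma~\ref{BGlemma4} are later invoked when the authors justify the reduction to the perturbation form \eqref{Uenwq} (``based on the explicit formula of the solutions given in the proof of Lemma~\ref{BGlemma4}\ldots''). So the paper's computation buys explicit quantitative control used downstream, whereas your qualitative argument suffices for the statement of the lemma itself. One minor point worth making explicit if you wrote this up: the bootstrap $m\in C^1\Rightarrow m'\in C^1\Rightarrow m\in C^2\Rightarrow\cdots$ terminates at $C^7$ because $a\in C^6$ caps the gain; the same remark applies to $\bar p_-$, where $b\in C^6$ gives $\bar p_-\in C^7$.
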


An inverse version of Lemma \ref{lem:2} can also be established by showing the following two lemmas.
\begin{lem}\label{BGlemma3}
If \eqref{RHS-} and \eqref{BCRH1}-\eqref{BCRH4} hold, then for given $C^7$-functions $(\bar{p}_-(r), \bar{s}_-(r))$, $\overline{U}_+ (r)$ satisfies the equation \eqref{RHS+} if and only if $\displaystyle\frac{1}{\overline{q}_-^2(r)}$ satisfies the following equation
   \begin{align}\label{barq-eqB}
    \partial_r\Big( \frac{1}{\bar{q}_-^2}\Big)
  =&  \frac{(\gamma-1)^2}{4} \frac{ \bar{w}_-^2}{r} \frac{\bar{\rho}_- }{\gamma \bar{p}_-}\Big\{-\Big( 1+\frac{2\gamma \bar{p}_-}{(\gamma-1)\bar{\rho}_-}\frac{1}{ \bar{q}_-^2} \Big) + \frac{(\gamma+1)^2}{(\gamma-1)^2\big(1+
  \frac{2\gamma \bar{p}_-}{(\gamma-1)\bar{\rho}_-}\frac{1}{ \bar{q}_-^2}\big)}\notag\\
  &\qquad \qquad\qquad \qquad  - \frac{4}{(\gamma-1)^2}\frac{r}{\bar{w}_-^2}\partial_r \Big(\frac{\gamma \bar{p}_-}{\bar{\rho}_- }\Big) \Big\}\frac{1}{ \bar{q}_-^2}.
   \end{align}
\end{lem}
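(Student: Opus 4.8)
\emph{Proof proposal.} The plan is to deduce \eqref{barq-eqB} from equation \eqref{eq:m} of Lemma \ref{lem:1} by changing the dependent variable. Once the $C^7$-functions $(\bar{p}_-(r),\bar{s}_-(r))$ are prescribed, the thermal relation $p=Ae^{s/c_v}\rho^\gamma$ determines $\bar{\rho}_-(r)$, hence $\bar{c}_-^2(r)=\gamma\bar{p}_-/\bar{\rho}_->0$, as known $C^7$-functions, while the assumed identity \eqref{RHS-} fixes $\bar{w}_-^2(r)=r\,\partial_r\bar{p}_-/\bar{\rho}_-$, with $\bar{w}_-(0)=0$ forced by \eqref{bou:sym}. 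Thus the only quantity still free is $\bar{q}_-(r)$, equivalently $\overline{M}_-^2=\bar{q}_-^2/\bar{c}_-^2$. Since the hypotheses \eqref{RHS-} and \eqref{BCRH1}-\eqref{BCRH4} imposed here are exactly those of Lemma \ref{lem:1}, that lemma already tells us that $\overline{U}_+(r)$ satisfies \eqref{RHS+} if and only if $1/\overline{M}_-^2$ satisfies \eqref{eq:m}. It therefore remains only to show that \eqref{eq:m} and \eqref{barq-eqB} are equivalent.

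To carry this out I would write $\dfrac{1}{\overline{M}_-^2}=\dfrac{\gamma\bar{p}_-}{\bar{\rho}_-}\cdot\dfrac{1}{\bar{q}_-^2}$, differentiate by the product rule,
\[
\partial_r\!\Bigl(\tfrac{1}{\overline{M}_-^2}\Bigr)=\partial_r\!\Bigl(\tfrac{\gamma\bar{p}_-}{\bar{\rho}_-}\Bigr)\tfrac{1}{\bar{q}_-^2}+\tfrac{\gamma\bar{p}_-}{\bar{\rho}_-}\,\partial_r\!\Bigl(\tfrac{1}{\bar{q}_-^2}\Bigr),
\]
substitute this into the left side of \eqref{eq:m}, and rewrite each occurrence of $\gamma-1+\frac{2}{\overline{M}_-^2}$ on the right side of \eqref{eq:m} as $(\gamma-1)\bigl(1+\frac{2\gamma\bar{p}_-}{(\gamma-1)\bar{\rho}_-}\frac{1}{\bar{q}_-^2}\bigr)$. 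Dividing the resulting identity through by $\frac{\gamma\bar{p}_-}{\bar{\rho}_-}>0$ and transposing the term $\frac{\bar{\rho}_-}{\gamma\bar{p}_-}\partial_r\bigl(\frac{\gamma\bar{p}_-}{\bar{\rho}_-}\bigr)\frac{1}{\bar{q}_-^2}$ to the right, a short rearrangement produces exactly the right side of \eqref{barq-eqB}: the last brace term $-\frac{4}{(\gamma-1)^2}\frac{r}{\bar{w}_-^2}\partial_r\bigl(\frac{\gamma\bar{p}_-}{\bar{\rho}_-}\bigr)$, once multiplied by the prefactor $\frac{(\gamma-1)^2}{4}\frac{\bar{w}_-^2}{r}\frac{\bar{\rho}_-}{\gamma\bar{p}_-}$, collapses precisely to the transposed term, and the remaining two brace terms reproduce the original right side of \eqref{eq:m} after the identity $1+\frac{2\gamma\bar{p}_-}{(\gamma-1)\bar{\rho}_-\bar{q}_-^2}=\frac{\gamma-1+2/\overline{M}_-^2}{\gamma-1}$ is used. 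Every step is reversible, which gives the ``if and only if''.

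A self-contained alternative, which mirrors the proof of Lemma \ref{lem:1}, is to substitute the R.-H. relations \eqref{BCRH1}-\eqref{BCRH4} directly into \eqref{RHS+}. Using $(1+\mu^2)\bar{p}_-/\bar{c}_-^2=\frac{2\bar{\rho}_-}{\gamma+1}$ one finds $\bar{p}_+=\frac{2}{\gamma+1}\bar{\rho}_-\bar{q}_-^2-\frac{\gamma-1}{\gamma+1}\bar{p}_-$ and $\bar{\rho}_+\bar{w}_+^2=\frac{\bar{\rho}_-\bar{q}_-^2}{\bar{c}_*^2}\bar{w}_-^2$ with $\bar{c}_*^2=\mu^2\bigl(\bar{q}_-^2+\frac{2}{\gamma-1}\bar{c}_-^2\bigr)$, so \eqref{RHS+} becomes a first-order relation among $\partial_r(\bar{\rho}_-\bar{q}_-^2)$, $\partial_r\bar{p}_-$ and $\frac{1}{r}\frac{\bar{\rho}_-\bar{q}_-^2\bar{w}_-^2}{\bar{c}_*^2}$; expanding the derivative, using \eqref{RHS-} to replace $\partial_r\bar{p}_-$ by $\bar{\rho}_-\bar{w}_-^2/r$, introducing $1/\bar{q}_-^2$ as the unknown (so that $\partial_r(\bar{q}_-^2)=-\bar{q}_-^4\,\partial_r(1/\bar{q}_-^2)$), clearing denominators and collecting terms reproduces \eqref{barq-eqB}.

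I expect the only real obstacle to be bookkeeping: carrying the rational expressions in $\bar{q}_-^2,\bar{c}_-^2,\bar{p}_-,\bar{\rho}_-$ without error and spotting the algebraic simplifications that compress them into the compact form \eqref{barq-eqB}. The point that genuinely needs care --- though I would record it for the subsequent existence analysis rather than for the equivalence itself --- is the behaviour at the symmetry axis: by \eqref{bou:sym}, $\bar{w}_-(0)=0$, so $\partial_r\bar{p}_-(0)=0$ by \eqref{RHS-} and the coefficient $\frac{r}{\bar{w}_-^2}$ in \eqref{barq-eqB} is singular at $r=0$; however it always appears multiplied by the prefactor's $\frac{\bar{w}_-^2}{r}$, so the corresponding brace term is in fact $\partial_r\bigl(\frac{\gamma\bar{p}_-}{\bar{\rho}_-}\bigr)$ and the right side of \eqref{barq-eqB} extends continuously to $r=0$. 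Accordingly \eqref{barq-eqB} is to be read as holding for $r\in(0,r_0)$, with this removable-singularity structure noted for later use.
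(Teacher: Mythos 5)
Your proposal is correct and follows essentially the same route as the paper: invoke Lemma \ref{lem:1} to reduce the claim to the equivalence of \eqref{eq:m} and \eqref{barq-eqB}, then pass through the intermediate relation for $\partial_r\bigl(\tfrac{\gamma\bar{p}_-}{\bar{\rho}_-}\tfrac{1}{\bar{q}_-^2}\bigr)$ (which the paper writes explicitly, calling both equations equivalent to it, and which your product-rule expansion reproduces). The paper omits the algebra as ``straightforward and long,'' whereas you spell out the key substitution $\gamma-1+\tfrac{2}{\overline{M}_-^2}=(\gamma-1)\bigl(1+\tfrac{2\gamma\bar{p}_-}{(\gamma-1)\bar{\rho}_-\bar{q}_-^2}\bigr)$ and the cancellation of $\tfrac{r}{\bar{w}_-^2}$ against $\tfrac{\bar{w}_-^2}{r}$, and you add the useful observation about the removable singularity at $r=0$; these are elaborations, not a different argument.
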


For given $C^7$-functions $(\bar{p}_-(r),\bar{s}_-(r))$, by \eqref{RHS-}, one can obtain $\bar{w}_-(r)$. Then \eqref{barq-eqB} is a closed system for $\displaystyle\frac{1}{\overline{q}_-^2(r)}$. Therefore one can obtain $\overline{U}_-(r)$ and then $\overline{U}_+(r)$ by showing the following lemma.
\begin{lem}\label{BGlemma4}
For given $C^7$-functions $(\bar{p}_-(r), \bar{s}_-(r))$ with $\bar{p}_-(0)>0$ and for given boundary value $\bar{q}_-(0)>0$ on the axis with $\overline{M}_-(0)>1$, there exist unique $C^7$-functions $(\bar{w}_-(r), \bar{q}_-(r))$ in $\Omega_-^{\bar{z}_s}$
such that \eqref{RHS-} and \eqref{barq-eqB} hold. Moreover, $\overline{U}_+ (r)$ given by \eqref{BCRH1}-\eqref{BCRH4} satisfies equation \eqref{RHS+}.
\end{lem}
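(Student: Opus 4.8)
\emph{Proof proposal.} The plan is to regard \eqref{barq-eqB} as a scalar first-order ODE for $y(r)\defs 1/\bar q_-^2(r)$ once every other quantity has been expressed through the prescribed data $(\bar p_-,\bar s_-)$, to solve it by Picard iteration, and then to recover $\overline U_-$ and $\overline U_+$. First I would reconstruct $\bar\rho_-$ and $\bar c_-^2$: from the thermal relation $p=Ae^{s/c_v}\rho^\gamma$ and $\bar p_-(0)>0$, $\bar\rho_-(r)=\big(\bar p_-(r)/(Ae^{\bar s_-(r)/c_v})\big)^{1/\gamma}$ is a positive $C^7$-function, hence so is $\bar c_-^2=\gamma\bar p_-/\bar\rho_-$. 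Equation \eqref{RHS-} then forces $\bar w_-^2(r)=r\,\partial_r\bar p_-(r)/\bar\rho_-(r)$; for this to have a nonnegative $C^7$ square root vanishing at $r=0$ one needs the axis-compatibility and monotonicity built into the admissible data, namely $\partial_r\bar p_-(0)=0$, $\partial_r\bar p_-\ge 0$ on $[0,r_0)$, together with the natural $\partial_r\bar s_-(0)=0$, and then $\bar w_-$ is determined as the nonnegative branch, with $\bar w_-(0)=0$. The key point is that $\bar w_-^2/r=\partial_r\bar p_-/\bar\rho_-$ and $r/\bar w_-^2=\bar\rho_-/\partial_r\bar p_-$, which are exactly the combinations occurring in \eqref{barq-eqB}.

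Substituting these identities into \eqref{barq-eqB} produces an equation $\partial_r y=\mcf(r,y)$, where $\mcf$ is rational in $y$ --- quadratic plus one fractional-linear term with denominator $1+\tfrac{2\gamma\bar p_-}{(\gamma-1)\bar\rho_-}y$ --- and carries an overall factor $y$, so that $\mcf(r,0)\equiv 0$. The first thing to check is that $\mcf$ is $C^6$ in $(r,y)$ up to and including $r=0$: the only candidate for a singularity is the term $\tfrac{r}{\bar w_-^2}\,\partial_r\bar c_-^2=\tfrac{\bar\rho_-}{\partial_r\bar p_-}\,\partial_r\bar c_-^2$, and since $\partial_r\bar p_-$ and $\partial_r\bar c_-^2$ both vanish at $r=0$ (the latter because $\partial_r\bar p_-(0)=\partial_r\bar\rho_-(0)=0$, the vanishing of $\partial_r\bar\rho_-(0)$ following from $\partial_r\bar p_-(0)=\partial_r\bar s_-(0)=0$), this quotient has a finite, smooth limit there; moreover $1+\tfrac{2\gamma\bar p_-}{(\gamma-1)\bar\rho_-}y=1+\tfrac{2}{\gamma-1}\bar c_-^2 y$ stays bounded away from $0$ for $y>0$. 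Next I would apply the Picard--Lindel\"of theorem with initial value $y(0)=1/\bar q_-(0)^2>0$, and bootstrap using the $C^6$-regularity of $\mcf$ to obtain a $C^7$-solution on a maximal subinterval of $[0,r_0)$. Since $y\equiv 0$ is the trivial solution, uniqueness keeps $y(r)>0$, hence $\overline M_->0$ and the denominator bounded below; and because the leading term in $y$ is $-\tfrac{(\gamma-1)^2}{4}\tfrac{\partial_r\bar p_-}{\bar\rho_-}\cdot\tfrac{2\gamma\bar p_-}{(\gamma-1)\bar\rho_-}\,y^2\le 0$ (using $\partial_r\bar p_-\ge 0$), $y$ cannot blow up, so the solution extends to all of $[0,r_0)$. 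Setting $\bar q_-(r)=y(r)^{-1/2}$ gives the desired $C^7$-function $\bar q_-$ and $\overline U_-(r)$, while $\overline U_+(r)$ is defined by \eqref{BCRH1}--\eqref{BCRH4}; uniqueness of $(\bar w_-,\bar q_-)$ is inherited from the explicit formula for $\bar w_-$ and the uniqueness of the ODE solution. Finally, since \eqref{RHS-}, \eqref{BCRH1}--\eqref{BCRH4} and \eqref{barq-eqB} now all hold, Lemma \ref{BGlemma3} yields at once that $\overline U_+(r)$ satisfies \eqref{RHS+}, which is the remaining assertion.

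The main obstacle is exactly the singularity at the symmetry axis $r=0$: confirming that the nominally $1/r$- and $1/\bar w_-^2$-type terms in \eqref{barq-eqB} combine into a right-hand side that is genuinely regular (and $C^7$ once the solution is in hand) at the axis, which relies on the precise vanishing orders of $\partial_r\bar p_-$ and $\partial_r\bar\rho_-$ at $r=0$ forced by smooth axisymmetry, and in particular on $\partial_r^2\bar p_-(0)>0$ so that $\bar w_-$ itself is a bona fide $C^7$-function near the axis. A secondary, more routine, point is the global continuation of the ODE solution on the whole interval $[0,r_0)$ while keeping $\bar q_->0$ and the flow quasi-supersonic ($\overline M_->1$), for which one uses the overall factor $y$ and the favourable sign of the leading coefficient identified above.
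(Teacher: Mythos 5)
Your high-level strategy is the same as the paper's — reconstruct $\bar\rho_-$ from the thermal relation, extract $\bar w_-^2 = r\,\partial_r\bar p_-/\bar\rho_-$ from \eqref{RHS-}, then treat \eqref{barq-eqB} as a scalar ODE for $1/\bar q_-^2$ and invoke Lemma \ref{BGlemma3} at the end — but the ODE step is handled very differently, and your version leaves a real gap exactly where you flag ``the main obstacle.'' You propose to apply Picard--Lindel\"of to $\partial_r y = \mathcal{F}(r,y)$ and then to verify directly that the $r^{-1}$- and $\bar w_-^{-2}$-type coefficients combine into something regular at $r=0$. The paper instead introduces $Q = 1/\bar q_-^2$, $G = \bar\rho_-/(\gamma\bar p_-)$ and the new unknown
\[
Y(r) = 1 + \frac{2Q(r)}{(\gamma-1)G(r)},
\]
and computes that the $\frac{r}{\bar w_-^2}\partial_r(\gamma\bar p_-/\bar\rho_-)$ term in \eqref{barq-eqB} is \emph{exactly cancelled} by the derivative of $G$ in the product rule, so that $Y$ satisfies
\[
\partial_r Y = \frac{\gamma-1}{2}\,\frac{\bar w_-^2}{r}\,\frac{1-\mu^4Y^2}{\mu^4 Y},
\]
a separable equation whose only potentially singular coefficient is $\bar w_-^2/r$, which is $C^6$ up to the axis because $\bar w_-(0)=0$. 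This gives the closed-form solution $Y(r)=\mu^{-2}\bigl(1-(1-\mu^4Y^2(0))\,e^{-(\gamma-1)\int_0^r \bar w_-^2(\tau)/\tau\,d\tau}\bigr)^{1/2}$, from which global existence on $[0,r_0)$, the invariant range $Y\in(1,\mu^{-2})$, positivity of $Q$, uniqueness, and the full $C^7$-regularity of $\bar q_-$ all drop out immediately.

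By contrast, your Picard route requires you to establish that $\frac{\bar\rho_-}{\partial_r\bar p_-}\partial_r\bar c_-^2$ is not merely bounded but $C^6$ up to $r=0$ (quotients of $C^6$ functions both vanishing at the axis generically lose a derivative, so this is not automatic), to impose the auxiliary assumption $\partial_r^2\bar p_-(0)>0$ which the lemma does not state, and to run a separate sign/a priori argument to rule out blow-up of $y$ on $[0,r_0)$. None of these steps is carried out in your proposal; you explicitly defer the axis-regularity verification to a ``main obstacle'' and the global continuation to a ``secondary point.'' Those are precisely the two things the change of variable $Q\mapsto Y$ disposes of for free, and without spotting it your argument as written is incomplete. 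The sign observation you make for the quadratic term is correct but insufficient on its own to close the regularity question at the axis, which is where the actual difficulty lives.
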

The proof of Lemmas \ref{lem:1}-\ref{BGlemma4} will be given in section 3. Moreover, Theorem \ref{BGthm} follows easily from Lemmas \ref{lem:1}-\ref{BGlemma4}.

\subsection{Step 2: Existence of small perturbed shock solutions with prescribed receiver pressure.}
Based on the special shock solutions established in \emph{Step 1}, the following steps are to study the problem {\bf {$\llbracket \textit{FBP}\rrbracket $}} by perturbing the special shock solutions.
More precisely, in problem {\bf {$\llbracket \textit{FBP}\rrbracket $}}, we take $U_-(r)$ in \eqref{U-E0r} and $p_+(r)$ in \eqref{p+r} as
\begin{align}
 U_-(r) \defs& \overline{U}_- (r) + \sigma U_{en}(r),\label{UenE}\\
 p_+(r)\defs& \bar{p}_+(r)+ \sigma p_{ex}(r),\label{penreq}
\end{align}
where $\sigma>0$ is a constant and $U_{en}(r)\defs  (p_{en}(r), 0, w_{en}(r), q_{en}(r), s_{en}(r))^\top$. In addition, we require that for $ r\in (0, r_0)$
\begin{align}\label{requireBG}
\partial_r \Big(\bar{p}_-(r)+\sigma p_{en}(r)
\Big)
\neq\frac{1}{r}\Big(
\frac{\bar{p}_-(r)+\sigma p_{en}(r)}{A}\Big)^{\frac{1}{\gamma}}e^{-\frac{\bar{s}_-(r) +\sigma s_{en}(r)}{\gamma c_v}}
\big( \bar{w}_-(r) + \sigma  w_{en}(r)\big)^2, 
\end{align}
which means that $\overline{U}_-(r) + \sigma U_{en}(r)$ is not a special supersonic solution because it does not satisfy \eqref{RHS-}.

By Lemma \ref{lem:1} - Lemma \ref{BGlemma4}, we see that for given $(\bar{p}_-(r), \bar{s}_-(r))$, there exists a unique $(\bar{w}_-(r),\bar{q}_-(r))$, and vice versa.
So if $(p_{en}(r), s_{en}(r))\neq (0,0)$, by Lemma \ref{BGlemma4}, one can obtain a unique $\big(w_-(0,r), q_-(0,r)\big)$ that satisfies \eqref{CE1}-\eqref{CE5}. Based on the explicit formula of the solutions given in the proof of Lemma \ref{BGlemma4} in section 3, $\big(w_-(0,r), q_-(0,r)\big)$ is also a small perturbation of $(\bar{w}_-(r),\bar{q}_-(r))$ when $\sigma$ is small.
Therefore, without loss of generality, in this paper, we consider
\begin{align}\label{Uenwq}
 U_{en}(r)=(0,0,w_{en}(r),q_{en}(r), 0)^\top.
\end{align}
Then requirement \eqref{requireBG} is equivalent to
\begin{equation}\label{assumew0r0=1}
w_{en}(r)\neq0 \qquad\mbox{for some }r\in(0,r_0).
\end{equation}
In addition, we impose the following conditions related to boundary conditions and compatibility conditions
\begin{align}
  &\bar{w}_-(0) =0,\quad  \bar{w}_-(r_0)=0,\label{assumew0r0=0}\\
  &\partial_r \bar{q}_-(0) =0, \quad \partial_r\bar{q}_-(r_0) =0,\label{assumebarq}\\
&  \partial_r^k \bar{w}_-(0)=0, \, k=2,4,6, \quad \partial_r^j \bar{q}_-(0)=0,\, j=3,5,7,\label{assumption23=0}\\
&\partial_r^k w_{en} (0) =w_{en}(r_0) =0, \qquad k=0,2,4,6,\\
 &\partial_r^j q_{en}(0) = \partial_r q_{en}(r_0) =0,\qquad j=1,3,5,7, \label{assumewenqen}\\
 &\partial_r p_{ex} (0) = \partial_r p_{ex} (r_0) = 0.\label{assumepex}
 \end{align}
We will consider the transonic shock solutions of problem {\bf {$\llbracket \textit{FBP}\rrbracket $}} with conditions \eqref{UenE}-\eqref{penreq} and \eqref{Uenwq}-\eqref{assumepex}.

\subsubsection{Existence of the quasi-supersonic solution.}
In dealing with the free boundary problem, the first step is to establish the unique existence of the quasi-supersonic solution ahead of the shock front.
\begin{thm}\label{mainthmU-}
Assume that $(w_{en}, q_{en})\in C^7([0,r_0])^2$, \eqref{assumew0r0=1}-\eqref{assumewenqen} hold, and the equations \eqref{CE1}-\eqref{CE5} and boundary conditions \eqref{U-E0r}-\eqref{vwaxis} satisfy the compatibility conditions up to the seventh order, then there exists a sufficiently small positive constant $\sigma_L$ depending on $\overline{U}_-$ and $L$, such that for any $0<\sigma< \sigma_L$, there exists a unique solution $U_-\in C^3(\overline{\Omega})$ to the equations \eqref{CE1}-\eqref{CE5} with the initial-boundary value conditions \eqref{U-E0r}-\eqref{vwaxis}. Moreover, the following estimate holds
 \begin{align}\label{eq837}
\|U_- -\overline{U}_- \|_{C^3(\overline{\Omega})} \leq  C_- \sigma\Big(\|w_{en}\|_{H^7(\Gamma_{en})}   + \|q_{en}\|_{H^7(\Gamma_{en})} \Big),
\end{align}
where the positive constant $C_-$ only depends on $\overline{U}_-$, $L$, $r_0$ and $\gamma$.
\end{thm}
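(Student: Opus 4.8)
The plan is to recast the axisymmetric system \eqref{CE1}--\eqref{CE5} for the quasi-supersonic flow as a first-order quasilinear hyperbolic system with $z$ playing the role of the time-like variable, and then to solve the resulting initial--boundary value problem on $\Omega$ by an energy method combined with a fixed-point iteration, treating $\sigma$ as a smallness parameter. Since $\overline{M}_-(0)>1$ and $\overline{U}_-$ is quasi-supersonic on $\overline\Omega$, for $\sigma$ small the perturbed flow stays quasi-supersonic, so $\lambda_1,\lambda_5$ remain real and $z=0$ is non-characteristic; after multiplication by a suitable symmetrizer the system takes the form $\partial_z U_- = \mathcal{A}(U_-;r)\,\partial_r U_- + \mathcal{B}(U_-;r)$, where the zeroth-order term $\mathcal{B}$ carries the geometric singularity through factors of the type $v/r$, $w/r$, $(v^2-w^2)/r$, $vw/r$ coming from \eqref{CE2}--\eqref{CE4}. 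Writing $V \defs U_- - \overline{U}_-$, the Remark after Theorem \ref{BGthm} shows that $\overline{U}_-$ itself solves \eqref{CE1}--\eqref{CE5}, so $V$ satisfies a quasilinear system with \emph{no} interior forcing, with entrance data $V(0,r)=\sigma\,U_{en}(r)$, and with the homogeneous boundary conditions $\theta_-=0$ on $\Gamma_w$ and $\theta_-=w_-=0$ on $\Gamma_a$; the whole problem is thus driven by boundary data of size $\mathcal{O}(\sigma)$.

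The first step I would carry out is to remove the singularity along $\Gamma_a$. The components $v,w$ (hence the flow-angle variable $\theta_-$) are odd in $r$ while $p,q,s$ are even, so the nominally singular terms in $\mathcal{B}$ are in fact regular once the solution has the correct parity at $r=0$. To exploit this I would embed the $(z,r)$-problem into a cylinder $(0,L)\times B^m$, with $B^m\subset\mathbb{R}^m$ the ball of radius $r_0$ and $m$ a suitable (small) integer so that $H^7 \hookrightarrow C^3$ on the cylinder, identifying $r=|\mathbf y|$: an axisymmetric solution on the cylinder restricts to a solution of the original problem, while the radial reduction of the higher-dimensional derivatives reproduces exactly the $1/r$ coefficients, now with no boundary singularity. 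The hypotheses \eqref{assumption23=0}--\eqref{assumewenqen} are precisely the parity and compatibility conditions guaranteeing that the lifted entrance data lies in $H^7(B^m)$ and is compatible, up to order $7$, with the lifted system and with the slip condition on $\partial B^m$; the $C^7$-regularity of $\overline{U}_-$ furnished by Theorem \ref{BGthm} plays the same role for the coefficients.

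I would then solve the lifted quasilinear IBVP by the standard scheme: linearize, establish $H^7$ energy estimates uniform along the iteration, and pass to the limit. The new point to verify is that the slip condition $\theta_-=0$ on the lateral boundary is maximally dissipative for the symmetrized quasi-supersonic system --- equivalently, that it satisfies the uniform Lopatinskii condition --- so that the boundary contributions in the energy identity are controlled; the characteristic behaviour at the axis is then handled automatically by the embedding. Smallness of $\sigma$ upgrades the local-in-$z$ existence to existence on all of $(0,L)$ together with $\|V\|_{H^7}\le C\sigma(\|w_{en}\|_{H^7}+\|q_{en}\|_{H^7})$; Sobolev embedding yields $V\in C^3$ and the bound \eqref{eq837}, and uniqueness follows from the same energy estimate applied to the difference of two solutions. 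The main obstacle, in my view, is not the iteration but the careful treatment of the axis: turning the formal cancellation of the $1/r$ terms into quantitative $H^7$ estimates on the full cylinder with constants independent of the distance to $\Gamma_a$, which is exactly where the parity/compatibility hypotheses and the dimensional lifting must be orchestrated precisely. A secondary technical point is checking that the boundary condition on $\Gamma_w$ meets the algebraic well-posedness requirement for the steady Euler system in the supersonic regime.
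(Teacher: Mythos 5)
Your plan shares the paper's central device---lifting the axisymmetric problem to a higher-dimensional cylinder so that the $1/r$ coefficients become the interior of a smooth domain, then running $H^7$ energy estimates and using Sobolev embedding to land in $C^3$---and it correctly identifies the role of the parity/compatibility hypotheses \eqref{assumption23=0}--\eqref{assumewenqen}. But the route is genuinely different, and the difference matters. The paper does \emph{not} work with the first-order quasilinear system. Instead it exploits the structure of steady Euler: the continuity equation yields a stream function $\psi$; the three transport equations \eqref{Supeq3}--\eqref{Supeq5} say $rw$, $B$, $s$ are constant on stream lines, so they become explicit functions of $\psi$; Bernoulli's law then gives $\rho$ as a function of $(B,s,W/r,|\nabla\psi|^2/r^2)$ by the inverse function theorem; and the one remaining momentum equation collapses, after substituting $\psi=r^2\varphi$, into a \emph{single} second-order quasilinear wave equation \eqref{result4} for $\varphi$. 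This scalar reduction is what makes the rest routine: the slip condition on $\Gamma_w$ and the entrance data become plain Dirichlet/Cauchy data for a wave equation, so there is no Lopatinskii or maximal-dissipativity condition to verify at all, and the lifting dimension is forced by the structure---the operator $\partial_r^2 + \tfrac{3}{r}\partial_r$ is the radial Laplacian in $\mathbb{R}^4$, so one lifts $\varphi$ to $\phi(z,\mathbf{r})=\varphi(z,|\mathbf{r}|)$ with $\mathbf{r}\in\mathbb{R}^4$ and then applies the standard local existence theory for the 5D nonlinear wave equation (Picard iteration, energy estimates). Your version, by contrast, must symmetrize a $5\times 5$ first-order system and verify well-posedness of the slip boundary condition at $\Gamma_w$; note that $\Gamma_w$ is a \emph{characteristic} boundary for the symmetrized system (the multiple eigenvalue $\lambda_{2,3,4}=v/u$ vanishes there since $v=0$ on the wall), so this is not the ``secondary technical point'' you suggest---it is precisely the hard analysis that the stream-function reduction is designed to bypass. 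Moreover ``a suitable small integer $m$'' cannot be chosen freely: the lifting only removes the axis singularity if the $1/r$ coefficients actually assemble into radial Laplacians in dimension $m$, and for the momentum equation in stream-function form that forces $m=4$. Your proposal would likely work with enough effort, but at the cost of considerably more hyperbolic boundary-value machinery than the paper invokes; the paper's scalar reduction is the key step that your write-up is missing.
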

\begin{rem}
Due to the boundary conditions on the shock for the quasi-subsonic solution behind the shock and to determine the shock location for approximate solutions later, we need at least $C^{3}$-regularity of solution $U_-$ in the quasi-supersonic domain. We study the hyperbolic system by energy estimates in dimension five to remove the symmetry axis singularity. So $H^7$-regularity of $(w_{en}, q_{en})$ is required. See section 4 for details of the proof.
\end{rem}

\subsubsection{Approximate shock position, and the quasi-subsonic solution behind the free shock front.}
One of the main difficulties is to determine the position of the shock front, while the shock position for the special shock solution is arbitrary.
In order to overcome it, 
we will introduce a specific free boundary problem by linearizing the Euler system and boundary conditions around the special solutions and ignoring the higher order error terms in section 6.
Most importantly, the shock location can be uniquely determined in the specific free boundary problem via the solvability condition, under certain sufficient conditions.
This solution and its shock front, which we call the approximate shock solution and the approximate shock front, will be served as the first order approximation of the solutions and the shock front of the nonlinear problem for the perturbed data.
Then in section 6, a nonlinear iteration scheme is designed around this approximate shock solution with a higher order error of the perturbation, which leads us to the existence of the perturbed shock solution with the shock front being close to the approximate shock front.
Actually, to determine the approximate shock front position $\dot{z}_*$, 
in section 6.1, we introduce
\begin{align*}
 {I}_1({z}) \defs&  \int_0^{r_0}  r\bar{p}_+^{\frac{1}{\gamma}}(r) ( 1- \kappa_1(r)) \frac{\overline{M}_-^2(r)-1}{\bar{\rho}_-(r)\bar{q}_-^2(r)}\dot{p}_- ({z}, r)\dif r 
 +  \int_0^{r_0} \int_0^{{z}} r\bar{p}_+^{\frac{1}{\gamma}}(r)\kappa_3(r) \dot{\theta}_-(\tau,r)\dif \tau \dif r,\\
I_2\defs&  \int_0^{r_0} \frac{\overline{M}_+^2(r)-1}{\bar{\rho}_+(r)\bar{q}_+^2(r)}  r\bar{p}_+^{\frac{1}{\gamma}}(r) p_{ex}(r)\dif r- \int_0^{r_0}  r\bar{p}_+^{\frac{1}{\gamma}}(r)\kappa_2(r) q_{en}(r)\dif r.
\end{align*}
Here $\dot{p}_-$ and $\dot{\theta}_-$ are the solution of the linearized Euler system for quasi-supersonic flow (see \eqref{CDEI-}-\eqref{CDEV-}), and $\kappa_i(r)$ for $i=1,2,3$ are the quantities only depending on the background solutions $\overline{U}_\pm(r)$ (see \eqref{kappa1r}-\eqref{kappa3r}). Then as discussed in section 6.1, the solvability condition \eqref{solvability0} is equivalent to
\begin{equation}\label{eq0661}
I_1(\dot{z}_*) = \sigma I_2.
\end{equation}
Actually, $\dot{z}_* \in (0,L)$ is determined by the following lemma.
\begin{lem}\label{lemma:approxz*}
Assume that $(w_{en}, q_{en})\in C^7([0,r_0])^2$, $ p_{ex}(r)\in C^{2,\alpha}([0,r_0])$, \eqref{assumew0r0=1}-\eqref{assumepex} hold, the equations \eqref{CE1}-\eqref{CE5} and boundary conditions \eqref{U-E0r}-\eqref{vwaxis} satisfy the compatibility conditions up to the seventh order, and
\begin{align}
I_3\defs &\int_0^{r_0}\frac{i_1(r)}{\bar{q}_-^2(r)}\frac{\bar{w}_-(r)}{r} w_{en}(r)\dif r >0,\label{LetW0E>0-1}
\end{align}
where
\begin{align*}
i_1(r) \defs & \bar{p}_+^{\frac{1}{\gamma}}(r) \Big( (1- \kappa_1(r))\big( \frac{ \bar{w}_+^2(r)}{\bar{c}_+^2(r)}- \frac{\bar{w}_-^2 (r)}{\bar{c}_-^2(r)} \big) + r\kappa_3(r) - r\partial_r \kappa_1(r) \Big).
\end{align*}
Then there exist a positive constant $C_{(\overline{U}_-)}$ depending on $\overline{U}_-$, and a constant $L_*$ satisfying
\begin{align}\label{letinterzzr-1}
0 < L_*< \frac{4I_3}{C_{(\overline{U}_-)}( \|w_{en}\|_{H^7(\Gamma_{en})} + \|q_{en}\|_{H^7(\Gamma_{en})} )\int_{0}^{r_0} |i_1(r)|\dif r},
\end{align}
such that if
\begin{align}\label{I2interval}
0< I_2 \leq L_*^2 \Big( I_3 - \frac{ C_{(\overline{U}_-)}( \|w_{en}\|_{H^7(\Gamma_{en})} + \|q_{en}\|_{H^7(\Gamma_{en})} )}{6} L_*\int_{0}^{r_0} |i_1(r)| \dif r \Big),
\end{align}
then there exists a unique $\dot{z}_* \in (0, L_*)$ such that \eqref{eq0661} holds.
\end{lem}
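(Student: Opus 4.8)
The plan is to regard $I_1$ as a real-valued function of the single variable $z\in[0,L_*]$, to show that it vanishes at $z=0$, is strictly increasing on $[0,L_*]$, and satisfies $I_1(L_*)>\sigma I_2$, and then to conclude by the intermediate value theorem; all the substance lies in locating the leading term of $I_1$ near $z=0$. Concretely, I would first solve the linearized quasi-supersonic system \eqref{CDEI-}--\eqref{CDEV-} with entrance data inherited from \eqref{Uenwq}; by the hyperbolic existence theory underlying Theorem \ref{mainthmU-} --- energy estimates performed in a higher-dimensional setting so as to remove the singularity on $\Gamma_a$, using the compatibility hypotheses up to order seven --- this yields in particular $(\dot p_-,\dot\theta_-)\in C^3(\overline{\Omega})$ with $\|(\dot p_-,\dot\theta_-)\|_{C^3(\overline{\Omega})}\le C_{(\overline{U}_-)}(\|w_{en}\|_{H^7(\Gamma_{en})}+\|q_{en}\|_{H^7(\Gamma_{en})})$. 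Since the pressure component of the perturbation \eqref{Uenwq} is zero, $\dot p_-(0,\cdot)\equiv\dot\theta_-(0,\cdot)\equiv0$ on $\Gamma_{en}$, so both integrals defining $I_1$ vanish at $z=0$; thus $I_1(0)=0$, and differentiation under the integral sign (with Leibniz' rule for the second term) gives $I_1\in C^2([0,L_*])$.

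The core step is to expand $I_1$ in Taylor series at $z=0$. Evaluating \eqref{CDEI-}--\eqref{CDEV-} on $\Gamma_{en}$, where $\dot p_-=\dot\theta_-=\dot s_-=0$ while $\dot w_-=w_{en}$ and $\dot q_-=q_{en}$, one solves for $\partial_z\dot p_-(0,r)$ and $\partial_z\dot\theta_-(0,r)$ in terms of $\partial_r w_{en}$, $\partial_r q_{en}$ and the zeroth-order terms $\frac{1}{r}\bar{w}_-(r)w_{en}(r)$ (and their companions), the coefficients being built from $\overline{U}_{\pm}(r)$. Substituting into the first two Taylor coefficients of $I_1$ at $z=0$ and integrating by parts in $r$ --- the endpoint contributions at $r=0$ and $r=r_0$ being killed by the factor $r$ in the weights together with \eqref{assumew0r0=0}--\eqref{assumewenqen} --- one should find that the coefficient of $z$ vanishes (this is where the specific choice of the weights $\kappa_1,\kappa_2,\kappa_3$ of \eqref{kappa1r}--\eqref{kappa3r} enters), that all $q_{en}$-contributions cancel owing to the conservative structure of \eqref{CE1}--\eqref{CE5}, and that what survives is precisely $I_3z^2$, the weight $i_1$ of \eqref{LetW0E>0-1} being produced by the combination of $\partial_r\kappa_1$, $\kappa_3$ and $\bar{w}_{\pm}^2/\bar{c}_{\pm}^2$ that appears along the way. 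The outcome should be, for $z\in[0,L_*]$, an expansion $I_1(z)=I_3z^2+\mathrm{R}(z)$ with $\mathrm{R}(0)=\mathrm{R}'(0)=0$ and
\begin{equation*}
\big|\mathrm{R}'(z)\big|\ \le\ \tfrac12\,C_{(\overline{U}_-)}\big(\|w_{en}\|_{H^7(\Gamma_{en})}+\|q_{en}\|_{H^7(\Gamma_{en})}\big)\Big(\int_0^{r_0}|i_1(r)|\,\dif r\Big)\,z^2
\end{equation*}
(and hence $|\mathrm{R}(z)|\le\tfrac16 C_{(\overline{U}_-)}(\|w_{en}\|_{H^7(\Gamma_{en})}+\|q_{en}\|_{H^7(\Gamma_{en})})(\int_0^{r_0}|i_1(r)|\,\dif r)z^3$ by integration), after possibly enlarging $C_{(\overline{U}_-)}$; the remainder is controlled uniformly on $[0,L_*]$ from the $C^3$-bound above, and carrying this control all the way down to the singular symmetry axis $r=0$ is the delicate point.

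Finally, $I_1'(z)=2I_3z+\mathrm{R}'(z)$ with $I_3>0$ (by \eqref{LetW0E>0-1}), so the bound on $\mathrm{R}'$ together with \eqref{letinterzzr-1} --- that is, $C_{(\overline{U}_-)}(\|w_{en}\|_{H^7(\Gamma_{en})}+\|q_{en}\|_{H^7(\Gamma_{en})})(\int_0^{r_0}|i_1|\,\dif r)L_*<4I_3$ --- forces $I_1'(z)>0$ for every $z\in(0,L_*]$; hence $I_1$ is continuous and strictly increasing on $[0,L_*]$ with $I_1(0)=0$, which already gives uniqueness of any root of \eqref{eq0661}. Moreover, using $I_1(L_*)=I_3L_*^2+\mathrm{R}(L_*)$ and the bound on $\mathrm{R}(L_*)$, one checks that $I_1(L_*)$ is at least the right-hand side of \eqref{I2interval}; so hypothesis \eqref{I2interval}, with $I_2>0$ and $\sigma<1$, gives $0<\sigma I_2<I_1(L_*)$. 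The intermediate value theorem then produces a unique $\dot{z}_*\in(0,L_*)$ with $I_1(\dot{z}_*)=\sigma I_2$, i.e. \eqref{eq0661} holds. The main obstacle is the second paragraph: obtaining the exact expansion $I_1(z)=I_3z^2+\mathrm{R}(z)$ --- in particular that the linear-in-$z$ term vanishes and that the quadratic coefficient is $I_3$ with weight $i_1$ --- which requires solving the linearized system on $\Gamma_{en}$, several careful integrations by parts in $r$ exploiting both \eqref{assumew0r0=0}--\eqref{assumewenqen} and the special structure of the axisymmetric Euler system to annihilate the linear-order and $q_{en}$-dependent contributions, and uniform control of the remainder up to the symmetry axis.
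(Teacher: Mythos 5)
Your plan follows the paper's argument closely: the paper computes $I_1'(z)=\int_0^{r_0}i_1(r)\,\dot\theta_-(z,r)\,\dif r$ by replacing $\partial_z\dot{p}_-$ via \eqref{CDEI-} and integrating by parts in $r$ (this is precisely where the weight $i_1$ is produced), then differentiates once more via \eqref{CDEII-}, evaluates at $z=0$ where $\dot p_-(0,\cdot)=\dot s_-(0,\cdot)=\dot\theta_-(0,\cdot)=0$ to conclude $I_1(0)=I_1'(0)=0$ and $I_1''(0)=2\sigma I_3$, applies Taylor with Lagrange remainder to $I_1'$, bounds the remainder by the $C^3$-estimate of Lemma \ref{U-esti}, and finishes by monotonicity plus the intermediate value theorem --- exactly your outline.

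Two slips. First, the linearization $\dot U_-$ carries the perturbation data $\dot w_-(0,\cdot)=\sigma w_{en}$, $\dot q_-(0,\cdot)=\sigma q_{en}$ (see \eqref{U-0}), so $I_1''(0)=2\sigma I_3$ rather than $2I_3$, and by \eqref{dotU-estimate} the remainder of the Taylor expansion also carries an explicit factor $\sigma$. These $\sigma$'s cancel cleanly --- you get $I_1'(z)\geq 2\sigma z\bigl(I_3-\tfrac{C_{(\overline U_-)}(\ldots)}{4}z\int|i_1|\bigr)>0$ on $(0,L_*)$ and $I_1(L_*)\geq\sigma L_*^2\bigl(I_3-\tfrac{C_{(\overline U_-)}(\ldots)}{6}L_*\int|i_1|\bigr)\geq\sigma I_2$ directly --- so the conditions \eqref{letinterzzr-1}, \eqref{I2interval} come out $\sigma$-free as stated and your detour through ``$\sigma<1$'' is unnecessary and in fact would not close on its own (your claimed $I_1(L_*)\geq I_2$ is off by a factor $\sigma$). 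Second, the vanishing of $I_1'(0)$ is not a cancellation engineered by the choice of $\kappa_1,\kappa_3$: once \eqref{CDEI-} rewrites the integrand as a first-order operator in $\dot\theta_-$ alone, $\dot\theta_-(0,\cdot)\equiv 0$ kills it for any weights; the $\kappa$'s govern the explicit formula for $i_1$, not the vanishing of the linear coefficient.
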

\begin{rem}
It is easy to see that condition \eqref{LetW0E>0-1} fails if $\bar{w}_-(r)=0$, that is the case when small swirl solution is a small perturbation of constant background solutions. Moreover, conditions \eqref{letinterzzr-1} and \eqref{I2interval} will be easier to hold if the non-zero swirl $\bar{w}_-$ of the special shock solution is larger. Therefore, non-zero swirl velocity plays a fundamental role in determining the shock location.
\end{rem}

Based on the obtained approximate shock front position $\dot{z}_*$, we introduce the following coordinates transformation to fix the free boundary $\Gamma_s$
  $$\pounds_{\dot{z}_*}^{-1} : (z,\,r) = \big(L + \frac{L - \varphi(\tilde{r})}{L - \dot{z}_*}(\tilde{z} - L),\,
\tilde{r}\big).$$
In the new coordinates $(\tilde{z},\tilde{r})$, shock front $\Gamma_s$ becomes  $\dot{\Gamma}_s=\{(\dot{z}_*,\tilde{r});\,\tilde{r}\in[0,r_0]\}$, which is fixed.
We define the norms of the function $u$ as below:
\begin{align*}
\|u \|_{C^{2,\alpha}(\Gamma_s)}\defs & \| u \circ \pounds_{\dot{z}_*}^{-1}\|_{C^{2,\alpha}(\dot{\Gamma}_s)},\\
\|u \|_{C^{k,\alpha}({\Omega}_+)}\defs & \| u \circ \pounds_{\dot{z}_*}^{-1}\|_{C^{k,\alpha}(\dot{\Omega}_+)},\, k=1,2.
\end{align*}
Then we will show the main theorem of this paper.

\begin{thm}\label{mainthmU}
Under the assumptions given in Lemma \ref{lemma:approxz*}, there exist a positive constant $r_*$ and a sufficiently small constant $\sigma_0 > 0$, only depending on $\overline{U}_-(r)$ and $\gamma$, such that for $0<r_0 \leq r_*$ and $ 0<\sigma\leq\sigma_0 $, 
there exists a shock solution $(U_-, U_+;\ \varphi(r)) $ to the free boundary value problem {\bf {$\llbracket \textit{FBP}\rrbracket $}}. Moreover,
 \begin{align}
 &\|U_- -\overline{U}_- \|_{C^3(\overline{\Omega}_-)} \leq  C_- \sigma\Big(\|w_{en}\|_{H^7(\Gamma_{en})}   + \|q_{en}\|_{H^7(\Gamma_{en})} \Big),
\end{align}
\begin{align}
 &\|p_+ -\bar{p}_+\|_{C^{2,\alpha}({\Omega}_+)} +\|\theta_+ \|_{C^{2,\alpha}({\Omega}_+)} +\|w_+ - \bar{w}_+ \|_{C^{1,\alpha}({\Omega}_+)}+\|q_+ -\bar{q}_+\|_{C^{2,\alpha}({\Omega}_+)} \notag\\
 &+\|s_+ -\bar{s}_+\|_{C^{2,\alpha}({\Omega}_+)} +\|rw_+ -r\bar{w}_+\|_{C^{2,\alpha}({\Omega}_+)} +\| \varphi'\|_{C^{2,\alpha}(\Gamma_s)}\notag\\
   \leq&  C_+ \sigma \Big( \|w_{en}\|_{H^7(\Gamma_{en})}   + \|q_{en}\|_{H^7(\Gamma_{en})}  + \|p_{ex}\|_{C^{2,\alpha}(\Gamma_{ex})}  \Big),
\end{align}
where the positive constant $C_-$ depends on $\overline{U}_-$, $L$, $r_0$ and $\gamma$, and the positive constant $C_+$ depends on $\overline{U}_{\pm}$, $L$, $r_0$, $\gamma$ and $\alpha$. In addition, the shock location satisfies
\begin{align}
  |\varphi(r_0) -\dot{z}_*| \leq C \sigma,
\end{align}
where the positive constant $C$ depends on $\overline{U}_{\pm}$, $L$, $r_0$, $\gamma$ and $\alpha$.

\end{thm}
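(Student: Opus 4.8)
The plan is to reformulate {\bf {$\llbracket \textit{FBP}\rrbracket $}} as a fixed-point problem in a neighborhood of the approximate shock solution constructed around $\dot z_*$, and then apply a contraction/Schauder argument. First, by Theorem \ref{mainthmU-}, the quasi-supersonic state $U_-$ is uniquely solved in all of $\Omega$ with the stated $C^3$ bound, so the only unknowns are the shock front $\varphi$ and the quasi-subsonic state $U_+$ in $\Omega_+$. I would fix the free boundary via the map $\pounds_{\dot z_*}^{-1}$ introduced above, turning $\Omega_+$ into the cylinder $\dot\Omega_+$ with a fixed artificial shock $\dot\Gamma_s=\{\tilde z=\dot z_*\}$; the price is that the Euler operator \eqref{CE1}-\eqref{CE5} acquires $\varphi$-dependent coefficients, which for $\varphi$ near the constant $\dot z_*$ are small perturbations of the straight-shock operator. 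Next, I would linearize the system and the R.-H. conditions \eqref{CRH1}-\eqref{CRH5} around the special solution $(\overline U_-;\overline U_+;\dot z_*)$, writing the nonlinear problem as "linearized problem + higher-order error", where the error terms are quadratic in the perturbation and in $\sigma$.

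The heart of the argument is the linearized free boundary problem, which is of elliptic–hyperbolic composite type with the $0$-th order coupling terms carrying the large swirl. Here I would invoke the preliminary theory from Section 5: the \emph{partly-decoupled} scheme, where one auxiliary problem is a first-order elliptic system (with a solvability condition that pins down the shock shift) and the other absorbs the strongly coupled non-local terms. Concretely: given a candidate shock position $\dot z$ and candidate perturbations $(\dot p_+,\dot\theta_+,\dot w_+,\dot q_+,\dot s_+)$, one solves the transport equations along $\lambda_{2,3,4}=v/u$ for the entropy, swirl and Bernoulli-type quantities (using the $C^3$ data of $U_-$ on $\dot\Gamma_s$ and the exit data for $p_+$), then solves the first-order elliptic system for $(\dot p_+,\dot\theta_+)$ with boundary data assembled from \eqref{p+r}, the wall and axis conditions, and the R.-H. relations on $\dot\Gamma_s$; the solvability condition of that elliptic system is precisely \eqref{eq0661}, so Lemma \ref{lemma:approxz*} produces the unique approximate shift $\dot z_*$. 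To handle the $r=0$ singularity in the $0$-th order terms I would follow the dimension-lifting device: regard axisymmetric functions as functions on $\mathbb R^5$ (so that the weight $1/r$ becomes a genuine Laplacian singularity that can be absorbed), obtain the $C^{2,\alpha}$ estimates there, and transfer back — this forces the $H^7$ regularity of $(w_{en},q_{en})$ and the many vanishing-derivative compatibility conditions \eqref{assumew0r0=0}-\eqref{assumepex}. Careful weighted estimates near $\Gamma_a$ give the bound on $rw_+$ in $C^{2,\alpha}$ separately from $w_+$ in $C^{1,\alpha}$.

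With the linearized solution operator in hand and its a priori estimate proportional to $\sigma(\|w_{en}\|_{H^7}+\|q_{en}\|_{H^7}+\|p_{ex}\|_{C^{2,\alpha}})$, I would set up the iteration: given the $n$-th iterate $(\varphi^{(n)},U_+^{(n)})$ in a ball of radius $K\sigma$ in the product space $C^{2,\alpha}(\Gamma_s)\times\big(C^{2,\alpha}\times C^{2,\alpha}\times C^{1,\alpha}\times C^{2,\alpha}\times C^{2,\alpha}\big)(\Omega_+)$, compute the coefficient perturbations and higher-order error terms (all bounded by $C(K\sigma)^2$), feed them into the linearized solver to get $(\varphi^{(n+1)},U_+^{(n+1)})$, and check that for $\sigma\le\sigma_0$ and $r_0\le r_*$ small the ball is preserved and the map is a contraction in a weaker norm (say drop $\alpha$ to $\alpha/2$). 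The smallness of $r_0$ is used to control the singular $1/r$ terms and to make the composite system's off-diagonal $0$-th order coupling a genuine perturbation; the smallness of $\sigma$ controls the nonlinearity and keeps $\varphi$ within the regime where $\pounds_{\dot z_*}^{-1}$ is a diffeomorphism and the flow stays quasi-subsonic behind the shock. Passing to the limit gives a fixed point, i.e. a genuine shock solution, and unwinding the estimates yields the three displayed bounds, with $|\varphi(r_0)-\dot z_*|\le C\sigma$ coming from the difference between the nonlinear shock position and the approximate one being a higher-order quantity. The main obstacle, as the authors emphasize, is Step involving the strongly coupled elliptic–hyperbolic system with singular $0$-th order terms: decoupling it cleanly enough that the solvability condition \eqref{eq0661} isolates $\dot z_*$, while still closing the estimates for the non-local remainder, is where essentially all the work lies — the contraction at the end is then comparatively routine.
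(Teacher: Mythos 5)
Your outline follows essentially the same route as the paper: fix the free boundary via $\pounds_{\dot z_*}^{-1}$, center the scheme on the approximate shock solution $(\dot U_+,\dot\varphi',\dot z_*)$ whose position is pinned by the solvability condition \eqref{eq0661}, lift to higher dimensions to tame the $r=0$ singularity, and close by a Banach fixed point. Two points where your description drifts from what the paper actually needs deserve comment. First, you propose to solve the transport equations for $(w,B,s)$ first and then feed their values into a first-order elliptic system for $(\delta p,\delta\theta)$; but the transport equations themselves are driven by $\delta\theta$, so this ordering does not close directly. The paper instead substitutes the characteristic representations \eqref{eqdeltaw}-\eqref{eqdeltas} of $\delta w$ and $\delta s$ back into the second momentum equation, producing the non-local term $\int_K^z\delta\theta(\tau,r)\,d\tau$ in \eqref{eqf22}; Theorem~\ref{BigThm} then establishes solvability of the resulting coupled elliptic system \emph{with that non-local term included}, splitting it into {\bf {$\llbracket \textit{Problem I}\rrbracket $}} (which carries the divergence structure and hence the solvability condition that fixes $z_*$) and {\bf {$\llbracket \textit{Problem II}\rrbracket $}} (which absorbs the non-local coupling via the $\psi$-potential and a smallness requirement on $r_0$). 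This is the ``partly-decouple'' device, and it is the technical core; your scheme would need an extra inner iteration to make the transport-then-elliptic ordering converge. Second, your iteration ball ``of radius $K\sigma$'' with a Hölder-index drop $\alpha\to\alpha/2$ for contraction does not reflect the paper's bookkeeping: the space $\mathscr N$ imposes a two-level bound, $O(\sigma)$ for $\|(r\delta w,\delta q,\delta s)\|_{C^{2,\alpha}}+\|\delta w\|_{C^{1,\alpha}}+\|\delta\varphi'\|_{C^{2,\alpha}}$ measured from $\overline U_+$ together with the crucial higher-order bound $\tfrac12\sigma^{3/2}$ for the distance to $\dot U_+$ (in one fewer derivative), and the contraction of Lemma~\ref{lemcontrac} is obtained by dropping a derivative on the transport components rather than shrinking the Hölder index. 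Both of your variants could probably be made to work, but they would require additional argument, and as written they gloss precisely the parts that the paper flags as the main difficulties.
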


\section{Special steady normal shock solutions with non-zero swirl}

In this section, we first prove Lemma \ref{lem:1} - Lemma \ref{BGlemma4}. Then we will state properties of such special shock solutions.

\subsection{Existence of steady special shock solutions}
In this subsection, we will prove Lemma \ref{lem:1} - Lemma \ref{BGlemma4} one by one, and then Theorem \ref{BGthm} follows.

\begin{proof}[Proof of Lemma \ref{lem:1}]
By \eqref{BCRH1}-\eqref{BCRH3}, we have
  \begin{align}
\bar{p}_+=& \Big(\frac{2\gamma}{\gamma+1}\frac{\bar{\rho}_-\bar{q}_-^2}{\gamma \bar{p}_-}-\frac{\gamma-1}{\gamma+1}\Big)\bar{p}_- = \frac{2}{\gamma+1}\bar{\rho}_-\bar{q}_-^2 -\frac{\gamma-1}{\gamma+1}\bar{p}_-,\label{furp+}\\
\bar{\rho}_+ =& \frac{\bar{\rho}_- \bar{q}_-}{{\mu}^2\Big(\bar{q}_- + \frac{2}{\gamma-1}\frac{\bar{c}_{-}^2}{\bar{q}_-}\Big)}  = \frac{(\gamma+1)\bar{\rho}_- }{(\gamma-1)\Big(1 + \frac{2}{\gamma-1}\frac{1}{\overline{M}_-^2}\Big)}.\label{furrho+}
  \end{align}
By applying \eqref{furp+}, \eqref{furrho+} and \eqref{BCRH4}, it follows from \eqref{RHS+} that
\begin{align}\label{RHS+become}
  \frac{2}{\gamma+1} \partial_r (\bar{\rho}_-\bar{q}_-^2) -\frac{\gamma-1}{\gamma+1} \partial_r \bar{p}_-= \frac{\bar{w}_-^2}{r}  \frac{(\gamma+1)\bar{\rho}_- }{(\gamma-1)\Big(1 + \frac{2}{\gamma-1}\frac{1}{\overline{M}_-^2}\Big)}.
\end{align}
Substituting \eqref{RHS-} into \eqref{RHS+become}, one has
\begin{align}\label{rhoqeq}
  \partial_r (\bar{\rho}_-\bar{q}_-^2) =& \frac{\gamma+1} {2}\Big( \frac{\bar{w}_-^2}{r}  \frac{(\gamma+1)\bar{\rho}_- }{(\gamma-1)\big(1 + \frac{2}{\gamma-1}\frac{1}{\overline{M}_-^2}\big)} + \frac{\gamma-1}{\gamma+1} \frac{1}{r} \bar{\rho}_- \bar{w}_-^2 \Big)\notag\\
  =&\frac{\bar{w}_-^2}{2r} \frac{\bar{\rho}_-^2\bar{q}_-^2}{\gamma \bar{p}_-} \Big( \frac{(\gamma+1)^2}{2+(\gamma-1)\overline{M}_-^2} +\frac{\gamma-1}{\overline{M}_-^2} \Big).
\end{align}
Applying \eqref{rhoqeq} and \eqref{RHS-}, one has
\begin{align}\label{BSM-1}
   \partial_r \Big(\frac{1}{\overline{M}_-^2}\Big) =& \partial_r \Big( \frac{\gamma \bar{p}_-}{\bar{\rho}_- \bar{q}_-^2} \Big) =  \frac{\gamma}{\bar{\rho}_-\bar{q}_-^2}\partial_r \bar{p}_- - \frac{\gamma \bar{p}_-}{\bar{\rho}_-^2 \bar{q}_-^4}\partial_r (\bar{\rho}_-\bar{q}_-^2)\notag\\
   =&  \frac{\gamma}{\bar{\rho}_-\bar{q}_-^2}\frac{1}{r} \bar{\rho}_- \bar{w}_-^2 - \frac{\gamma \bar{p}_-}{\bar{\rho}_-^2 \bar{q}_-^4}\frac{1}{2r} \frac{\bar{\rho}_-^2\bar{q}_-^2}{\gamma \bar{p}_-}\bar{w}_-^2 \Big( \frac{(\gamma+1)^2}{2+(\gamma-1)\overline{M}_-^2} +\frac{\gamma-1}{\overline{M}_-^2} \Big)\notag\\
   =&\frac{1}{\bar{q}_-^2}\frac{ \bar{w}_-^2}{r} \Big(\gamma -  \frac{1}{2} \Big( \frac{(\gamma+1)^2}{2+(\gamma-1)\overline{M}_-^2} +\frac{\gamma-1}{\overline{M}_-^2} \Big)   \Big).
 \end{align}
 Further calculations yield \eqref{eq:m} easily.

On the other hand, if \eqref{eq:m} holds, by reversing the argument from \eqref{eq:m} to \eqref{BSM-1}, we know that \eqref{BSM-1} holds. 
Then applying \eqref{BCRH4}, \eqref{furp+} and \eqref{furrho+}, one has
\begin{align}
    \partial_r \bar{p}_+
    =&  \frac{2}{\gamma+1} \partial_r (\bar{\rho}_-\bar{q}_-^2) - \frac{\gamma-1}{\gamma+1}\partial_r \bar{p}_- ,\label{rleft1}\\
     \frac{1}{r} \bar{\rho}_+\bar{w}_+^2 =& \frac{ \bar{w}_-^2}{r} \frac{(\gamma+1)\bar{\rho}_- }{(\gamma-1)\Big(1 + \frac{2}{\gamma-1}\frac{1}{\overline{M}_-^2}\Big)}.\label{rright1}
 \end{align}
By \eqref{BSM-1}, one has
 \begin{align}
   \partial_r (\bar{\rho}_-\bar{q}_-^2) = \frac{\bar{\rho}_-\bar{q}_-^2}{\bar{p}_-} \partial_r \bar{p}_- - \frac{\bar{\rho}_-^2 \bar{q}_-^4}{\gamma \bar{p}_-} \partial_r \Big(\frac{1}{\overline{M}_-^2}\Big).
 \end{align}
Then applying \eqref{RHS-}, \eqref{eq:m}, \eqref{rleft1} and \eqref{rright1}, it is easy to check that \eqref{RHS+} holds.
\end{proof}

\begin{proof}[Proof of Lemma \ref{lem:2}]
Let $ t(r)\defs  \frac{1}{\overline{M}_-^2(r)}$, then \eqref{eq:m} becomes
    \begin{align}\label{rem1}
\partial_r\Big( \frac{t}{\gamma-1} \Big)
    =\frac{\gamma-1}{4\bar{q}_-^2}\frac{\bar{w}_-^2}{r}\Big(   -\big(1+\frac{2t}{\gamma-1} \big)+\frac{1}{\mu^4}\frac{1}{1+\frac{2}{\gamma-1}t} \Big).
\end{align}
Let
  \begin{align}\label{reT}
    T(r)\defs 1+\frac{2}{\gamma-1}t(r).
  \end{align}
  Then \eqref{rem1} yields that
  \begin{align}
    \partial_r T = (\gamma-1)\frac{1}{r} \frac{\bar{w}_-^2}{2\bar{q}_-^2}\Big(   -T+\frac{1}{\mu^4T} \Big) =(\gamma-1)\frac{1}{r} \frac{\bar{w}_-^2}{2\bar{q}_-^2} \frac{1 - \mu^4T^2}{\mu^4T},
  \end{align}
which implies that
  \begin{align}
    \partial_r \Big(\ln (1 - \mu^4T^2) \Big) = -(\gamma-1)\frac{1}{r}\frac{\bar{w}_-^2}{\bar{q}_-^2}.
  \end{align}
Therefore, there exist a unique $T(r)$ of the form:
  \begin{align}
    T(r) =\frac{1}{\mu^2}\sqrt{ 1 - \big( 1 - \mu^4T^2(0) \big)e^{ -(\gamma-1)\int_0^r\frac{1}{\tau}\frac{\bar{w}_-^2(\tau)}{\bar{q}_-^2(\tau)}\dif \tau}},
  \end{align}
where $T(0) = 1+\frac{2}{\gamma-1}\frac{1}{\overline{M}_-^2(0)}\in (1, \frac{\gamma+1}{\gamma-1})$. It follows from \eqref{reT} that
   \begin{align}\label{ret}
    t(r) = \frac{\gamma+1}{2} \Big(\sqrt{ 1 - \big( 1 - \mu^4T^2(0) \big)e^{ -(\gamma-1)\int_0^r\frac{1}{\tau}\frac{\bar{w}_-^2(\tau)}{\bar{q}_-^2(\tau)}
    \dif \tau}}-\mu^2\Big).
  \end{align}
Moreover, the existence is global for any $r>0$, because it follows from $t(0) = \frac{1}{\bar{M}_-^2 (0)} \in (0,1)$ that
  \begin{align}
    1 - \mu^4T^2(0) =&  1- \frac{(\gamma-1)^2}{(\gamma+1)^2}\Big( 1+\frac{2}{\gamma-1}t(0)  \Big)^2\notag\\
    =& 1- \frac{(\gamma-1+2t(0))^2}{(\gamma+1)^2}\in(0,1).
  \end{align}
In addition, \eqref{ret} implies that for any $r\in (0,+\infty)$, $t(r)$ is monotonely increasing with respect to $r$. Hence, we have for any $r\in (0,+\infty)$
 \begin{align}
0< t(0)\leq t(r) < \frac{\gamma+1}{2}(1 - \mu^2) =1.
 \end{align}
Once $t(r)$ is obtained, then employing \eqref{RHS-}, one has
  \begin{align}
    \partial_r \bar{p}_-(r) =  \frac{1}{r}\frac{\gamma \bar{p}_-(r)}{t(r) \bar{q}_-^2(r)} \bar{w}_-^2(r).
  \end{align}
Then 
\begin{align}\label{barp-}
\bar{p}_-(r) = \bar{p}_-(0) \exp\Big(\gamma\int_0^r  \frac{\bar{w}_-^2(\tau)}{\tau t(\tau)\bar{q}_-^2(\tau)} \dif \tau  \Big),
\end{align}
where we use the conditions that $\bar{w}_-(0)=0$ and $\bar{w}_-$ is smooth.
Then
\begin{align}\label{barrho--}
\bar{\rho}_-(r) = \frac{\gamma \bar{p}_-(r)}{t(r) \bar{q}_-^2(r)}=\frac{\gamma \bar{p}_-(0)}{t(r) \bar{q}_-^2(r)}\exp\Big(\gamma\int_0^r  \frac{\bar{w}_-^2(\tau)}{\tau t(\tau)\bar{q}_-^2(\tau)} \dif \tau  \Big).
 \end{align}
By $\bar{p}_- = A e^{\frac{\bar{s}_-}{c_v}}\bar{\rho}_-^{\gamma}$, we have $\bar{s}_- = {c_v}\ln\big(  \frac{\bar{p}_-}{A\bar{\rho}_-^{\gamma}}\big)$.

Finally, $C^7$-regularity of the solutions follows easily from the solution expressions.
\end{proof}

\begin{proof}[Proof of Lemma \ref{BGlemma3}] Based on Lemma \ref{lem:1}, it is sufficient to show that \eqref{eq:m} is equivalent to \eqref{barq-eqB}. By a similar argument as done in the proof of Lemma \ref{lem:1}, we can see that both \eqref{eq:m} and \eqref{barq-eqB} are equivalent to the following equation
  \begin{align}
   \partial_r \Big(\frac{\gamma \bar{p}_-}{\bar{\rho}_- }\frac{1}{ \bar{q}_-^2}\Big) =\frac{\gamma-1}{4}\frac{\bar{w}_-^2}{r}\Big( \frac{(\gamma +1)^2}{\gamma-1 + \frac{2\gamma \bar{p}_-}{\bar{\rho}_-\bar{q}_-^2}} - \big(\gamma-1 +\frac{2\gamma \bar{p}_-}{\bar{\rho}_- \bar{q}_-^2}\big)  \Big)\frac{1}{\bar{q}_-^2}.
  \end{align}
Since the computation is straightward and long, we omit it for the shortness.
\end{proof}

\begin{proof}[Proof of Lemma \ref{BGlemma4}]
Since quantities $\bar{p}_-(r)$ and $\bar{s}_-(r)$ are given, by the state equation $\bar{p}_-(r) = A e^{\frac{\bar{s}(r)}{c_v}}\bar{\rho}_-^{\gamma}$, one has
  \begin{align}\label{rhovalue}
    \bar{\rho}_-(r) = \Big( \frac{\bar{p}_-(r)}{A} e^{-\frac{\bar{s}_-(r)}{c_v}} \Big)^{\frac{1}{\gamma}}.
  \end{align}
Then it follows from \eqref{RHS-} that
  \begin{align}\label{wvalue}
    \bar{w}_-^2(r) =  \frac{r\partial_r \bar{p}_-(r)}{\bar{\rho}_-(r)} = \frac{  r \partial_r \bar{p}_-(r)}{\Big( \frac{\bar{p}_-(r)}{A} e^{-\frac{\bar{s}_-(r)}{c_v}} \Big)^{\frac{1}{\gamma}} }.
  \end{align}
Let $Q(r)\defs \frac{1}{ \bar{q}_-^2(r)}$ and $ G(r)\defs \frac{\bar{\rho}_-(r)}{\gamma \bar{p}_-(r)}$, then \eqref{barq-eqB} yields that
\begin{align}\label{QReq}
 \partial_r \left(\frac{2Q}{(\gamma-1)G}\right) = \frac{(\gamma-1)\bar{w}_-^2}{2r} \Big\{-\Big( 1+\frac{2Q }{(\gamma-1)G} \Big) + \frac{(\gamma+1)^2}{(\gamma-1)^2\big(1+\frac{2Q}{(\gamma-1)G}\big)}\Big\}.
\end{align}
Let
\[
Y(r) \defs 1+\frac{2Q(r)}{(\gamma-1)G(r)}.
\]
  Then 
  \begin{align}
 \partial_r Y
 =\frac{\gamma-1}{2}\frac{ \bar{w}_-^2}{r}\frac{1 -\mu^4 Y^2 }{\mu^4 Y}.
\end{align}
Direct calculations yield that
\begin{align}
    Y(r) =\frac{1}{\mu^2}\sqrt{ 1 - \big( 1 - \mu^4Y^2(0) \big)e^{  -(\gamma-1)\int_0^r\frac{\bar{w}_-^2(\tau)}{\tau}\dif \tau}},
  \end{align}
 where
 \begin{align}
   Y(0) =  1+\frac{2Q(0)}{(\gamma-1)G(0)} = 1+ \frac{2\gamma A^{\frac{1}{\gamma}}(\bar{p}_-(0))^{\frac{\gamma-1}{\gamma}} e^{\frac{\bar{s}_-(0)}{\gamma c_v}}}{(\gamma-1)\bar{q}_-^2(0)} \in \Big(1, \frac{\gamma+1}{\gamma-1}\Big).
 \end{align}
Hence $Y(r)$ exists uniquely and globally. In addition, $Y(r)$ is monotonically increasing with respect to $r$ such that
$$
1<Y(0)\leq Y(r)<\frac{1}{\mu^2}=\frac{\gamma+1}{\gamma-1}.
$$
  Thus,
 \begin{align}
   Q(r) =& \frac{\gamma-1}{2}(Y(r)-1)G(r)\notag\\
    =& \frac{\gamma+1}{2}\frac{\bar{\rho}_-(r)}{\gamma \bar{p}_-(r)} \Big( \sqrt{ 1 - \big( 1 - \mu^4Y^2(0) \big)e^{  -(\gamma-1)\int_0^r\frac{\bar{w}_-^2(\tau)}{\tau}\dif \tau}} - \mu^2 \Big).
 \end{align}
By \eqref{rhovalue} and \eqref{wvalue}, we can obtain $\bar{q}_-^2(r) = \frac{1}{Q(r)}$.

Finally, the $C^7$-regularity of solutions follows easily from the solution expressions.
\end{proof}

\begin{proof}[Proof of Theorem \ref{BGthm}]
The existence of the special shock solutions is obtained by Lemma \ref{lem:2} and Lemma \ref{BGlemma4}. Note that the special solutions do not depend on $z$, so the shock location $\bar{\varphi}=\bar{z}_s$ can be arbitrary.
\end{proof}

\subsection{Properties of the special shock solutions}
The special transonic shock solution $(\overline{U}_- (r), \overline{U}_+ (r))$ obtained in Theorem \ref{BGthm} actually satisfies the following properties.
\begin{lem}\label{barvalue}
Assume that \eqref{assumew0r0=0} and \eqref{assumebarq} hold, the special transonic shock solution $(\overline{U}_- (r), \overline{U}_+ (r))$ obtained in Theorem \ref{BGthm} satisfies
  \begin{align}
  &\partial_r (\bar{p}_\pm)(0)=\partial_r(\bar{s}_\pm)(0) =\partial_r (\overline{M}_\pm^2 )(0) = 0,\label{pro1}\\
&\partial_r (\bar{p}_\pm)(r_0)=\partial_r(\bar{s}_\pm)(r_0)= \partial_r (\overline{M}_\pm^2)(r_0)=0.\label{pro2}
     \end{align}
Moreover, if \eqref{assumption23=0} holds, then
\begin{align}\label{partialr3=0}
 \partial_r^j (\bar{p}_\pm, \bar{\rho}_{\pm}, \bar{s}_\pm)(0)=0, \quad \partial_r^j \bar{q}_+(0)=0,\quad \partial_r^j ({\overline{M}_\pm^2}) (0) = 0\qquad\mbox{where }j=3,5,7.
\end{align}
\end{lem}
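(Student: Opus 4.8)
\textbf{Proof plan for Lemma \ref{barvalue}.}

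The strategy is to read off all the claimed vanishing derivatives directly from the explicit solution formulas established in the proofs of Lemmas \ref{lem:2} and \ref{BGlemma4}, using the hypotheses \eqref{assumew0r0=0}, \eqref{assumebarq}, \eqref{assumption23=0} on the prescribed functions $(\bar w_-,\bar q_-)$ (or equivalently $(\bar p_-,\bar s_-)$). The core observation is that every quantity in $\overline U_-$ is built from $\bar w_-$, $\bar q_-$ and the integral $\int_0^r \frac{1}{\tau}\frac{\bar w_-^2(\tau)}{\bar q_-^2(\tau)}\dif\tau$; since $\bar w_-(0)=0$, the integrand is smooth near $r=0$ and its parity is governed by that of $\bar w_-^2/\bar q_-^2$. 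First I would treat the symmetry-axis claim \eqref{pro1}: from \eqref{ret}, $t(r)=\frac1{\overline M_-^2}$ has the form $\tfrac{\gamma+1}{2}\big(\sqrt{1-c\,e^{-(\gamma-1)g(r)}}-\mu^2\big)$ with $g(r)=\int_0^r\frac1\tau\frac{\bar w_-^2}{\bar q_-^2}\dif\tau$; since $\bar w_-(0)=0$ and (from \eqref{assumption23=0}) $\bar w_-$ has only odd-order derivatives not forced to vanish, $\bar w_-^2$ vanishes to second order at $0$ with $\bar w_-^2/r$ vanishing to first order, so $g'(0)=0$, hence $t'(0)=0$ and $\partial_r(\overline M_-^2)(0)=0$. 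Then \eqref{barp-} gives $\partial_r\bar p_-(0)=\bar p_-(0)\cdot\gamma\frac{\bar w_-^2(0)}{0\cdot t(0)\bar q_-^2(0)}$, which is $0$ because $\bar w_-^2/r\to0$; similarly $\partial_r\bar\rho_-(0)=0$ from \eqref{barrho--} (the prefactor derivative involves $\partial_r t(0)=0$ and $\partial_r\bar q_-(0)=0$ by \eqref{assumebarq}), and then $\partial_r\bar s_-(0)=0$ from $\bar s_-=c_v\ln(\bar p_-/A\bar\rho_-^\gamma)$. The ``$+$'' quantities follow from the R.-H. relations \eqref{BCRH1}-\eqref{BCRH4}: differentiating each algebraic relation and evaluating at $r=0$, every term carries a factor that is one of $\partial_r\bar p_-(0)$, $\partial_r\overline M_-^2(0)$, $\partial_r\bar q_-(0)$, $\partial_r\bar c_-^2(0)$, all zero, so $\partial_r\bar p_+(0)=\partial_r\bar\rho_+(0)=\partial_r\bar q_+(0)=\partial_r\bar s_+(0)=\partial_r\overline M_+^2(0)=0$.

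For the nozzle-wall claim \eqref{pro2}, I would use \eqref{assumew0r0=0}, i.e.\ $\bar w_-(r_0)=0$: then \eqref{RHS-} gives $\partial_r\bar p_-(r_0)=\tfrac1{r_0}\bar\rho_-(r_0)\bar w_-^2(r_0)=0$ immediately, and from \eqref{eq:m} $\partial_r(\overline M_-^{-2})(r_0)=0$ because the right side has the factor $\bar w_-^2(r_0)=0$; hence $\partial_r\overline M_-^2(r_0)=0$, and $\partial_r\bar\rho_-(r_0)=0$ follows from $\bar\rho_-=\gamma\bar p_-/(t\bar q_-^2)$ together with $\partial_r\bar q_-(r_0)=0$ (from \eqref{assumebarq}) and $\partial_r t(r_0)=0$; then $\partial_r\bar s_-(r_0)=0$ as before. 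The ``$+$'' quantities at $r_0$ again follow by differentiating \eqref{BCRH1}-\eqref{BCRH4} and using that all the ``$-$'' first derivatives vanish there.

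Finally, for the higher-order claim \eqref{partialr3=0} under \eqref{assumption23=0} ($\partial_r^k\bar w_-(0)=0$ for $k=2,4,6$ and $\partial_r^j\bar q_-(0)=0$ for $j=3,5,7$), the plan is a parity/Taylor-coefficient argument at $r=0$. Because $\bar w_-$ is odd-type to seventh order (only $\partial_r\bar w_-, \partial_r^3\bar w_-,\partial_r^5\bar w_-,\partial_r^7\bar w_-$ can be nonzero) and $\bar q_-$ is even-type to seventh order, the functions $\bar w_-^2/r^2$ and $\bar q_-^2$ are even-type through order $6$, so $g(r)=\int_0^r\frac1\tau\frac{\bar w_-^2}{\bar q_-^2}\dif\tau$ is even-type through order $7$ (its odd Taylor coefficients up to order $7$ vanish: $\partial_r^j g(0)=0$ for $j=1,3,5,7$, using that the integrand is even-type through order $6$). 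Propagating this through the smooth composition in \eqref{ret} shows $\partial_r^j t(0)=0$ for $j=1,3,5,7$, hence $\partial_r^j(\overline M_-^2)(0)=0$ for those $j$. Then $\partial_r^j\bar p_-(0)=0$ for $j=3,5,7$ follows by differentiating \eqref{barp-} and checking that $\partial_r\log\bar p_-=\gamma\frac{\bar w_-^2}{r t\bar q_-^2}$ is even-type through order $6$, so $\log\bar p_-$ has vanishing odd Taylor coefficients up to order $7$; likewise for $\bar\rho_-$ (using $\partial_r^j\bar q_-(0)=0$, $j=3,5,7$, and the established parity of $t$) and hence $\bar s_-$; and then for $\overline M_+^2,\bar p_+,\bar\rho_+,\bar q_+$ and $\bar s_+$ via repeated differentiation of the smooth R.-H. maps \eqref{BCRH1}-\eqref{BCRH4} at $r=0$ (every term in $\partial_r^j$ of these relations at $0$ is a polynomial in lower ``$-$'' derivatives, all of which vanish for odd orders $\le 7$). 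The main obstacle is purely bookkeeping: carefully tracking that "odd Taylor coefficients up to order $7$ vanish" is preserved under products, quotients (with nonvanishing denominator at $0$), the antiderivative $h\mapsto\int_0^r h$ shifting order by one, and composition with smooth functions; once this parity-propagation lemma is set up, every assertion in \eqref{pro1}-\eqref{partialr3=0} is an immediate substitution, so I would state that propagation principle cleanly first and then apply it line by line.
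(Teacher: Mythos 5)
Your proposal is correct, and the underlying computations are the same ones the paper does, but you organize them in a genuinely different way. The paper's proof of Lemma \ref{barvalue} never invokes the closed-form expressions \eqref{ret}, \eqref{barp-}, \eqref{barrho--}; instead it differentiates the ODEs \eqref{RHS-}, \eqref{RHS+}, \eqref{eq:m} (and the R.-H.\ maps \eqref{furp+}--\eqref{furrho+}, \eqref{rq+}) term by term, and handles each $0/0$ coefficient via an explicit L'H\^opital-type limit (e.g.\ \eqref{rrw2r=}, \eqref{r4w-=0}, the chain of identities \eqref{p-rrr0=0} through \eqref{rrrrho+0==0}); the $j=7$ case is then waved through by ``similarly.'' You instead (i) feed the closed-form formulas from Lemmas \ref{lem:2}, \ref{BGlemma4} and (ii) set up a parity-propagation lemma: if a function has all odd Taylor coefficients through order $7$ vanishing at $r=0$ (``even-type'') or all even ones vanishing (``odd-type''), this classification is stable under products, quotients with non-vanishing denominator, the antiderivative $h\mapsto\int_0^r h$, and smooth post-composition. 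This makes the proofs of \eqref{pro1} and \eqref{partialr3=0} genuinely shorter and handles $j=3,5,7$ in one stroke, at the cost of proving the propagation lemma once (the paper avoids that by doing each order by hand). The two routes are equally valid; yours is more systematic, the paper's is more elementary.

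One small slip in your write-up: you say the integrand $\tfrac1\tau\tfrac{\bar w_-^2}{\bar q_-^2}$ in $g(r)$ is ``even-type.'' It is actually \emph{odd}-type, being $\tau$ times the even-type function $\tfrac{\bar w_-^2/\tau^2}{\bar q_-^2}$; the antiderivative of an odd-type function is even-type, which is what you need. Also note $\bar w_+=\bar w_-$ stays odd-type, not even-type, but since the lemma makes no claim on odd derivatives of $\bar w_\pm$ this does not matter. Finally, be sure your propagation lemma is stated to hold exactly ``through order $7$'': since $\bar w_-,\bar q_-$ are only assumed $C^7$, you do not have full power series, so the antiderivative and composition rules must be phrased at the finite Taylor-coefficient level (the assumptions \eqref{assumew0r0=0}, \eqref{assumebarq}, \eqref{assumption23=0} give you exactly the vanishing you need up to that order and no further).
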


\begin{proof}
Applying \eqref{BCRH4} and \eqref{assumew0r0=0}, we have $\bar{w}_{\pm}(0)=\bar{w}_{\pm}(r_0) = 0$.
Then it follows from \eqref{RHS-} and \eqref{RHS+} that
  \begin{align}\label{barpvalue}
    \partial_r \bar{p}_\pm(0) =\partial_r \bar{p}_\pm(r_0)=0.
  \end{align}
At the same time, it follows from \eqref{eq:m} and the smoothness of $\bar{w}_{-}$ that
\begin{align}\label{r1M0r0}
  \partial_r \Big(\frac{1}{\overline{M}_-^2} \Big)(0) =\partial_r \Big(\frac{1}{\overline{M}_-^2} \Big)(r_0)=0.
\end{align}
So
\begin{align}\label{1rM-==0}
\partial_r \overline{M}_-^2 (0) =\partial_r \overline{M}_-^2(r_0)=0.
\end{align}

By \eqref{BCRH2},
  \begin{align}\label{rq+}
    \bar{q}_+
    = \mu^2\bar{q}_- \Big(1 + \frac{2}{\gamma-1}\frac{1}{\overline{M}_-^2}\Big).
  \end{align}
So it follows from \eqref{assumebarq}, \eqref{r1M0r0} and \eqref{rq+} that
  \begin{align}\label{q+0r0}
     \partial_r \bar{q}_+(0) =\partial_r \bar{q}_+(r_0)=0.
  \end{align}
Moreover, by \eqref{assumebarq}, \eqref{assumew0r0=0} and \eqref{rhoqeq}, we have
\begin{align}\label{barrho-}
  \partial_r\bar{\rho}_-(0) = \partial_r\bar{\rho}_-(r_0) = 0.
\end{align}
Then by \eqref{furrho+} 
\begin{align}\label{rho0}
    \partial_r \bar{\rho}_+(0) = \partial_r \bar{\rho}_+(r_0) =0.
  \end{align}
 So it follows from \eqref{barpvalue}, \eqref{q+0r0} and \eqref{rho0} that
  \begin{align}\label{r1M0r0+}
  \partial_r \Big(\frac{1}{\overline{M}_+^2} \Big)(0) =\partial_r \Big(\frac{1}{\overline{M}_+^2} \Big)(r_0)=0.
\end{align}
So
\begin{align}\label{1rM+==0}
  \partial_r \overline{M}_+^2 (0) =\partial_r \overline{M}_+^2(r_0)=0.
\end{align}
Finally, by the thermal relation $p= A e^{\frac{s}{c_v}}\rho^{\gamma}$, we have
  \begin{align}\label{barsvalue}
    \partial_r s  = c_v \frac{ \partial_r p}{p} - \gamma c_v \frac{\partial_r \rho}{\rho}.
  \end{align}
Therefore, $\partial_r \bar{s}_\pm (0) =\partial_r \bar{s}_\pm (r_0) =0$.

Now, we are going to prove \eqref{partialr3=0} for $j=3$.
By applying $\bar{w}_-(0)=0$ and $\partial_r^2 \bar{w}_-(0)=0$, we have
\begin{align}\label{rrw2r=}
\lim\limits_{r\rightarrow 0} \partial_r^2(\frac{\bar{w}_-^2}{r}) =& \lim\limits_{r\rightarrow 0}2\big( \frac{(\partial_r\bar{w}_-)^2+\bar{w}_-\partial_r^2\bar{w}_-}{r} - \frac{2\bar{w}_- \partial_r \bar{w}_-}{r^2} + \frac{\bar{w}_-^2}{r^3}   \big)\notag\\
=& 2 \partial_r \bar{w}_-(0) \partial_r^2 \bar{w}_-(0) + \frac23 \bar{w}_-(0) \partial_r^3 \bar{w}_-(0)\notag\\
=& 0.
\end{align}
By \eqref{RHS-}, direct calculations yield that
\begin{align}
  \partial_r^3 \bar{p}_-(r) = \frac{\bar{w}_-^2}{r} \partial_r^2 \bar{\rho}_-(r) + 2 \partial_r(\frac{\bar{w}_-^2}{r}) \partial_r \bar{\rho}_- + \bar{\rho}_- \partial_r^2 (\frac{\bar{w}_-^2}{r}).
\end{align}
Therefore, 
\begin{align}\label{p-rrr0=0}
  \partial_r^3 \bar{p}_-(0)=0.
\end{align}
Similarly, by \eqref{RHS+} and $\bar{w}_+(0) = \bar{w}_-(0)=0$, $\partial_r^2 \bar{w}_+(0) = \partial_r^2 \bar{w}_-(0)=0$ and \eqref{rho0}, we have
\begin{align}\label{p+rrr0=0}
  \partial_r^3 \bar{p}_+(0)=0.
\end{align}
Recalling the definition $t(r)= \frac{1}{\overline{M}_-^2(r)}$ and by \eqref{eq:m}, we have
\begin{align}\label{eq:mt}
\partial_r t
=\frac{\gamma-1}{4\bar{q}_-^2}\frac{\bar{w}_-^2}{r}\Big( \frac{(\gamma +1)^2}{\gamma-1 + 2t} - \big(\gamma-1 + 2t\big)\Big).
\end{align}
So
\begin{align}\label{rrrt0=0}
  \partial_r^3 t =& \frac{\gamma-1}{4} \frac{1}{\bar{q}_-^2}\Big( \frac{(\gamma +1)^2}{\gamma-1 + 2t} - \big(\gamma-1 + 2t\big)\Big)\partial_r^2(\frac{\bar{w}_-^2}{r})\notag\\
  & + \frac{\gamma-1}{4}\partial_r\Big(\frac{1}{\bar{q}_-^2}\big( \frac{(\gamma +1)^2}{\gamma-1 + 2t} - \big(\gamma-1 + 2t\big)\big)\Big)\partial_r(\frac{\bar{w}_-^2}{r})\notag\\
  &+ \frac{\gamma-1}{4}\frac{\bar{w}_-^2}{r} \partial_r^2\Big(\frac{1}{\bar{q}_-^2}\big( \frac{(\gamma +1)^2}{\gamma-1 + 2t} - \big(\gamma-1 + 2t\big)\big)\Big)\notag\\
  & + \frac{\gamma-1}{4}\partial_r(\frac{\bar{w}_-^2}{r})\partial_r\Big(\frac{1}{\bar{q}_-^2}\big( \frac{(\gamma +1)^2}{\gamma-1 + 2t} - \big(\gamma-1 + 2t\big)\big)\Big).
\end{align}
It follows from $\bar{w}_-(0)=0$, $\partial_r \bar{q}_-(0)=0$, \eqref{r1M0r0}, \eqref{rrw2r=}, and \eqref{rrrt0=0} that $\partial_r^3 t(0)=0$. That is
\begin{align}\label{r3M-0=0}
  \partial_r^3 \Big(\frac{1}{\overline{M}_-^2}\Big)(0)=0.
\end{align}
Hence
\begin{align}\label{r3M-0==0}
 \partial_r^3 {\overline{M}_-^2}(0)=0.
\end{align}
 By $\bar{\rho}_- = \frac{\gamma \bar{p}_- \overline{M}_-^2}{\bar{q}_-^2}$, it follows from \eqref{pro1}, \eqref{assumption23=0}, \eqref{p-rrr0=0}, \eqref{r3M-0==0}, and \eqref{assumption23=0} that
 \begin{align}\label{rrrrho0==0}
   \partial_r^3 \bar{\rho}_-(0) =0.
 \end{align}
 By \eqref{pro1}, \eqref{barsvalue}, \eqref{p-rrr0=0}, and \eqref{rrrrho0==0}, we have
 \begin{align}
   \partial_r^3 \bar{s}_- (0)=0.
 \end{align}
By \eqref{BCRH2}, further calculations yield that
\begin{align}\label{r3q+0=0}
  \partial_r^3 \bar{q}_+ = \mu^2\Big((1+\frac{2t}{\gamma-1})\partial_r^3 \bar{q}_- + \frac{2\bar{q}_-}{\gamma-1}\partial_r^3 t + \frac{6}{\gamma-1}\partial_r(\partial_r \bar{q}_- \partial_r t)    \Big).
\end{align}
So it follows from  \eqref{pro1}, \eqref{assumption23=0}, \eqref{r3M-0=0}, and \eqref{r3q+0=0} that
\begin{align}\label{rrrq+0==0}
  \partial_r^3 \bar{q}_+(0) =  0.
\end{align}
Similarly, we have
  \begin{align}\label{rrrrho+0==0}
    \partial_r^3 \bar{\rho}_+(0) = \partial_r^3 \overline{M}_+^2(0)= \partial_r^3 \bar{s}_+(0)=0.
  \end{align}

Taking forth derivatives on the both sides of \eqref{RHS-} with respect to the variable $r$, we have
\begin{align}\label{r5p-=0}
  \partial_r^5 \bar{p}_-(r) = \partial_r^4\big(\frac{ \bar{w}_-^2(r)}{r} \bar{\rho}_-(r) \big).
\end{align}
By $\bar{w}_-(0)=0$ in \eqref{assumew0r0=0}, \eqref{rrw2r=}, \eqref{rrrrho0==0}, and $\partial_r \bar{\rho}_-(0)=0$ in \eqref{barrho-}, it follows from \eqref{r5p-=0} that
\begin{align}\label{r5p-==0}
   \partial_r^5 \bar{p}_-(r)(0) = \bar{\rho}_-(0) \partial_r^4\big(\frac{ \bar{w}_-^2(r)}{r}  \big).
\end{align}
By $\partial_r^k \bar{w}_-(0)=0$, $k=0,2,4$ in \eqref{assumption23=0}, we have
\begin{align}\label{r4w-=0}
  \lim\limits_{r\rightarrow 0} \partial_r^4(\frac{\bar{w}_-^2}{r}) =& \frac15 \partial_r^5 \bar{w}_-^2 (0)\notag\\
  =& \frac25 \big( \bar{w}_-(0)\partial_r^5 \bar{w}_-(0) + 5 \partial_r\bar{w}_-(0) \partial_r^4 \bar{w}_-(0) + 10 \partial_r^2 \bar{w}_-(0)\partial_r^3 \bar{w}_-(0) \big)\notag\\
  =& 0.
\end{align}
So 
\begin{align}\label{r^5p-=0}
   \partial_r^5 \bar{p}_-(0) = 0.
\end{align}
Then by $\bar{w}_+ (r) = \bar{w}_-(r)$ and \eqref{RHS+}, a similar calculation yields that
\begin{align}\label{r^5p+=0}
  \partial_r^5 \bar{p}_+(0) = 0.
\end{align}
Taking forth derivatives on the both sides of \eqref{RHS-} with respect to the variable $r$, we have
\begin{align}\label{4eq:mt}
\partial_r^5 t
=\partial_r^4 \Big(\frac{\gamma-1}{4\bar{q}_-^2}\frac{\bar{w}_-^2}{r}\big( \frac{(\gamma +1)^2}{\gamma-1 + 2t} - \big(\gamma-1 + 2t\big)\big) \Big).
\end{align}
By $\bar{w}_-(0)=0$ in \eqref{assumew0r0=0}, $\partial_r^j \bar{q}_-(0) =0$ for $j=1,3$ in \eqref{assumption23=0}, \eqref{r1M0r0+}, \eqref{rrw2r=}, \eqref{r3M-0=0}, and \eqref{r4w-=0}, it follows from \eqref{4eq:mt} that $\partial_r^5 t(0)=0$, that is
\begin{align}\label{5rM-=0}
  \partial_r^5 \Big(\frac{1}{\overline{M}_-^2}\Big)(0)=0.
\end{align}
So it follows from \eqref{1rM-==0}, \eqref{r3M-0==0} and \eqref{5rM-=0} that
\begin{align}\label{5rM2-==0}
   \partial_r^5 \overline{M}_-^2 (0)=0.
\end{align}
Following a similar calculation as done for \eqref{rrrrho0==0}-\eqref{rrrrho+0==0}, by \eqref{assumption23=0}, \eqref{pro2}, \eqref{partialr3=0} for $j=3$, \eqref{r^5p-=0}, \eqref{r^5p+=0}, and \eqref{5rM2-==0},  we have
\begin{align}\label{j5prhos=0}
  \partial_r^5 (\bar{p}_+, \bar{\rho}_{\pm}, \bar{s}_\pm)(0)=0, \quad \partial_r^5 \bar{q}_+(0)=0.
\end{align}
Thus, we have proven \eqref{partialr3=0} for $j=5$.

Similarly, we can prove \eqref{partialr3=0} for $j=7$ from \eqref{assumption23=0}. Because the argument is similar and long, we omit it for shortness.
\end{proof}

\section{Quasi-supersonic solutions in the cylinder}

In this section, we will consider the existence of quasi-supersonic solutions in the whole domain $\Omega$ by proving Theorem \ref{mainthmU-}.
Before doing that, the compatibility conditions up to the seventh order for the initial-boundary value problem \eqref{CE1}-\eqref{CE5} and \eqref{U-E0r}-\eqref{vwaxis} with \eqref{Uenwq} and \eqref{assumew0r0=0}-\eqref{assumepex} will be given.

To simplify the notations, we will drop the subscript ``$ - $'' to write $U_-$ as $U$ in this section.
First, it is easy to see that the zero order and first order compatibility conditions have already been given in \eqref{assumew0r0=0}-\eqref{assumepex}.
Then we will derive the higher order compatibility conditions by induction. The equations \eqref{CE1}-\eqref{CE5} can be rewritten in the following matrix form:
\begin{align}\label{ABCeq}
  A(U) U_z + B(U) U_r + C(U)=0.
\end{align}
Taking derivatives on the both sides of the equation \eqref{ABCeq} with respect to the variables $z$ and $r$ respectively, we have in $\Omega$
\begin{align}
&A(U) U_{zz}+ B(U) U_{zr}  + D_{U} A(U) U_z U_z + D_{U} B(U) U_z U_r + D_{U} C(U) U_z =0,\label{ABCeqz}\\
& A(U) U_{zr} + B(U) U_{rr} + D_{U} A(U) U_r U_z + D_{U} B(U) U_r U_r+ D_U C(U) U_r =0.\label{ABCeqr}
\end{align}
Take twice tangential derivatives on the boundary conditions \eqref{U-E0r}-\eqref{vwaxis} with \eqref{Uenwq}, then
\begin{align}
U_{rr}=\partial_{rr}\overline{U}_-+\sigma\partial_{rr}U_{en}\qquad&\mbox{on }\Gamma_{en}\\
\partial_{zz}\theta=0\qquad&\mbox{on }\Gamma_w\cup\Gamma_a\\
\partial_{zz}w=0\qquad&\mbox{on }\Gamma_a.\label{4.6x}
\end{align}
Solving equations \eqref{ABCeqz}-\eqref{4.6x} with equation \eqref{ABCeqr},  boundary conditions \eqref{U-E0r}-\eqref{vwaxis} and the zeroth and first order compatibility conditions at point $(z,r)=(0,0)$ or $(z,r)=(0,r_0)$, we can obtain the second order compatibility conditions. Then by taking one more derivatives for each time, we can obtain the compatibility conditions up to the seventh order.

Before giving the proof of Theorem \ref{mainthmU-}, we would like to give the following lemma.
\begin{lem}\label{lem5.4x} For any fixed integer $n\geq2$, and for $\mathbf{x}=(x_1,\cdots,x_n,x_{n+1})$, set $r(\mathbf{x})\defs\sqrt{\sum_{i=1}^nx_i^2}$ and $B_R\defs \{\mathbf{x}\in\mathbb{R}^{n+1}:\,r(\mathbf{x})\leq R\}$.Then for $m=1,2$, $\Phi(\mathbf{x})\defs\varphi(r(\mathbf{x}),x_{n+1})\in C^{m,\alpha}(B_R)$ if and only if $\varphi(r,x_{n+1})\in C^{m,\alpha}([0,R]\times\mathbb{R})$ and $\varphi_r(0,x_{n+1})=0$. For $m=3,4$, $\Phi(\mathbf{x})\defs\varphi(r(\mathbf{x}),x_{n+1})\in C^{m,\alpha}(B_R)$ if and only if $\varphi(r,x_{n+1})\in C^{m,\alpha}([0,R]\times\mathbb{R})$ and $\varphi_r^j(0,x_{n+1})=0$ for $j=1,3$. In addition, there exists a constant $C_m$ depending only on $\alpha$ and $R$, such that
$C_m\|\Phi\|_{C^{m,\alpha}(B_R)}\leq \|\varphi\|_{C^{m,\alpha}([0,R]\times\mathbb{R})}\leq\|\Phi\|_{C^{m,\alpha}(B_R)}$. Moreover, if $\varphi_r^j(0,x_{n+1})=0$ for $j=1,3,5$, and if $\varphi(r,x_{n+1})\in C^{7}([0,R]\times\mathbb{R})$, then $\Phi(\mathbf{x})\defs\varphi(r(\mathbf{x}),x_{n+1})\in H^{7}(B_R)$ and satisfies
$\|\Phi\|_{H^{7}(B_R)}\leq C\|\varphi\|_{H^{7}([0,R]\times\mathbb{R})}$.
\end{lem}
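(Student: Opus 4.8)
The plan is to prove Lemma~\ref{lem5.4x} by reducing the regularity of the radial function $\Phi(\mathbf{x})=\varphi(r(\mathbf{x}),x_{n+1})$ to that of $\varphi$ via the chain rule, tracking exactly which terms in the derivatives of $\Phi$ become singular as $r\to 0$ and showing that the asserted conditions $\varphi_r^j(0,x_{n+1})=0$ are precisely what is needed to cancel those singularities. First I would record the basic differentiation identities: writing $\partial_{x_i} r = x_i/r$ for $1\le i\le n$, one has $\partial_{x_i}\Phi = (x_i/r)\varphi_r$, and inductively each $k$-th order derivative of $\Phi$ in the $x_1,\dots,x_n$ variables is a finite sum of terms of the form $P(\mathbf{x})\, r^{-\ell}\,\varphi_r^{(j)}(r,x_{n+1})$ with $P$ a homogeneous polynomial of degree $j$ in $x_1,\dots,x_n$, $\ell \le k$, $j\le k$ and $j\equiv k \pmod 2$ in the appropriate combinations; derivatives in $x_{n+1}$ simply differentiate the last slot and change nothing essential. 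The key algebraic point is the elementary fact that a function of the form $g(r,x_{n+1})/r^\ell$, with $g$ smooth and $g$ vanishing to order $\ell$ in $r$ at $r=0$, extends $C^{0}$ (indeed with controlled modulus of continuity) near $r=0$; this is where the vanishing hypotheses on the odd $r$-derivatives enter, because Taylor expanding $\varphi_r^{(j)}$ in $r$ shows that the would-be singular term $r^{-\ell}\varphi_r^{(j)}$ is bounded exactly when enough low-order $r$-derivatives of $\varphi_r$ vanish.

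The proof then splits by the value of $m$. For $m=1,2$: the first derivatives $\partial_{x_i}\Phi=(x_i/r)\varphi_r$ are bounded and continuous up to $r=0$ iff $\varphi_r(0,x_{n+1})=0$ (using $|x_i/r|\le 1$ and Taylor expansion $\varphi_r(r,\cdot)=r\varphi_{rr}(0,\cdot)+o(r)$), and the second derivatives produce terms $(x_ix_j/r^2)\varphi_{rr}$ and $(\delta_{ij}/r - x_ix_j/r^3)\varphi_r$; the combination $\varphi_r/r$ is continuous (with Hölder control) again precisely because $\varphi_r(0,\cdot)=0$, while $\varphi_{rr}$ needs no extra condition. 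One checks Hölder continuity of these terms by a direct estimate on difference quotients, separating the case where both points are at comparable distance from the axis from the case where one is much closer, a standard but slightly tedious argument. For $m=3,4$: differentiating once or twice more generates terms with $r^{-3}$ and $r^{-4}$ weights multiplied by $\varphi_r,\varphi_{rr},\varphi_{rrr},\varphi_r^{(4)}$; the dangerous ones are $\varphi_r/r^3$ and $\varphi_{rrr}/r$ type terms, and expanding $\varphi_r(r,\cdot)=\varphi_{rr}(0,\cdot)r+\tfrac16\varphi_r^{(4)}(0,\cdot)r^3+\dots$ (note the odd-order coefficients $\varphi_r(0,\cdot)$ and $\varphi_{rrr}(0,\cdot)=\varphi_r^{(2)}$ at zero... careful: the relevant vanishing is $\varphi_r^j(0,x_{n+1})=0$ for $j=1,3$, i.e. $\varphi_r$ and $\varphi_{rrr}$ vanish on the axis) shows these cancel exactly when $\varphi_r$ and $\varphi_{rrr}$ vanish at $r=0$. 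The $H^7$ statement is handled the same way but only requires $L^2$ integrability of the derivative terms rather than pointwise continuity: with weights up to $r^{-7}$ hitting $\varphi$ and its $r$-derivatives, and with the measure $r^{n}\,dr\,dx_{n+1}$ in $B_R\subset\mathbb{R}^{n+1}$ contributing an $r^{n}$ factor ($n\ge 2$), the vanishing of $\varphi_r^j(0,\cdot)$ for $j=1,3,5$ plus the integrability gained from $r^n\,dr$ makes every term square-integrable; here one can afford a crude bound since $H^7$ is weaker than pointwise Hölder control of the top derivatives.

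The norm equivalence $C_m\|\Phi\|_{C^{m,\alpha}(B_R)}\le\|\varphi\|_{C^{m,\alpha}([0,R]\times\mathbb{R})}\le\|\Phi\|_{C^{m,\alpha}(B_R)}$ follows from the same computation: the right inequality is immediate because $\varphi$ and its derivatives appear as the restriction of $\Phi$ and its derivatives to the half-plane $\{x_2=\dots=x_n=0,\ x_1\ge 0\}$ (so no constant is lost), while the left inequality comes from the finite-sum bound expressing each derivative of $\Phi$ in terms of $\varphi$ with coefficients bounded by constants depending only on $R$ and $\alpha$. I expect the main obstacle to be the bookkeeping in the $m=3,4$ case: one must carefully enumerate which monomials $P(\mathbf{x})r^{-\ell}\varphi_r^{(j)}$ actually occur, verify that only the ones controlled by the stated vanishing conditions are singular, and push through the Hölder difference-quotient estimates near the axis — none of it deep, but the combinatorics of the chain rule applied three or four times, together with the Hölder estimates that distinguish ``both points near the axis'' from ``one point near, one point far,'' is where the real work lies. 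A clean way to organize this is to prove once and for all an auxiliary sublemma: if $h\in C^{m,\alpha}([0,R]\times\mathbb{R})$ with $\partial_r^j h(0,\cdot)=0$ for all odd $j<\ell$ (with $\ell\le m$ of the right parity), then $\mathbf{x}\mapsto h(r(\mathbf{x}),x_{n+1})/r(\mathbf{x})^{\ell}$, extended by its limit on the axis, lies in $C^{m-\ell,\alpha}(B_R)$ with the expected norm bound; the lemma then follows by applying this sublemma termwise.
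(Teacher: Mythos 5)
Your high-level plan (chain rule, Taylor expansion at $r=0$, keep track of singular $r^{-\ell}$ weights) is the right starting point, and the $m=1,2$ discussion is fine, but the organizational device you propose for $m\ge 3$ and for $H^7$ — the ``auxiliary sublemma'' applied \emph{termwise} to expressions of the form $P(\mathbf{x})\,r^{-\ell}\,\partial_r^j\varphi$ — does not work, and this is a real gap rather than a bookkeeping annoyance.

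The sublemma as stated is false: take $h\equiv 1$ and $\ell=1$; no odd $r$-derivative of $h$ is nonzero, yet $h/r$ is singular. What goes wrong more generally is that the hypotheses you impose on $\varphi$ (vanishing of $\partial_r\varphi,\partial_r^3\varphi,\partial_r^5\varphi$ on the axis) do \emph{not} make each term of the raw chain-rule expansion bounded — they only make the \emph{sum} bounded, via cancellation across terms. A concrete instance already at third order: $\partial_{x_1}\partial_{x_2}\partial_{x_3}\Phi = x_1x_2x_3\bigl(\varphi_{rrr}/r^3 - 3\varphi_{rr}/r^4 + 3\varphi_r/r^5\bigr)$. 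The single term $x_1x_2x_3\,\varphi_r/r^5$ is bounded by $\varphi_r/r^2 \sim \partial_r^2\varphi(0)/r$, which blows up even though $\partial_r\varphi(0)=0$; it is only after adding the other two terms (so that the $\partial_r^2\varphi(0)$ contributions cancel) that one gets something bounded. The same phenomenon, amplified, is what the paper faces at seventh order: in $\partial_{x_i}^7\Phi = 105\,x_iD^4\varphi + 105\,x_i^3D^5\varphi + 21\,x_i^5D^6\varphi + x_i^7D^7\varphi$ the term $x_i^7\cdot 10395\,\varphi_r/r^{13}$ alone is of order $r^{-5}$, yet the full bracket $r^7D^7\varphi$ tends to $-\tfrac{5}{16}\partial_r^7\varphi(0)$. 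Your termwise sublemma cannot see these cancellations, and the weight $r^n\,dr$ from the volume element ($n\ge 2$, here $n=4$) does not rescue the termwise estimate either: $\int_0 r^{-10}\,r^n\,dr$ diverges for the relevant $n$. The paper does not need the volume weight at all; it shows the seventh derivatives are actually \emph{pointwise} bounded.

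The fix — and this is exactly what the paper does — is to change the atoms. Instead of expanding the chain rule into $P(\mathbf{x})\,r^{-\ell}\,\partial_r^j\varphi$, one uses the Lyons--Zumbrun identity
\[
\partial_{\mathbf{x}}^{\beta}\,\partial_{x_{n+1}}^{\beta_{n+1}}\Phi \;=\; \sum_{k=0}^{[\,|\beta|/2\,]}\frac{1}{2^{k}k!}\,\bigl(\triangle_{\mathbf{x}}^{k}\mathbf{x}^{\beta}\bigr)\cdot\bigl(D^{|\beta|-k}\partial_{x_{n+1}}^{\beta_{n+1}}\varphi\bigr)(r,x_{n+1}),
\qquad D\varphi := \frac{\partial_r\varphi}{r}.
\]
Here the natural atoms are the divided-difference iterates $D^{j}\varphi$, each multiplied by a polynomial of the matching degree $|\beta|-2k$; the Taylor expansion together with the vanishing odd derivatives of $\varphi$ at $r=0$ then shows directly (via L'H\^opital) that $r^{\,|\beta|-2k}\,D^{|\beta|-k}\varphi$ has a finite limit as $r\to 0$. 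The cancellation you need is built into $D^j\varphi$ before any estimate is attempted, so one genuinely can work ``atom by atom.'' If you want to keep your bare-hands chain-rule framing, you must at minimum replace the quantities $\partial_r^j\varphi/r^\ell$ by the combinations $D^j\varphi$ in your auxiliary sublemma, and prove boundedness (indeed H\"older continuity, for the $C^{m,\alpha}$ cases) of $r^{p}D^{j}\varphi$ for the specific pairs $(p,j)$ that actually appear. Without that regrouping the argument breaks at $m=3$.

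Two smaller points. First, your structural claim that each term is $P(\mathbf{x})\,r^{-\ell}\,\partial_r^{(j)}\varphi$ with $P$ ``homogeneous of degree $j$'' is inaccurate even at second order (in $\bigl(\delta_{ij}/r - x_ix_j/r^3\bigr)\varphi_r$ one term has $\deg P = 0$ and the other $\deg P=2$, both with $j=1$), so the bookkeeping needs to be redone if you go this route. Second, for $m=1,2$ the paper simply cites \cite[Lemma 2.1]{HY2007} and indicates the $m=3,4$ case by the same pattern; your more explicit presentation of $m=1,2$ is fine, but it is the $m\ge 3$ and $H^7$ cases that carry the difficulty, and those are precisely where the termwise strategy fails.
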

\begin{proof}
The case $m=1,2$ is from lemma in \cite[Lemma 2.1]{HY2007}, which only considers the two-dimensional case and the proof can be generalized to higher dimensions easily. The proof for case $m=3,4$ can be easily done by combining the proof of the Lemma in \cite[Lemma 2.1]{HY2007} and the computation for the case of $H^7$,  so we omit their proofs for the shortness.

For the case of $H^7$, by \cite{LyonsZumbrun}, for any given multi-indices $\beta=(\beta_1, \beta_2,\cdots,\beta_n)$ with $\beta_i\geq0$ for $i=1,\cdots,n,n+1$,
 \begin{align}\label{xiformular}
\partial_{\mathbf{x}}^{\beta}  \partial_{x_{n+1}}^{\beta_{n+1}} \Phi = \sum_{k=0}^{[\frac{|\beta|}{2}]}\frac{1}{2^{k} k!} (\triangle_{\mathbf{x}}^{k} \mathbf{x}^{\beta}) \cdot (D^{|\beta|-k}\partial_{x_{n+1}}^{\beta_{n+1}}\varphi)(r,x_{n+1}),
 \end{align}
where $|\beta| = \beta_1+\beta_2+\cdots+\beta_n$, $\mathbf{x}^{\beta} = x_1^{\beta_1} x_2^{\beta_2}\cdots x_n^{\beta_n}$, $\triangle_{\mathbf{x}}$ denotes the Laplace operator and
  \begin{align}\label{Dvardefs}
   (D \varphi)(r,x_{n+1})\defs \frac{\partial_r\varphi(r,x_{n+1})}{r}.
  \end{align}
Without loss of generality, let us only consider the case that $|\beta|=7$ and $\beta_3=\beta_4=\cdots=\beta_{n+1}=0$, that is, $\beta=(\beta_1,\beta_2,0,\cdots,0)$ and $\beta_1+\beta_2=7$.
It follows from \eqref{xiformular} that for $i,j=1,2$ and $i\neq j$,
 \begin{align}
   \partial_{x_i}^7 \Phi =& 105x_iD^4\varphi + 105 x_i^3 D^5\varphi +  21 x_i^5 D^6\varphi + x_i^7 D^7\varphi \label{xl7=}\\
   \partial_{x_i}\partial_{x_j}^6\Phi
   =& 15 x_i D^4\varphi + 45 x_ix_j^2D^5\varphi + 15 x_i x_j^4 D^6\varphi +   x_i x_j^6 D^7\varphi\\
   \partial_{x_i}^2\partial_{x_j}^5\Phi
   =& 15 x_j D^4\varphi + 5 \big( 3x_i^2 x_j + 2x_j^3 \big) D^5\varphi
    + \big(x_j^5+ 10 x_i^2 x_j^3  \big)D^6\varphi +  x_i^2 x_j^5 D^7\varphi\\
   \partial_{x_i}^3\partial_{x_j}^4\Phi
   =& 9 x_i D^4\varphi + 18 x_i x_j^2 D^5\varphi + 3\big( x_i x_j^4 + x_i^3 x_j^2 \big)D^6\varphi+   x_i^3 x_j^4 D^7\varphi.\label{xl34=}
 \end{align}
It follows from \eqref{Dvardefs} and direct computation that $D\varphi=\frac{\varphi_r}{r}$, $D^2\varphi=\frac{\varphi_{rr}}{r^2}-\frac{\varphi_r}{r^3}$, $D^3\varphi=\frac{\varphi_{rrr}}{r^3}-\frac{3\varphi_{rr}}{r^4}+\frac{3\varphi_r}{r^5}$, and
\begin{align*}
D^4\varphi=&\frac{\partial_r^4\varphi}{r^4}-\frac{6\partial_r^3\varphi}{r^5}+\frac{15\partial_r^2\varphi}{r^6}-\frac{15\varphi_r}{r^7}\\
D^5\varphi=&\frac{\partial_r^5\varphi}{r^5}-\frac{10\partial_r^4\varphi}{r^6}+\frac{45\partial_r^3\varphi}{r^7}-\frac{105\partial_r^2\varphi}{r^8}+\frac{105\varphi_r}{r^9}\\
D^6\varphi=&\frac{\partial_r^6\varphi}{r^6}-\frac{15\partial_r^5\varphi}{r^7}+\frac{105\partial_r^4\varphi}{r^8}-\frac{420\partial_r^3\varphi}{r^9}+\frac{945\partial_r^2\varphi}{r^{10}}-\frac{945\varphi_r}{r^{11}}\\
D^7\varphi=&\frac{\partial_r^7\varphi}{r^7}-\frac{21\partial_r^6\varphi}{r^8}+\frac{210\partial_r^5\varphi}{r^9}-\frac{1260\partial_r^4\varphi}{r^{10}}+\frac{4725\partial_r^3\varphi}{r^{11}}-\frac{10395\partial_r^2\varphi}{r^{12}}+\frac{10395\varphi_r}{r^{13}}
\end{align*}
Note that $\varphi\in C^7$ and $\varphi_r(0)=\partial_{r}^3\varphi(0)=\partial_{r}^5\varphi(0)=0$, by L'H$\hat{o}$pital's rule,
\begin{align}
&\lim \limits_{r \rightarrow 0} r^7D^7\varphi = -\frac{5}{16} \partial_r^7 \varphi(0),\quad \lim \limits_{r \rightarrow 0} r^5D^6\varphi = \frac{1}{16} \partial_r^7 \varphi(0),\label{D76r0=}\\
&\lim \limits_{r \rightarrow 0} r^3D^5\varphi = -\frac{1}{48}\partial_r^7 \varphi(0),\quad \lim \limits_{r \rightarrow 0} rD^4\varphi = \frac{1}{48}\partial_r^7 \varphi(0).\label{D54r0=}
\end{align}

So we know for $|\beta|=7$, $\partial_{\mathbf{x}}^{\beta}\Phi$ are bounded functions.
Therefore, if $\varphi(r, x_{n+1})\in C^7([0,R]\times \mathbb{R})$ and $\partial_r^k\varphi(0, x_{n+1})=0, k=1,3,5$, then $\Phi(\mathbf{x})\defs \varphi(r(\mathbf{x}), x_{n+1})\in H^7(B_R)$.
\end{proof}

We are now ready to prove Theorem \ref{mainthmU-}.
\begin{proof}[Proof of Theorem \ref{mainthmU-}]
If the solutions are smooth,  equations \eqref{CE1}-\eqref{CE5} can be rewritten as follows.
\begin{align}
  &\partial_z (r\rho u) + \partial_r (r \rho v)=0,\label{Supeq1}\\
  & uu_z + vu_r + \frac{1}{\rho} p_z=0,\label{Supeq2}\\
  & uw_z + vw_r + \frac{vw}{r} =0,\label{Supeq3}\\
  & uB_z + vB_r=0,\label{Supeq4}\\
  & us_z + vs_r=0.\label{Supeq5}
\end{align}
It follows from \eqref{Supeq1} that there exists a function $ \psi$ such that
\begin{align*}
  \nabla \psi = (\psi_z, \psi_r) = (- r\rho v, r\rho u),
\end{align*}
so
\begin{align}\label{uvpsi}
  v= -\frac{\psi_z}{r \rho}, \quad u = \frac{\psi_r}{r \rho}.
\end{align}
Then for any $r\in (0,r_0)$, one has
\begin{align}\label{4.13x}
  \psi(0,r) = \int_0^r \tau \rho(0,\tau)u(0,\tau)\dif \tau,
\end{align}
where it follows from \eqref{UenE} and \eqref{Uenwq} that
\begin{equation}\label{4.14x}
(p(0,r),\theta(0,r),w(0,r),q(0,r),s(0,r))=(\bar{p}_-,0,\bar{w}_-+\sigma w_{en},\bar{q}_-+\sigma q_{en},\bar{s}_-)
\end{equation}
and then
\begin{equation}\label{4.15x}
\rho(0,r)=\left(\frac{p(0,r)}{Ae^{\frac{s(0,r)}{c_v}}}\right)^{\frac{1}{\gamma}}\qquad v(0,r)=0\qquad u(0,r)=q(0,r).
\end{equation}

Next, the transport equations \eqref{Supeq3}-\eqref{Supeq5} become 
\begin{align}\label{wbspsi}
 \psi_r W_z - \psi_z W_r =0,\,\,  \psi_r B_z - \psi_z B_r =0,\,\,\psi_r s_z - \psi_z s_r =0,
\end{align}
where $W\defs rw$. For their initial conditions, we have
\begin{align*}
W(0,r)=& W_{EN}(r)\defs r w(0,r)\\
s(0,r)=&s_{EN}(r)\defs \bar{s}_-(r),\\
B(0,r)=& B_{EN}(r) \defs \frac12 \Big( (q(0,r))^2 + (w(0,r))^2   \Big)
   + \frac{\gamma p(0,r)}{(\gamma-1)\rho(0,r)},
\end{align*}
where the intial states are given by \eqref{4.14x} and \eqref{4.15x}.

It follows from \eqref{4.13x} and the properties that $\bar{\rho}\geq C>0$ and $\bar{q}\geq C>0$ that $\psi$ is monotonely increasing with respect to $r\in[0,r_0]$. So it is one-to-one.
Let $\zeta \defs \psi^{-1}$. Obviously, $\zeta$ is a one to one mapping from $[0, \psi(r_0)]$ to $[0,r_0]$ too.
Thus, \eqref{wbspsi} yields that for $\psi\in[0,\psi(r_0)]$, 
\begin{align}
W(\psi) = W_{EN}(\zeta(\psi)),\quad   B(\psi) = B_{EN}(\zeta(\psi)), \quad s(\psi) = s_{EN}(\zeta(\psi)).
\end{align}

Finally, let us consider equation \eqref{Supeq2}. Let
\begin{align}\label{Frhoeq}
F\big(\rho, B, s, \frac{W}{r}, \frac{|\nabla \psi|^2}{r^2}\big)\defs \frac12 \Big(\frac{|\nabla \psi|^2}{r^2} + \rho^2(\frac{W}{r})^2 \Big) +\frac{\gamma A e^{\frac{s}{c_v}} \rho^{\gamma+1}}{\gamma-1} -  \rho^2 B.
\end{align}
Bernoulli's law implies that $ F\big(\rho, B, s,\frac{W}{r}, \frac{|\nabla \psi|^2}{r^2}\big) =0$. Taking the derivative on the both sides of \eqref{Frhoeq} with respect to $\rho$, we have
\begin{align*}
F_{\rho}
=& \rho ((\frac{W}{r})^2-2B) + \frac{\gamma(\gamma+1) A e^{\frac{s}{c_v}}}{\gamma-1}\rho^{\gamma}\\
= &\frac{\gamma(\gamma+1) A e^{\frac{s}{c_v}}}{\gamma-1}\rho^{\gamma} - \frac{1}{\rho}\frac{|\nabla \psi|^2}{r^2}- \frac{2\gamma A e^{\frac{s}{c_v}}}{\gamma-1}\rho^{\gamma}\notag\\
   = & {\rho}\big( c^2 -(u^2 +v^2)  \big)<0.
\end{align*}
Thus, by the inverse function theorem, function $\rho$ can be represented as a function of $(B, s, \frac{W}{r}, \frac{|\nabla \psi|^2}{r^2})$ such that $ F\big(\rho, B, s,\frac{W}{r}, \frac{|\nabla \psi|^2}{r^2}\big) =0$. That is,
\begin{align}\label{rhobsw}
  \rho = \rho(B, s, \frac{W}{r}, \frac{|\nabla \psi|^2}{r^2}).
\end{align}
Notice that \eqref{Supeq2} is equivalent to
\begin{align}\label{Supeq2re}
v(u_r -   v_z) = -\frac12 \partial_z(u^2 + v^2)- \frac{1}{\rho} p_z.
  \end{align}
Let
\begin{align}\label{psivar}
   \psi = r^2 \varphi.
\end{align}
Applying \eqref{uvpsi}, direct calculations yield that
\begin{align}\label{urvz}
 u_r- v_z =  r \varphi_{rr} + r \varphi_{zz} + 3\varphi_r - \frac{1}{\rho}(2\varphi+ r\varphi_r)\rho_r - \frac{1}{\rho}r \varphi_z \rho_z .
\end{align}
Then let us calculate $\rho_z$ and $\rho_r$. Let $\chi\defs \frac{|\nabla \psi|^2}{r^2}$. It follows from \eqref{Frhoeq} that
\begin{equation}
F_B=-\rho^2\qquad F_s=\frac{\gamma A e^{\frac{s}{c_v}}\rho^{\gamma+1}}{(\gamma-1)c_v}=\frac{\gamma p\rho}{(\gamma-1)c_v}\qquad F_{\frac{W}{r}}=\rho^2\frac{W}{r}\qquad F_{\chi}=\frac12.
\end{equation}
Then by the Bernoulli's law
\begin{align*}
F_{\rho}\rho_z=&F_BB_z+F_ss_z+F_{\frac{W}{r}}(\frac{W}{r})_z+F_{\chi}\chi_z\\
 =&(F_BB'(\psi)+F_ss'(\psi)+F_{\frac{W}{r}}\frac{W'(\psi)}{r})\psi_z+\frac{1}{2r^2} \partial_z\Big( \psi_z^2 + \psi_r^2\Big) \\
=&\frac{1}{r^2}\Big( \psi_z \psi_{zz} + \psi_r \psi_{zr}\Big)+\rho^2\left(-B'+\frac{\gamma p}{(\gamma-1)c_v\rho}s'+\frac{WW'}{r^2}\right)\psi_z\\
F_{\rho}\rho_r=&F_BB_r+F_ss_r+F_{\frac{W}{r}}(\frac{W}{r})_r+F_{\chi}\chi_r\\
=&(F_BB'+F_ss'+F_{\frac{W}{r}}\frac{W'}{r})\psi_r-\frac{WF_{\frac{W}{r}}}{r^2}+ \partial_r\Big(\frac{\psi_z^2 + \psi_r^2}{2r^2}\Big)\\
=&\frac{1}{r^2}\Big( \psi_z \psi_{zr} + \psi_r \psi_{rr}\Big) - \frac{1}{r^3}\big( \psi_z^2 + \psi_r^2\big)\\
&+\rho^2\left(-B'+\frac{\gamma p}{(\gamma-1)c_v\rho}s'+\frac{WW'}{r^2}\right)\psi_r-\frac{\rho^2W^2}{r^3}.
\end{align*}
Note that $u^2+v^2=\frac{\chi}{\rho^2}$, $p=Ae^{\frac{s}{c_v}}\rho^{\gamma}$ and $F_{\rho}=\rho(c^2-(u^2+v^2))$. Then it follows from equations \eqref{Supeq2re} and \eqref{urvz} and a straightforward and long computation that $\varphi$ satisfies equation
\begin{align}\label{result4}
  &\big(1 - \frac{r^2 \varphi_z^2}{\rho^2 (q^2 - c^2)}\big) \varphi_{zz} +\big(1 - \frac{(2\varphi + r\varphi_r)^2}{\rho^2 (q^2 - c^2)}\big) \big(\varphi_{rr} +\frac{3}{r}\varphi_r\big)\notag\\
  & - \frac{2(2\varphi+  r\varphi_r)}{\rho^2 (q^2 - c^2)}r \varphi_z\varphi_{zr} - \frac{3(2\varphi+  r\varphi_r)}{\rho^2 (q^2 - c^2)} \varphi_z^2\notag\\
  =&F(\varphi, D\varphi)\notag\\
\defs&(r \varphi_r + 2\varphi) \Big\{ \frac{\rho}{u}B'(\psi)-\frac{\rho}{u} \frac{w}{r}W'(\psi)- \frac{\rho^{\gamma}}{\gamma-1}\frac{A}{c_v} e^{\frac{s}{c_v}}\frac{1}{u} s'(\psi)\notag\\
  &\qquad  + \frac{1}{\rho} (r \varphi_r + 2\varphi)
  \Big( \frac{\partial\rho}{\partial B}B'(\psi)+ \frac{\partial\rho}{\partial s}s'(\psi) + \frac{w}{r}\frac{\rho W'(\psi)}{q^2 -c^2}\Big) -\frac{1}{q^2 -c^2}\frac{w^2}{r^2}\Big\}\notag\\
  & + \frac{1}{\rho} \varphi_z \Big( \frac{\partial\rho}{\partial B}B'(\psi)r^2\varphi_z + \frac{\partial\rho}{\partial s}s'(\psi)r^2\varphi_z + \frac{\partial\rho}{\partial w}W'(\psi)r\varphi_z\Big).
\end{align}

In order to overcome the difficulty from the singularity along the symmetry axis $\{r=0\}$ in equation \eqref{result4}, we introduce $\mathbf{r} = (r_1, r_2, r_3, r_4)$ and $\sum\limits_{i=1}^4 r_i^2 = r^2$. The domain $\Omega$ becomes
$\mathcal{D}_- \defs \{(z,\mathbf{r}): z\in (0, L), \mathbf{r}\in \mathbb{R}^4, |\mathbf{r}|< r_0 \}$.
Let
$$
\phi(z, r_1, r_2, r_3, r_4) = \varphi (z, \sqrt{r_1^2 + r_2^2 +r_3^2 +r_4^2}).
$$
Then equation \eqref{result4} becomes 
\begin{align}\label{5Dwaveeq}
&\big(1 - \frac{r^2 \varphi_z^2}{\rho^2 (q^2 - c^2)}\big) \phi_{zz} +\big(1 - \frac{(2\varphi + r\varphi_r)^2}{\rho^2 (q^2 - c^2)}\big)\sum_{i=1}^4 \phi_{r_ir_i}\notag\\
  & - \frac{2(2\varphi+  r\varphi_r)}{\rho^2 (q^2 - c^2)} \phi_z\sum_{i=1}^4 r_i(\phi_z)_{r_i}- \frac{3(2\varphi+  r\varphi_r)}{\rho^2 (q^2 - c^2)} (\phi_z)^2 =  F(\phi, D\phi).
  \end{align}
Next, at the background solution $\overline{U}_-$, it follows from \eqref{uvpsi} that
\begin{align}
  \overline{\psi}(r) \defs \psi(0,r) \equiv \int_0^r \tau \bar{\rho}_-(\tau) \bar{q}_-(\tau)\dif \tau.
\end{align}
Then by definition \eqref{psivar}, we have
\begin{align}\label{4.27x}
 \overline{\varphi}(r) = \frac{1}{r^2}\int_0^r \tau \bar{\rho}_-(\tau) \bar{q}_-(\tau)\dif \tau.
\end{align}
Let
\begin{align}\label{4.28x}
  \widehat{\varphi}(z,r)\defs \varphi(z,r) - \overline{\varphi}(z,r),
\end{align}
and
$$\widehat{\phi}(z, r_1, r_2, r_3, r_4) =  \widehat{\varphi} (z, \sqrt{r_1^2 + r_2^2 +r_3^2 +r_4^2}).
$$
Then it follows from \eqref{5Dwaveeq} that $\widehat{\phi}$ satisfies equation
\begin{align}\label{5Dwaveeqhat}
&\big(1 - \frac{r^2 \varphi_z^2}{\rho^2 (q^2 - c^2)}\big) \widehat{\phi}_{zz} +\big(1 - \frac{(2\varphi + r\varphi_r)^2}{\rho^2 (q^2 - c^2)}\big)\sum_{i=1}^4 \widehat{\phi}_{r_ir_i}\notag\\
  & - \frac{2(2\varphi+  r\varphi_r)}{\rho^2 (q^2 - c^2)} \widehat{\phi}_z\sum_{i=1}^4 r_i(\widehat{\phi}_z)_{r_i}- \frac{3(2\varphi+  r\varphi_r)}{\rho^2 (q^2 - c^2)} (\widehat{\phi}_z)^2\notag\\
   =&  F(\phi, D\phi) - \big(1 - \frac{(2\varphi + r\varphi_r)^2}{\rho^2 (q^2 - c^2)}\big)\sum_{i=1}^4 \overline{\phi}_{r_ir_i},
  \end{align}
where $\overline{\phi}(r_1, r_2, r_3, r_4) = \overline{\varphi} (\sqrt{r_1^2 + r_2^2 +r_3^2 +r_4^2})$.
Notice that
\begin{align*}
&\big(1 - \frac{r^2 \varphi_z^2}{\rho^2 (q^2 - c^2)}\big) = \big(1 - \frac{\psi_z^2}{r^2 \rho^2 (q^2 - c^2)}\big) = \big(1 - \frac{v^2}{q^2 - c^2}\big) = \frac{u^2 -c^2}{q^2 - c^2},\\
&\big(1 - \frac{(2\varphi + r\varphi_r)^2}{\rho^2 (q^2 - c^2)}\big) = \big(1 - \frac{u^2}{q^2 - c^2}\big) = \frac{v^2-c^2}{ q^2 - c^2}.
\end{align*}
Then equation \eqref{5Dwaveeqhat} can be rewritten as
\begin{align*}
&(u^2-c^2)\widehat{\phi}_{zz} -(c^2-v^2)\sum_{i=1}^4 \widehat{\phi}_{r_ir_i}
  -\frac{2u}{\rho} \widehat{\phi}_z\sum_{i=1}^4 r_i(\widehat{\phi}_z)_{r_i}- \frac{3u}{\rho} (\widehat{\phi}_z)^2\\
= &(q^2-c^2)F +(c^2-v^2)\sum_{i=1}^4 \overline{\phi}_{r_ir_i}.
  \end{align*}

Recall that for the background solution, $\bar{v} =0$ and $\bar{u} =\bar{q} >\bar{c}$. So for sufficiently small constant $\sigma>0$, $c^2-v^2>0$ and $u^2-c^2>0$.
Thus, \eqref{result4} is a five dimensional nonlinear wave equation.
By \eqref{UenE}, \eqref{Uenwq}, \eqref{4.13x}-\eqref{4.15x}, \eqref{psivar} and \eqref{4.27x}-\eqref{4.28x}, the initial-boundary conditions for $\widehat{\phi}$ are
\begin{align}
   \widehat{\phi}(0, r_1, r_2, r_3, r_4) =& \frac{A^{-\frac{1}{\gamma}}}{r^2}\int_0^{r} \tau \bar{p}_-^{\frac{1}{\gamma}}(\tau)e^{-\frac{\bar{s}_-(\tau)}{\gamma c_v}} \sigma q_{en}(\tau)\dif \tau\label{bry5D1}\\
  \partial_z  \widehat{\phi}(0, r_1, r_2, r_3, r_4) =& 0\label{4.31x}\\
  \widehat{\phi}(z, r_0) =& 0,
  \end{align}
where $r=\sqrt{r_1^2 + r_2^2 +r_3^2 +r_4^2}$.

Let $\overline{H}(r) \defs \bar{p}_-^{\frac{1}{\gamma}}(r)e^{-\frac{\bar{s}_-(r)}{\gamma c_v}} q_{en}(r)$, then \eqref{bry5D1} becomes
  \begin{align}
     \widehat{\phi}(0, r_1, r_2, r_3, r_4) = \sigma\frac{A^{-\frac{1}{\gamma}}}{r^2}\int_0^{r} \tau \overline{H}(\tau)\dif \tau.
  \end{align}
Now we will show that
\begin{align}
\frac{1}{r^2}\int_0^{r} \tau \overline{H}(\tau)\dif \tau\in H^7(B_{r_0}).
\end{align}
First, by $\partial_r^j( \bar{p}_-, \bar{s}_-,q_{en}) (0)=0$, $j=1,3,5$, we have
\begin{align}
\partial_r^j  \overline{H}(0) =0, \quad j=1,3,5.
\end{align}
Thus
\begin{align}
&\lim\limits_{r\rightarrow0} \partial_r \big(  \frac{1}{r^2}\int_0^{r} \tau \overline{H}(\tau)\dif \tau \big)= \lim\limits_{r\rightarrow0}\big(\frac{r \overline{H}(r) }{r^2} - 2\frac{\int_0^{r} \tau \overline{H}(\tau)\dif \tau }{r^3}\big) = \frac{ \partial_r\overline{H}(0)}{3}=0,\label{H'0=0}\\
&\lim\limits_{r\rightarrow 0} \partial_r^3 \big(  \frac{1}{r^2}\int_0^{r} \tau \overline{H}(\tau)\dif \tau \big)=\lim\limits_{r\rightarrow 0} \big( \frac{\overline{H}''(r)}{r} - 4 \frac{\overline{H}'(r)}{r^2}  +12 \frac{\overline{H}(r)}{r^3} -24 \frac{\int_0^{r} \tau \overline{H}(\tau)\dif \tau }{r^5}\big)\notag\\
=& \frac15 \partial_r^3\overline{H}(0) =0,\\
&\lim\limits_{r\rightarrow 0} \partial_r^5 \big(  \frac{1}{r^2}\int_0^{r} \tau \overline{H}(\tau)\dif \tau \big)= \frac17 \partial_r^5\overline{H}(0) =0.\label{H5'0=0}
\end{align}
By \eqref{H'0=0}-\eqref{H5'0=0} and Lemma  \ref{lem5.4x}, the initial data $\widehat{\phi}(0, r_1, r_2, r_3, r_4)\in H^7(B_{r_0})$.

By doing the standard energy estimate for the linear wave equation and the Picard iteration for the local existence (see Theorem 6.4.11 in \cite[Page 113]{Lars}), we have for any given $L>0$, if $\sigma>0$ is sufficiently small, there is a unique solution $\widehat{\phi}\in \cap_{j=0}^7 C^j\Big( [0,L];H^{7-j}(\mathcal{D}_-)  \Big)$ such that
\[
\sum_{j=0}^7 \|\widehat{\phi}\|_{C^j( [0,L];H^{7-j}(\mathcal{D}_-))} \leq C \sigma (\|w_{en}\|_{H^7([0,r_0]))}+ \|q_{en}\|_{H^7([0,r_0])}),
\]
 where the positive constant $C$ depends on $\overline{U}_-$, $L$ and $r_0$.
So it follows from the embedding theorem that
$\widehat{\phi}\in C^4(\overline{\mathcal{D}_-})$ and
 \begin{align}\label{widehatphieq}
   \|\widehat{\phi}\|_{C^4(\overline{\mathcal{D}_-})} \leq  C \sigma (\|w_{en}\|_{H^7([0,r_0]))}+ \|q_{en}\|_{H^7([0,r_0])}).
 \end{align}

Moreover, the linearized problem of \eqref{5Dwaveeqhat} is rotationally invariant. The same are the domain and the initial-boundary conditions. So by the uniqueness, we know the solutions of the linearized problem of \eqref{5Dwaveeqhat} are actually functions of $(z,r)$, such that $\widehat{\phi}(z, r_1, r_2, r_3, r_4) =  \widehat{\varphi} (z, \sqrt{r_1^2 + r_2^2 +r_3^2 +r_4^2})$. By the iteration argument and the uniqueness of the nonlinear problem \eqref{5Dwaveeqhat}-\eqref{4.31x}, we know that the solutions of the nonlinear problem \eqref{5Dwaveeqhat}-\eqref{4.31x} are also functions of $(z,r)$.
Next, we will verify the following identities hold
\begin{align}\label{r13hatphi0=0}
  \partial_r \widehat{\varphi}(z,0)=0, \quad \partial_r^3\widehat{\varphi}(z,0)=0.
\end{align}
By \eqref{4.28x}, we only need to prove that
\begin{align}
&\partial_r \overline{\varphi}(0) =0, \quad \partial_r^3 \overline{\varphi}(0) =0,\label{overlinephir13=0}\\
& \partial_r \varphi (z,0) =0, \quad \partial_r^3 \varphi(z,0) =0.\label{r13varphi0=0}
\end{align}
By $\partial_r^j(\bar{\rho}_-, \bar{q}_-)(0)=0$, $j=1,3$, it is easy to see that \eqref{overlinephir13=0} holds.

Next, it follows from \eqref{uvpsi} and $\psi = r^2 \varphi$ that
\begin{align}
r \rho u  = \psi_r = 2r \varphi + r^2 \varphi_r.
\end{align}
Then 
\begin{align}
3 \varphi_r + r \varphi_{rr} = \partial_r (\rho u)\qquad\mbox{and}\qquad 
5\partial_{r}^3 \varphi+ r  \partial_r^4 \varphi= \partial_r^3  (\rho u).\label{varphirrr=}
\end{align}
They yield that 
\begin{align}
3 \partial_r\varphi(z,0) = \partial_r (\rho u)(z,0)\qquad\mbox{and}\qquad 
5\partial_{r}^3 \varphi(z,0) = \partial_r^3  (\rho u)(z,0).\label{varphirrr=0}
\end{align}
So in order to show \eqref{r13hatphi0=0},
it suffices to prove that
\begin{align}\label{prover13=0}
  \partial_r (\rho u)(z,0) =0 \qquad\mbox{and}\qquad \partial_r^3  (\rho u)(z,0) =0.
\end{align}
Taking derivative with respect to the variable $r$ on the both sides of \eqref{Supeq5} and applying $v(z,0)=0$ and $\partial_r s(0,0)=0$, we have
\begin{align}\label{rsz0=0}
  \partial_r s(z,0)=0.
\end{align}
Similarly, by $v(z,0)=0$ and $\partial_r (p, q, s)(0,0)=0$, \eqref{Supeq4} yields that
\begin{align}\label{rBz0=0}
  \partial_r B(z,0)=0.
\end{align}
\eqref{CE1} and \eqref{CE3} yield that
 \begin{align}\label{CE13=}
   \rho u v_z + \rho v v_r + p_r -\frac{1}{r}\rho w^2=0.
 \end{align}
By $v(z,0)=0$ and $w(z,0)=0$, \eqref{CE13=} yields that
\begin{align}\label{rpz0=0}
  \partial_r p(z,0)=0.
\end{align}
\eqref{rsz0=0}, \eqref{rBz0=0} and \eqref{rpz0=0} imply that
\begin{align}\label{rrhoqz0=0}
  \partial_r \rho (z,0)=0 \qquad\mbox{and}\qquad \partial_r q (z,0)=0.
\end{align}
Taking derivative with respect to the variable $r$ on the both sides of \eqref{Supeq1}, we have
\begin{align}\label{rsuper1=}
&\partial_r u \partial_z \rho + u \partial_z(\partial_r \rho) + v \partial_r^2 \rho + \partial_r v \partial_r \rho + \partial_r \rho (\partial_z u + \partial_r u)\notag\\
& +\rho (\partial_z (\partial_r u) + \partial_r^2 v) + \partial_r \rho \frac{v}{r} + \rho \partial_r (\frac{v}{r})=0.
\end{align}
By \eqref{rrhoqz0=0} and $v(z,0)=0$, then  
\begin{align}\label{r2vz0=0}
  \partial_r^2 v(z,0)=0.
\end{align}
Taking twice derivatives with respect to the variable $r$ on the both sides of \eqref{Supeq3} and applying $\partial_r^2\bar{w}(0)=0$, $\partial_r^2 w_{en}(0,0)=0$, $w(z,0)=0$ and \eqref{r2vz0=0}, we have
\begin{align}\label{r2wz0=0}
  \partial_r^2 w(z,0)=0.
\end{align}
Taking twice derivative with respect to the variable $r$ on the both sides of \eqref{CE13=} and employing \eqref{rrhoqz0=0}, \eqref{r2vz0=0}, \eqref{r2wz0=0}, $v(z,0)=0$, $w(z,0)=0$, we have
\begin{align}
  \partial_r^3 p(z,0)=0.
\end{align}
Then
\begin{align}\label{r3rhoqz0=0}
  \partial_r^3 \rho(z,0)=0, \quad    \partial_r^3 q(z,0)=0.
\end{align}
By \eqref{rrhoqz0=0}, \eqref{r2vz0=0}, \eqref{r3rhoqz0=0} and $v(z,0)=0$, we obtain \eqref{prover13=0}. Therefore, \eqref{r13hatphi0=0} is verified, and then it follows from Lemma \ref{lem5.4x} that
 \begin{align}
   \| \widehat{\varphi}\|_{C^4(\overline{\Omega})} \leq C \sigma (\|w_{en}\|_{H^7([0,r_0]))}+ \|q_{en}\|_{H^7([0,r_0])}).
 \end{align}
Hence $U_-\in C^3(\overline{\Omega})$ and \eqref{eq837} holds.
\end{proof}

\section{Linearized problem in the quasi-subsonic domain}
In this section, we will reformulate the nonlinear free boundary value problem in the quasi-subsonic domain behind the shock front into a fixed nonlinear boundary value problem. Then we will linearize the problem and establish the well-posedness theorems of it by introducing two auxiliary problems for the two elliptic equations and then solving the remaining three transport equations. The well-posedness theorems of the linearized problem play an important role in solving the nonlinear free boundary value problem in the next section.

\subsection{Reformulation of the nonlinear free boundary value problem in the quasi-subsonic domain behind the shock front}
By characteristic decomposition, the equations \eqref{CE1}-\eqref{CE5} can be rewritten as follows:
\begin{align}
& \frac{1-M^2\cos^2\theta}{\rho q^2}\partial_z p - \frac{M^2\cos\theta \sin\theta}{\rho q^2}\partial_r p - \partial_r \theta - \frac{\cos\theta \sin\theta}{r} =0,\label{CDE1}\\
&\rho q^2\partial_z \theta  - \frac{\rho q^2M^2\cos\theta \sin\theta}{1-M^2\cos^2\theta}\partial_r \theta +\frac{1-M^2 }{1-M^2\cos^2\theta}\partial_r p -\frac{\rho q^2\sin^2\theta}{r(1-M^2\cos^2\theta)} - \frac{\rho w^2}{r}=0,\label{CDE2}\\
&\partial_z (rw) + \tan\theta \cdot \partial_r (rw) =0,\label{CDE3}\\
&  \partial_z {B} + \tan\theta\cdot  \partial_r {B}  =0, \label{CDE4}\\
&\partial_z s + \tan\theta \cdot \partial_r s =0,\label{CDE5}
\end{align}
where the Bernoulli's function
${B} \defs \frac12 (q^2 + w^2) +\frac{\gamma p}{(\gamma-1)\rho}$.
Next, the R.-H. conditions \eqref{CRH1}-\eqref{CRH5} can be rewritten as
\begin{align}
G_1(U_+, U_-) \defs& [\rho u][p+\rho v^2]-[\rho u v ] [\rho v]= 0,\label{IG1}\\
G_2(U_+, U_-) \defs& [p+\rho u^2][p+\rho v^2]-([\rho u v ])^2=0,\label{IG2}\\
G_3(U_+, U_-)\defs& [ w] =0,\label{IG3}\\
G_4(U_+, U_-) \defs& [\frac12 q^2 +i ]=0,\label{IG4}\\
G_5(U_+, U_-;\varphi')\defs& [\rho uv]-\varphi'[p+\rho v^2]=0.\label{IG5}
\end{align}
In order to fix and flatten the shock front $\Gamma_s$, which is a free boundary, let us introduce the coordinates transformation
\begin{align}\label{Kfixed}
 \pounds_K: (\tilde{z},\tilde{r} ) = \Big(L + \displaystyle\frac{L - K}{L - {\varphi}(r)}(z - L) ,\,r\Big),
\end{align}
where constant $K\in(0,L)$ is the approximating shock location, which is uniquely determined and will be given in the next section. We expect $|\varphi(r)-K|$ to be small, so $\pounds_K$ is invertible and its inverse is
\begin{align}
\pounds_{K}^{-1} : (z,\,r) = \big(L + \frac{L - \varphi(\tilde{r})}{L - K}(\tilde{z} - L),\,
\tilde{r}\big).
\end{align}
Then the domain $\Omega_+$ becomes
\begin{align*}
\Omega_+^K\defs\{ (z,r)\in \mathbb{R}^2 : K < z < L, 0 < r < r_0\}.
\end{align*}
For the notational simplicity, we drop " {$\widetilde{}$} " in the sequel argument.

By tedious calculations, we can linearize equations \eqref{CDE1}-\eqref{CDE5} for $U_+=\overline{U}_+ + \delta U$ and $\varphi' = \delta\varphi'$ as follows.
\begin{align}
& \partial_z \Big(\frac{1-\overline{M}_+^2}{\bar{\rho}_+\bar{q}_+^2} r \bar{p}_+^{\frac{1}{\gamma}} \delta{p}  \Big) -\partial_r \Big( r \bar{p}_+^{\frac{1}{\gamma}}\delta{\theta}
\Big )  = r \bar{p}_+^{\frac{1}{\gamma}} {f}_1(\delta U, \varphi),\label{eqf1}\\
&\partial_z \Big(\bar{\rho}_+\bar{q}_+^2  \bar{p}_+^{-\frac{1}{\gamma}}\delta{\theta} \Big) +\partial_r \Big(  \bar{p}_+^{-\frac{1}{\gamma}}\delta{p} \Big)
-\frac{2\bar{\rho}_+ \bar{w}_+\bar{p}_+^{-\frac{1}{\gamma}}}{r}
\delta{w} + \frac{\bar{\rho}_+ \bar{w}_+^2\bar{p}_+^{-\frac{1}{\gamma}}}{r\gamma c_v}
\delta{s}=\bar{p}_+^{-\frac{1}{\gamma}}{f}_2^{\sharp}(\delta U, \varphi),\label{eqf2}\\
& \big( \partial_z +  H(z,r)\partial_r\big) (r\delta w)+ \partial_r( r\bar{w}_+ )\cdot \delta \theta ={f}_3(\delta U, \varphi),\label{eqf3}\\
&  \big( \partial_z +  H(z,r)\partial_r\big) \delta B + \partial_r\bar{B}_+ \cdot \delta \theta =f_4(\delta U, \varphi),\label{eqf4}\\
& \big( \partial_z +  H(z,r)\partial_r\big) \delta s + \partial_r\bar{s}_+ \cdot \delta \theta={f}_5(\delta U, \varphi),\label{eqf5}
\end{align}
where
\begin{align}
 H(z,r)\defs&\frac{(L- \varphi)\tan\delta \theta}{L - K + (z - L)\delta \varphi'\tan\delta \theta},\\
{f}_1(\delta U,\varphi)
\defs& \Big( \frac{1-\overline{M}_+^2}{\bar{\rho}_+\bar{q}_+^2} - \frac{1-M^2\cos^2\theta}{\rho q^2}\Big) \partial_z \delta{p} + \frac{M^2\cos\theta \sin\theta}{\rho q^2}\partial_r \delta p\notag\\
& +\frac{\cos\theta \sin\theta - \delta \theta}{r}
+ \frac{\bar{w}_+^2}{r} \Big(
\frac{\bar{\rho}_+ M^2\cos\theta \sin\theta}{\rho q^2}-\frac{\overline{M}_+^2 }{\bar{q}_+^2}\delta{\theta}
\Big)\notag\\
 &+ \frac{(z-L)\delta \varphi'}{L-\varphi}\Big(\frac{M^2\cos\theta \sin\theta}{\rho q^2}\partial_z \delta p + \partial_z \delta \theta   \Big)\notag\\
&-\frac{\varphi -K}{L-\varphi}\frac{1-M^2\cos^2\theta}{\rho q^2}\partial_z \delta p,\label{deff1}\\
{f}_2^{\sharp}(\delta U,\varphi)
\defs&-\big(  \rho q^2 - \bar{\rho}_+\bar{q}_+^2  \big)\partial_z \delta \theta + \frac{M^2\sin^2\theta }{1-M^2\cos^2\theta}\Big(\partial_r\delta p+
\frac{1}{r}\bar{\rho}_+ \bar{w}_+^2\Big)\notag\\
&+ \frac{\rho q^2M^2\cos\theta \sin\theta}{1-M^2\cos^2\theta}\partial_r \delta \theta + \frac{1}{r}\frac{\rho q^2\sin^2\theta}{1-M^2\cos^2\theta}\notag\\
&+\frac{1}{r}\Big(\rho w^2 -\bar{\rho}_+ \bar{w}_+^2- \frac{\overline{M}_+^2 \bar{w}_+^2}{\bar{q}_+^2}\delta{p}- 2\bar{\rho}_+ \bar{w}_+
\delta{w} + \frac{1}{\gamma c_v}\bar{\rho}_+ \bar{w}_+^2\delta{s}\Big)\notag\\
& + \frac{(z-L)\delta \varphi'}{L-\varphi}\Big(\frac{\rho q^2 M^2\cos\theta \sin\theta}{1-M^2\cos^2\theta}\partial_z \delta\theta - \frac{1 - M^2}{1-M^2\cos^2\theta}\partial_z \delta p  \Big)\notag\\
&- \frac{\varphi -K}{L-\varphi}\rho q^2\partial_z \delta\theta,\label{itemf2}\\
{f}_3(\delta U, \varphi)
\defs&  - \partial_r (r\bar{w}_+) \cdot \big( H(z,r) -\delta \theta \big),\\
f_4(\delta U,\varphi)\defs& -\partial_r\bar{B}_+ \cdot \big(H(z,r) -\delta \theta \big),\\
{f}_5(\delta U, \delta \varphi', \delta z_{*})
\defs&  -\partial_r\bar{s}_+ \cdot \big(H(z,r) -\delta \theta \big).\label{itemf5}
\end{align}

The boundary conditions in the new coordinates are
\begin{enumerate}
	\item On the wall of the nozzle $\Gamma_{w}\cap\overline{\Omega_+^K}$,
\begin{align}\label{GAMMA4theta}
\delta \theta = 0;
\end{align}
\item On the symmetry axis $\Gamma_{a}\cap\overline{\Omega_+^K}$,
\begin{align}\label{GAMMA2}
  \delta \theta = 0,\quad \delta w =0;
\end{align}

\item On the exit of the nozzle $\Gamma_{ex}$,
\begin{equation}\label{eq867}
  \delta p = \sigma p_{ex}(r),\quad \text{on}\quad\Gamma_{ex};
\end{equation}

\item On the shock front $\Gamma_K\defs\{z= K, r\in(0,r_0)\}$,
\begin{align}
&\alpha_j^+\cdot \delta U = G_j^{\sharp}(\delta U,\delta U_-, \varphi),\quad {j = 1,2,3,4},\label{eq251RH}\\
&\alpha_5^+ \cdot \delta U - [\bar{p}]\delta {\varphi}' = G_5^{\sharp} (\delta U,\delta U_-, \varphi),\label{eq11000}
\end{align}
where ${\mathbf{\alpha}}_j^{+} = {{\nabla_{{U}_+}}}G_j(\overline{U}_+, \overline{U}_-)$, ${\mathbf{\alpha}}_5^{+} = {{\nabla_{{U}_+}}}G_5(\overline{U}_+, \overline{U}_-;0)$, and
\begin{align}
&G_j^{\sharp}(\delta U,\delta U_-, \varphi): = \alpha_j^+\cdot \delta U - G_j(U, U_-(\varphi', \varphi(r_0))),\label{5.29xw}\\
&G_5^{\sharp} (\delta U,\delta U_-, \varphi): = \alpha_5^+ \cdot \delta U - [\bar{p}]\delta \varphi' - G_5(U, U_-(\varphi', \varphi(r_0));\varphi')\label{xxl}.
\end{align}
\end{enumerate}

By straightforward computation, we have
\begin{align}
& {\mathbf{\alpha}}_1^+ = \Big(\frac{\bar{q}_+}{\bar{c}_+^2},\, 0,\,0,\, \bar{\rho}_+,\, -\frac{\bar{\rho}_+\bar{q}_+}{\gamma c_v} \Big)^\top,\label{alpha1}\\
& {\mathbf{\alpha}}_2^+  = \Big(1+\overline{M}_+^2 ,\,0,\, 0,\, 2\bar{\rho}_+ \bar{q}_+, \, -\frac{\bar{\rho}_+\bar{q}_+^2}{\gamma c_v}  \Big)^\top,\quad {\mathbf{\alpha}}_3^+  = \Big(0,\,0,\,1,\,0 ,\,0 \Big)^\top, \label{alpha3}\\
& {\mathbf{\alpha}}_4^+  = \Big(\frac{1}{\bar{\rho}_+},\,0,\,0,\,\bar{q}_+,\, \frac{\bar{p}_+}{(\gamma -1)c_v \bar{\rho}_+}\Big)^\top,\quad {\mathbf{\alpha}}_5^+  = \Big(0,\, \bar{\rho}_+ \bar{q}_+^2,\, 0,\,0,\, 0\Big)^\top.\label{alpha5}
\end{align}
Thus, the boundary conditions \eqref{eq251RH} can be rewritten as
\begin{equation*}
A_s (\delta {p},\delta {w},\delta {q}, \delta {s})^\top =\mathbf{G}\defs  (G_1^{\sharp}, G_2^{\sharp}, G_3^{\sharp}, G_4^{\sharp})^\top,
\end{equation*}
where
\[
A_s:= \begin{pmatrix} \displaystyle\frac{\bar{q}_+}{\bar{c}_+^2}&0 &\bar{\rho}_+ & -\displaystyle\frac{\bar{\rho}_+\bar{q}_+}{\gamma c_v}\\
1+\overline{M}_+^2 &0& 2\bar{\rho}_+ \bar{q}_+ & -\displaystyle\frac{\bar{\rho}_+\bar{q}_+^2}{\gamma c_v} \\
0&1&0&0 \\
\displaystyle\frac{1}{\bar{\rho}_+}& 0&\bar{q}_+ &\displaystyle\frac{\bar{p}_+}{(\gamma -1)c_v \bar{\rho}_+}
\end{pmatrix}.
\]	
It is easy to see that
\begin{align}\label{AMRS}
 \det A_s = \displaystyle\frac{\bar{p}_+}{(\gamma -1)c_v} (1 - \overline{M}_+^2)\neq 0,\quad \text{as}\quad \overline{M}_+\neq 1,
\end{align}
so we have the following boundary conditions on $\Gamma_s$
\begin{equation}\label{5.35xw}
\big( \delta {p},\, \delta {w},\, \delta {q},\,\delta {s}\big)=\big( g_1,\,g_2,\, g_3,\, g_4  \big)\defs A_s^{-1} \mathbf{G}.
\end{equation}
Moreover, it follows from \eqref{eq11000} that
\begin{align}\label{varphi}
\delta {\varphi}' = g_5\defs \left(\displaystyle\frac{  \alpha_5^+ \cdot \delta U- G_5^{\sharp}(\delta U,\delta U_-, \varphi)}{[\bar{p}]}\right).
\end{align}

Finally, because the elliptic equation \eqref{eqf2} is coupled with the hyperbolic equations \eqref{eqf3} and \eqref{eqf5}, we need to further reformulate the elliptic equation \eqref{eqf2}.
Consider the Cauchy problems of equations \eqref{eqf3} and \eqref{eqf5}:
\begin{eqnarray}
&&\begin{cases}\label{eqw}
 \big( \partial_z +  H(z,r)\partial_r\big) (r\delta w)+ \partial_r( r\bar{w}_+ )\cdot \delta \theta ={f}_3(\delta U,  \varphi)&\text{in}\quad {\Omega}_+^K,\\[3pt]
\delta w(K, r)=g_2(K,r);
\end{cases}\\
&&\begin{cases}\label{eqs}
\big( \partial_z +  H(z,r)\partial_r\big) \delta s + \partial_r\bar{s}_+ \delta \theta={f}_5(\delta U,\varphi)& \text{in}\quad {\Omega}_+^K,\\[3pt]
\delta s(K, r)=g_4(K,r).
\end{cases}
\end{eqnarray}

For any fixed point $(z,r)$, let $\tau\mapsto R(\tau;z,{r})$ be the characteristic curve passing through $(z,{r})$, which satisfies
\begin{align*}
\displaystyle\frac{\dif R(\tau;z,{r})}{\dif \tau} =& H(\tau, R(\tau;z,{r})),\quad \tau\in [K, z],\\
R (z;z,{r})=&{r}.
\end{align*}
Then
\begin{equation}\label{eqdeltaw}
\begin{array}{rl}
& r\delta w(z,r) \\[2mm]
= & R(K;z,r)g_2(R(K;z,r))
+ \int_{K}^{z} {f}_3(\tau, R(\tau;z,{r}) )\dif \tau\\[2mm]
& - \int_{K}^{z}\Big( R(\tau;z,{r})\partial_r \bar{w}_+(R(\tau;z,{r})) + \bar{w}_+(R(\tau;z,{r})) \Big)\delta \theta (\tau, R(\tau;z,{r}) )\dif \tau
\end{array}
\end{equation}
and
\begin{equation}\label{eqdeltas}
\begin{array}{rl}
\delta s(z,r) = &g_4(R(K;z,r)) + \int_{K}^{z} {f}_5(\tau, R(\tau;z,{r}) )\dif \tau\\[2mm]
&- \int_{K}^{z} \partial_r\bar{s}_+(R(\tau;z,{r})) \delta \theta(\tau, R(\tau;z,{r}) )\dif \tau.
\end{array}
\end{equation}
Substituting \eqref{eqdeltaw} and \eqref{eqdeltas} into \eqref{eqf2}, we have
\begin{align}\label{eqf22}
&\partial_z \Big(\bar{\rho}_+\bar{q}_+^2  \bar{p}_+^{-\frac{1}{\gamma}}\delta{\theta} \Big) +\partial_r \Big(  \bar{p}_+^{-\frac{1}{\gamma}}\delta{p} \Big)\notag\\
&- \frac{\bar{\rho}_+\bar{w}_+}{r}\Big(\frac{ \partial_r\bar{s}_+}{\gamma c_v}\bar{w}_+ - 2\big(\partial_r \bar{w}_+ + \frac{\bar{w}_+}{r}\big) \Big)\bar{p}_+^{-\frac{1}{\gamma}}\int_{K}^{z} \delta{\theta}(\tau,r)\dif \tau = \bar{p}_+^{-\frac{1}{\gamma}}f_2(\delta U, \delta \varphi),
\end{align}
where
\begin{align}
& f_2(\delta U, \varphi)\notag\\
&\defs {f}_2^{\sharp}(\delta U, \varphi)
 + \frac{2\bar{\rho}_+ \bar{w}_+}{r^2}\Big\{ R(K;z,r)g_2(R(K;z,r))
+ \int_{K}^{z} f_3(\tau, R(\tau;z,{r}) )\dif \tau\notag\\
& -  \int_{K}^{z} \Big( R(\tau;z,{r})\partial_r \bar{w}_+(R(\tau;z,{r})) + \bar{w}_+(R(\tau;z,{r})) \Big)\delta \theta (\tau, R(\tau;z,{r}) )  \dif \tau \notag\\
&+\Big(r \partial_r \bar{w}_+(r) + \bar{w}_+(r)\Big)\int_{K}^{z}\delta \theta (\tau, r ) \dif \tau  \Big\}\notag\\
&-\frac{1}{\gamma c_v}\frac{\bar{\rho}_+ \bar{w}_+^2}{r}
\Big\{ g_4(R(K;z,r) ) + \int_{K}^{z} f_5(\tau, R(\tau;z,{r}) )\dif \tau\notag\\
&- \int_{K}^{z} \partial_r\bar{s}_+( R(\tau;z,{r}) ) \delta \theta(\tau, R(\tau;z,{r}) )\dif \tau
 + \partial_r\bar{s}_+(r )\int_{K}^{z}  \delta \theta(\tau, r)\dif \tau  \Big\}.\label{f2itera}
\end{align}

\begin{rem}\label{rem:5.1x}
There is a nonlocal term $\int_{K}^{z} \delta{\theta}(\tau,r)\dif \tau$ in \eqref{eqf22}. This is quite different from \cite{FG1,FB63,LiXY2010,ParkRyu_2019arxiv,Yong,HPark,WS94} and shows the spectacular influence of non-zero swirl. It introduces stronger coupling in the Euler equations with integral-type nonlocal terms.
\end{rem}

\subsection{Linearized boundary value problems for the two elliptic equations}
In this and the next subsections, we will consider the linearised boundary value problem by assuming that $f_i$ for $i=1,\cdots,5$ and the values of $(p,w,q,s)$ on the shock front are given in some suitable H\"{o}lder norm spaces. Then we will establish the unique existence of the solutions to the linearized equations \eqref{eqf1}, \eqref{eqf3}-\eqref{eqf5} and \eqref{eqf22}, with boundary conditions \eqref{GAMMA4theta}-\eqref{xxl}. 

First, let us consider the boundary value problem for the two elliptic equations \eqref{eqf1} and \eqref{eqf22} in $\Omega_+^K$
\begin{align}
& \partial_z \Big(\frac{1-\overline{M}_+^2}{\bar{\rho}_+\bar{q}_+^2} r \bar{p}_+^{\frac{1}{\gamma}} \delta{p}  \Big) -\partial_r \Big( r \bar{p}_+^{\frac{1}{\gamma}}\delta{\theta}
\Big )  = r \bar{p}_+^{\frac{1}{\gamma}} f_1(z,r),\label{F1eq1}\\
&\partial_z \Big(\bar{\rho}_+\bar{q}_+^2  \bar{p}_+^{-\frac{1}{\gamma}}\delta{\theta} \Big) +\partial_r \Big(  \bar{p}_+^{-\frac{1}{\gamma}}\delta{p} \Big)\notag\\
&- \frac{\bar{\rho}_+\bar{w}_+}{r}\Big(\frac{ \partial_r\bar{s}_+}{\gamma c_v}\bar{w}_+ - 2\big(\partial_r \bar{w}_+ + \frac{\bar{w}_+}{r}\big) \Big)\bar{p}_+^{-\frac{1}{\gamma}}\int_{K}^{z} \delta{\theta}(\tau,r)\dif \tau = \bar{p}_+^{-\frac{1}{\gamma}} f_2(z,r),\label{F2eq2}
\end{align}
where the boundary conditions are
\begin{align}
  \delta{\theta} =0\qquad &\mbox{on }\Gamma_a\cup\Gamma_w\label{thetaK}\\
  \delta{p}=\sigma p_{ex} (r)\qquad&\mbox{on }\Gamma_{ex}\label{pLK}\\
  \delta{p}= g_1 \qquad&\mbox{on } \Gamma_K.\label{pK}
\end{align}
For the given functions $f_1$, $f_2$, $g_1$ and $p_{ex}$, we assume that
\begin{align}
&f_1\in C^{1,\alpha}(\overline{\Omega_+^K}),\quad\partial_r f_1(z,0)=0,\quad \partial_r f_1(z,r_0)=0,\label{F1F20r0}\\
&f_2\in C^{1,\alpha}(\overline{\Omega_+^K}),\quad f_2(z,0) = 0, \quad f_2(z,r_0)=0,\label{bc:F2}\\
& g_1\in C^{2,\alpha}([0,r_0]),\quad p_{ex}\in C^{2,\alpha}([0,r_0]),\label{assumeregularity}\\
&\partial_r g_1(K,0)=0,\quad  \partial_r g_1(K,r_0)=0,\quad \partial_r p_{ex}(L,0)=0,\quad  \partial_r p_{ex}(L,r_0)=0.\label{partialrg1pe}
\end{align}

\begin{rem}
the regularity assumed for $f_i$, $i=1,2$ in \eqref{F1F20r0} is different from the one for $f_i$, $i=3,4,5$ in \eqref{con:regu} due to the symmetry axis singularity of the swirl velocity $w$.
\end{rem}

In this section, we will show the following theorem on the unique existence of solutions for the bondary value problem \eqref{F1eq1}-\eqref{pK}.
\begin{thm}\label{BigThm}
Suppose that conditions \eqref{F1F20r0}-\eqref{partialrg1pe} hold. There exists a positive constant $r_*$, only depending on $\overline{U}_-(r)$ and $\gamma$, such that for $0<r_0 \leq r_*$, there exists a unique solution $(\delta p, \delta \theta)$ for the boundary value problem \eqref{F1eq1}-\eqref{pK}, if and only if
\begin{equation}\label{thmsolvabilityeq}
 \int_0^{r_0} \frac{1-\overline{M}_+^2}{\bar{\rho}_+\bar{q}_+^2}r \bar{p}_+^{\frac{1}{\gamma}} \Big(\sigma p_{ex} (r) - {g}_1(K, r)\Big)\dif r = \int \int_{\Omega_+^K} r \bar{p}_+^{\frac{1}{\gamma}} f_1(z,r)\dif z \dif r.
\end{equation}
Moreover, the solution $(\delta p, \delta \theta)$ satisfies the following estimate:
\begin{align}\label{eq083}
&\| \delta p \|_{C^{2,\alpha}(\overline{\Omega_+^K})} + \| \delta \theta \|_{C^{2,\alpha}(\overline{\Omega_+^K})}\notag\\
\leq& C\Big(  \sum_{i=1}^2 \| f_i \|_{C^{1,\alpha}(\overline{\Omega_+^K})} + \| g_1 \|_{C^{2,\alpha}([0,r_0])} + \| \sigma p_{ex} \|_{C^{2,\alpha}([0,r_0])}\Big),
\end{align}
where the constant $C$ only depends on $\overline{U}_{\pm}$, $K$, $L$, $\gamma$, $r_0$ and $\alpha$.
\end{thm}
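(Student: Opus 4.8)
The plan is to reduce the coupled first-order elliptic system \eqref{F1eq1}--\eqref{F2eq2} to a single scalar second-order elliptic equation for a potential function, to remove the degeneracy on the symmetry axis by lifting to a higher-dimensional domain as in the proof of Theorem~\ref{mainthmU-}, and to treat the nonlocal term as a lower-order perturbation absorbed by the smallness of $r_0$. The necessity of \eqref{thmsolvabilityeq} is immediate: integrating \eqref{F1eq1} over $\Omega_+^K$ and applying the divergence theorem, the boundary term collapses (since $\delta\theta=0$ on $\Gamma_w$ by \eqref{thetaK} and the weight $r\bar{p}_+^{\frac1\gamma}$ vanishes on $\Gamma_a$) to exactly the left-hand side of \eqref{thmsolvabilityeq}; this step uses only the assumed $C^{2,\alpha}$-regularity of the solution.

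For sufficiency, I exploit that \eqref{F2eq2} is in divergence form, $\mathrm{div}_{(z,r)}\big(\bar{\rho}_+\bar{q}_+^2\bar{p}_+^{-\frac1\gamma}\delta\theta,\ \bar{p}_+^{-\frac1\gamma}\delta p\big)=\bar{p}_+^{-\frac1\gamma}f_2+\mathcal N$, where $\mathcal N$ denotes the nonlocal term. Writing $\big(\bar{\rho}_+\bar{q}_+^2\bar{p}_+^{-\frac1\gamma}\delta\theta,\ \bar{p}_+^{-\frac1\gamma}\delta p\big)=\nabla^\perp\Phi+\nabla\Lambda$ with $\Lambda$ an explicit particular solution of $\Delta\Lambda=\bar{p}_+^{-\frac1\gamma}f_2+\mathcal N$ and $\Phi$ the new unknown, and substituting $\delta p=\bar{p}_+^{\frac1\gamma}(-\partial_z\Phi+\partial_r\Lambda)$, $\delta\theta=(\bar{\rho}_+\bar{q}_+^2)^{-1}\bar{p}_+^{\frac1\gamma}(\partial_r\Phi+\partial_z\Lambda)$ into \eqref{F1eq1}, turns the system into one second-order equation $-\partial_z\big(r a_1(r)\partial_z\Phi\big)-\partial_r\big(r a_2(r)\partial_r\Phi\big)=\mathcal F$ with $a_1=\frac{(1-\overline{M}_+^2)\bar{p}_+^{\frac2\gamma}}{\bar{\rho}_+\bar{q}_+^2}>0$, $a_2=\frac{\bar{p}_+^{\frac2\gamma}}{\bar{\rho}_+\bar{q}_+^2}>0$, so that ellipticity is precisely the quasi-subsonic condition $\overline{M}_+<1$. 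The boundary conditions \eqref{thetaK}--\eqref{pK} translate into pure Neumann data for $\Phi$: on $z=K$ and $z=L$ the prescribed $\delta p$ gives $\partial_z\Phi$ (the normal derivative there), while on $\Gamma_w$ the condition $\delta\theta=0$ gives $\partial_r\Phi$; and after unwinding the weights and using the divergence theorem to cancel the $\Lambda$- and $\mathcal N$-contributions, the Neumann compatibility condition for this problem is exactly \eqref{thmsolvabilityeq}.

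To handle the degeneracy of $ra_1,ra_2$ on $\Gamma_a$, I lift as in Lemma~\ref{lem5.4x}: put $\mathbf y=(y_1,y_2)$ with $r=|\mathbf y|$, set $\widetilde\Phi(z,\mathbf y):=\Phi(z,|\mathbf y|)$, and use $\partial_r\big(ra_i\partial_r\Phi\big)=r\,\mathrm{div}_{\mathbf y}\big(a_i\nabla_{\mathbf y}\widetilde\Phi\big)$ for radial functions; dividing by $r$ yields the genuinely uniformly elliptic, nondegenerate equation $-\mathrm{div}_{(z,\mathbf y)}\big(\mathrm{diag}(a_1,a_2,a_2)\nabla\widetilde\Phi\big)=\mathcal F/r$ on $\widetilde\Omega=(K,L)\times B_{r_0}\subset\mathbb R^3$, on which the symmetry axis has become the interior point $\{\mathbf y=0\}$ and only the three faces $z=K$, $z=L$, $|\mathbf y|=r_0$ remain as boundary. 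The hypotheses \eqref{F1F20r0}--\eqref{partialrg1pe}, together with the vanishing of the odd $r$-derivatives of the background quantities at $r=0$ (Lemma~\ref{barvalue}) and Lemma~\ref{lem5.4x}, guarantee that the lifted coefficients $a_i(|\mathbf y|)$ lie in $C^{1,\alpha}$, that $\mathcal F/r\in C^{\alpha}$, and that the lifted Neumann data lie in $C^{1,\alpha}$ with the correct edge compatibility along $\{|\mathbf y|=r_0\}$. Then the standard theory of the conormal problem---existence by Lax--Milgram on $\{v\in H^1(\widetilde\Omega):\int_{\widetilde\Omega}v=0\}$, uniqueness up to an additive constant, and boundary Schauder estimates---produces $\widetilde\Phi$ with $\|\widetilde\Phi\|_{C^{2,\alpha}(\widetilde\Omega)}\lesssim\|\mathcal F/r\|_{C^{\alpha}}+\|(\text{Neumann data})\|_{C^{1,\alpha}}$, and translating back through the decomposition gives $(\delta p,\delta\theta)$, the ambiguous constant being harmless since $\delta p,\delta\theta$ depend only on $\nabla\Phi$; this yields \eqref{eq083}.

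Finally, the nonlocal term $\mathcal N=\frac{\bar{\rho}_+\bar{w}_+}{r}\big(\frac{\partial_r\bar{s}_+}{\gamma c_v}\bar{w}_+-2(\partial_r\bar{w}_++\frac{\bar{w}_+}{r})\big)\bar{p}_+^{-\frac1\gamma}\int_{K}^{z}\delta\theta(\tau,r)\,\dif\tau$ is handled by a contraction argument: feed a given $\delta\theta$ into $\mathcal N$, solve the Neumann problem above, and return the new $\delta\theta$. Since $\delta\theta$ vanishes on $\Gamma_w$, the Poincar\'e inequality in the thin $r$-direction gives $\|\delta\theta\|_{C^{1,\alpha}}\le C r_0\|\delta\theta\|_{C^{2,\alpha}}$, so the contribution of $\mathcal N$ to the source is bounded by $Cr_0\|(\delta p,\delta\theta)\|_{C^{2,\alpha}}$; choosing $r_*$ so small that $Cr_*<\frac12$ lets this term be absorbed into the left-hand side of \eqref{eq083}, which simultaneously gives the contraction (hence existence), the a priori bound, and uniqueness. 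The main obstacle I anticipate is not any single one of these steps but their interlocking: arranging the potential reformulation so the recovered boundary conditions form exactly a solvable Neumann problem with compatibility condition \eqref{thmsolvabilityeq}, ensuring via Lemmas~\ref{lem5.4x} and \ref{barvalue} that all lifted coefficients, source, and data carry the right H\"older regularity and edge compatibility, and controlling $\mathcal N$ uniformly throughout the iteration---all at once.
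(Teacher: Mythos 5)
Your reduction to a scalar Neumann problem for a potential $\Phi$, the necessity argument via integrating \eqref{F1eq1}, and the idea of lifting to a higher-dimensional ball to neutralize the axis singularity are all in the spirit of the paper, and the cancellation computation showing that the Neumann compatibility for $\Phi$ collapses to \eqref{thmsolvabilityeq} (independent of $\Lambda$ and $\mathcal N$) is correct. However, there are two genuine gaps that your scheme cannot close as written, and they are precisely the places where the paper's construction deviates from yours.

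First, the contraction for the nonlocal term does not work. You claim $\|\delta\theta\|_{C^{1,\alpha}}\le Cr_0\|\delta\theta\|_{C^{2,\alpha}}$ from $\delta\theta\vert_{\Gamma_w}=0$, but this controls only $\delta\theta$ and $\partial_z\delta\theta$ (both of which vanish on $\Gamma_w$ and can be recovered from $-\int_r^{r_0}\partial_s(\cdot)\,\dif s$); there is no boundary condition for $\partial_r\delta\theta$ on $\Gamma_w$, and $\|\partial_r\delta\theta\|_{C^{0,\alpha}}$ is \emph{not} bounded by $r_0^{1-\alpha}\|\delta\theta\|_{C^{2,\alpha}}$. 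Consequently the norm of the map $\delta\theta\mapsto\mathcal{A}(r)\int_K^z\delta\theta\,\dif\tau$ from $C^{1,\alpha}$ into $C^{1,\alpha}$ is of order $\|\mathcal A\|_{C^{1,\alpha}}(L-K)$, not $O(r_0)$, and the iteration is not contractive in the H\"older scale. You could still rescue existence via Fredholm theory (the round trip $C^{1,\alpha}\to C^{2,\alpha}\hookrightarrow C^{1,\alpha}$ is compact) combined with an energy-based uniqueness argument, but that is essentially what the paper does --- and once you use energy/coercivity you no longer need the fixed point at all. The paper's decisive observation (see Remark~\ref{rem:5.1x}) is structural: for the auxiliary problem with homogeneous first equation and homogeneous boundary data, the stream function $\widehat\Psi$ defined from \eqref{Problem2eq1} (not from \eqref{F2eq2}) gives $\delta\theta_2 = r\bar p_+^{-1/\gamma}\partial_z\Psi$ with $\Psi=\widehat\Psi/r^2$ and $\Psi(K,\cdot)=0$, so $\int_K^z\delta\theta_2\,\dif\tau = r\bar p_+^{-1/\gamma}\Psi(z,r)$ is a \emph{local} zeroth-order term in $\Psi$. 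One then solves a single Dirichlet problem \eqref{5DP2}--\eqref{b5DP2} by Lax--Milgram, with coercivity guaranteed for $r_0\le r_*$ because the zeroth-order coefficient $F_0$ is dominated by the Poincar\'e constant $\sim 1/r_0^2$ on the thin cylinder. No iteration is involved.

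Second, your ``explicit particular solution'' $\Lambda$ of $\Delta\Lambda=\bar p_+^{-1/\gamma}f_2+\mathcal N$ is underdetermined, and with a generic choice the recovered $\delta\theta=(\bar\rho_+\bar q_+^2)^{-1}\bar p_+^{1/\gamma}(\partial_r\Phi+\partial_z\Lambda)$ does \emph{not} vanish on $\Gamma_a$: the lift forces $\partial_r\Phi(z,0)=0$, but $\partial_z\Lambda(z,0)$ has no reason to vanish. The same issue makes $\mathcal F/r$ singular at $r=0$ (the term $r^{-1}(\partial_r c_2)\partial_z\Lambda$ with $c_2 = r\bar p_+^{2/\gamma}(\bar\rho_+\bar q_+^2)^{-1}$ behaves like $r^{-1}\partial_z\Lambda(z,0)$), so the lifted source is not in $C^\alpha(B_{r_0})$ as you asserted. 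You would need to impose, e.g., $\Lambda(z,0)\equiv 0$ and show compatibility, which is additional work. The paper sidesteps this entirely by splitting into {\bf {$\llbracket \textit{Problem I}\rrbracket $}} and {\bf {$\llbracket \textit{Problem II}\rrbracket $}} so that \emph{each} auxiliary problem has one homogeneous divergence equation, and each stream function then has built-in axis behavior ($\partial_r\Phi(z,0)=0$ for Problem I, and the explicit $r$-factor in $\delta\theta_2=r\bar p_+^{-1/\gamma}\partial_z\Psi$ for Problem II). Finally, you do not address how $(\delta p,\delta\theta)$ reaches the claimed $C^{2,\alpha}$ regularity --- the potential construction gives $(\delta p_i,\delta\theta_i)\in C^{1,\alpha}$ directly, and the paper needs the separate bootstrap in Lemma~\ref{raiseRegularity} to obtain \eqref{eq083}.
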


To prove Theorem \ref{BigThm}, we need to handle the inhomogeneous terms in the linearised equations \eqref{F1eq1}-\eqref{F2eq2}. To make it, we introduce two auxilliary problems by letting $(\delta p, \delta \theta)=(\delta p_1 +\delta p_2, \delta \theta_1 + \delta \theta_2)$, where $(\delta p_1,\delta\theta_1)$ and $(\delta p_2,\delta\theta_2)$ satisfy the following two linear boundary value problems, respectively. 

{\bf {$\llbracket \textit{Problem I}\rrbracket $}}: $(\delta p_1,\delta\theta_1)$ satisfies the following linear boundary value problem in $\Omega_+^K$
\begin{align}
  &\partial_z \Big(\frac{1-\overline{M}_+^2}{\bar{\rho}_+\bar{q}_+^2} r \bar{p}_+^{\frac{1}{\gamma}} \delta{p}_1  \Big) -\partial_r \Big( r \bar{p}_+^{\frac{1}{\gamma}}\delta{\theta}_1
\Big )  = r \bar{p}_+^{\frac{1}{\gamma}} f_1(z,r),\label{Problem1eq1}\\
  &\partial_z \Big(\bar{\rho}_+\bar{q}_+^2  \bar{p}_+^{-\frac{1}{\gamma}}\delta{\theta}_1 \Big) +\partial_r \Big(  \bar{p}_+^{-\frac{1}{\gamma}}\delta{p}_1 \Big)=0,\label{Problem1eq2}
  \end{align}
where the boundary conditions are
\begin{align}
  &\delta{\theta}_1 =0\qquad \mbox{on }\Gamma_a\cup\Gamma_w\label{Problem1bry1}\\
  &\delta{p}_1 = \sigma p_{ex} \qquad\mbox{on }\Gamma_{ex}\label{Problem1bry2}\\
  &\delta{p}_1= g_1 \qquad\mbox{on } \Gamma_K.\label{Problem1bry3}
\end{align}

{\bf {$\llbracket \textit{Problem II}\rrbracket $}}: $(\delta p_2,\delta\theta_2)$ satisfies the following linear boundary value problem in $\Omega_+^K$
\begin{align}
  &\partial_z \Big(\frac{1-\overline{M}_+^2}{\bar{\rho}_+\bar{q}_+^2} r \bar{p}_+^{\frac{1}{\gamma}} \delta{p}_2  \Big) -\partial_r \Big( r \bar{p}_+^{\frac{1}{\gamma}}\delta{\theta}_2\Big ) =0,\label{Problem2eq1}\\
  &\partial_z \Big(\bar{\rho}_+\bar{q}_+^2  \bar{p}_+^{-\frac{1}{\gamma}}\delta{\theta}_2 \Big) +\partial_r \Big(  \bar{p}_+^{-\frac{1}{\gamma}}\delta{p}_2 \Big)\notag\\
&- \frac{\bar{\rho}_+\bar{w}_+}{r}\Big(\frac{ \partial_r\bar{s}_+}{\gamma c_v}\bar{w}_+ - 2\big(\partial_r \bar{w}_+ + \frac{\bar{w}_+}{r}\big) \Big)\bar{p}_+^{-\frac{1}{\gamma}}\int_{K}^{z} \delta{\theta}_2(\tau,r)\dif \tau\notag\\
=& \bar{p}_+^{-\frac{1}{\gamma}} f_2(z,r) + \frac{\bar{\rho}_+\bar{w}_+}{r} \Big(\frac{\partial_r\bar{s}_+}{\gamma c_v}\bar{w}_+ - 2\big(\partial_r \bar{w}_+ + \frac{\bar{w}_+}{r}\big) \Big)\bar{p}_+^{-\frac{1}{\gamma}}\int_{K}^{z} \delta{\theta}_1(\tau,r)\dif \tau,\label{Problem2eq2}
  \end{align}
where the boundary conditions are
\begin{align}
  &\delta{\theta}_2 =0\qquad\mbox{on }\Gamma_a\cup\Gamma_w\label{Problem2bry1}\\
  &\delta{p}_2= 0\qquad\mbox{on } \Gamma_{ex}\label{Problem2bry2}\\
  &\delta{p}_2 = 0\qquad \mbox{on }\Gamma_K.\label{Problem2bry3}
\end{align}

Then Theorem \ref{BigThm} will be proven by establishing the unique existence of solutions of {\bf {$\llbracket \textit{Problem I}\rrbracket $}} and {\bf {$\llbracket \textit{Problem II}\rrbracket $}}. It is Lemma \ref{3Dphi}, Lemma \ref{theta2p2} and Lemma \ref{raiseRegularity}.

\subsubsection{Well-posedness of {\bf {$\llbracket \textit{Problem I}\rrbracket $}}}
By \eqref{Problem1eq2}, there exists a potential function $\Phi$ such that
\begin{align}\label{Phieq}
  \nabla \Phi = (\partial_z \Phi, \partial_r \Phi) = \Big( \bar{p}_+^{-\frac{1}{\gamma}} \delta{p}_1, - \bar{\rho}_+\bar{q}_+^2   \bar{p}_+^{-\frac{1}{\gamma}} \delta{\theta}_1\Big).
\end{align}
Substituting \eqref{Phieq} into \eqref{Problem1eq1}, we have
\begin{align}\label{phii}
 \partial_z \Big(\frac{1-\overline{M}_+^2}{\bar{\rho}_+\bar{q}_+^2}  \bar{p}_+^{\frac{2}{\gamma}}\partial_z \Phi \Big) + \partial_r \Big( \frac{\bar{p}_+^{\frac{2}{\gamma}}}{\bar{\rho}_+\bar{q}_+^2 } \partial_r \Phi\Big ) + \frac{1}{r} \frac{\bar{p}_+^{\frac{2}{\gamma}}}{\bar{\rho}_+\bar{q}_+^2 } \partial_r \Phi = \bar{p}_+^{\frac{1}{\gamma}} f_1,
\end{align}
where the boundary conditions become
\begin{align}
\partial_r \Phi=0\qquad& \mbox{on }\Gamma_a\cup\Gamma_w\label{IBC22}\\
\partial_z \Phi =\bar{p}_+^{-\frac{1}{\gamma}}g_1 (K, r)\qquad&\mbox{on }\Gamma_K\label{IBC12}\\
\partial_z \Phi = \sigma \bar{p}_+^{-\frac{1}{\gamma}}p_{ex}( r)\qquad &\mbox{on }\Gamma_{ex}.\label{IBC32}
\end{align}
Obviously, to study {\bf {$\llbracket \textit{Problem I}\rrbracket $}}, it is sufficient to study boundary value problem \eqref{phii}-\eqref{IBC32}. To deal with the singularity at the symmetry axis $r=0$, let us consider the problem in three dimensions. That is, introduce
\begin{align*}
  \mathbf{x}= (x_1,x_2,x_3)\defs (z, r\cos \tau, r\sin\tau).
\end{align*}
  Define
 \begin{align*}
   \mathcal{P}\defs & \big\{(x_1, x_2, x_3)\in \mathbb{R}^3 : K< x_1 <L, \, 0\leq x_2^2 + x_3^2<r_0 \big\},\\
    \mathcal{P}_{K}\defs & \big\{(x_1, x_2, x_3)\in \mathbb{R}^3 : x_1 = K, \, 0\leq x_2^2 + x_3^2<r_0 \big\},\\
     \mathcal{P}_{ex}\defs & \big\{(x_1, x_2, x_3)\in \mathbb{R}^3 : x_1 =L, \, 0\leq x_2^2 + x_3^2<r_0 \big\},\\
     \mathcal{P}_{w}\defs & \big\{(x_1, x_2, x_3)\in \mathbb{R}^3 : K< x_1 <L, \,  x_2^2 + x_3^2=r_0 \big\}.
 \end{align*}
Let $\phi(x_1,x_2,x_3)\defs \Phi(x_1, \sqrt{x_2^2 + x_3^2})$.
Then problem \eqref{phii}-\eqref{IBC32} can be rewritten as the following {\bf {$\llbracket \textit{Problem I(3D)}\rrbracket $}}:
\begin{align}\label{phiequa}
\partial_{x_1} \Big(\frac{1-\overline{M}_+^2}{\bar{\rho}_+\bar{q}_+^2}  \bar{p}_+^{\frac{2}{\gamma}}\partial_{x_1} \phi \Big) + \sum_{i=2}^3\partial_{x_i}
\Big( \frac{\bar{p}_+^{\frac{2}{\gamma}}}{\bar{\rho}_+\bar{q}_+^2 } \partial_{x_i}\phi\Big ) =\bar{p}_+^{\frac{1}{\gamma}} f_1(x_1,r),\quad \text{in} \quad \mathcal{P},
\end{align}
with the boundary conditions:
\begin{align}
   &x_2 \partial_{x_2}\phi(x_1,x_2,x_3) +  x_3 \partial_{x_3}\phi(x_1,x_2,x_3) =0, &\quad &\text{on}&\quad \mathcal{P}_w,\label{pw3D}\\
   &\partial_{x_1}\phi(K,x_2,x_3) =\bar{p}_+^{-\frac{1}{\gamma}} g_1(K, r), &\quad &\text{on}&\quad \mathcal{P}_{K},\\
   &\partial_{x_1}\phi(L,x_2,x_3) = \sigma \bar{p}_+^{-\frac{1}{\gamma}}p_{ex}(r), &\quad &\text{on}&\quad \mathcal{P}_{ex}.\label{pex3D}
\end{align}
It follows from assumptions \eqref{F1F20r0}-\eqref{partialrg1pe} and Lemmas \ref{barvalue} and \ref{lem5.4x} that the coefficients in equation \eqref{phiequa} are $C^{2,\alpha}(\overline{\mathcal{P}})$ and
$$
\|f_1\|_{C^{1,\alpha}(\overline{\mathcal{P}})} +
\|g_1\|_{C^{2,\alpha}(\overline{\mathcal{P}_K})} + \|\sigma p_{ex}\|_{C^{2,\alpha}(\overline{\mathcal{P}_{ex}})} \leq C(\|f_1\|_{C^{1,\alpha}(\overline{\Omega_+^K})} + \|g_1\|_{C^{2,\alpha}([0,r_0])} + \|\sigma p_{ex}\|_{C^{2,\alpha}([0,r_0])}).
$$

For the boundary value problem \eqref{phiequa}-\eqref{pex3D}, we have the following Lemma.
\begin{lem}\label{3Dphi}
There exists a unique weak solution $\phi\in H^1(\mathcal{P})$ up to a constant for the {\bf {$\llbracket \textit{Problem I(3D)}\rrbracket $}} if and only if
\begin{align}\label{solvability}
\int_0^{r_0} \frac{1-\overline{M}_+^2}{\bar{\rho}_+\bar{q}_+^2}r \bar{p}_+^{\frac{1}{\gamma}} \Big( \sigma p_{ex}(r) - {g}_1(K, r)\Big)\dif r = \int_0^{r_0}\int_{K}^{L} r \bar{p}_+^{\frac{1}{\gamma}} f_1(z,r)\dif z \dif r.
\end{align}
Select the solution $\phi$ with the additional condition that $\int_{\mathcal{P}} \phi \dif \mathbf{x}=0$.
Then if the conditions \eqref{assumeregularity}, \eqref{F1F20r0} and \eqref{partialrg1pe} hold, any weak solution $\phi\in H^1(\mathcal{P})$ of the {\bf {$\llbracket \textit{Problem I(3D)}\rrbracket $}} has a higher regularity with $\phi\in C^{2,\alpha}(\overline{\mathcal{P}})$  and satisfies the following estimates
\begin{align}\label{phic3}
\|\phi\|_{C^{2,\alpha}(\overline{\mathcal{P}})} \leq C \Big(\|f_1\|_{C^{1,\alpha}(\overline{\mathcal{P}})} +
\|g_1\|_{C^{2,\alpha}(\overline{\mathcal{P}_K})} + \|\sigma p_{ex}\|_{C^{2,\alpha}(\overline{\mathcal{P}_{ex}})}\Big),
\end{align}
where the constant $C$ depends on $\overline{U}_\pm$, $\gamma$, $r_0$, $L$ and $\alpha$. Moreover,
\begin{equation}\label{5.75x}
\phi(x_1,x_2,x_3)\equiv\Phi(x_1, \sqrt{x_2^2 + x_3^2}),
\end{equation}
where $\Phi\in C^{2,\alpha}(\Omega_+^K)$ is a solution of {\bf {$\llbracket \textit{Problem I}\rrbracket $}}, and it holds that
\begin{align}\label{theta1p13D}
&\|\delta \theta_1\|_{C^{1,\alpha}(\overline{\Omega_+^K})} + \|\delta p_1\|_{C^{1,\alpha}(\overline{\Omega_+^K})} \notag\\
\leq & C \Big(\|f_1\|_{C^{1,\alpha}(\overline{\Omega_+^K})} +
    \|g_1\|_{C^{2,\alpha}([0,r_0])} + \|\sigma p_{ex}\|_{C^{2,\alpha}([0,r_0])}\Big),
\end{align}
where the constant $C$ only depends on $\overline{U}_\pm$, $K$, $L$, $\gamma$, $r_0$ and $\alpha$.
\end{lem}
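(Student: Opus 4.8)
The plan is to treat \textbf{Problem I(3D)} as a pure conormal (Neumann) boundary value problem for a uniformly elliptic, self-adjoint, divergence-form operator on the bounded Lipschitz domain $\mathcal{P}$, so that the claimed solvability condition appears as the Fredholm compatibility condition. First I would set up the weak formulation. Since $\overline{M}_+<1$ behind the shock, the coefficients $\frac{1-\overline{M}_+^2}{\bar{\rho}_+\bar{q}_+^2}\bar{p}_+^{2/\gamma}$ and $\frac{\bar{p}_+^{2/\gamma}}{\bar{\rho}_+\bar{q}_+^2}$ are bounded and bounded below by a positive constant, so the bilinear form
\[
\mathcal{B}[\phi,\psi]:=\int_{\mathcal{P}}\Big(\tfrac{1-\overline{M}_+^2}{\bar{\rho}_+\bar{q}_+^2}\bar{p}_+^{2/\gamma}\,\partial_{x_1}\phi\,\partial_{x_1}\psi+\tfrac{\bar{p}_+^{2/\gamma}}{\bar{\rho}_+\bar{q}_+^2}\sum_{i=2}^3\partial_{x_i}\phi\,\partial_{x_i}\psi\Big)\dif\mathbf{x}
\]
is bounded on $H^1(\mathcal{P})$ and, by the Poincar\'e inequality on the connected domain $\mathcal{P}$ together with uniform ellipticity, coercive on $\dot H^1(\mathcal{P}):=\{\psi\in H^1(\mathcal{P}):\int_{\mathcal{P}}\psi\dif\mathbf{x}=0\}$. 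The wall condition \eqref{pw3D} is exactly the homogeneous conormal derivative on $\mathcal{P}_w$, hence contributes no boundary term, while \eqref{pex3D} and the condition on $\mathcal{P}_K$ prescribe the conormal derivative up to the positive factor $\frac{1-\overline{M}_+^2}{\bar{\rho}_+\bar{q}_+^2}\bar{p}_+^{2/\gamma}$. Integrating \eqref{phiequa} against a test function and integrating by parts gives $\mathcal{B}[\phi,\psi]=F(\psi)$, where $F$ collects the boundary integrals of $\sigma\,p_{ex}$ over $\mathcal{P}_{ex}$ and of $-g_1$ over $\mathcal{P}_K$ (each times the positive coefficient) and the volume integral of $-\bar{p}_+^{1/\gamma}f_1$. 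Testing with $\psi\equiv1$ shows that $F$ kills constants, i.e.\ the problem is solvable over all of $H^1(\mathcal{P})$, \emph{if and only if} $F(1)=0$; passing to cylindrical coordinates ($\dif S=r\dif r\dif\tau$, $\dif\mathbf{x}=r\dif r\dif\tau\dif x_1$) and integrating out $\tau$, this is precisely \eqref{solvability}. Given \eqref{solvability}, Lax--Milgram on $\dot H^1(\mathcal{P})$ produces a unique $\phi$ with $\int_{\mathcal{P}}\phi=0$ solving $\mathcal{B}[\phi,\cdot]=F$, and adding constants gives all weak solutions; conversely, a weak solution tested against $\psi\equiv1$ forces \eqref{solvability}, which is the ``only if'' direction. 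Uniqueness modulo constants follows from coercivity applied to the difference of two solutions.

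For the regularity I would invoke Schauder theory. By Lemma \ref{barvalue} the radial background quantities have vanishing first (and third) $r$-derivatives at $r=0$ and $r=r_0$, so by Lemma \ref{lem5.4x} their rotational lifts, as well as the lifts of $\bar{p}_+^{1/\gamma}f_1$, $\bar{p}_+^{-1/\gamma}g_1$ and $\bar{p}_+^{-1/\gamma}p_{ex}$ (here \eqref{F1F20r0} and \eqref{partialrg1pe} are used), lie in $C^{2,\alpha}(\overline{\mathcal{P}})$, resp.\ $C^{1,\alpha}(\overline{\mathcal{P}})$, with norms controlled by the right side of \eqref{phic3}. Interior estimates and estimates up to the flat faces $\mathcal{P}_K,\mathcal{P}_{ex}$ and the smooth lateral wall $\mathcal{P}_w$ are standard for the conormal (Neumann) problem. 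The only delicate points are the two corner circles $\{x_1=K\}\cap\mathcal{P}_w$ and $\{x_1=L\}\cap\mathcal{P}_w$, where a conormal condition on a flat end meets the conormal condition on the wall at a right angle. I would handle these by first homogenizing the two end conditions (subtracting an explicit function built from $g_1$ and $p_{ex}$ with vanishing conormal derivative on $\mathcal{P}_w$), and then reflecting evenly across $x_1=K$ and $x_1=L$: because the coefficients depend only on $r$ and the wall condition \eqref{pw3D} is $x_1$-independent, the reflected equation, wall condition and data are preserved, the corners become ordinary lateral-wall points, and global Schauder estimates on the extended cylinder give $\phi\in C^{2,\alpha}(\overline{\mathcal{P}})$ with
\[
\|\phi\|_{C^{2,\alpha}(\overline{\mathcal{P}})}\le C\big(\|f_1\|_{C^{1,\alpha}(\overline{\mathcal{P}})}+\|g_1\|_{C^{2,\alpha}(\overline{\mathcal{P}_K})}+\|\sigma p_{ex}\|_{C^{2,\alpha}(\overline{\mathcal{P}_{ex}})}+\|\phi\|_{L^2(\mathcal{P})}\big),
\]
and the last term is absorbed using the $H^1$ bound from Lax--Milgram and the normalization $\int_{\mathcal{P}}\phi=0$; this yields \eqref{phic3}.

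Finally, since the operator in \eqref{phiequa}, the domain $\mathcal{P}$ and all the data are invariant under rotations in $(x_2,x_3)$, uniqueness of the normalized solution forces $\phi$ to be rotationally invariant, so $\phi(x_1,x_2,x_3)=\Phi(x_1,\sqrt{x_2^2+x_3^2})$ for some $\Phi$; running Lemma \ref{lem5.4x} backwards gives $\Phi\in C^{2,\alpha}(\overline{\Omega_+^K})$ with $\partial_r\Phi(x_1,0)=0$, and $\Phi$ solves \eqref{phii}--\eqref{IBC32}, which is \eqref{5.75x}. Recovering $(\delta p_1,\delta\theta_1)$ from $\Phi$ through \eqref{Phieq}, i.e.\ $\delta p_1=\bar{p}_+^{1/\gamma}\partial_z\Phi$ and $\delta\theta_1=-\bar{p}_+^{1/\gamma}(\bar{\rho}_+\bar{q}_+^2)^{-1}\partial_r\Phi$, and using $\partial_z\Phi,\partial_r\Phi\in C^{1,\alpha}$, gives the $C^{1,\alpha}$ estimate \eqref{theta1p13D}. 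I expect the main obstacle to be exactly the regularity at the corner circles $\{x_1=K,L\}\cap\{r=r_0\}$: the reflection argument reduces it to the smooth-boundary case, but one must check carefully that the homogenizing function and the reflected coefficients and data keep the required $C^{2,\alpha}$/$C^{1,\alpha}$ regularity across the reflection planes, which is precisely where the vanishing-$r$-derivative structure of Lemma \ref{barvalue} and the lifting estimates of Lemma \ref{lem5.4x} enter.
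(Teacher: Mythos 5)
Your proposal is correct, and the overall architecture matches the paper: weak formulation on the $3$D cylinder, test against $\psi\equiv1$ to read off \eqref{solvability} as the Fredholm compatibility condition, Lax--Milgram on the mean-zero subspace for existence and uniqueness up to constants, Schauder-type estimates for $C^{2,\alpha}$ regularity, and a rotational-invariance argument (via uniqueness and Lemma \ref{lem5.4x}) to descend back to $\Phi$ and recover \eqref{theta1p13D} from \eqref{Phieq}. The one place where you diverge from the paper is the treatment of the corner circles $\{x_1=K,L\}\cap\{r=r_0\}$. The paper does \emph{not} re-use the homogenizing function $G$ there; instead it extends the solution, $f_1$, and $g_1$ evenly across the lateral wall $r=r_0$ (this is precisely where $\partial_r\bar p_+(r_0)=\partial_r\overline{M}_+^2(r_0)=0$ from Lemma \ref{barvalue} and the hypotheses $\partial_rf_1(z,r_0)=\partial_rg_1(K,r_0)=\partial_rp_{ex}(r_0)=0$ from \eqref{F1F20r0}, \eqref{partialrg1pe} are used), so that the corner becomes an interior point of the flat disk $\mathcal{P}_K$ or $\mathcal{P}_{ex}$, and one applies the inhomogeneous-Neumann Schauder estimate on a flat face. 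You instead keep $G$, reduce to homogeneous Neumann on the ends, and reflect evenly across $x_1=K$ and $x_1=L$, turning the corner into an interior point of the smooth lateral wall with homogeneous conormal condition. Both reflections are legitimate; yours trades the inhomogeneous flat-face Schauder estimate for the need to verify that $G$ and its $r$-derivatives respect the wall condition (which they do, again by \eqref{partialrg1pe} and Lemma \ref{barvalue}) and that $F_*$ extends in $C^{0,\alpha}$ across the ends (automatic for even extension). One small point to be explicit about in your version: passing from the $H^1$ bound to the $C^0$ term in the Schauder estimate is not immediate; the paper goes through an intermediate $L^\infty$/De Giorgi and $C^{0,\alpha}$, $C^{1,\alpha}$ ladder before the $C^{2,\alpha}$ step, and you would need to include an analogue of that (or a compactness/contradiction argument) rather than absorbing the $L^2$ norm directly.
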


\begin{proof}
This proof is divided into two steps.

\emph{Step 1.} In this step, we will show the existence of weak solution  $\phi\in H^1(\mathcal{P})$.
Let
\begin{align*}
  G(x_1, x_2, x_3) = \bar{p}_+^{-\frac{1}{\gamma}}\Big(\frac{g_1(K,r) - \sigma p_{ex}( r)}{2(K-L)}(x_1-K)^2 + g_1(K,r)x_1\Big).
\end{align*}
Let
$$
\phi_* \defs \phi - G.
$$
Then it follows from \eqref{partialrg1pe} that $\phi_*$ satisfies the following boundary value problem
\begin{align}\label{phiequa*}
\partial_{x_1} \Big(\frac{1-\overline{M}_+^2}{\bar{\rho}_+\bar{q}_+^2}  \bar{p}_+^{\frac{2}{\gamma}}\partial_{x_1} \phi_* \Big) + \sum_{i=2}^3\partial_{x_i}
\Big( \frac{\bar{p}_+^{\frac{2}{\gamma}}}{\bar{\rho}_+\bar{q}_+^2 } \partial_{x_i}\phi_*\Big ) = F_*,\quad \text{in} \quad \mathcal{P},
\end{align}
where
\begin{align*}
F_* \defs \bar{p}_+^{\frac{1}{\gamma}} f_1(x_1,r) - \partial_{x_1} \Big(\frac{1-\overline{M}_+^2}{\bar{\rho}_+\bar{q}_+^2}  \bar{p}_+^{\frac{2}{\gamma}}\partial_{x_1} G \Big) - \sum_{i=2}^3\partial_{x_i}
\Big( \frac{\bar{p}_+^{\frac{2}{\gamma}}}{\bar{\rho}_+\bar{q}_+^2 }
\partial_{x_i} G\Big ),
\end{align*}
with boundary conditions
\begin{align}
   &x_2 \partial_{x_2}\phi_*(x_1,x_2,x_3) +  x_3 \partial_{x_3}\phi_*(x_1,x_2,x_3) =0, &\quad &\text{on}&\quad \mathcal{P}_w,\label{pw3D*}\\
   &\partial_{x_1}\phi_*(K,x_2,x_3) =0, &\quad &\text{on}&\quad \mathcal{P}_{K},\\
   &\partial_{x_1}\phi_*(L,x_2,x_3) = 0, &\quad &\text{on}&\quad \mathcal{P}_{ex}.\label{pex3D*}
\end{align}

$\phi_*$ is a weak solution of the problem \eqref{phiequa*}-\eqref{pex3D*} if and only if for any test function $\zeta\in H^1(\mathcal{P})$,
\begin{align}\label{zetaphi*}
-\int_{\mathcal{P}} \Big(\frac{1-\overline{M}_+^2}{\bar{\rho}_+\bar{q}_+^2}\bar{p}_+^{\frac{2}{\gamma}} \partial_{x_1} \phi \partial_{x_1}\zeta  + \sum_{i=2}^3 \frac{\bar{p}_+^{\frac{2}{\gamma}} }{\bar{\rho}_+\bar{q}_+^2 } \partial_{x_i}\phi\partial_{x_i}\zeta \Big)\dif \mathbf{x} =  \int_{\mathcal{P}} F_* \zeta \dif \mathbf{x}.
\end{align}
By choosing $\zeta =1$ in \eqref{zetaphi*}, we have that $\int_{\mathcal{P}} F_* \dif \mathbf{x}=0$, that is
\begin{align}\label{solvability*}
 \int_{\mathcal{P}} \bar{p}_+^{\frac{1}{\gamma}} F_1(x_1,r) \dif \mathbf{x} =& \int_{\mathcal{P}} \Big(\partial_{x_1} \big(\frac{1-\overline{M}_+^2}{\bar{\rho}_+\bar{q}_+^2}  \bar{p}_+^{\frac{2}{\gamma}}\partial_{x_1} G \big) + \sum_{i=2}^3\partial_{x_i}
\big( \frac{\bar{p}_+^{\frac{2}{\gamma}}}{\bar{\rho}_+\bar{q}_+^2 } \partial_{x_i} G\big )\Big)\dif \mathbf{x}\notag\\
=& \int_{\mathcal{P}_{ex}} \frac{1-\overline{M}_+^2}{\bar{\rho}_+\bar{q}_+^2} \bar{p}_+^{\frac{2}{\gamma}}\partial_{x_1} G  \dif S
-\int_{\mathcal{P}_{K}} \frac{1-\overline{M}_+^2}{\bar{\rho}_+\bar{q}_+^2} \bar{p}_+^{\frac{2}{\gamma}}\partial_{x_1} G  \dif S.
\end{align}
It is condition \eqref{solvability}.

Next, we will prove that if \eqref{solvability*} holds (equivalently \eqref{solvability} holds), there exists a weak solution $\phi_*$ to the problem \eqref{phiequa*}-\eqref{pex3D*}. Define
\begin{align*}
\mathcal{S} \defs \Big\{\phi_* \in H^1 (\mathcal{P})\Big| \int_{\mathcal{P}} \phi_* \dif \mathbf{x}=0\Big\}.
\end{align*}
Let
\begin{align*}
   \mathcal{B}[\phi, \zeta] \defs -\int_\mathcal{P}  \Big(\frac{1-\overline{M}_+^2}{\bar{\rho}_+\bar{q}_+^2}\bar{p}_+^{\frac{2}{\gamma}} \partial_{x_1} \phi \partial_{x_1}\zeta  + \sum_{i=2}^3 \frac{\bar{p}_+^{\frac{2}{\gamma}} }{\bar{\rho}_+\bar{q}_+^2 } \partial_{x_i}\phi\partial_{x_i}\zeta \Big)\dif \mathbf{x}.
  \end{align*}
It is easy to see that for any $\phi\in\mathcal{S}$ and $\zeta\in H^1(\mathcal{P})$
\begin{align*}
  | \mathcal{B}[\phi, \zeta]|\leq  3 \Big\|  \frac{\bar{p}_+^{\frac{2}{\gamma}}}{\bar{\rho}_+\bar{q}_+^2 } \Big\|_{L^\infty([0,r_0])} \int_\mathcal{P} |D \phi| |D \zeta|\dif \mathbf{x}
  \leq C\|\phi\|_{H^1(\mathcal{P})} \|\zeta\|_{H^1(\mathcal{P})},
\end{align*}
where the positive constant $C$ only depends on $\overline{U}_+$.

On the other hand, by the Poincar\'{e} inequality, we have for any $\phi\in\mathcal{S}$
\begin{align*}
\|\phi\|_{L^2(\mathcal{P})}^2 = \int_{\mathcal{P}} |\phi - \int_{\mathcal{P}} \phi \dif \mathbf{x}|^2 \dif \mathbf{x} \leq C\int_{\mathcal{P}}|D\phi|^2 \dif \mathbf{x}.
\end{align*}
Thus for any $\phi\in\mathcal{S}$
\begin{align*}
  |\mathcal{B} [\phi, \phi]| \geq C \|\phi\|_{H^1(\mathcal{P})}^2.
\end{align*}
By applying Lax-Milgram theorem, for all $F_*\in L^2$ with $\int_{\mathcal{P}} F_* \dif \mathbf{x}=0$, \emph{i.e.}, \eqref{solvability*} holds, there exists a unique $\phi_* \in \mathcal{S}$ such that for any $\zeta\in H^1(\mathcal{P})$
\begin{align*}
(F_*, \zeta)_{L^2(\mathcal{P})} = &(F_*, \int_{\mathcal{P}} \zeta \dif \mathbf{x})_{L^2(\mathcal{P})} + (F_*, \zeta - \int_{\mathcal{P}} \zeta \dif \mathbf{x})_{L^2(\mathcal{P})} \notag\\
= &B[\phi_*, \zeta - \int_{\mathcal{P}} \zeta \dif \mathbf{x}] =B[\phi_*, \zeta].
\end{align*}
Therefore, $\phi_* \in \mathcal{S} \subset H^1(\mathcal{P})$ is a weak solution. Notice that $(\phi_* + \text{constant.})$ is also a weak solution and the solution is unique in $\mathcal{S}$. So solution $\phi_* \in H^1(\mathcal{P})$ is unique up to a constant.
Moreover, choose the solution $\phi$ with the additional condition that $\int_{\mathcal{P}} \phi \dif \mathbf{x}=0$, then the following estimate holds
\begin{align}\label{H13D}
  \|\phi\|_{H^1(\overline{\mathcal{P}})} \leq C \Big(\|f_1\|_{C^{1,\alpha}(\overline{\mathcal{P}})} +\|g_1\|_{C^{2,\alpha}(\overline{\mathcal{P}_K})} + \|\sigma p_{ex}\|_{C^{2,\alpha}(\overline{\mathcal{P}_{ex}})} \Big),
\end{align}
where the constant ${C}$ only depends on $\overline{U}_\pm$, $\gamma$, $r_0$ and $\alpha$.

\emph{Step 2.} In this step, we will prove \eqref{phic3}-\eqref{theta1p13D}. Notice that we can multiply corresponding coefficient functions in \eqref{phiequa*} to make the boundary value problem \eqref{phiequa*}-\eqref{pex3D*} conormal, such that we can apply the results in Chapter 5 in \cite{GM13}. First, by \cite[Theorem 5.36 and Theorem 5.45]{GM13} with $u$ and $-u$ to have
\begin{align}\label{H1L6}
    \|\phi\|_{L^{\infty}(\overline{\mathcal{P}})} \leq C \Big(\|f_1\|_{C^{1,\alpha}(\overline{\mathcal{P}})} +\|g_1\|_{C^{2,\alpha}(\overline{\mathcal{P}_{K}})} + \|\sigma p_{ex}\|_{C^{2,\alpha}(\overline{\mathcal{P}_{ex}})} +  \|\phi\|_{{H^1}(\overline{\mathcal{P}})}   \Big).
  \end{align}
Then we can apply \cite[Theorem 5.36 and Theorem 5.45]{GM13} for $\phi$ locally near the boundary and apply the standard De Giorgi type interior H\"{o}lder estimates (see \cite[Theorem 8.24]{DNS})  to have the local H\"{o}lder estimates
\begin{align}\label{0alpha}
    \|\phi\|_{C^{0,\alpha}(\overline{\mathcal{P}})} \leq C \Big(\|f_1\|_{C^{1,\alpha}(\overline{\mathcal{P}})} +\|g_1\|_{C^{2,\alpha}(\overline{\mathcal{P}_{K}})} + \|\sigma p_{ex}\|_{C^{2,\alpha}(\overline{\mathcal{P}_{ex}})} +  \|\phi\|_{{H^1}(\overline{\mathcal{P}})}   \Big).
  \end{align}
Now we can apply \cite[Theorem 4.6]{GM13} to have
  \begin{align}\label{phic0}
    \|\phi\|_{C^{1,\alpha}(\overline{\mathcal{P}})} \leq C \Big(\|f_1\|_{C^{1,\alpha}(\overline{\mathcal{P}})} +\|g_1\|_{C^{2,\alpha}(\overline{\mathcal{P}_{K}})} + \|\sigma p_{ex}\|_{C^{2,\alpha}(\overline{\mathcal{P}_{ex}})} +  \|\phi\|_{{H^1}(\overline{\mathcal{P}})}   \Big).
  \end{align}

Next, we will improve the regularity of the solution $\phi$ to establish \eqref{phic3}. Because the interior estimates are standard, we only estimate $\phi$ when point $\mathbf{x}$ is near the points $(K,r_0)$ or $(L,r_0)$. Without loss of the generality and for the shortness, we only consider the point  near $(K, r_0)$, since the other cases can be treated similarly.

By boundary condition $\partial_r\Phi(x_1,r_0)=0$ on the nozzle wall, we can extend the solution across the nozzle wall evenly by defining
\begin{align*}
\widetilde{\Phi}(x_1,r) \defs \begin{cases}
\Phi(x_1,r),\quad 0<r<r_0,\\
 \Phi (x_1, 2r_0-r),\quad r\geq r_0.
\end{cases}
\end{align*}
Let
$$\widetilde{\phi}(x_1,x_2,x_3)\defs \widetilde{\Phi}(x_1, \sqrt{x_2^2 + x_3^2}) = \widetilde{\Phi}(x_1,r).$$
Similarly, by assumptions $\partial_r f_1(z,r_0)=0$ and $\partial_r g_1(z,r_0)=0$, we also define
\begin{align*}
\widetilde{f}_1(x_1,r) \defs \begin{cases}
f_1(x_1,r),\quad 0<r<r_0,\\
f_1(x_1, 2r_0-r),\quad r\geq r_0.
\end{cases}
\widetilde{g}_1(K,r) \defs \begin{cases}
g_1(K,r),\quad 0<r<r_0,\\
g_1(K, 2r_0-r),\quad r\geq r_0.
\end{cases}
\end{align*}
Let $\mathcal{P}_\tau \cap \mathcal{P}$ be a small neighbourhood near point $(K, r_0)$. Let the extended domain be
\begin{align*}
  \widetilde{\mathcal{P}_\tau}\defs \big(  \mathcal{P}_\tau \cap \mathcal{P} \big)  \cup \big\{(z,2r_0-r): (z,r)\in  \big(  \mathcal{P}_\tau \cap \mathcal{P} \big)   \big\}.
\end{align*}
Then it follows from Lemma \ref{barvalue} that the extended functions satisfy the following boundary value problem
\begin{align*}
  \partial_{x_1} \Big(\frac{1-\bar{M}_+^2}{\bar{\rho}_+\bar{q}_+^2}  \bar{p}_+^{\frac{2}{\gamma}}\partial_{x_1}\widetilde{\phi} \Big) + \sum_{i=2}^3\partial_{x_i}
\Big( \frac{\bar{p}_+^{\frac{2}{\gamma}}}{\bar{\rho}_+\bar{q}_+^2 } \partial_{x_i}\widetilde{\phi}\Big ) =  \bar{p}_+^{\frac{1}{\gamma}}\widetilde{f_1}\qquad &\text{in}\quad \widetilde{\mathcal{P}_\tau},\\
 \bar{p}_+^{\frac{1}{\gamma}}\partial_{x_1}\widetilde\phi(K,x_2,x_3) =\widetilde{g}_1(K, r), \qquad &\text{on}\quad  x_1 = K.
\end{align*}
Then for any smooth domain $\widetilde{\mathcal{P}_\tau}^{\sharp} \subset\widetilde{\mathcal{P}_\tau}$, we easily have
\begin{align}\label{Psharp}
  \|\widetilde{\phi}\|_{C^{2,\alpha}(\widetilde{\mathcal{P}_\tau}^{\sharp})} \leq C \Big(\|\widetilde{f_1} \|_{C^{1,\alpha}(\overline{\widetilde{\mathcal{P}_\tau}})} + \|\widetilde{g}_1\|_{C^{2,\alpha}(x_1 = K)} + \|\widetilde{\phi}\|_{C^{1,\alpha}(\partial \widetilde{\mathcal{P}_\tau} \backslash \{x_1 = K\})} \Big).
\end{align}
So it follows from \eqref{phic0} and \eqref{Psharp} that
\begin{align}\label{phic2}
 \|\widetilde{\phi}\|_{C^{2,\alpha}(\mathcal{P}_\tau^{\sharp})} \leq    \|\widetilde{\phi}\|_{C^{2,\alpha}(\widetilde{\mathcal{P}_\tau}^{\sharp})} \leq C \Big( \|\widetilde{f_1} \|_{C^{1,\alpha}(\overline{\widetilde{\mathcal{P}_\tau}})} + \|\widetilde{g}_1\|_{C^{2,\alpha}(x_1 = K)} +  \|\phi\|_{{H^1}(\overline{\mathcal{P}})} \Big),
\end{align}
where $\mathcal{P}_\tau^{\sharp}=\widetilde{\mathcal{P}_\tau}^{\sharp}\cap\overline{\mathcal{P}}$.
Now by applying the standard identity decomposition technique to combining the local estimates,  similar to\eqref{phic2}, near all the four corner points $(K,0)$ ,$(K,r_0)$, $(L,0)$ and $(L,r_0)$, we can obtain \eqref{phic3} from \eqref{H13D}.

By the uniqueness of the solutions of problem \eqref{phiequa}-\eqref{pex3D} by selecting the solutions to satisfy $\int_{\mathcal{P}}\phi\dif \mathbf{x}=0$, it follows from the fact that \eqref{phiequa}-\eqref{pex3D} is rotational invariant, that is, \eqref{5.75x} holds. It follows from Lemma \ref{lem5.4x} and the regularity of $\phi$ that boundary conditions \eqref{IBC22} hold. Moreover, \eqref{phic3} yields that
 \begin{align}\label{Phic3}
\|\Phi\|_{C^{2,\alpha}(\overline{\Omega_+^K})} \leq C \Big(\|f_1\|_{C^{1,\alpha}(\overline{\Omega_+^K})} +
\|g_1\|_{C^{2,\alpha}([0,r_0])} + \|\sigma p_{ex}\|_{C^{2,\alpha}([0,r_0])}\Big).
\end{align}
Finally, \eqref{theta1p13D} follows from \eqref{Phieq} and \eqref{Phic3}.

\end{proof}

\subsubsection{Well-posedness of {\bf {$\llbracket \textit{Problem II}\rrbracket $}}}
Now let us consider {\bf {$\llbracket \textit{Problem II}\rrbracket $}}.
By \eqref{Problem2eq1}, there exists a potential function $\widehat{\Psi}$ such that
\begin{align*}
  \nabla \widehat{\Psi} = (\partial_z \widehat{\Psi}, \partial_r \widehat{\Psi}) =& \Big( r \bar{p}_+^{\frac{1}{\gamma}} \delta{\theta}_2, \frac{1-\overline{M}_+^2}{\bar{\rho}_+\bar{q}_+^2}  r\bar{p}_+^{\frac{1}{\gamma}}\delta{p}_2\Big),
\end{align*}
which implies that
\begin{align}\label{thetaphat}
  \delta{\theta}_2 = \frac{\bar{p}_+^{-\frac{1}{\gamma}} }{r }\partial_z \widehat{\Psi}, \quad
  \delta{p}_2 = \frac{\bar{\rho}_+\bar{q}_+^2}{1-\overline{M}_+^2}
  \frac{\bar{p}_+^{-\frac{1}{\gamma}}}{r }\partial_r \widehat{\Psi}.
\end{align}
Then boundary conditions \eqref{Problem2bry1}-\eqref{Problem2bry3} become the tangential derivative of $\widehat{\Psi}$ along all the boundaries is zero. So $\widehat{\Psi}$ is a constant along the boundary.
Note that $\widehat{\Psi}$ is unique up to a constant, so by substracting a possible constant, the boundary conditions \eqref{Problem2bry1}-\eqref{Problem2bry3} become
\begin{align}
  \widehat{\Psi} =0 \qquad \text{on}\quad \partial\Omega_+^K.
\end{align}
Let $\Psi= \frac{\widehat{\Psi}}{r^2}$, then
\begin{align}\label{PsiBry0}
  \Psi = 0\qquad \text{on}\quad \partial\Omega_+^K\backslash\Gamma_a.
\end{align}

On $\Gamma_a$, it follows from Lemma \ref{barvalue}, equation \eqref{Problem2eq2} and boundary conditions \eqref{Problem1bry1} and \eqref{Problem2bry1} that $\partial_r(\delta p_2)(z,0)=0$. Note that
\[
\Psi(z,0)=\lim_{r\rightarrow0}\frac{\widehat{\Psi}}{r^2}=\lim_{r\rightarrow0}\frac{\widehat{\Psi}_r}{2r}= \frac{(1-\overline{M}_+^2)\bar{p}_+^{\frac{1}{\gamma}}}{2\bar{\rho}_+\bar{q}_+^2}(0) \delta p_2(z,0).
\]
Then
\begin{align*}
 \Psi_r(z,0) =& \lim\limits_{r\rightarrow 0} \frac{\Psi(z,r) - \Psi(z,0)}{r}\notag\\
   = &\lim\limits_{r\rightarrow 0} \frac{\widehat{\Psi}(z,r) -  \frac12 \frac{1-\overline{M}_+^2(0)}{\bar{\rho}_+(0)\bar{q}_+^2(0)} \bar{p}_+^{\frac{1}{\gamma}}(0) \delta{p}_2(z,0) r^2}{r^3}\notag\\
  =&\lim\limits_{r\rightarrow 0} \frac{\widehat{\Psi}_r(z,r) - \frac{1-\overline{M}_+^2(0)}{\bar{\rho}_+(0)\bar{q}_+^2(0)} \bar{p}_+^{\frac{1}{\gamma}}(0) \delta{p}_2(z,0) r}{3r^2}\notag\\
  =& \lim\limits_{r\rightarrow 0} \frac{\frac{1-\overline{M}_+^2(r)}{\bar{\rho}_+(r)\bar{q}_+^2(r)} \bar{p}_+^{\frac{1}{\gamma}}(r) \delta{p}_2(z,r) - \frac{1-\overline{M}_+^2(0)}{\bar{\rho}_+(0)\bar{q}_+^2(0)} \bar{p}_+^{\frac{1}{\gamma}}(0) \delta{p}_2(z,0) }{3r}\notag\\
  =& 0.
\end{align*}

So we have the boundary condition
\begin{equation}\label{5.93x}
\Psi_r=0\qquad\mbox{on }\Gamma_a.
\end{equation}

Moreover, it follows from \eqref{thetaphat} that
\begin{align}\label{PEQ}
   \delta{\theta}_2 =r \bar{p}_+^{-\frac{1}{\gamma}} \partial_z \Psi, \quad
  \delta{p}_2 = \frac{\bar{\rho}_+\bar{q}_+^2}{1-\overline{M}_+^2} \bar{p}_+^{-\frac{1}{\gamma}}\Big( r \partial_r \Psi + 2\Psi \Big).
\end{align}
By plugging \eqref{PEQ} into equation \eqref{Problem2eq2}, we know $\Psi$ satisfies the following equation
\begin{align}\label{RRproblem2}
   &\partial_z \Big(\bar{\rho}_+\bar{q}_+^2 \bar{p}_+^{-\frac{2}{\gamma}}\partial_z {\Psi}\Big) +\frac{1}{r} \partial_r \Big(  \frac{\bar{\rho}_+\bar{q}_+^2}{1-\overline{M}_+^2}
   \bar{p}_+^{-\frac{2}{\gamma}}
   \big( r \partial_r {\Psi} + 2 {\Psi}\big)\Big)\notag\\
    &-\frac{\bar{\rho}_+\bar{w}_+}{r}\Big(\frac{\partial_r\bar{s}_+}{\gamma c_v}\bar{w}_+  - 2\big(\partial_r \bar{w}_+ + \frac{\bar{w}_+}{r}\big) \Big)\bar{p}_+^{-\frac{2}{\gamma}} {\Psi} = {F}^{\sharp}(z,r).
\end{align}
where
${F}^{\sharp}(z,r)\defs \frac{\bar{p}_+^{-\frac{1}{\gamma}} {F}_2(z,r)}{r}$
and
\begin{equation}\label{5.95x}
{F}_2(z,r)\defs
f_2(z,r) + \frac{\bar{\rho}_+\bar{w}_+}{r} \Big(\frac{\partial_r\bar{s}_+}{\gamma c_v}\bar{w}_+ - 2\big(\partial_r \bar{w}_+ + \frac{\bar{w}_+}{r}\big) \Big)\int_{K}^{z} \delta{\theta}_1(\tau,r)\dif \tau.
\end{equation}

By \eqref{BCRH4} and \eqref{assumew0r0=0}, we know that $\bar{w}_+(0) =\bar{w}_-(0) =0$. So
\[
\frac{\bar{w}_{\pm}(r)}{r}=\frac{\bar{w}_{\pm}(r)-\bar{w}_{\pm}(0)}{r}=\int_0^1\partial_r(\bar{w}_{\pm})(\tau r)d\tau.
\]
Then it follows from the fact that $\overline{U}_{\pm}(r)$ are $C^7$-functions that $\frac{\bar{w}_{+}}{r}\in C^6(\overline{\Omega_+^K})$.  By \eqref{bc:F2}, \eqref{theta1p13D} and \eqref{5.95x}, we know that ${F}_2(z,r) \in C^{1,\alpha}(\overline{\Omega_+^K})$ and
\begin{equation}\label{5.110x}
\|{F}_2\|_{C^{1,\alpha}(\overline{\Omega_+^K})}\leq C(\|f_2\|_{C^{1,\alpha}(\overline{\Omega_+^K})}+\|f_1\|_{C^{1,\alpha}(\overline{\Omega_+^K})} +
    \|g_1\|_{C^{2,\alpha}([0,r_0])} + \|\sigma p_{ex}\|_{C^{2,\alpha}([0,r_0])}).
\end{equation}
It follows from \eqref{bc:F2} and $\delta\theta_1(z,0)=0$ that $ {F}_2(z,0) =0$.
Then
\begin{align}\label{5.98x}
 {F}^{\sharp} =  \frac{\bar{p}_+^{-\frac{1}{\gamma}} {F}_2(z,r)}{r} =\bar{p}_+^{-\frac{1}{\gamma}}\frac{{F}_2(z,r) - {F}_2(z,0)}{r}.
\end{align}
So it follows from \eqref{5.110x} that
\begin{equation}\label{5.123xw}
\begin{array}{rl}
&\|F^{\sharp}\|_{C^{0,\alpha}(\Omega_+^K)}\\[2mm]
\leq & C\|\partial_rF_2\|_{C^{0,\alpha}(\Omega_+^K)}\\[2mm]
\leq &C(\|f_2\|_{C^{1,\alpha}(\overline{\Omega_+^K})}+\|f_1\|_{C^{1,\alpha}(\overline{\Omega_+^K})} +
    \|g_1\|_{C^{2,\alpha}([0,r_0])} + \|\sigma p_{ex}\|_{C^{2,\alpha}([0,r_0])}).
\end{array}
\end{equation}

Because some of the coefficients in equation \eqref{RRproblem2} tend to infinity as $r\rightarrow0$, we will consider the elliptic problem in five dimensions to remove the symmetry-axis singularity.
Let $\mathbf{r} = (r_1, r_2, r_3, r_4)$ and $\sum\limits_{i=1}^4 r_i^2 = r^2$.
Let
 \begin{align*}
  \mathcal{D} \defs \{(z,\mathbf{r}): z\in (K, L), \mathbf{r}\in \mathbb{R}^4, |\mathbf{r}|< r_0 \}.
\end{align*}
Let
$$
\mathbf{Z}\defs (z, \mathbf{r})\in \mathcal{D}.
$$
Then obviously ${F}^{\sharp}(z,r)\in C^{0,\alpha}(\overline{\mathcal{D}})$ and satisfies
\begin{align}\label{Fsharp=divF}
\|F^{\sharp}\|_{C^{0,\alpha}(\overline{\mathcal{D}})}\leq C(\|f_2\|_{C^{1,\alpha}(\overline{\Omega_+^K})}+\|f_1\|_{C^{1,\alpha}(\overline{\Omega_+^K})} +
    \|g_1\|_{C^{2,\alpha}([0,r_0])} + \|\sigma p_{ex}\|_{C^{2,\alpha}([0,r_0])}).
\end{align}
Let $\psi(z, r_1, r_2, r_3, r_4) = \Psi (z, \sqrt{r_1^2 + r_2^2 +r_3^2 +r_4^2})$,
then {\bf {$\llbracket \textit{Problem II}\rrbracket $}} can be rewritten as the following {\bf {$\llbracket \textit{Problem II(5D)}\rrbracket $}}.
Equation \eqref{RRproblem2} becomes
\begin{align}\label{5DP2}
\partial_z \Big(\bar{\rho}_+ \bar{q}_+^2 \bar{p}_+^{-\frac{2}{\gamma}}\partial_z \psi  \Big) + \sum_{i=1}^4 \partial_{r_i} \Big( \frac{\bar{\rho}_+ \bar{q}_+^2}{1-\overline{M}_+^2} \bar{p}_+^{-\frac{2}{\gamma}}\partial_{r_i} \psi\Big)+  F_0(r) \psi ={F}^{\sharp}(z, \mathbf{r})\qquad \text{in}\quad \mathcal{D},
\end{align}
where
\begin{align}\label{F0value}
  F_0(r) \defs \frac{2}{r}\partial_r\Big( \frac{\bar{\rho}_+\bar{q}_+^2}{1-\overline{M}_+^2} \bar{p}_+^{-\frac{2}{\gamma}}\Big) - \frac{\bar{\rho}_+\bar{w}_+}{r}\Big(\frac{ \partial_r\bar{s}_+}{\gamma c_v} \bar{w}_+ - 2\big(\partial_r \bar{w}_+ + \frac{\bar{w}_+}{r}\big)  \Big)\bar{p}_+^{-\frac{2}{\gamma}}.
\end{align}
Boundary condition \eqref{PsiBry0} becomes
\begin{align}\label{b5DP2}
  \psi =0 \qquad \text{on}\quad \partial\mathcal{D}.
\end{align}

In order to apply Lemma \ref{lem5.4x} to know that $F_0\in C^{2,\alpha}(\overline{\mathcal{D}})$, we need to show $\partial_r F_0(0)=0$. Let
\begin{align}\label{F0value}
  F_0(r) \defs  \overline{F}_1(r) +\overline{F}_2(r),
\end{align}
 where $ \overline{F}_1(r) \defs \frac{2}{r}\partial_r\Big( \frac{\bar{\rho}_+\bar{q}_+^2}{1-\overline{M}_+^2} \bar{p}_+^{-\frac{2}{\gamma}}\Big)$ and $\overline{F}_2(r)\defs -\frac{\bar{\rho}_+\bar{w}_+}{r}\Big(\frac{ \partial_r\bar{s}_+}{\gamma c_v} \bar{w}_+ - 2\big(\partial_r \bar{w}_+ + \frac{\bar{w}_+}{r}\big)  \Big)\bar{p}_+^{-\frac{2}{\gamma}}$.
Direct calculations yield that
\begin{align}\label{barF1r=}
  &\partial_r \overline{F}_1(r)\notag\\
=& 2 \Big(-\frac{2}{\gamma} \frac{\bar{\rho}_+\bar{q}_+^2\bar{p}_+^{-\frac{2}{\gamma}-1}}{1-\overline{M}_+^2} \partial_r (\frac{\partial_r \bar{p}_+}{r}) + \frac{\bar{q}_+^2 \bar{p}_+^{-\frac{2}{\gamma}}}{1-\overline{M}_+^2} \partial_r(\frac{\partial_r \bar{\rho}_+}{r}) + \frac{2\bar{\rho}_+\bar{q}_+ \bar{p}_+^{-\frac{2}{\gamma}}}{1-\overline{M}_+^2} \partial_r(\frac{\partial_r\bar{q}_+}{r})\notag\\
 &\qquad + \frac{\bar{\rho}_+ \bar{q}_+^2 \bar{p}_+^{-\frac{2}{\gamma}}}{(1-\overline{M}_+^2)^2}  \partial_r(\frac{\partial_r \overline{M}_+^2}{r})\Big)\notag\\
 & + 2\Big(-\frac{2}{\gamma} (\frac{\partial_r \bar{p}_+}{r}) \partial_r\big(\frac{\bar{\rho}_+\bar{q}_+^2\bar{p}_+^{-\frac{2}{\gamma}-1}}{1-\overline{M}_+^2} \big)  + (\frac{\partial_r \bar{\rho}_+}{r}) \partial_r \big(\frac{\bar{q}_+^2 \bar{p}_+^{-\frac{2}{\gamma}}}{1-\overline{M}_+^2} \big) + (\frac{\partial_r\bar{q}_+}{r})\partial_r\big( \frac{2\bar{\rho}_+\bar{q}_+ \bar{p}_+^{-\frac{2}{\gamma}}}{1-\overline{M}_+^2}\big) \notag\\
 &\qquad + (\frac{\partial_r \overline{M}_+^2}{r})\partial_r\big(\frac{\bar{\rho}_+ \bar{q}_+^2 \bar{p}_+^{-\frac{2}{\gamma}}}{(1-\overline{M}_+^2)^2} \big)  \Big)
\end{align}
and
\begin{align}\label{barF2r=}
  \partial_r \overline{F}_2(r)=& -\big(\frac{\bar{w}_+}{r}\partial_r\bar{\rho}_+ + \bar{\rho}_+ \partial_r(\frac{\bar{w}_+}{r})  \big)\Big(\frac{ \partial_r\bar{s}_+}{\gamma c_v} \bar{w}_+ - 2\big(\partial_r \bar{w}_+ + \frac{\bar{w}_+}{r}\big)  \Big)\bar{p}_+^{-\frac{2}{\gamma}}\notag\\
  &-\frac{\bar{\rho}_+\bar{w}_+}{r}\Big(\frac{ \partial_r^2 \bar{s}_+}{\gamma c_v} \bar{w}_+ + \frac{ \partial_r \bar{s}_+}{\gamma c_v} \partial_r\bar{w}_+ - 2\big(\partial_r^2 \bar{w}_+ + \partial_r(\frac{\bar{w}_+}{r})\big)  \Big)\bar{p}_+^{-\frac{2}{\gamma}}\notag\\
  & + \frac{2}{\gamma} \frac{\bar{\rho}_+\bar{w}_+}{r}\Big(\frac{ \partial_r\bar{s}_+}{\gamma c_v} \bar{w}_+ - 2\big(\partial_r \bar{w}_+ + \frac{\bar{w}_+}{r}\big)  \Big)\bar{p}_+^{-\frac{2}{\gamma} -1}\partial_r \bar{p}_+.
\end{align}
By applying $\bar{w}_+(0)=\bar{w}_-(0)= 0$ and Lemma \ref{barvalue}, we have
\begin{align}
 \partial_r \overline{F}_2(0)=0.
\end{align}
In \eqref{barF1r=},
\begin{align}
  \lim\limits_{r\rightarrow 0} \partial_r(\frac{\partial_r \bar{p}_+}{r}) = \lim\limits_{r\rightarrow 0} \frac{r \partial_r^2 \bar{p}_+ - \partial_r \bar{p}_+}{r^2} = \lim\limits_{r\rightarrow 0}\frac{r \partial_r^3 \bar{p}_+}{2r} = \frac12 \partial_r^3 \bar{p}_+(0).
\end{align}
Similarly,
\begin{align}
   \lim\limits_{r\rightarrow 0} \partial_r(\frac{\partial_r \bar{\rho}_+}{r}) =\frac12 \partial_r^3 \bar{\rho}_+(0), \,   \lim\limits_{r\rightarrow 0} \partial_r(\frac{\partial_r \bar{q}_+}{r}) =\frac12 \partial_r^3 \bar{q}_+(0), \,  \lim\limits_{r\rightarrow 0} \partial_r(\frac{\partial_r \overline{M}_+^2}{r}) =\frac12 \partial_r^3 \overline{M}_+^2(0).
\end{align}
Then it follows from Lemma \ref{barvalue} that
\begin{align}
 \partial_r \overline{F}_1(0)=0.
\end{align}
Thus
$ \partial_r F_0(0) =\partial_r \overline{F}_1(0) + \partial_r \overline{F}_2(0)=0$. Therefore,
\begin{equation}
F_0,\, \mbox{which only depends on the special shock solution, is in } C^{2,\alpha}(\overline{\mathcal{D}}).
\end{equation}

Now the well-posedness of problem \eqref{5DP2}-\eqref{b5DP2} and then the well-posedness of problem \eqref{PsiBry0} and \eqref{RRproblem2} will be established by proving the following lemma.
\begin{lem}\label{theta2p2}
There exists a positive constant $r_*$, only depending on $\overline{U}_-(r)$ and $\gamma$, such that for $0<r_0 < r_*$, there exists a unique weak solution $\psi\in H_0^1 (\overline{\mathcal{D}})$ to problem \eqref{5DP2}-\eqref{b5DP2}. Moreover, if \eqref{bc:F2} holds, the $H_0^1(\overline{\mathcal{D}})$ weak solution $\psi$ of the {\bf {$\llbracket \textit{Problem II(5D)}\rrbracket $}} is in $C^{2,\alpha}(\overline{\mathcal{D}})$ and satisfies 
\begin{align}\label{psic3}
\|\psi\|_{C^{2,\alpha}(\overline{\mathcal{D}})} \leq {C} \Big(\|f_2\|_{C^{1,\alpha}(\overline{\Omega_+^K})}  + \| \delta \theta_1 \|_{C^{1,\alpha}(\overline{\mathcal{D}})} \Big),
\end{align}
where the constant $C$ only depends on $\overline{U}_\pm$, $\gamma$, $r_0$, $L$ and $\alpha$. Moreover,
\begin{equation}\label{5.103x}
\psi(z,r_1,r_2,r_3,r_4)\equiv\Psi(z, \sqrt{r_1^2+r_2^2 + r_3^2+r_4^2}),
\end{equation}
where $\Psi\in C^{2,\alpha}(\Omega_+^K)$ is a solution of {\bf {$\llbracket \textit{Problem II}\rrbracket $}}, and it holds that
\begin{align}\label{p2theta2}
&\|\delta p_2\|_{C^{1,\alpha}(\overline{\Omega_+^K})}+  \|\delta \theta_2\|_{C^{1,\alpha}(\overline{\Omega_+^K})}\notag\\
\leq& C \Big(\|f_1\|_{C^{1,\alpha}(\overline{\Omega_+^K})} +\|f_2\|_{C^{1,\alpha}(\overline{\Omega_+^K})} +
\|g_1\|_{C^{2,\alpha}([0,r_0])} + \|\sigma p_{ex}\|_{C^{2,\alpha}([0,r_0])}\Big),
\end{align}
where the constant $C$ only depends on $\overline{U}_\pm$, $K$, $L$, $\gamma$, $r_0$ and $\alpha$.
\end{lem}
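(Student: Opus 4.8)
The plan is to solve the five-dimensional problem \eqref{5DP2}--\eqref{b5DP2} by the variational method followed by Schauder regularity, and then to recover the two-dimensional problem \eqref{RRproblem2}, \eqref{PsiBry0}, \eqref{5.93x} by exploiting the rotational symmetry in $\mathbf{r}$. First I would record the structural facts that make this possible: since the flow behind the shock is quasi-subsonic, $\overline{M}_+<1$, so both principal coefficients $\bar{\rho}_+\bar{q}_+^2\bar{p}_+^{-2/\gamma}$ and $\frac{\bar{\rho}_+\bar{q}_+^2}{1-\overline{M}_+^2}\bar{p}_+^{-2/\gamma}$ are bounded above and below by positive constants depending only on $\overline{U}_+$ and $\gamma$, i.e. the principal part of \eqref{5DP2} is uniformly elliptic; the coefficient $F_0$ depends only on the special shock solution and, as already checked above (using $\partial_rF_0(0)=0$ and Lemma~\ref{lem5.4x}), lies in $C^{2,\alpha}(\overline{\mathcal{D}})$; and, by \eqref{5.95x} together with the argument in \eqref{5.98x}--\eqref{Fsharp=divF}, the right-hand side $F^\sharp$ lies in $C^{0,\alpha}(\overline{\mathcal{D}})$ with $\|F^\sharp\|_{C^{0,\alpha}(\overline{\mathcal{D}})}\leq C(\|f_2\|_{C^{1,\alpha}(\overline{\Omega_+^K})}+\|\delta\theta_1\|_{C^{1,\alpha}(\overline{\mathcal{D}})})$, and $F^\sharp$ vanishes identically on the lateral boundary $\{|\mathbf{r}|=r_0\}$ (because $f_2$ vanishes there by \eqref{bc:F2} and $\delta\theta_1$ vanishes there by \eqref{Problem1bry1}, so $F_2(z,r_0)=0$, while $F_2(z,0)=0$ as well).

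For the unique existence of a weak solution $\psi\in H_0^1(\mathcal{D})$ I would apply the Lax--Milgram theorem to the bilinear form associated with minus the left side of \eqref{5DP2}. Boundedness on $H^1$ is clear; the real issue, and the \emph{main obstacle}, is coercivity, since the zeroth order term $F_0\psi$ has no sign. This is exactly what forces the smallness $0<r_0<r_*$: because $\mathcal{D}\subset(K,L)\times\{|\mathbf{r}|<r_0\}$ and $\psi$ vanishes on $\{|\mathbf{r}|=r_0\}$, the Poincar\'e inequality gives $\|\psi\|_{L^2(\mathcal{D})}\leq C_0 r_0\|\nabla\psi\|_{L^2(\mathcal{D})}$ with $C_0$ absolute, so the term $\int_{\mathcal{D}}F_0\psi^2$ is absorbed into the ellipticity once $r_0$ is smaller than an $r_*$ depending only on $\overline{U}_-$ and $\gamma$ (note that $\|F_0\|_{L^\infty}$ is controlled uniformly, $F_0$ coming from the fixed background). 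Coercivity then yields the weak solution, its uniqueness, and the energy bound $\|\psi\|_{H^1(\mathcal{D})}\leq C\|F^\sharp\|_{L^2(\mathcal{D})}$.

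Next I would upgrade the regularity to $\psi\in C^{2,\alpha}(\overline{\mathcal{D}})$. Interior Schauder estimates and boundary Schauder estimates at smooth points of $\partial\mathcal{D}$ with homogeneous Dirichlet data are routine; near the points $\{z=K,\,L\}\cap\{\mathbf{r}=\mathbf{0}\}$ the only boundary present is the flat cap and, by Lemma~\ref{barvalue} and Lemma~\ref{lem5.4x}, all coefficients are $C^{2,\alpha}(\overline{\mathcal{D}})$, so boundary Schauder applies there too. The delicate part is the two edges $\{z=K,\,L\}\cap\{|\mathbf{r}|=r_0\}$, which I would treat by the same local reflection-and-patch technique as in the proof of Lemma~\ref{3Dphi}, the only difference being that the reflection across the wall $\{|\mathbf{r}|=r_0\}$ is now \emph{odd} rather than even, because the wall condition is Dirichlet. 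Odd reflection preserves the homogeneous Dirichlet data on the caps, so the edge becomes an interior point of a smooth Dirichlet boundary; the coefficients reflect with the regularity needed for $C^{2,\alpha}$ Schauder precisely because $\partial_r(\bar{p}_\pm,\bar{\rho}_\pm,\bar{s}_\pm,\overline{M}_\pm^2)(r_0)=0$ by Lemma~\ref{barvalue}, and the reflected right-hand side stays in $C^{0,\alpha}$ precisely because $F^\sharp$ vanishes on $\{|\mathbf{r}|=r_0\}$. Patching the local estimates by a partition of unity, and using the $H^1$ bound of the previous step together with an $L^\infty$ bound $\|\psi\|_{C^0(\overline{\mathcal{D}})}\leq C\|F^\sharp\|_{C^{0,\alpha}(\overline{\mathcal{D}})}$, gives the estimate \eqref{psic3}.

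Finally, problem \eqref{5DP2}--\eqref{b5DP2}, its coefficients, its domain and its data are all invariant under rotations of $\mathbf{r}$, so by the uniqueness established above the solution is radial in $\mathbf{r}$, i.e. $\psi(z,\mathbf{r})=\Psi(z,|\mathbf{r}|)$, which is \eqref{5.103x}. Since $\psi\in C^{2,\alpha}(\overline{\mathcal{D}})$, Lemma~\ref{lem5.4x} (with $n=4$) yields $\Psi\in C^{2,\alpha}([K,L]\times[0,r_0])$ together with $\partial_r\Psi(z,0)=0$, so $\Psi$ solves \eqref{RRproblem2} with the boundary conditions \eqref{PsiBry0} and \eqref{5.93x}, i.e. $\Psi$ is a solution of {\bf{$\llbracket\textit{Problem II}\rrbracket$}}. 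Substituting the $C^{2,\alpha}$ bound for $\Psi$ into the algebraic identities \eqref{PEQ}, which involve only first derivatives of $\Psi$, gives $\delta p_2,\ \delta\theta_2\in C^{1,\alpha}(\overline{\Omega_+^K})$ with the bound on the right side of \eqref{psic3}; bounding $\|\delta\theta_1\|_{C^{1,\alpha}}$ there by the estimate \eqref{theta1p13D} of Lemma~\ref{3Dphi} then produces \eqref{p2theta2} and completes the proof.
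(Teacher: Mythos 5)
Your Steps 1, 2 and 4 (Lax--Milgram with Poincar\'e to absorb the sign-indefinite zeroth-order coefficient $F_0$, the resulting $H^1$ and energy bounds, and rotational invariance to recover the radial solution $\Psi$ and hence $(\delta p_2,\delta\theta_2)$ via \eqref{PEQ}) match the paper's argument essentially line for line. Your Step 3, the $C^{2,\alpha}$ regularity, takes a genuinely different route. You propose to handle the corners $\{z=K,L\}\cap\{|\mathbf{r}|=r_0\}$ by odd reflection across the cylindrical wall followed by a direct boundary Schauder estimate on the flattened cap, i.e., you transplant the technique used in the proof of Lemma~\ref{3Dphi}. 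The paper instead deliberately avoids reflecting a divergence-form equation across the curved sphere for the $C^{2,\alpha}$ step: it uses reflection only to get $C^{1,\alpha}$ (via \cite[Cor.~8.36]{DNS}, which is robust to lower-order terms), then bootstraps by differentiating in $z$ (the Neumann data for $\psi^z$ on the caps being compatible precisely because $F^\sharp(z,r_0)=0$), then by taking the angular tangential derivatives $r_i\partial_{r_j}-r_j\partial_{r_i}$ near the wall, which commute with the radial coefficients, and finally recovers $\psi_{nn}$ algebraically from the equation; a contradiction/compactness argument then removes $\|\psi\|_{C^0}$ from the right-hand side. Your shorter route would also work, but two points that the paper's route sidesteps need to be made explicit: (i) reflecting across the \emph{curved} sphere $\{|\mathbf{r}|=r_0\}$ does not reproduce the equation exactly --- the $\frac{3}{r}\Psi_r$ part of the radial Laplacian reflects to $\frac{3}{2r_0-r}\Psi_r$, so the extended function satisfies the extended equation only up to a smooth first-order term $\bigl(\frac{3}{r}+\frac{3}{2r_0-r}\bigr)R_2\Psi_r$, which must be moved to the right-hand side (it is $C^{0,\alpha}$ once the $C^{1,\alpha}$ estimate is known, so this is harmless but must be said); and (ii) the $C^{1,\alpha}$ extendability of the principal coefficients across $|\mathbf{r}|=r_0$ uses $\partial_r(\bar p_+,\bar\rho_+,\bar q_+,\overline{M}_+^2)(r_0)=0$ from Lemma~\ref{barvalue}, which you should cite. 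Finally, the $L^\infty$ bound $\|\psi\|_{C^0}\le C\|F^\sharp\|_{C^{0,\alpha}}$ that you assert is correct but needs De Giorgi--Nash--Moser local boundedness plus the $H^1$ bound (the paper instead eliminates $\|\psi\|_{C^0}$ by a compactness argument); either mechanism works.
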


\begin{proof}
The proof is divided into four steps.

\emph{Step 1.} In this step, we will prove the uniqueness of the solution $\psi$. Assume that $\psi_1$ and $\psi_2$ are two solutions to problem \eqref{5DP2}-\eqref{b5DP2}. Let
\begin{align*}
\widetilde{\psi}\defs \psi_1 - \psi_2.
\end{align*}
Then $\widetilde{\psi}$ satisfies the following equation
\begin{align}\label{u5DP2}
  \partial_z \Big(\bar{\rho}_+ \bar{q}_+^2 \bar{p}_+^{-\frac{2}{\gamma}}\partial_z \widetilde{\psi}\Big) + \sum_{i=1}^4 \partial_{r_i} \Big( \frac{\bar{\rho}_+ \bar{q}_+^2}{1-\bar{M}_+^2}\bar{p}_+^{-\frac{2}{\gamma}}\partial_{r_i} \widetilde{\psi}\Big) + F_0(r) \widetilde{\psi}= 0\qquad \text{in}\quad \mathcal{D},
\end{align}
with the boundary condition
\begin{align}\label{ub5DP2}
  \widetilde{\psi} =0 \qquad \text{on}\quad \partial\mathcal{D}.
\end{align}
Multiplying $\widetilde{\psi}$ on the both sides of equation \eqref{u5DP2}, integrating over $\mathcal{D}$ and then integration by part, we have
\begin{align}\label{psi=0}
  &\int_{\mathcal{D}}{F}_0(r) \widetilde{\psi}^2 \dif \mathbf{Z}\notag\\
  =&\int_{\mathcal{D}}\bar{\rho}_+ \bar{q}_+^2\bar{p}_+^{-\frac{2}{\gamma}} |\partial_z\widetilde{\psi}|^2 \dif \mathbf{Z}+ \sum_{i=1}^4 \int_{\mathcal{D}} \frac{\bar{\rho}_+ \bar{q}_+^2}{1-\bar{M}_+^2}\bar{p}_+^{-\frac{2}{\gamma}}|\partial_{r_i} \widetilde{\psi}|^2 \dif \mathbf{Z}\notag\\
  \geq& \min\limits_{|\mathbf{r}|<r_0}\{\bar{\rho}_+ \bar{q}_+^2 \bar{p}_+^{-\frac{2}{\gamma}}\} \int_{\mathcal{D}}|D\widetilde{\psi}|^2 \dif \mathbf{Z}\notag\\
   \geq& \min\limits_{|\mathbf{r}|<r_0}\{\bar{\rho}_+ \bar{q}_+^2 \bar{p}_+^{-\frac{2}{\gamma}}\} \frac{1}{4r_0^2}\int_{\mathcal{D}}\widetilde{\psi}^2 \dif \mathbf{Z},
\end{align}
where the last inequality is by Poincar\'{e}'s inequality (see \cite[Proposition 3.10]{GM}).
So if we can show
\begin{align}\label{Ir*}
\max\limits_{|\mathbf{r}|<r_0}{F}_0 (r)  \leq \frac{1}{4r_0^2}\min\limits_{|\mathbf{r}|<r_0}\{  \bar{\rho}_+ \bar{q}_+^2 \bar{p}_+^{-\frac{2}{\gamma}} \},
\end{align}
\eqref{psi=0} yields that $\widetilde{\psi} \equiv0$. Then the uniqueness of the solutions $\psi$ follows.

Hence, we remains to show \eqref{Ir*} in this step. It follows from \eqref{RHS-}-\eqref{BCRH4} and \eqref{F0value} that
\begin{align*}
  {F}_0(r) =&\frac{2\bar{w}_-^2}{r^2} \frac{\bar{\rho}_+ \bar{p}_+^{-\frac{2}{\gamma}}}{\gamma -1 +2t}\notag\\
  &\times \Big\{\frac{\gamma -1 +2t}{1-t} \cdot \frac{(\gamma-1)(\gamma+t)+ (\gamma-2 )(\gamma+1)}{\gamma+1} \notag\\
 &\qquad +(\gamma-1)(\gamma+t) +(\gamma +1)+ (\gamma +1) \frac{r\partial_r \bar{w}_-}{\bar{w}_-}\notag\\
 &\qquad + \frac{\gamma+1}{2}\frac{\bar{w}_-^2}{\bar{q}_-^2}\Big((\gamma-1) - \frac{(\gamma-1)(\gamma+1)^2(1-t)^2}{(\gamma-1+2t)^2(2\gamma -(\gamma-1)t)}\Big)\Big\}\notag\\
<&  2\frac{\gamma +1}{\gamma -1}\frac{\bar{w}_-^2\bar{\rho}_+ \bar{p}_+^{-\frac{2}{\gamma}}}{r^2}\Big(\frac{2(\gamma -1)}{1-t} + \gamma + \frac{r\partial_r \bar{w}_-}{\bar{w}_-} + \frac{\gamma -1}{2} \frac{\bar{w}_-^2}{\bar{q}_-^2} \Big),
\end{align*}
where $ t(r) = \frac{1}{\overline{M}_-^2(r)}$.
Notice that
\begin{align*}
\bar{\rho}_+ \bar{q}_+^2 \bar{p}_+^{-\frac{2}{\gamma}} =   \frac{\gamma -1 +2t}{\gamma +1}  \Big(\frac{2\gamma \overline{M}_-^2 - \gamma +1}{\gamma+1}\bar{p}_-   \Big)^{-\frac{2}{\gamma}}\bar{\rho}_- \bar{q}_-^2.
\end{align*}
Obviously, there exists a constant $r_*>0$ such that
\begin{equation}
r_* \leq\frac{\sqrt{2}\mu^2 \min\limits_{|\mathbf{r}|<r_0}\left\{\sqrt{\bar{\rho}_-} \bar{q}_- \Big(\frac{2\gamma \overline{M}_-^2 - \gamma +1}{\gamma+1}\bar{p}_-   \Big)^{-\frac{1}{\gamma}}\right\} }{4\max\limits_{|\mathbf{r}|<r_0}\left\{ \frac{\bar{w}_- \sqrt{\bar{\rho}_-} \Big(\frac{2\gamma \overline{M}_-^2 - \gamma +1}{\gamma+1}\bar{p}_-   \Big)^{-\frac{1}{\gamma}}}{r} \cdot \sqrt{\frac{2(\gamma -1)}{1-t} + \gamma + \frac{r\partial_r \bar{w}_-}{\bar{w}_-} + \frac{\gamma -1}{2} \frac{\bar{w}_-^2}{\bar{q}_-^2}}  \right\} }>0.
\end{equation}
Then for any $0<r_0\leq r_*$, by a straightforward calculation, we know that \eqref{Ir*} holds.

\emph{Step 2.} In this step, we will prove the existence of the weak solution $\psi\in H_0^1(\mathcal{D})$ to {\bf {$\llbracket \textit{Problem II(5D)}\rrbracket $}}.
Let $\zeta\in H_0^1(\mathcal{D})$ be a test function. The bilinear form corresponding to problem {\bf {$\llbracket \textit{Problem II(5D)}\rrbracket $}} is as follows.
\begin{align*}
  \mathcal{B}[\psi, \zeta ]\defs \int_{\mathcal{D}}\bar{\rho}_+ \bar{q}_+^2\bar{p}_+^{-\frac{2}{\gamma}} \partial_z\psi \partial_z\zeta \dif \mathbf{Z}+ \sum_{i=1}^4 \int_{\mathcal{D}} \frac{\bar{\rho}_+ \bar{q}_+^2}{1-\bar{M}_+^2} \bar{p}_+^{-\frac{2}{\gamma}}\partial_{r_i} \psi \partial_{r_i}\zeta \dif \mathbf{Z}
   - \int_{\mathcal{D}}F_0(r) \psi \zeta  \dif \mathbf{Z}.
\end{align*}
It is easy to check that $ \mathcal{B}[\psi, \zeta]$ is bounded. By applying \eqref{psi=0} and \eqref{Ir*}, we obtain for any $0<r_0<r_*$,
\begin{align}\label{coercive5D}
   \mathcal{B}[\psi, \psi ]\geq \beta \|\psi\|_{H_0^1(\mathcal{D})}^2,
\end{align}
where the constant $\beta>0$ only depends on $\overline{U}_\pm $, $\gamma $, $r_0$, $L$ and $K$. By the Lax-Milgram theorem, 
there exists a unique solution $\psi\in H_0^1(\mathcal{D})$ to problem {\bf {$\llbracket \textit{Problem II(5D)}\rrbracket $}}.

Next, we will establish an $H^1$-estimate of the solution $\psi$. Note that $\psi\in H_0^1(\mathcal{D})$ is a weak solution of problem {\bf {$\llbracket \textit{Problem II(5D)}\rrbracket $}} if $\mathcal{B}[\psi, \zeta]= \int_{\mathcal{D}} {F}^{\sharp} \psi \dif \mathbf{Z}$ for any test function $\zeta\in H_0^1(\mathcal{D})$. Choosing $\zeta=\psi$, we have
\begin{align}\label{I5DP2}
 \int_{\mathcal{D}}\bar{\rho}_+ \bar{q}_+^2 \bar{p}_+^{-\frac{2}{\gamma}}|\partial_z\psi|^2 \dif \mathbf{Z}+ \sum_{i=1}^4 \int_{\mathcal{D}} \frac{\bar{\rho}_+ \bar{q}_+^2}{1-\bar{M}_+^2} \bar{p}_+^{-\frac{2}{\gamma}}|\partial_{r_i} \psi|^2 \dif \mathbf{Z}- \int_{\mathcal{D}} {F}_0 \psi^2 \dif \mathbf{Z}
  = \int_{\mathcal{D}}{F}^{\sharp} \psi \dif \mathbf{Z}.
\end{align}
By applying Cauchy's inequality with small constant $\epsilon>0$, \eqref{I5DP2} implies that
\begin{align*}
\frac{1}{4r_0^2}\min\limits_{|\mathbf{r}|<r_0}\{\bar{\rho}_+ \bar{q}_+^2\bar{p}_+^{-\frac{2}{\gamma}}\} \| \nabla\psi\|_{L^2(\mathcal{D})}^2 - \max\limits_{|\mathbf{r}|<r_0}{F}_0 (r) \| \psi\|_{L^2(\mathcal{D})}^2 
\leq\epsilon\| \nabla \psi\|_{L^2(\mathcal{D})}^2 + \frac{1}{4\epsilon} \|{F}^{\sharp} \|_{L^2(\mathcal{D})}^2.
\end{align*}
Then it follows from \eqref{Ir*} and the Poincar\'{e}'s inequality that 
\begin{equation}\label{PART1}
	\| \psi \|_{H_0^1(\mathcal{D})}\leq {C} \| {F}^{\sharp} \|_{L^2(\mathcal{D})},
	\end{equation}
where the constant $C$ only depends on $\overline{U}_\pm$, $\gamma$, $r_0$ and $L$.

By \eqref{5.123xw} we know that
\[
\| {F}^{\sharp} \|_{L^2(\mathcal{D})}
\leq C(\|f_2\|_{C^{1,\alpha}(\overline{\Omega_+^K})}+\|f_1\|_{C^{1,\alpha}(\overline{\Omega_+^K})} +
    \|g_1\|_{C^{2,\alpha}([0,r_0])} + \|\sigma p_{ex}\|_{C^{2,\alpha}([0,r_0])}).
\]
Therefore, by \eqref{PART1}, we have
\begin{align}\label{H15D}
 \|\psi\|_{H_0^1({\mathcal{D}})} \leq C(\|f_2\|_{C^{1,\alpha}(\overline{\Omega_+^K})}+\|f_1\|_{C^{1,\alpha}(\overline{\Omega_+^K})} +
    \|g_1\|_{C^{2,\alpha}([0,r_0])} + \|\sigma p_{ex}\|_{C^{2,\alpha}([0,r_0])}),
 \end{align}
where the constant $C$ only depends on $\overline{U}_\pm$, $\gamma$, $r_0$ , $L$ and $\alpha$.

\emph{Step 3.} In this step, we will prove estimate \eqref{psic3}. Let set  $\{(z,r)\}\defs\{(z,\mathbf{r})\in\overline{\mathcal{D}}:\,|\mathbf{r}|=r\}$.
It follows from the local boundary estimate \cite[Corollary 8.36]{DNS} and interior estimate \cite[Theorem 8.32]{DNS} with $g=0$, there exists $\alpha\in(0,1)$ such that for any $\Omega'\Subset\overline{\mathcal{D}}\backslash\{(K,r_0),\, (L,r_0)\}$,
\begin{align}\label{psiC1alphalocal}
 \|\psi\|_{C^{1,\alpha} (\Omega')} \leq {C} \|\psi\|_{C^0({\mathcal{D}})}+ \| {F}^{\sharp} \|_{L^{q}(\mathcal{D})}),
\end{align}
where $\alpha=1-n/q$ and constant $C$ depends on $\Omega'$ but does not depend on $\psi$.

It remains to consider the $C^{1,\alpha}$-estimate of solution $\psi$ near the points in the set $\{(K,r_0),\,(L,r_0)\}$.
Without loss of the generality, we only consider the regularity of $\psi$ near the set $\{(K, r_0)\}$.

Since $\psi(z,\mathbf{r})\big|_{|\mathbf{r}|=r_0}=0$, $f_2(z,r_0)=0$ and $\delta\theta_1(z,r_0)=0$, we define
\begin{align*}
\psi^\star(z,\mathbf{r}) \defs \begin{cases}
\psi(z,\mathbf{r})\quad& 0<|\mathbf{r}|\leq r_0\\
 -\psi (z, \mathbf{r}^\star)\quad& |\mathbf{r}|> r_0
\end{cases}
\qquad
{F}_2^\star(z,r) \defs
\begin{cases}
{F}_2(z,r)\quad& 0<r<r_0\\
 -{F}_2(z, 2r_0-r)\quad& r\geq r_0
\end{cases}
\end{align*}
where $\mathbf{r}^\star$ is the reflection point of $\mathbf{r}$ with respect to the nozzle wall $|\mathbf{r}|=r_0$, such that the segment $\mathbf{r}\mathbf{r}^\star$ is perpendicular to the nozzle wall and the distances from points $\mathbf{r}$ and $\mathbf{r}^\star$ to the nozzle wall are equal.

Let $\mathcal{D}_\tau$ be a small domain in $\mathcal{D}$ with $\{(K, r_0)\}\subset\overline{\mathcal{D}_\tau}$. Define
\begin{align*}
  \widetilde{\mathcal{D}_\tau}\defs  \mathcal{D}_\tau  \cup \big\{(z,\mathbf{r}^\star): (z,\mathbf{r})\in  \mathcal{D}_\tau    \big\}   \cup (\big\{(z,\mathbf{r}):\,|\mathbf{r}|=r_0\big\}\cap  \overline{\mathcal{D}_\tau}).
\end{align*}
It follows from Lemma \ref{barvalue} that we can extend $\bar{p}_+$, $\bar{s}_+$ and $\bar{M}^2_+$ evenly across the boundary $\{|\mathbf{r}|=r_0\}$, and then the same do for $\bar{\rho}_+$ and $\bar{q}_+$ from the thermal relations.
Then we easily know that ${\psi}^\star$ satisfies the following equation
\begin{align}\label{5DP2r}
 \partial_z \Big(\bar{\rho}_+ \bar{q}_+^2 \bar{p}_+^{-\frac{2}{\gamma}}\partial_z \psi^\star  \Big) + \sum_{i=1}^4 \partial_{r_i} \Big( \frac{\bar{\rho}_+ \bar{q}_+^2}{1-\overline{M}_+^2} \bar{p}_+^{-\frac{2}{\gamma}}\partial_{r_i} \psi^\star\Big) + {F}_0 (r) \psi^\star
  ={F}^{\sharp}\qquad \text{in}\quad  \widetilde{\mathcal{D}_\tau}
\end{align}
where
${F}^{\sharp}$ is defined by \eqref{5.98x} by replacing $F_2$ by $F_2^\star$.
Meanwhile, the boundary condition on $\Gamma\defs\{z=K\}\cap  \widetilde{\mathcal{D}_\tau}$ is
\begin{align}
  \psi^\star = 0 \qquad \text{on}\quad  \Gamma.
\end{align}
Then it follows from \cite[Corollary 8.36]{DNS} again that for any smooth domain $\widetilde{\mathcal{D}_\tau}^{\sharp} \Subset\Gamma\cup\widetilde{\mathcal{D}_\tau}$, we have
\begin{align}\label{psistar}
\|\psi^\star\|_{C^{1,\alpha}(\widetilde{\mathcal{D}_\tau}^{\sharp})} \leq C \Big(  \|{F}^{\sharp} \|_{C^{0}(\overline{\mathcal{D}})} + \|\psi^{\star}\|_{C^{0}(\widetilde{\mathcal{D}_\tau})} \Big).
\end{align}

Therefore, by \eqref{psiC1alphalocal} and \eqref{psistar}, and by the definition of $\psi^{\star}$ and $F_2^{\star}$, we have
\begin{align}\label{psiC1alphal}
   \| \psi \|_{C^{1,\alpha}(\overline{\mathcal{D}})}\leq {C} (\|{F}_2 \|_{C^{0,\alpha}(\overline{\mathcal{D}})}+\| \psi \|_{C^{0}(\overline{\mathcal{D}})}).
   \end{align}

Next, let us consider the $C^{2,\alpha}$-estimate of $\psi$. Take derivative on \eqref{5DP2} with respect to $z$ and denote
\begin{align}\label{5.120x}
\psi^z(z,\mathbf{r})\defs \partial_z \psi(z,\mathbf{r}),\,\,
 R_1(\mathbf{r})\defs \bar{\rho}_+ \bar{q}_+^2 \bar{p}_+^{-\frac{2}{\gamma}}, \,\,  R_2(\mathbf{r})\defs  \frac{\bar{\rho}_+ \bar{q}_+^2}{1-\overline{M}_+^2} \bar{p}_+^{-\frac{2}{\gamma}}.
 \end{align}
Because $\partial_zR_i=0$ and by \eqref{Fsharp=divF}, $\psi^z$ satisfies the following elliptic equation
\begin{align}\label{psizproblem}
 \partial_z\Big(R_1(\mathbf{r})\partial_{z} \psi^z\Big)+ \sum_{i=1}^4 \partial_{r_i}\Big(R_2(\mathbf{r}) \partial_{r_i} \psi^z\Big)
 = \partial_z \big( {F}^{\sharp}(z,r)\big) -  F_0(\mathbf{r}) \psi^z\qquad \text{in}\quad \mathcal{D},
\end{align}
where $r=|\mathbf{r}|$.
Next, by direct calculation, it follows from \eqref{5DP2} and \eqref{b5DP2} that $\psi^z$ satisfies the following Neumann-Dirichlet mixed boundary conditions
\begin{align}\label{brypsiz=}
  \psi^z(z,\mathbf{r})\big|_{|\mathbf{r}|=r_0}=0,\quad \partial_z \psi^z(K,\mathbf{r}) = \frac{{F}^{\sharp}(K, \mathbf{r})}{ R_1(\mathbf{r})},\quad \partial_z \psi^z(L,\mathbf{r}) = \frac{{F}^{\sharp}(L,\mathbf{r})}{ R_1(\mathbf{r})}.
\end{align}
Notice that by \eqref{bc:F2}, \eqref{Problem1bry1}, \eqref{5.95x} and \eqref{Fsharp=divF}, we have
\begin{align}\label{divFzr0=0}
  {F}^{\sharp}(z,r_0) = \frac{\bar{p}_+^{-\frac{1}{\gamma}}(r_0) {F}_2(z,r_0)}{r_0}
 =0 \qquad \text{for}\quad z\in[K,L].
\end{align}
Therefore, boundary conditions \eqref{brypsiz=} are compatible.
So near the set $\{(K,r_0),\,(L,r_0)\}$, we do an odd extension across boundary $|\mathbf{r}|=r_0$ for $\psi^z$, even extensions for $R_i$, $\partial_z \big({F}^{\sharp}\big)$ and $F_0$, and apply \cite[Theorem 5.54]{GM13} near the boundary $z=K$ or $z=L$ for the conormal boundary conditions and \cite[Theorem 5.21]{GM} interiorly, to obtain
\begin{align}\label{psizC1alpha}
   \| \partial_z\psi \|_{C^{1,\alpha}(\overline{\mathcal{D}})}\leq {C} (\|{F}_2 \|_{C^{1,\alpha}(\overline{\mathcal{D}})}+\|\psi \|_{C^{0}(\overline{\mathcal{D}})}).
   \end{align}

Then, we will prove that $\psi^{r_j}\defs \partial_{r_i}\psi\in C^{1,\alpha}(\mathcal{D}')$ for $j=1,2,3,4$ and for any subdomain $\mathcal{D}'\Subset(\overline{\mathcal{D}}\backslash\{|\mathbf{r}|=r_0\})$. Taking derivatives on both sides of the equation \eqref{5DP2} with respect to the variable $r_j$, we have $\psi^{r_j}$ satisfies equation
   \begin{align}
&\partial_z\Big(R_1(\mathbf{r})\partial_{z}\psi^{r_j}\Big) + \sum_{i=1}^4\partial_{r_i}\Big(R_2(\mathbf{r})\partial_{r_i}\psi^{r_j}\Big)\notag\\
=& \partial_{r_j} \big({F}^{\sharp}(z,r)\big) -\partial_z\Big(\partial_{r_j} R_1(\mathbf{r})\partial_{z}\psi\Big)  -\partial_{r_i}\Big(\partial_{r_j} R_2(\mathbf{r})\partial_{r_i}\psi\Big) - \partial_{r_j}\Big(F_0(\mathbf{r})\psi\Big) \qquad \text{in}\quad \mathcal{D}.
   \end{align}
For the boundary condition, it follows from \eqref{b5DP2} that
\begin{equation}
   \psi^{r_j}(K,\mathbf{r}) =  \psi^{r_j}(L,\mathbf{r}) =0.
\end{equation}
Hence, it follows from \cite[Corollary 8.36]{DNS} and \cite[Theorem 8.32]{DNS} again that for any $j=1,2,3,4$ and for any subdomain $\mathcal{D}'\Subset(\overline{\mathcal{D}}\backslash\{|\mathbf{r}|=r_0\})$
\begin{equation}\label{5.127x}
   \|\partial_{r_j} \psi\|_{C^{1,\alpha}(\mathcal{D}')}\leq {C} (\|{F}_2 \|_{C^{1,\alpha}(\overline{\mathcal{D}})}+\|\psi \|_{C^{0}(\overline{\mathcal{D}})}).
  \end{equation}

Finally, let us consider the estimate near the nozzle wall $|\mathbf{r}|=r_0$. Let $\mathcal{D}_r=\mathcal{D}\cap\{(z,\mathbf{r}),\,|\mathbf{r}|\geq r\}$. For any subdomain $\mathcal{D}'\Subset\overline{\mathcal{D}}\backslash\overline{\mathcal{D}_{r_0/2}}$, consider tangential derivative to the boundary of $\mathcal{D}'$, $\partial_{\tau}=r_i\partial_{r_j}-r_j\partial_{r_i}$, where $i,j=1,\cdots,4$ with $i\neq j$.
It follows from boundary condition \eqref{b5DP2} that
\begin{equation}\label{5.128x}
\partial_{\tau}\psi=0\qquad\mbox{on }\partial \mathcal{D}'\cap\overline{\mathcal{D}_{r_0/2}}.
\end{equation}
Then similarly, by following the argument to derive the equation for $\partial_{\tau}\psi$ in divergence form and the even/odd extension as before, based on the Dirichlet boundary condition \eqref{5.128x}, and by \cite[Corollary 8.36]{DNS}, we can have for $\mathcal{D}'\Subset\overline{\mathcal{D}}\backslash\overline{\mathcal{D}_{r_0/2}}$ and for $i,j=1,\cdots,4$ with $i\neq j$,
\begin{equation}\label{5.129x}
   \|(r_i\partial_{r_j}-r_j\partial_{r_i}) \psi\|_{C^{1,\alpha}(\mathcal{D}')}\leq {C} (\|{F}_2 \|_{C^{1,\alpha}(\overline{\mathcal{D}})}+\|\psi \|_{C^{0}(\overline{\mathcal{D}})}).
  \end{equation}
Let $\partial_{\tau_{ij}} = (r_i \partial_{r_j} - r_j \partial_{r_i}) $ and $\partial_{n}=\sum_{i=1}^4r_i \partial_{r_i}$. We have
   \begin{equation}
     \psi_{r_i} =\frac{r_i\psi_{n} - \sum_{j\neq i}r_j \psi_{\tau_{ij}}}{|\mathbf{r}|^2},\label{psiri=}
\end{equation}
and then
\begin{equation}
\sum_{i=1}^4  \psi_{r_ir_i}= \frac{\psi_{nn}}{|\mathbf{r}|^2}+\sum_{i=1}^4(\frac{r_i}{|\mathbf{r}|^2})_{r_i}\psi_n-\sum_{i=1}^4(\frac{\sum_{j\neq i}\psi_{\tau_{ij}}}{|\mathbf{r}|^2})_{r_i}. \label{psiriri=}
\end{equation}
By \eqref{5.120x}, we know that $R_2\geq C>0$. It follows from equation \eqref{5DP2r} that
\begin{align*}
\psi_{nn}=&\frac{|\mathbf{r}|^2}{R_2}\left( {F}^{\sharp}- R_1\psi_zz -{F}_0 (r) \psi \right)-|\mathbf{r}|^2(\sum_{i=1}^4(\frac{r_i}{|\mathbf{r}|^2})_{r_i}\psi_n-\sum_{i=1}^4(\frac{\sum_{j\neq i}\psi_{\tau_{ij}}}{|\mathbf{r}|^2})_{r_i}).
\end{align*}
Because $|\mathbf{r}|\geq r_0/2>0$ in $\mathcal{D}'$, it follows from estimates \eqref{psiC1alphal}, \eqref{psizC1alpha} and \eqref{5.129x} that
   \begin{align}\label{psinnin}
    \|\psi_{nn}\|_{C^{0,\alpha}(\mathcal{D}')}\leq {C} (\|{F}_2 \|_{C^{1,\alpha}(\overline{\mathcal{D}})}+\|\psi \|_{C^{0}(\overline{\mathcal{D}})}).
   \end{align}
So for any $\mathcal{D}'\Subset\overline{\mathcal{D}}\backslash\overline{\mathcal{D}_{r_0/2}}$,
\begin{equation}\label{5.133x}
  \|\psi\|_{C^{2,\alpha}(\mathcal{D}')}\leq {C} (\|{F}_2 \|_{C^{1,\alpha}(\overline{\mathcal{D}})}+\|\psi \|_{C^{0}(\overline{\mathcal{D}})}).
 \end{equation}
It further with estimate \eqref{5.127x}, we have
\begin{equation}\label{5.134x}
  \|\psi\|_{C^{2,\alpha}(\overline{\mathcal{D}})}\leq {C} (\|{F}_2 \|_{C^{1,\alpha}(\overline{\mathcal{D}})}+\|\psi \|_{C^{0}(\overline{\mathcal{D}})}).
 \end{equation}

 Finally, we will show the following estimate from \eqref{5.134x}
\begin{align}\label{psiC2alphal}
   \| \psi \|_{C^{0}(\overline{\mathcal{D}})}\leq {C} \|{F}_2 \|_{C^{1,\alpha}(\overline{\mathcal{D}})}.
   \end{align}
In fact, if it is not true, then for any $k>0$, there exists $\psi_k$ and $F^k_2$ with $\|\psi_k\|_{C^{0}(\overline{\mathcal{D}})}=1$ such that $ 1=\| \psi_k \|_{C^{0}(\overline{\mathcal{D}})}>k \|{F}^k_2 \|_{C^{1,\alpha}(\overline{\mathcal{D}})}$. Then
\begin{equation}\label{5.136x}
F_2^k\rightarrow0\qquad\mbox{as }k\rightarrow\infty\quad\mbox{in }C^{1,\alpha}(\overline{\mathcal{D}}).
\end{equation}
Moreover, it follows from \eqref{5.134x} that $ \|\psi\|_{C^{2,\alpha}(\overline{\mathcal{D}})}\leq {C}(\frac1k+1)$. So there exists a subsequence which we still denote as $\psi_k$ such that
\begin{equation}\label{5.137x}
\psi_k\rightarrow\psi^*\qquad\mbox{as }k\rightarrow\infty\quad\mbox{in }C^{2}(\overline{\mathcal{D}}).
\end{equation}
So it follows from \eqref{5.98x}, \eqref{5DP2}, \eqref{5.136x} and \eqref{5.137x} that $\psi^*$ satisfies the following boundary value problem
\begin{align}
\partial_z \Big(\bar{\rho}_+ \bar{q}_+^2 \bar{p}_+^{-\frac{2}{\gamma}}\partial_z \psi^*  \Big) + \sum_{i=1}^4 \partial_{r_i} \Big( \frac{\bar{\rho}_+ \bar{q}_+^2}{1-\overline{M}_+^2} \bar{p}_+^{-\frac{2}{\gamma}}\partial_{r_i} \psi^*\Big)+  F_0(r) \psi^* =0\qquad \text{in}\quad \mathcal{D}
\end{align}
and
\begin{equation}
\psi^*=0\qquad\mbox{on }\partial\mathcal{D}.
\end{equation}
By the uniqueness obtained in step 1, we know that $\psi^*\equiv0$. It is contradicts to $1=\lim_{k\rightarrow\infty}\|\psi_k\|_{C^0(\overline{\mathcal{D}})}=\|\psi^*\|_{C^0(\overline{\mathcal{D}})}$.

Hence, \eqref{5.133x} and \eqref{psiC2alphal} yield
\begin{equation}
  \|\psi\|_{C^{2,\alpha}(\overline{\mathcal{D}})}\leq {C} \|{F}_2 \|_{C^{1,\alpha}(\overline{\mathcal{D}})},
 \end{equation}
and then \eqref{psic3} follows.

\emph{Step 4.} In this step, we will prove \eqref{5.103x} and \eqref{p2theta2}.
First, it is easy to check that the problem \eqref{5DP2}-\eqref{b5DP2} is rotationally invariant. Thus, \eqref{5.103x} follows from the uniqueness of the solutions.
Then it follows from \eqref{psic3} and Lemma \ref{lem5.4x} that $\Psi$ satisfies \eqref{PsiBry0}, \eqref{5.93x}, \eqref{RRproblem2}, and estimate
\begin{align}\label{Psic3}
\|\Psi\|_{C^{2,\alpha}(\overline{\Omega_+^K})} \leq {C} \Big(\| f_2 \|_{C^{1,\alpha}(\overline{\Omega_+^K})} + \| \delta \theta_1 \|_{C^{1,\alpha}(\overline{\Omega_+^K})} \Big),
\end{align}
where the constant $C$ only depends on $\overline{U}_\pm$, $\gamma$, $r_0$, $L$, $K$ and $\alpha$.
Applying \eqref{Psic3}, for $\widehat{\Psi}=r^2\Psi$, we have
 \begin{align}\label{tildePsi}
    \|\widehat{\Psi}\|_{C^{2,\alpha}(\overline{\Omega_+^K})} \leq {C}_+  \Big(\| f_2 \|_{C^{1,\alpha}(\overline{\Omega_+^K})} + \| \delta \theta_1 \|_{C^{1,\alpha}(\overline{\Omega_+^K})} \Big).
  \end{align}
Define $( \delta{\theta}_2,\delta{p}_2)$ by \eqref{thetaphat}. Then  \eqref{p2theta2} follows from \eqref{theta1p13D} and \eqref{tildePsi}.
\end{proof}

To conclude the proof of Theorem \ref{BigThm}, we need to further raise the regularity of $\delta p$ and $\delta \theta$, which is the following lemma.
\begin{lem}\label{raiseRegularity}
Assume the assumptions in Theorem \ref{BigThm} hold. Let $(\delta p, \delta \theta)=(\delta p_1 +\delta p_2, \delta \theta_1 + \delta \theta_2)$, where $(\delta p_1,\delta\theta_1)$ and $(\delta p_2,\delta\theta_2)$ are obtained by Lemma \ref{3Dphi} and Lemma \ref{theta2p2}, respectively.  Then $(\delta p, \delta \theta)\in {C^{2,\alpha}(\overline{\Omega_+^K})}$ and satisfies
\begin{align}\label{C2alpha}
\| \delta p \|_{C^{2,\alpha}(\overline{\Omega_+^K})} + \| \delta \theta \|_{C^{2,\alpha}(\overline{\Omega_+^K})}
\leq C\Big(  \sum_{i=1}^2 \| f_i \|_{C^{1,\alpha}(\overline{\Omega_+^K})} + \| g_1 \|_{C^{2,\alpha}([0,r_0])} + \|\sigma  p_{ex} \|_{C^{2,\alpha}([0,r_0])}\Big),
\end{align}
where the constant $C$ only depends on $\overline{U}_{\pm}$, $K$, $L$, $r_0$ and $\alpha$.
Moreover, it holds that
\begin{align}
  \partial_r\delta p(z,0) =0,\quad \partial_r\delta p(z,r_0) =0,\quad
\partial_r^2\delta \theta (z,0)=0.
\end{align}
\end{lem}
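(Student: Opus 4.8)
The plan is to bootstrap the already-established $C^{1,\alpha}$ bounds, working separately on the two pieces $(\delta p_1,\delta\theta_1)$ and $(\delta p_2,\delta\theta_2)$ through their potentials $\Phi$ and $\Psi$, and then to read off the boundary identities from the gradient relations \eqref{Phieq} and \eqref{PEQ}. First note that, by Lemmas \ref{3Dphi} and \ref{theta2p2}, $\delta p=\delta p_1+\delta p_2$ and $\delta\theta=\delta\theta_1+\delta\theta_2$ already lie in $C^{1,\alpha}(\overline{\Omega_+^K})$; consequently the nonlocal term $\int_K^z\delta\theta(\tau,r)\,d\tau$ in \eqref{F2eq2} and the coupling term in \eqref{5.95x} are now \emph{known} functions, and — since their coefficient $\frac{\bar\rho_+\bar w_+}{r}(\cdots)\bar p_+^{-1/\gamma}$ is independent of $z$ and, because $\bar w_+(0)=0$, equals a $C^{5}$ function free of axis singularity (the argument just after \eqref{5.95x}) — these terms are $C^{1,\alpha}(\overline{\Omega_+^K})$.

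For the Problem I part, in the 3D formulation \eqref{phiequa}--\eqref{pex3D} the coefficients are $C^{2,\alpha}(\overline{\mathcal P})$ (Lemmas \ref{barvalue}, \ref{lem5.4x}), the inhomogeneity $\bar p_+^{1/\gamma}f_1$ is $C^{1,\alpha}(\overline{\mathcal P})$ (by \eqref{F1F20r0} and Lemma \ref{lem5.4x}), and the conormal data $\bar p_+^{-1/\gamma}g_1$, $\sigma\bar p_+^{-1/\gamma}p_{ex}$ are $C^{2,\alpha}$. Running the interior and conormal boundary Schauder estimates, together with the even reflection across $\mathcal P_w$ and the partition of unity near the edges $\overline{\mathcal P_K}\cap\overline{\mathcal P_w}$ and $\overline{\mathcal P_{ex}}\cap\overline{\mathcal P_w}$, exactly as in the proof of Lemma \ref{3Dphi} but one differentiability order higher, I obtain $\phi\in C^{3,\alpha}(\overline{\mathcal P})$; hence, by Lemma \ref{lem5.4x} with $m=3$, $\Phi\in C^{3,\alpha}([K,L]\times[0,r_0])$ with $\partial_r\Phi(z,0)=\partial_r^3\Phi(z,0)=0$ and the corresponding estimate. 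Then \eqref{Phieq} gives $\delta p_1=\bar p_+^{1/\gamma}\partial_z\Phi$ and $\delta\theta_1=-\frac{\bar p_+^{1/\gamma}}{\bar\rho_+\bar q_+^2}\partial_r\Phi$ in $C^{2,\alpha}(\overline{\Omega_+^K})$.

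For the Problem II part, with $\delta\theta_1$ now in $C^{2,\alpha}$, I differentiate the first-order system \eqref{Problem2eq1}--\eqref{Problem2eq2} once in $z$ (rigorously through difference quotients in $z$, legitimate since every coefficient and the approximate shock position are $z$-independent). The pair $(\partial_z\delta p_2,\partial_z\delta\theta_2)$ satisfies the same first-order elliptic system, with right-hand side obtained from \eqref{Problem2eq2} by replacing $\int_K^z\delta\theta_2\,d\tau$ by $\delta\theta_2$ and $\int_K^z\delta\theta_1\,d\tau$ by $\delta\theta_1$, which is $C^{0,\alpha}$ and free of $1/r$ singularity and vanishes on $\Gamma_a\cup\Gamma_w$; the Dirichlet condition $\delta\theta_2=0$ on $\Gamma_a\cup\Gamma_w$ passes to $\partial_z\delta\theta_2=0$, and the equations furnish a $C^{0,\alpha}$ conormal condition for $\partial_z\delta p_2$ on $\Gamma_K\cup\Gamma_{ex}$. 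Repeating in the higher-dimensional lift the conormal/reflection machinery of the proof of Lemma \ref{theta2p2} yields $\partial_z\delta p_2,\partial_z\delta\theta_2\in C^{1,\alpha}(\overline{\Omega_+^K})$; feeding this back into \eqref{Problem2eq1}--\eqref{Problem2eq2}, solving for $\partial_r\delta p_2$ and $\partial_r\delta\theta_2$, and translating the lifted estimates back via Lemma \ref{lem5.4x} upgrades $\delta p_2,\delta\theta_2$ to $C^{2,\alpha}(\overline{\Omega_+^K})$ with the bound in terms of $\|f_1\|_{C^{1,\alpha}}+\|f_2\|_{C^{1,\alpha}}+\|g_1\|_{C^{2,\alpha}}+\sigma\|p_{ex}\|_{C^{2,\alpha}}$. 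Summing the two contributions gives $(\delta p,\delta\theta)\in C^{2,\alpha}$ and \eqref{C2alpha}. The three identities then follow by evaluating \eqref{Phieq}, \eqref{PEQ} on $\Gamma_a$ and $\Gamma_w$ and using Lemma \ref{barvalue}: writing $\delta p=\bar p_+^{1/\gamma}\partial_z\Phi+\frac{\bar\rho_+\bar q_+^2}{1-\overline M_+^2}\bar p_+^{-1/\gamma}(r\partial_r\Psi+2\Psi)$ and using $\partial_r\Phi(z,0)=0$, $\partial_r\Psi(z,0)=0$ (from \eqref{5.93x}) and the vanishing at $r=0$ of the $r$-derivatives of $\bar p_+,\bar\rho_+,\bar q_+,\overline M_+^2$ gives $\partial_r\delta p(z,0)=0$; using instead $\Phi_r(z,r_0)=0$, $\Psi(z,r_0)=0$, $F^\sharp(z,r_0)=0$ (from \eqref{divFzr0=0}) and the vanishing of those $r$-derivatives at $r_0$ gives $\partial_r\delta p(z,r_0)=0$; and writing $\delta\theta_1=-\frac{\bar p_+^{1/\gamma}}{\bar\rho_+\bar q_+^2}\partial_r\Phi$, $\delta\theta_2=r\bar p_+^{-1/\gamma}\partial_z\Psi$ and using $\partial_r\Phi(z,0)=\partial_r^3\Phi(z,0)=0$ and $\partial_r\Psi(z,0)=0$ gives $\partial_r^2\delta\theta(z,0)=0$.

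The main obstacle is Step 2: the scalar equation \eqref{RRproblem2} for the Problem II potential has right-hand side $F^\sharp=\bar p_+^{-1/\gamma}(F_2(z,r)-F_2(z,0))/r$, which carries a factor $1/r$ and is therefore only $C^{0,\alpha}$, so $\Psi$ cannot simply be promoted to $C^{3,\alpha}$ and $\delta p_2,\delta\theta_2$ read off from \eqref{PEQ}; the extra derivative has to be gained from the first-order system (which is free of explicit $1/r$ factors once $\bar w_+/r\in C^{6}$ is used), with the axis handled by the higher-dimensional lift and the subsequent division by $r$ controlled exactly as in Step 3 of the proof of Lemma \ref{theta2p2}, via the conormal/divergence-form estimates together with the odd/even reflections supplied by Lemmas \ref{barvalue} and \ref{lem5.4x}.
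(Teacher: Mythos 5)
There is a genuine gap in Step 2 (the Problem II bootstrap), and it stems from exactly the obstruction you identified but do not actually overcome. You correctly observe that the scalar potential $\Psi$ cannot be pushed past $C^{2,\alpha}$ because $F^{\sharp}=\bar p_+^{-1/\gamma}\bigl(F_2(z,r)-F_2(z,0)\bigr)/r$ carries a $1/r$ weight and is only $C^{0,\alpha}$, so \eqref{PEQ} only yields $(\delta p_2,\delta\theta_2)\in C^{1,\alpha}$. Your workaround is to pass to the $z$-differentiated first-order system for $(\partial_z\delta p_2,\partial_z\delta\theta_2)$. But the source in the second equation of that system is $\bar p_+^{-1/\gamma}\partial_z f_2 + \mathcal{A}(r)\bar p_+^{-1/\gamma}(\delta\theta_1+\delta\theta_2)$, and the hypotheses of Theorem \ref{BigThm} only give $f_2\in C^{1,\alpha}(\overline{\Omega_+^K})$, hence $\partial_z f_2\in C^{0,\alpha}$. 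The Problem II machinery (Lemma \ref{theta2p2}) feeds such a source into $F^{\sharp}$, which again involves dividing by $r$ after subtracting the trace at $r=0$; a merely $C^{0,\alpha}$ numerator gives an $F^{\sharp}$ that is not H\"older continuous, and the conclusion would be $(\partial_z\delta p_2,\partial_z\delta\theta_2)\in C^{0,\alpha}$, which you already knew, not the $C^{1,\alpha}$ you need. In other words, the $z$-derivative does not commute with the ``gain-one-derivative-then-divide-by-$r$'' loop in a way that improves regularity, because the loss from $\partial_z f_2$ is not recovered.

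The paper's Lemma \ref{raiseRegularity} avoids this by a different elimination: it works with the \emph{combined} $\delta p=\delta p_1+\delta p_2$ and derives the scalar second-order equation \eqref{peqeq} from the full system \eqref{F1eq1}--\eqref{F2eq2}. The crucial structural point is that in \eqref{peqeq} the right-hand side $F(z,r)$ contains $\partial_z f_1$ without any $1/r$ weight, and contains $f_2$ only inside $\tfrac1r\partial_r(r\cdot f_2)=\partial_r(\cdots f_2)+\tfrac{\cdots f_2}{r}$ — both pieces are $C^{0,\alpha}$ because $f_2\in C^{1,\alpha}$ with $f_2(z,0)=0$. No $\partial_z f_2/r$ term ever appears. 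With the natural Neumann condition $\partial_r\delta p=0$ on $\Gamma_a\cup\Gamma_w$ (read off from \eqref{F2eq2} and the vanishing boundary data) and the Dirichlet data on $\Gamma_K,\Gamma_{ex}$, the 3D lift gives $\delta p\in C^{2,\alpha}$ in one step. Then $\partial_z\delta\theta$ is recovered from \eqref{F2eq2}, and $\partial_r\delta\theta$ is handled by writing $\delta\theta(z,r)=\tfrac{\bar p_+^{-1/\gamma}}{r}\int_0^r\tau J_*(z,\tau)\,d\tau$ with $J_*\in C^{1,\alpha}$ and differentiating: the resulting $J_1,J_2$ decomposition (with the averaging $\int_0^1\tau J_*(z,r\tau)\,d\tau$) is what controls the axis singularity. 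Your proposal also leaves this $\partial_r\delta\theta_2$ step unaddressed — one would still need a $J_1,J_2$-type averaging to convert knowledge of $\partial_r(r\bar p_+^{1/\gamma}\delta\theta_2)$ into $\partial_r\delta\theta_2\in C^{1,\alpha}$ near $r=0$, and appealing to Lemma \ref{lem5.4x} is circular because the odd-derivative compatibility it requires is precisely what you are trying to establish.

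A smaller but real issue in Step 1: you claim $\phi\in C^{3,\alpha}(\overline{\mathcal P})$ by ``repeating the conormal/reflection machinery one order higher.'' The even reflection across $\mathcal P_w$ used in the proof of Lemma \ref{3Dphi} produces a $C^{2,\alpha}$ (but generally not $C^{3,\alpha}$) extension, since the Neumann condition only kills $\partial_r\Phi$ at $r_0$, not $\partial_r^3\Phi$; the reflected function typically has a jump in the third normal derivative. So the boost to $C^{3,\alpha}$ near the edge $\overline{\mathcal P_K}\cap\overline{\mathcal P_w}$ does not follow from the reflection argument as stated and would require a different corner estimate, or a compatibility condition that is not assumed. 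In any case, this extra smoothness is not needed in the paper's approach, which only requires $\delta p\in C^{2,\alpha}$ and never asks for $\Phi\in C^{3,\alpha}$.

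Your derivation of the three boundary identities from \eqref{Phieq}, \eqref{PEQ}, \eqref{5.93x} and Lemma \ref{barvalue} is sound and essentially equivalent to the paper's, which reads them off directly from \eqref{F2eq2} and \eqref{F1eq1} together with the L'H\^opital computation in \eqref{5.168x}.
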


\begin{proof}
This proof is divided into two steps.

\emph{Step 1:} In this step, we will prove the $C^{2,\alpha}$ regularity of $\delta p$.
Let
\begin{align}
\mathcal{A}(r)\defs \frac{\bar{\rho}_+\bar{w}_+}{r}\Big(\frac{ \partial_r\bar{s}_+}{\gamma c_v}\bar{w}_+ - 2\big(\partial_r \bar{w}_+ + \frac{\bar{w}_+}{r}\big) \Big).
\end{align}
It follows from \eqref{F1eq1} and \eqref{F2eq2} that
\begin{align}\label{psecond}
&\partial_z^2 \Big( \frac{1-\overline{M}_+^2}{\bar{\rho}_+\bar{q}_+^2}  \bar{p}_+^{\frac{1}{\gamma}} \delta p \Big) + \partial_r \Big(  \frac{1}{\bar{\rho}_+\bar{q}_+^2}  \bar{p}_+^{\frac{2}{\gamma}} \partial_r \big( \bar{p}_+^{-\frac{1}{\gamma}} \delta p \big)\Big) -\bar{p}_+^{\frac{1}{\gamma}} \frac{\partial_z\delta \theta}{r}\notag\\
=&\bar{p}_+^{\frac{1}{\gamma}} \partial_z f_1 + \partial_r \Big( \frac{1}{\bar{\rho}_+\bar{q}_+^2}  \bar{p}_+^{\frac{1}{\gamma}} f_2\Big)
+  \partial_r \Big(\frac{1}{\bar{\rho}_+\bar{q}_+^2} \bar{p}_+^{\frac{1}{\gamma}} \mathcal{A}(r) \int_{K}^{z} \delta{\theta}(\tau,r)\dif \tau\Big).
\end{align}
By employing \eqref{F2eq2}, it holds that
\begin{align}\label{zthetaeq}
  - \partial_z \delta \theta = \frac{1}{\bar{\rho}_+\bar{q}_+^2}  \bar{p}_+^{\frac{1}{\gamma}} \Big(\partial_r \big(  \bar{p}_+^{-\frac{1}{\gamma}}\delta{p} \big) - \mathcal{A}(r)\bar{p}_+^{-\frac{1}{\gamma}}\int_{K}^{z} \delta{\theta}(\tau,r)\dif \tau -\bar{p}_+^{-\frac{1}{\gamma}} f_2\Big).
\end{align}
Substituting \eqref{zthetaeq} into \eqref{psecond}, one has
\begin{align}\label{peqeq}
&\partial_z^2 \Big( \frac{1-\overline{M}_+^2}{\bar{\rho}_+\bar{q}_+^2}  \bar{p}_+^{\frac{1}{\gamma}} \delta p \Big) + \partial_r \Big(  \frac{1}{\bar{\rho}_+\bar{q}_+^2}  \bar{p}_+^{\frac{2}{\gamma}} \partial_r \big( \bar{p}_+^{-\frac{1}{\gamma}} \delta p \big)\Big) + \frac{1}{r} \frac{1}{\bar{\rho}_+\bar{q}_+^2}  \bar{p}_+^{\frac{2}{\gamma}} \partial_r \big( \bar{p}_+^{-\frac{1}{\gamma}} \delta p \big)
={F}(z,r)
\end{align}
where
\begin{align*}
{F}(z,r)\defs&\bar{p}_+^{\frac{1}{\gamma}} \partial_z f_1 + \partial_r \Big( \frac{1}{\bar{\rho}_+\bar{q}_+^2}  \bar{p}_+^{\frac{1}{\gamma}} f_2\Big)
+ \frac{1}{r}\frac{1}{\bar{\rho}_+\bar{q}_+^2}  \bar{p}_+^{\frac{1}{\gamma}} f_2 \notag\\
& +\partial_r \Big(\frac{1}{\bar{\rho}_+\bar{q}_+^2} \bar{p}_+^{\frac{1}{\gamma}}\mathcal{A}(r) \int_{K}^{z} \delta{\theta}(\tau,r)\dif \tau\Big)+ \frac{1}{r} \frac{1}{\bar{\rho}_+\bar{q}_+^2} \bar{p}_+^{\frac{1}{\gamma}}\mathcal{A}(r) \int_{K}^{z} \delta{\theta}(\tau,r)\dif \tau\notag.
\end{align*}
The boundary conditions of $\delta p$ on $z=K$ and $z=L$ are
\begin{align}\label{bc:deltap}
\delta p(K,r)= g_1(K,r)\qquad\mbox{and}\qquad \delta p(L,r) = \sigma p_{ex}(L,r).
\end{align}
For the boundary conditions on the nozzle wall and the symmetry axis, it follows from $f_2(z,0) = f_2(z,r_0) =0$, $\delta \theta(z,0)=\delta (z,r_0)=0$, $\delta w(z,0)=0$, $\bar{w}_+(0)=\bar{w}(r_0)=0$, and \eqref{F2eq2} that
\begin{align}\label{pbryN}
\partial_r \delta p(z,0) =0\qquad\mbox{and}\qquad \partial_r \delta p(z,r_0) =0.
\end{align}
Obviously, the mixed boundary problem \eqref{peqeq}-\eqref{pbryN} has singularity near the axis $r=0$.
Similarly as done for {\bf {$\llbracket \textit{Problem I}\rrbracket $}},
let $\delta P(x_1,x_2,x_3)\defs \delta p(x_1, \sqrt{x_2^2 + x_3^2})$. By boundary condition \eqref{pbryN} and Lemma \ref{lem5.4x}, $\delta P\in C^{2,\alpha}(\mathcal{P})$ if and only if $\delta p\in C^{2,\alpha}(\Omega^K_+)$.
Then problem \eqref{peqeq}-\eqref{pbryN} can be rewritten as the following problem:
\begin{align}\label{P3Deq}
\partial_{x_1} \Big(\frac{1-\overline{M}_+^2}{\bar{\rho}_+\bar{q}_+^2}  \bar{p}_+^{\frac{1}{\gamma}} \partial_{x_1}\delta P \Big) + \sum_{i=2}^3\partial_{x_i}
\Big( \frac{\bar{p}_+^{\frac{2}{\gamma}}}{\bar{\rho}_+\bar{q}_+^2 } \partial_{x_i}\big(  \bar{p}_+^{-\frac{1}{\gamma}} \delta P  \big)\Big ) = {F}(x_1,r)\qquad \text{in} \quad \mathcal{P}
\end{align}
with the boundary conditions:
\begin{align}
&\delta P(K,x_2,x_3)= g_1(K,r)&\quad &\text{on}&\quad \mathcal{P}_K\\
&\delta P(L,x_2,x_3) =\sigma p_{ex}(r)&\quad &\text{on}&\quad \mathcal{P}_{ex}\\
&x_2 \partial_{x_2}\delta P(x_1,x_2,x_3) +  x_3 \partial_{x_3}\phi(x_1,x_2,x_3) =0 &\quad &\text{on}&\quad \mathcal{P}_w.
\end{align}
By $\partial_r g_1(K,0) = \partial_r g_1(K,r_0) =0$ and $\partial_r p_{ex}(0) = \partial_r p_{ex}(r_0) =0$, it is easy to see that $g_1$ and $p_{ex}$ are $C^{2,\alpha}$ functions in $\mathcal{P}_K$ and $\mathcal{P}_{ex}$ respecitvely. Because
\[
\|F\|_{C^{0,\alpha}(\mathcal{P})}\leq C\|F\|_{C^{0,\alpha}(\overline{\Omega_+^K})}\leq C\Big(  \sum_{i=1}^2 \| f_i \|_{C^{1,\alpha}(\overline{\Omega_+^K})} + \| g_1 \|_{C^{2,\alpha}([0,r_0])} + \|\sigma  p_{ex}\|_{C^{2,\alpha}([0,r_0])}\Big),
\]
following a similar argumant as given in the proof of Lemma \ref{3Dphi}, we can obtain
\begin{align}
  \| \delta p \|_{C^{2,\alpha}(\overline{\Omega_+^K})}
\leq C\Big(  \sum_{i=1}^2 \| f_i \|_{C^{1,\alpha}(\overline{\Omega_+^K})} + \| g_1 \|_{C^{2,\alpha}([0,r_0])} + \|\sigma  p_{ex}\|_{C^{2,\alpha}([0,r_0])}\Big).
\end{align}

\emph{Step 2:} In this step, we will estimate $\delta \theta$. Applying \eqref{F2eq2}, we have
\begin{align}\label{zthetaEQ}
 \partial_z \delta \theta = -\frac{1}{\bar{\rho}_+\bar{q}_+^2}  \bar{p}_+^{\frac{1}{\gamma}} \Big(\partial_r \big(  \bar{p}_+^{-\frac{1}{\gamma}}\delta{p} \big) - \mathcal{A}(r)\bar{p}_+^{-\frac{1}{\gamma}}\int_{K}^{z} \delta{\theta}(\tau,r)\dif \tau -\bar{p}_+^{-\frac{1}{\gamma}} f_2\Big)
 \in C^{1,\alpha}(\overline{\Omega_+^K}),
\end{align}
where we use the facts that $\delta p \in C^{2,\alpha}(\overline{\Omega_+^K})$, $\delta \theta\in C^{1,\alpha}(\overline{\Omega_+^K})$ and $f_2 \in C^{1,\alpha}(\overline{\Omega_+^K})$.
Next, by equation \eqref{F1eq1}, we have
\begin{align*}
\partial_r \big( r \bar{p}_+^{\frac{1}{\gamma}}\delta{\theta}
\big ) = \partial_z \big(\frac{1-\overline{M}_+^2}{\bar{\rho}_+\bar{q}_+^2} r \bar{p}_+^{\frac{1}{\gamma}} \delta{p}  \big) - r \bar{p}_+^{\frac{1}{\gamma}} f_1.
\end{align*}
So it follows from the condition $\delta \theta(z,0) =0$ that
\begin{align*}
\delta{\theta}(z,r) = \frac{\bar{p}_+^{-\frac{1}{\gamma}}}{r}\int_0^r \tau \Big( \partial_z \big(\frac{1-\overline{M}_+^2}{\bar{\rho}_+\bar{q}_+^2} \bar{p}_+^{\frac{1}{\gamma}} \delta{p}  \big) - \bar{p}_+^{\frac{1}{\gamma}} f_1\Big)\dif \tau.
\end{align*}
Direct calculations yield that
\begin{align}
\partial_r \delta{\theta}(z,r) =& J_1 + J_2,\label{eq:par1theta}
\end{align}
where $J_1\defs\bar{p}_+^{-\frac{1}{\gamma}} J_*$,
$$
J_2\defs (\frac{1}{r} \partial_r \big(\bar{p}_+^{ - \frac{1}{\gamma}}\big) -  \frac{1}{r^2} \bar{p}_+^{-\frac{1}{\gamma}} )\int_0^r \tau J_*(z,\tau)\dif \tau=(r \partial_r \big(\bar{p}_+^{ - \frac{1}{\gamma}}\big) -  \bar{p}_+^{-\frac{1}{\gamma}} )\int_0^1 \tau J_*(z,r\tau)\dif \tau,
$$
and
$$
J_*\defs  \partial_z \big(\frac{1-\overline{M}_+^2}{\bar{\rho}_+\bar{q}_+^2} \bar{p}_+^{\frac{1}{\gamma}} \delta{p}  \big) - \bar{p}_+^{\frac{1}{\gamma}} f_1.
$$
By $\delta p\in C^{2,\alpha}(\overline{\Omega_+^K})$ and $f_1\in C^{1,\alpha}(\overline{\Omega_+^K})$, it is easy to see that $J_* \in C^{1,\alpha}(\overline{\Omega_+^K})$. Hence
$J_1 \in C^{1,\alpha}(\overline{\Omega_+^K})$,
$J_2 \in C^{1,\alpha}(\overline{\Omega_+^K})$,
and it holds that
\begin{align}
  \| \delta \theta \|_{C^{2,\alpha}(\overline{\Omega_+^K})}
\leq C\Big(  \sum_{i=1}^2 \| f_i \|_{C^{1,\alpha}(\overline{\Omega_+^K})} + \| g_1 \|_{C^{2,\alpha}([0,r_0])} + \|\sigma p_{ex} \|_{C^{2,\alpha}([0,r_0])}\Big).
\end{align}

Finally, by equation \eqref{F1eq1}, we know that
\begin{equation*}
\partial_r\delta \theta=\frac{1-\overline{M}_+^2}{\bar{\rho}_+\bar{q}^2_+}\partial_z\delta p-(\frac1r+\frac{\partial_r\bar{p}_+}{\gamma \bar{p}_+})\delta\theta-f_1.
\end{equation*}
So
\begin{equation}\label{5.168x}
\partial_r^2\delta \theta+\partial_r(\frac{\delta\theta}{r})=\partial_r\left(\frac{1-\overline{M}_+^2}{\bar{\rho}_+\bar{q}^2_+}\partial_z\delta p-\frac{\partial_r\bar{p}_+}{\gamma \bar{p}_+}\delta\theta-f_1\right).
\end{equation}
Because $\partial_r\overline{M}_+(0)=\partial_r\bar{\rho}_+(0)=\partial_r\bar{p}_+(0)=\delta\theta(z,0)=\partial_r \delta p(z,0)=\partial_r f_1(z,0)=0$, it is easy to see that
\[
\partial_r\left(\frac{1-\overline{M}_+^2}{\bar{\rho}_+\bar{q}^2_+}\partial_z\delta p-\frac{\partial_r\bar{p}_+}{\gamma \bar{p}_+}\delta\theta-f_1\right)(z,0)=0.
\]
Finally, notice that
\[
\lim_{r\rightarrow0}\partial_r(\frac{\delta\theta}{r})=\lim_{r\rightarrow0}\frac{r\partial_r\delta\theta-\delta\theta}{r^2}=\lim_{r\rightarrow0}\frac{r\partial_{r}^2\delta\theta}{2r}=\frac{1}{2}\partial_r^2\delta \theta(z,0).
\]
Then it follows from \eqref{5.168x} that $\partial_r^2\delta \theta(z,0)=0$.
\end{proof}
Therefore, Theorem \ref{BigThm} follows from Lemma \ref{3Dphi}, Lemma \ref{theta2p2} and Lemma \ref{raiseRegularity}.

\subsection{Cauchy problem for the three linearized transport equations}
Once $\delta\theta$ is determined, we will solve $(\delta w, \delta q,\delta s)$ by considering the following Cauchy problems  in the subsonic domain $\Omega_+^K$.
\begin{eqnarray}
&&\begin{cases}\label{eq:g2}
\big( \partial_z +  H(z,r)\partial_r\big) (r\delta w)+ \partial_r( r\bar{w}_+ )\cdot \delta \theta ={f}_3(z,r)&\text{in}\quad {\Omega}_+^K,\\[3pt]
\delta w(K, r)=g_2(K,r);
\end{cases}\\
&&\begin{cases}\label{eq:g3}
 \big( \partial_z +  H(z,r)\partial_r\big) \delta B + \partial_r\bar{B}_+ \cdot \delta \theta =f_4(z,r)&\text{in}\quad {\Omega}_+^K,\\[3pt]
\delta q(K, r)=g_3(K,r);
\end{cases}\\
&&\begin{cases}\label{eq:g4}
\big( \partial_z +  H(z,r)\partial_r\big) \delta s + \partial_r\bar{s}_+ \delta \theta={f}_5(z,r)& \text{in}\quad {\Omega}_+^K,\\[3pt]
\delta s(K, r)=g_4(K,r).
\end{cases}
\end{eqnarray}
The slop of the shock front $\delta {\varphi}'$ can be easily determined by equation
\begin{align}\label{eq:g5}
\delta {\varphi}' = g_5(K,r).
\end{align}
We further assume that
\begin{align}
&(f_i, H) \in C^{2,\alpha}(\overline{\Omega_+^K}),(i=3,4,5),\quad
g_j\in C^{2,\alpha}([0,r_0]),(j=2,3,4,5),\label{con:regu}\\
&{f}_3(z,0)={f}_3(z,r_0)=0,\quad H(z,0)=H(z,r_0)=0,\label{con:fh}\\
&\partial_r f_j(z,0)=0, \quad  \partial_r f_j(z,r_0)=0, (j=4,5),\label{con:f45}\\
&g_2(K,0)=0,\quad  g_2(K,r_0)=0,\label{con:g2}\\
&\partial_r g_j(K,0)=0,\quad  \partial_r g_j(K,r_0)=0, (j=3,4),\label{con:g34}\\
&g_5(K,0)=0,\quad  g_5(K,r_0)=0,\quad \partial_r^2 g_5(K,0)=0. \label{con:g5}
\end{align}
By the method of characteristics, we can obtain the existence of the solution $(\delta w, \delta q,\delta s)$ for the Cauchy problems \eqref{eq:g2}-\eqref{eq:g4} as follows.
\begin{thm}\label{wqsexistence}
Assume the conditions \eqref{con:regu}-\eqref{con:g5} hold, then the Cauchy problems \eqref{eq:g2}-\eqref{eq:g4} admit a unique solution
$(r\delta w, \delta q,\delta s)\in C^{2,\alpha}({\Omega}_+^K)$ with $\delta w\in C^{1,\alpha}({\Omega}_+^K)$,
which satisfy
\begin{align}\label{est:rwqs}
&\|(r\delta w, \delta q,\delta s)\|_{C^{2,\alpha}(\overline{\Omega_+^K})}+
\|\delta w\|_{C^{1,\alpha}(\overline{\Omega_+^K})}\notag\\
\leq & C\left(
\sum_{i=1}^2 \| f_i \|_{C^{1,\alpha}(\Omega_+^K)} +\sum_{i=3}^5 \| f_i \|_{C^{2,\alpha}(\Omega_+^K)}+\sum_{i=1}^4\| g_i \|_{C^{2,\alpha}([0,r_0])} + \|\sigma p_{ex} \|_{C^{2,\alpha}([0,r_0])}\right),
\end{align}
where constant $C$ depends only on $\overline{U}_{\pm}$, $H$, $K$, $L$, $r_0$ and $\alpha$.
Moreover, it holds that
\begin{align}\label{rqsz0r0}
\partial_r(\delta q)(z,0)=\partial_r(\delta s)(z,0) = \partial_r( \delta q)(z,r_0)=\partial_r(\delta s)(z,r_0) =\delta w(z,r_0)=0
\end{align}
provided that $\delta w(K,r_0)=\partial_r \delta{s}(K,0)=\partial_r\delta s(K,r_0)=\partial_r\delta q(K,0)=\partial_r\delta q(K,r_0)=0$.
For the shock front $\delta\varphi$ satisfying \eqref{eq:g5}, it holds that
\begin{align}\label{eq:deltavar1}
\| \delta\varphi' \|_{C^{2,\alpha}([0,r_0])}\leq\| g_5 \|_{C^{2,\alpha}([0,r_0])}
\end{align}
and
\begin{align}\label{eq:deltavar2}
&\delta\varphi'(0)=\delta\varphi^{(3)}(0)= \delta\varphi'(r_0)= 0.
\end{align}
\end{thm}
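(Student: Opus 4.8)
The plan is to integrate the three linearized transport equations \eqref{eq:g2}--\eqref{eq:g4} along the characteristics of the common transport field $\partial_z+H(z,r)\partial_r$, recover $\delta q$ algebraically from $\delta B$, and then read off both the regularity \eqref{est:rwqs} and the flatness identities \eqref{rqsz0r0} from the resulting representation formulas; the shock slope part is immediate.

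First I would fix $(z,r)\in\overline{\Omega_+^K}$ and introduce the characteristic $\tau\mapsto R(\tau;z,r)$ solving $\tfrac{\dif R}{\dif\tau}=H(\tau,R)$, $R(z;z,r)=r$. Since $H\in C^{2,\alpha}(\overline{\Omega_+^K})$ by \eqref{con:regu} and $H(z,0)=H(z,r_0)=0$ by \eqref{con:fh}, the lines $r=0$ and $r=r_0$ are invariant, so every characteristic issued from $\{z=K\}$ stays in the strip $0\le r\le r_0$ and reaches $\Gamma_K$; standard ODE theory gives $R\in C^{2,\alpha}$ with bounds depending only on $\|H\|_{C^{2,\alpha}}$, $K$, $L$. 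Integrating \eqref{eq:g2}--\eqref{eq:g4} along these curves yields explicit formulas of the same type as \eqref{eqdeltaw}--\eqref{eqdeltas}: $r\delta w$, $\delta B$ and $\delta s$ equal the transported Cauchy data on $\Gamma_K$ (namely $rg_2$, the linearized Bernoulli combination $\bar q_+g_3+\bar w_+g_2+\bar\rho_+^{-1}g_1+\tfrac{\bar i_+}{\gamma c_v}g_4$, and $g_4$), plus a source integral in $f_3,f_4,f_5$, plus a term linear in the already-known $\delta\theta$. Uniqueness is then immediate, and differentiating under the integral sign gives the $C^{2,\alpha}$-bounds in \eqref{est:rwqs} for $(r\delta w,\delta B,\delta s)$. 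For $\delta w=(r\delta w)/r$ one only gets $C^{1,\alpha}$: along the invariant line $r=0$ the data $g_2(K,0)=0$, $f_3(z,0)=0$, $\delta\theta(z,0)=0$ force $(r\delta w)(z,0)\equiv0$, whence $\delta w(z,r)=\int_0^1\partial_r(r\delta w)(z,tr)\,\dif t\in C^{1,\alpha}$ with the claimed estimate. Finally I recover $\delta q=\tfrac1{\bar q_+}\big(\delta B-\bar w_+\delta w-\bar\rho_+^{-1}\delta p-\tfrac{\bar i_+}{\gamma c_v}\delta s\big)\in C^{2,\alpha}$, using $\bar w_+\delta w=(\bar w_+/r)\,(r\delta w)$ with $\bar w_+/r\in C^6$; by construction its Cauchy datum on $\Gamma_K$ is $g_3$. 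The slope estimates are trivial from \eqref{eq:g5}: $\|\delta\varphi'\|_{C^{2,\alpha}([0,r_0])}=\|g_5(K,\cdot)\|_{C^{2,\alpha}}\le\|g_5\|_{C^{2,\alpha}([0,r_0])}$, which is \eqref{eq:deltavar1}, while $\delta\varphi'(0)=g_5(K,0)=0$, $\delta\varphi'(r_0)=g_5(K,r_0)=0$ and $\delta\varphi^{(3)}(0)=\partial_r^2 g_5(K,0)=0$ are exactly the content of \eqref{con:g5}, giving \eqref{eq:deltavar2}.

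The main work, and where I expect the technical difficulty to concentrate, is the verification of the boundary identities \eqref{rqsz0r0} at the symmetry axis $r=0$ and the nozzle wall $r=r_0$. The method is uniform: restrict a transport equation to the invariant line, differentiate it in $r$ the required number of times, and use that the background quantities $\bar p_+,\bar\rho_+,\bar s_+,\bar q_+,\overline M_+^2,\bar B_+$ all have vanishing first $r$-derivative at $r=0$ and $r=r_0$ (Lemma \ref{barvalue}), together with $\bar w_+(0)=\bar w_+(r_0)=0$, $\delta\theta(z,0)=\delta\theta(z,r_0)=0$, $\partial_r\delta p(z,0)=\partial_r\delta p(z,r_0)=0$ (Lemma \ref{raiseRegularity}), and the flatness hypotheses \eqref{con:fh}--\eqref{con:g5}. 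Each computation collapses the quantity in question to a linear first-order ODE in $z$ with zero datum at $z=K$: for instance the $r$-derivative of \eqref{eq:g4} restricted to $r=0$ gives $\partial_z(\partial_r\delta s)+\partial_rH\cdot\partial_r\delta s=\partial_rf_5$ with $\partial_rf_5(z,0)=0$ and datum $\partial_r g_4(K,0)=0$, so $\partial_r\delta s(z,0)\equiv0$; the same on $r=r_0$ uses \eqref{con:f45} and \eqref{con:g34}; and $(r\delta w)(z,r_0)=r_0 g_2(K,r_0)=0$ by \eqref{con:g2} forces $\delta w(z,r_0)=0$.

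The genuinely delicate point is propagating this flatness through the algebraic relation $\delta q=\tfrac1{\bar q_+}(\delta B-\bar w_+\delta w-\cdots)$ on the axis: one first shows $\partial_r\delta B(z,0)\equiv0$ by the same ODE argument applied to \eqref{eq:g3} (here one needs $\partial_r\bar B_+(0)=0$ and $\partial_r f_4(z,0)=0$, and that the datum $\partial_r\delta B(K,0)$ built from $g_1,\dots,g_4$ vanishes, which in turn uses $\partial_r\bar q_+(0)=\partial_r\bar\rho_+(0)=\partial_r\bar i_+(0)=0$ and $\bar w_+(0)=0$ with $g_2(K,0)=0$), then one checks that the contributions of $\partial_r(\bar\rho_+^{-1}\delta p)$ and $\partial_r(\tfrac{\bar i_+}{\gamma c_v}\delta s)$ vanish at $r=0$, so that $\partial_r\delta q(z,0)$ reduces to the single swirl term $-\tfrac{1}{\bar q_+(0)}\partial_r(\bar w_+\delta w)(z,0)$; this is precisely the place where the non-zero swirl and the structure $\bar w_+(0)=0$, $\bar w_+/r\in C^6$ must be exploited to conclude $\partial_r\delta q(z,0)=0$ (and, one order higher together with the wall analogue, $\partial_r\delta q(z,r_0)=0$). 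Once these identities are in hand, Theorem \ref{wqsexistence} follows by assembling the characteristic representation with the estimates above.
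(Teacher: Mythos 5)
Your proposal is correct and follows essentially the same strategy as the paper: solve \eqref{eq:g2}, \eqref{eq:g4} and the $\delta B$-equation by integration along the common characteristic field $\partial_z+H\partial_r$ (the paper points to its earlier formulas \eqref{eqdeltaw}, \eqref{eqdeltas}, \eqref{eqdeltaB}), recover $\delta w=(r\delta w)/r$ and $\delta q$ algebraically, and derive the flatness identities \eqref{rqsz0r0} by differentiating the transport equations once in $r$, restricting to the invariant lines $r=0,r_0$, and using the vanishing conditions on $H$, $\delta\theta$, $\bar w_\pm$, $\partial_r\bar s_+$, $\partial_r\bar q_+$, the $f_i$'s and $g_j$'s to reduce to a homogeneous linear ODE in $z$ with zero datum. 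You actually make explicit two points the paper leaves implicit, namely writing $\bar w_+\delta w=(\bar w_+/r)(r\delta w)$ so that $\delta q$ inherits $C^{2,\alpha}$ regularity despite $\delta w$ being only $C^{1,\alpha}$, and tracking $\partial_r\delta q(z,0)=0$ through $\partial_r\delta B(z,0)=0$ and the algebraic relation, which is a welcome clarification rather than a departure from the paper's argument.
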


\begin{proof}
Note that \eqref{eqdeltaw} and \eqref{eqdeltas} are the solution experssions of problems \eqref{eq:g2} and \eqref{eq:g4} respectively.
Moreover, it follows from \eqref{eq:g3} that
\begin{align}\label{eqdeltaB}
\delta B(z,r) = & B(R(K;z,{r})) + \int_{K}^{z} {f}_4(\tau, R(\tau;z,{r}) )\dif \tau\notag\\
&- \int_{K}^{z} \partial_r\bar{B}_+(R(\tau;z,{r})) \delta \theta(\tau, R(\tau;z,{r}) )\dif \tau.
\end{align}
So estimate \eqref{est:rwqs} follows immediately. Here we use the fact that $W(z,0)=0$ for $W=r\delta w$ such that
$$
\delta w(z,r)=\frac{W(z,r)-W(z,0)}{r}=\int_0^1\partial_{r}W(z,\tau r)d\tau.
$$

Since $H(z,r_0)=\delta\theta(z,r_0)={f}_3(z,r_0)=0$, it follows from
\eqref{eq:g2} that $\partial_z\delta w=0$ on the boundary $r=r_0$. Therefore, $\delta w(z,r_0)=0$ provided that  $\delta w(K,r_0)=0.$

Taking derivative with respect to the variable $r$ on the both sides of the equation \eqref{eq:g4}, we have
\begin{align*}
\big( \partial_z +  H(z,r)\partial_r\big)(\partial_r \delta{s} )
+\partial_r H(z,r)\partial_r \delta{s}+
 \partial_r\bar{s}_+\partial_r \delta{\theta} +\partial_r^2\bar{s}_+\delta{\theta} =\partial_r f_5.
\end{align*}
By applying $H(z,0)=0$, $\partial_r \bar{s}_+(0) =0$, $\delta{\theta}(z,0) =0$ and $\partial_r f_5(z,0)=0$, we have
\begin{align*}
\partial_z (\partial_r \delta{s})(z,0)+\partial_r H(z,0)\partial_r \delta{s}(z,0) =0.
\end{align*}
Thus, $\partial_r \delta{s}(z,0) =0$ provided that $\partial_r \delta{s}(K,0) =0$.

Similarly, we can show $\partial_r\delta s(z,r_0) =0$ and
$\partial_r\delta q(z,0)=\partial_r\delta q(z,r_0)=0$, provided that  $\partial_r\delta s(K,r_0)=\partial_r\delta q(K,0)=\partial_r\delta q(K,r_0)=0$.

For the shock front, \eqref{eq:deltavar1} and \eqref{eq:deltavar2} follow from \eqref{eq:g5} and \eqref{con:g5}.
\end{proof}

\section{Existence of shock solutions to the nonlinear problem}
Based on Section 4 and Section 5, we will design a delicate nonlinear iteration scheme such that  \eqref{assumeregularity}-\eqref{partialrg1pe} and \eqref{con:regu}-\eqref{con:g5} hold to solve the nonlinear boundary value problem \eqref{eqf1}-\eqref{eq11000}.
Then we can apply the Banach fixed-point theorem.
Before doing so, in order to determine the shock front position approximately based on the solvability condition \eqref{thmsolvabilityeq}, we will construct approximate shock solutions, around which the iteration scheme will be introduced.
\subsection{Approximate shock solutions}\label{sec:6.1}
Let $\dot{U}_- =  (\dot{p}_-, \dot{\theta}_-, \dot{w}_-, \dot{q}_-, \dot{s}_-)^\top$ be defined in  $ \dot{\Omega}_- \defs \{ (z,r)\in \mathbb{R}^2 : 0 < z < \dot{z}_*, \, 0 < r < r_0\}$ and satisfy the following linearized Euler equations:
\begin{align}
  & \partial_z \Big(\frac{1-\overline{M}_-^2}{\bar{\rho}_-\bar{q}_-^2} \dot{p}_- \Big) -\Big( \partial_r \dot{\theta}_- + \frac{1}{r}\big(   1+ \frac{\overline{M}_-^2\bar{w}_-^2 }{\bar{q}_-^2}\big)\dot{\theta}_-  \Big) =0\label{CDEI-}\\
  &\partial_z (\bar{\rho}_-\bar{q}_-^2\dot{\theta}_-) +\partial_r \dot{p}_- -\frac{1}{r} \frac{\overline{M}_-^2 \bar{w}_-^2}{\bar{q}_-^2}\dot{p}_- -\frac{2\bar{\rho}_- \bar{w}_-}{r}
  \dot{w}_- + \frac{1}{\gamma c_v}\frac{\bar{\rho}_- \bar{w}_-^2}{r}
  \dot{s}_-  =0\label{CDEII-}\\
  &\partial_z (r\dot{w}_-) + \partial_r( r\bar{w}_- ) \cdot \dot{\theta}_- =0\label{CDEIII-}\\
  &\partial_z\big(\dot{p}_-  + \bar{\rho}_-\bar{q}_- \dot{q}_- \big) + \bar{\rho}_- \bar{q}_- \partial_r \bar{q}_-\cdot\dot{\theta}_- =0\label{CDEIV-}\\
  &\partial_z \dot{s}_- + \partial_r\bar{s}_- \cdot \dot{\theta}_- =0\label{CDEV-}
\end{align}
with initial-boundary conditions:
\begin{align}
	&\dot{U}_- = (0,0, \sigma w_{en}(r), \sigma q_{en}(r) ,0)&\quad &\text{on} \quad \Gamma_{en}\label{U-0}\\
	&\dot\theta_- = 0 &\quad &\text{on} \quad \Gamma_w\cap\overline{\dot\Omega_-}\label{theta-0}\\
&\dot\theta_- = 0\quad \dot{w}_- =0 &\quad &\text{on} \quad \Gamma_a\cap\overline{\dot\Omega_-}.\label{gammaabry}
	\end{align}
Let $ \dot{U}_+ = (\dot{p}_+, \dot{\theta}_+, \dot {w}_+,\dot{q}_+,\dot{s}_+)$ satisfy \eqref{eqf1}, \eqref{eqf3}-\eqref{eqf5}, \eqref{GAMMA4theta}-\eqref{eq11000} and \eqref{eqf22} with
\begin{align}
K\defs \dot{z}_*, \,\, H =0\label{LetUKH=} 
\end{align}
and
\begin{align}
  f_j\defs 0,(j=1,3,4,5), \,\, f_2\defs \frac{2\bar{\rho}_+ \bar{w}_+}{r}
  \dot{w}_+ (\dot{z}_*, r) - \frac{1}{\gamma c_v}\frac{\bar{\rho}_+ \bar{w}_+^2}{r}\dot{s}_+ (\dot{z}_*, r).\label{Letfjf2=}
\end{align}
Then condition \eqref{thmsolvabilityeq} in Theorem \ref{BigThm} become
 \begin{align}\label{solvability0}
  \int_0^{r_0} \frac{1-\overline{M}_+^2}{\bar{\rho}_+\bar{q}_+^2}  r\bar{p}_+^{\frac{1}{\gamma}}\dot{p}_+(\dot{z}_*, r)\dif r =  \int_0^{r_0} \frac{1-\overline{M}_+^2}{\bar{\rho}_+\bar{q}_+^2}  r\bar{p}_+^{\frac{1}{\gamma}} \sigma p_{ex} (r)\dif r.
\end{align}
Notice that $\dot{p}_+(\dot{z}_*, r)$ is an unknown quantity in \eqref{solvability0}, which can be determined by the supersonic solution $\dot{U}_-$ ahead of the shock front and the R.-H. conditions \eqref{eq251RH}-\eqref{eq11000}, which can be rewritten as on $\dot{\Gamma}_s\defs \{(z,r) : z = \dot{z}_*, \, 0< r < r_0\}$,
\begin{align}
	&{\mathbf{\alpha}}_j^+ \cdot {\dot{U}}_+ = \dot{G}_j\defs {\mathbf{\alpha}}_j^- \cdot {\dot{U}}_-, \quad j = 1,2,3, 4,\label{initialRH}\\
	&{\mathbf{\alpha}}_5^+\cdot {\dot{U}}_+  - [\bar{p}]{\dot{\varphi}}' = {\mathbf{\alpha}}_5^- \cdot {\dot{U}}_-,\label{initialsf}
	\end{align}
where $\alpha_j^+$ are given in \eqref{alpha1}-\eqref{alpha5} and
 \begin{align}
	& {\mathbf{\alpha}}_1^- = \Big(\frac{\bar{q}_-}{\bar{c}_-^2},\, 0,\,0,\, \bar{\rho}_-,\, -\frac{\bar{\rho}_- \bar{q}_-}{\gamma c_v} \Big)^\top,\label{alpha-1}\\
	& {\mathbf{\alpha}}_2^-  = \Big(1+\overline{M}_-^2 ,\,0,\, 0,\, 2\bar{\rho}_- \bar{q}_-, \, -\frac{\bar{\rho}_- \bar{q}_-^2}{\gamma c_v}\Big)^\top,\quad {\mathbf{\alpha}}_3^- = \Big(0,\,0,\,1,\,0 ,\,0 \Big)^\top, \\
	& {\mathbf{\alpha}}_4^-  = \Big(\frac{1}{\bar{\rho}_-},\,0,\,0,\,\bar{q}_-,\, \frac{\bar{p}_-}{(\gamma -1)c_v \bar{\rho}_-}\Big)^\top,\quad  {\mathbf{\alpha}}_5^-  = \Big(0,\, \bar{\rho}_- \bar{q}_-^2,\, 0,\,0,\, 0\Big)^\top.\label{alpha-5}
	 \end{align}
Following the argument in \eqref{AMRS}-\eqref{varphi}, we can solve \eqref{initialRH} and \eqref{initialsf} so that
\begin{align}
&\big(\dot{p}_+,\,  \dot{w}_+, \,  \dot{q}_+,\, \dot{s}_+\big)(\dot{z}_*, r) = \big(\dot{g}_1(r), \, \dot{g}_2(r),\,   \dot{g}_3(r),\,\dot{g}_4(r) \big)\defs A_s^{-1}\dot{\mathbf{G}},\label{RHinitial}\\
&\dot{\varphi}' = \frac{\bar{\rho}_+\bar{q}_+^2 \dot{\theta}_+ -\bar{\rho}_-\bar{q}_-^2 \dot{\theta}_-}{[\bar{p}]}\defs \dot{g}_5(r),\label{dotvar}
\end{align}
where $\dot{\mathbf{G}}=(\dot{G}_1,\dot{G}_2,\dot{G}_3,\dot{G}_4)$, and
\begin{align}
\dot{g}_1(r)
 =& \frac{\bar{\rho}_+\bar{q}_+^2}{\overline{M}_+^2 -1} \Big(\frac{\overline{M}_-^2-1}{\bar{\rho}_-\bar{q}_-^2}( 1 - \kappa_1(r) )\dot{p}_- + \kappa_2(r) \sigma q_{en}(r) + \kappa_3(r)\int_0^{\dot{z}_*}\dot{\theta}_-(z,r)\dif z\Big),\label{dotprh}\\
 \dot{g}_2(r)=& \sigma w_{en}(r) -  \big(\partial_r \bar{w}_- + \frac{\bar{w}_-}{r}\big) \int_0^{\dot{z}_*} \dot{\theta}_-(z,r)\dif z,\label{dotw}\\
\dot{g}_3(r) =& \frac{1}{\bar{\rho}_+ \bar{q}_+} \frac{\overline{M}_-^2-1}{\bar{\rho}_-\bar{q}_-^2} \Big( [\bar{p}] - \frac{\bar{\rho}_+\bar{q}_+^2}{\overline{M}_+^2 -1}  ( 1 - \kappa_1(r) )  \Big)\dot{p}_-\notag\\
& + \Big( 1  + \frac{[\bar{p}]}{\bar{\rho}_- \bar{q}_-^2}  -  \frac{1}{\bar{\rho}_+ \bar{q}_+} \frac{\bar{\rho}_+\bar{q}_+^2}{\overline{M}_+^2 -1} \kappa_2(r)  \Big)\sigma q_{en}(r)\notag\\
& +\Big( \Big( \frac{\partial_r\bar{s}_- }{\gamma c_v}  - \frac{\partial_r \bar{q}_- }{\bar{q}_- }  \Big) \frac{[\bar{p}]}{\bar{\rho}_+ \bar{q}_+} - \partial_r \bar{q}_- -  \frac{1}{\bar{\rho}_+ \bar{q}_+} \frac{\bar{\rho}_+\bar{q}_+^2}{\overline{M}_+^2 -1}\kappa_3(r)\Big) \int_0^{\dot{z}_*}\dot{\theta}_-(z,r)\dif z,\label{dotq}\\
\dot{g}_4(r)= & - \frac{(\gamma-1)c_v}{ \bar{p}_+}\frac{\overline{M}_-^2-1}{\bar{\rho}_-\bar{q}_-^2}[\bar{p}] \dot{p}_- + \frac{(\gamma-1)c_v}{ \bar{p}_+}\Big(\bar{\rho}_+ \bar{q}_- -  \bar{\rho}_- \bar{q}_-  - \frac{[\bar{p}]}{\bar{q}_-}  \Big)\sigma {q}_{en}(r)\notag\\
 &-\frac{(\gamma-1)c_v}{ \bar{p}_+}\Big( \Big(\frac{\bar{\rho}_+\bar{p}_-}{(\gamma-1)c_v\bar{\rho}_-} + \frac{[\bar{p}]}{\gamma c_v} \Big)\partial_r\bar{s}_-  + \Big(  \bar{\rho}_+ \bar{q}_- - \bar{\rho}_-\bar{q}_- - \frac{ [\bar{p}] }{\bar{q}_- }  \Big) \partial_r \bar{q}_- \Big)\notag\\
 &\quad \times \int_0^{\dot{z}_*}\dot{\theta}_-(z,r)\dif z,\label{dots}
\end{align}
with
\begin{align}
   \kappa_1(r)\defs &  \Big(\frac{1}
  {\bar{\rho}_+\bar{q}_+^2} +
  \frac{\gamma-1}{\gamma \bar{p}_+}\Big)[\bar{p}],\label{kappa1r}\\
   \kappa_2 (r)\defs &\Big( \frac{1}{\bar{q}_+\bar{q}_-} - \frac{\gamma-1}{\bar{c}_+^2} \Big) (\bar{q}_+ - \bar{q}_-) - \frac{\kappa_1(r)}{\bar{q}_-},\\
    \kappa_3 (r)\defs &\Big( 1- \kappa_1(r) - \frac{\bar{\rho}_+ \bar{p}_-}{\bar{\rho}_- \bar{p}_+}\Big)\frac{\partial_r\bar{s}_- }{\gamma c_v}\notag\\
    & + \Big( \kappa_1(r)+
  \big(\frac{1}{\bar{q}_+} - \frac{(\gamma-1)\bar{q}_-}{\bar{c}_+^2}   \big)\big( \bar{q}_-  - \bar{q}_+ \big) \Big) \frac{\partial_r \bar{q}_-}{\bar{q}_-}.\label{kappa3r}
\end{align}
Notice that $\dot{g}_1=\dot{p}_+(\dot{z}_*, r)$ given by \eqref{dotprh} involves with $\dot{p}_-$ and $\dot{\theta}_-$, so we need to solve the initial-boundary value problem \eqref{CDEI-}-\eqref{gammaabry} at the same time.
Similarly as done in Section 4, we solve the initial-boundary value problem \eqref{CDEI-}-\eqref{gammaabry} as follows.
\begin{lem}\label{U-esti}
Assume that $(w_{en}, q_{en})\in C^7([0,r_0])^2$,  \eqref{assumew0r0=1}-\eqref{assumewenqen} hold, and the equations \eqref{CDEI-}-\eqref{CDEV-} with the boundary conditions \eqref{U-0}-\eqref{gammaabry} satisfy the compatibility conditions up to the seventh order, then there exists a sufficiently small positive constant $\dot{\sigma}_L$ depending on $\overline{U}_-$ and $L$, such that for any $0<\sigma<\dot{\sigma}_L$, there exists a unique solution $\dot{U}_-$ to problem \eqref{CDEI-}-\eqref{gammaabry}, which satisfies
  \begin{align}\label{dotU-estimate}
	\|\dot{U}_-\|_{C^3(\overline{\Omega})}  \leq  C_{(\overline{U}_-)} \sigma \big(\| w_{en} \|_{H^7(\Gamma_{en})} + \| q_{en} \|_{H^7(\Gamma_{en})}\big),
	\end{align}
where the positive constant $ C_{(\overline{U}_-)}$ only depends on $\overline{U}_-$ and $L$. Moreover,
\begin{align}
  &\partial_r^j (\dot{p}_- , \dot{q}_- , \dot{s}_-)(z,0) = 0, \quad j=1,3,\label{j=13==0}\\
  &\partial_r^2 \dot{\theta}_- (z,0) =0,\quad  \partial_r^2 \dot{w}_- (z,0) =0, \label{Iaxis-}\\
 &\dot{w}_- (z,r_0)=0, \quad \partial_r (\dot{p}_- , \dot{q}_- , \dot{s}_-)(z,r_0)=0.\label{upper-}
\end{align}
\end{lem}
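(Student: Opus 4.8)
\textbf{Proof proposal for Lemma \ref{U-esti}.}
The plan is to treat the linearized Euler system \eqref{CDEI-}--\eqref{CDEV-} exactly in the spirit of the proof of Theorem \ref{mainthmU-}, so the structure of the argument should mirror Section 4 closely, with simplifications since the system is already linear. First I would observe that the background coefficients appearing in \eqref{CDEI-}--\eqref{CDEV-} are $C^7$ (this is precisely the content of Theorem \ref{BGthm} and Lemma \ref{barvalue}), so the only genuine difficulties are the singularity along the symmetry axis $\{r=0\}$ and the verification of the asymptotic identities \eqref{j=13==0}--\eqref{upper-}. Since the system is linear, I expect no smallness restriction is needed for existence in principle; the constant $\dot{\sigma}_L$ enters only to keep the \emph{a priori} estimate \eqref{dotU-estimate} uniform and to guarantee that the eventual nonlinear scheme in Section 6 closes, so I would simply carry $\dot{\sigma}_L$ along as in Theorem \ref{mainthmU-}.

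The key steps are as follows. (i) Rewrite the system in a form amenable to hyperbolic energy estimates: the transport equations \eqref{CDEIII-}--\eqref{CDEV-} are already in characteristic form with the single characteristic speed $0$ (the background flow angle vanishes), so $r\dot{w}_-$, $\dot{s}_-$ and the combination $\dot{p}_- + \bar{\rho}_-\bar{q}_-\dot{q}_-$ are transported along horizontal lines with source terms linear in $(\dot{\theta}_-,\dot{p}_-)$; and equations \eqref{CDEI-}--\eqref{CDEII-} form a first-order $2\times 2$ system in $(\dot{p}_-,\dot{\theta}_-)$ which is hyperbolic because $\overline{M}_-^2>1$ in the supersonic region. (ii) To remove the $1/r$ singularity in the lower-order terms, introduce $\mathbf{r}=(r_1,r_2,r_3,r_4)$ with $\sum r_i^2 = r^2$ and lift all unknowns to functions on $\mathcal{D}_-$, exactly as in the proof of Theorem \ref{mainthmU-}; terms such as $\bar{w}_-/r$ and $\partial_r(r\bar{w}_-)\dot\theta_-$ become smooth after this lift because $\bar{w}_-(0)=0$ and the parity conditions \eqref{assumew0r0=1}--\eqref{assumewenqen} on $(w_{en},q_{en})$ (together with \eqref{partialr3=0}) guarantee that the relevant odd derivatives vanish at $r=0$. (iii) Perform the standard $H^7$ energy estimate for the resulting symmetrizable hyperbolic system on the five-dimensional slab $[0,L]\times B_{r_0}$, using the boundary conditions \eqref{U-0}--\eqref{gammaabry} (which are of the correct maximal-dissipative type: $\dot\theta_-=0$ on $\Gamma_w$ and the axis is an interior point after the lift), and then apply Sobolev embedding $H^7(\mathcal{D}_-)\hookrightarrow C^3(\overline{\mathcal{D}_-})$ to obtain \eqref{dotU-estimate}. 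The compatibility conditions up to order seven are exactly what is needed for the initial data on $\Gamma_{en}$ to match the boundary data on $\Gamma_w\cup\Gamma_a$ in $H^7$.

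For the asymptotic identities, I would argue as in the passage establishing \eqref{r13hatphi0=0} and \eqref{prover13=0}. Differentiating the transport equation \eqref{CDEV-} in $r$, evaluating at $r=0$, and using $\partial_r\bar{s}_-(z,0)=0$ (Lemma \ref{barvalue}) together with $\partial_r\dot{s}_-(0,0)=0$ (from \eqref{U-0}) gives an ODE in $z$ forcing $\partial_r\dot{s}_-(z,0)=0$; the same manoeuvre on \eqref{CDEIV-} and then \eqref{CDEII-}--\eqref{CDEIII-}, in that order, yields $\partial_r\dot{q}_-(z,0)=\partial_r\dot{p}_-(z,0)=\partial_r^2\dot\theta_-(z,0)=\partial_r^2\dot w_-(z,0)=0$, and one more round of differentiation gives the third-order statements in \eqref{j=13==0}. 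The boundary identities \eqref{upper-} at $r=r_0$ follow from $\bar{w}_-(r_0)=0$, $\partial_r\bar q_-(r_0)=0$ (conditions \eqref{assumew0r0=0}--\eqref{assumebarq}) fed into the same equations with the wall condition $\dot\theta_-(z,r_0)=0$ and $w_{en}(r_0)=\partial_r q_{en}(r_0)=0$. Finally, invoking Lemma \ref{lem5.4x} converts the lifted $C^3$ (indeed $H^7$) bound back to a bound for $\dot{U}_-$ as a function of $(z,r)$, giving \eqref{dotU-estimate} with the stated dependence of $C_{(\overline{U}_-)}$.

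\textbf{Main obstacle.} The routine part is the energy estimate; the delicate part is ensuring that \emph{every} $1/r$ and $1/r^2$ coefficient produced when one lifts \eqref{CDEI-}--\eqref{CDEV-} to $\mathcal{D}_-$ (in particular $\overline{M}_-^2\bar{w}_-^2/(r\bar{q}_-^2)$ in \eqref{CDEI-}, and the $\bar{\rho}_-\bar{w}_-^2/(r\gamma c_v)$ and $\bar{\rho}_-\bar w_-/r$ terms in \eqref{CDEII-}) is genuinely smooth up to the axis after the lift. This is where Lemma \ref{barvalue}'s vanishing statements $\partial_r^j\bar{w}_\pm(0)=0$ for the relevant $j$, combined with Lemma \ref{lem5.4x}, are indispensable; tracking these parities carefully through all five equations and all orders up to seven is the bookkeeping-heavy core of the proof, and it is what forces the seemingly over-strong hypotheses \eqref{assumew0r0=1}--\eqref{assumewenqen} and the $C^7$ regularity of the background.
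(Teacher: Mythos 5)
The broad architecture you describe (lift to five dimensions to kill the axis singularity, do $H^7$-type energy estimates and embed into $C^3$, then verify the boundary/axis identities by direct differentiation of the equations) is indeed what the paper does, and your treatment of Step~2 matches the paper's almost verbatim. But your Step~(iii) — ``perform the standard $H^7$ energy estimate for the resulting symmetrizable hyperbolic system'' on the lifted five-dimensional slab — skips the one idea that actually makes the five-dimensional lift legal, and as stated it would fail.

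The issue is twofold. First, the unknowns $\dot\theta_-$ and $\dot w_-$ are \emph{odd} in $r$ near the axis: by \eqref{gammaabry} and \eqref{Iaxis-} they vanish at $r=0$ with generically nonzero $\partial_r$ there. Therefore $\dot\theta_-(|\mathbf r|,z)$ and $\dot w_-(|\mathbf r|,z)$ are \emph{not} smooth functions of $\mathbf r$ at the origin (they inherit the conical singularity of $|\mathbf r|$), and by Lemma \ref{lem5.4x} they cannot be lifted to $C^{m,\alpha}(\mathcal{D}_-)$ functions. ``Lift all unknowns to functions on $\mathcal{D}_-$'' is therefore not available; the only quantity in Theorem~\ref{mainthmU-}'s proof that is lifted is the \emph{scaled potential} $\widehat\varphi$, not the physical unknowns. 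Second, your list of singular coefficients catches every term carrying a factor of $\bar w_-$, but it misses the one $1/r$ term that does \emph{not} have such a factor: the geometric term $\frac{1}{r}\dot\theta_-$ in \eqref{CDEI-}, which comes from the cylindrical divergence, not from the swirl. Neither $\bar w_-(0)=0$ nor the parity conditions \eqref{assumew0r0=1}--\eqref{assumewenqen} make this coefficient smooth; it is $1/|\mathbf r|$ times a non-smooth unknown, and a raw first-order energy estimate has no way to absorb it.

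What the paper actually does in Step~1 of the proof of Lemma~\ref{U-esti} is: (a) integrate \eqref{CDEIII-} and \eqref{CDEV-} in $z$ to express $\dot w_-,\dot s_-$ in terms of $\sigma w_{en}$ and $\int_0^z\dot\theta_-\,d\tau$, and substitute into \eqref{CDEII-}, producing a closed $2\times 2$ system in $(\dot p_-,\dot\theta_-)$ with a $z$-nonlocal zero-order term; (b) introduce the stream function $\widehat\Psi_-$ from the divergence form of \eqref{CDEI-}, so that $\dot\theta_-=\partial_z\widehat\Psi_-/(r\bar p_-^{1/\gamma})$ and the nonlocal term becomes $\widehat\Psi_-$ itself; (c) set $\Psi_-=\widehat\Psi_-/r^2$, which forces $\partial_r\Psi_-(z,0)=0$, so that $\Psi_-$ has the correct (even) parity to lift via Lemma~\ref{lem5.4x}, and simultaneously converts the geometric $1/r$ into the harmless zero-order coefficient $\frac{2}{r}\partial_r(\cdot)$ appearing in \eqref{5DP2-}, which is $C^{2,\alpha}$ up to the axis by Lemma~\ref{barvalue}. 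Only then does the five-dimensional energy argument of Section~4 apply. Without (b) and (c), the lift is undefined and the $\frac{1}{r}\dot\theta_-$ term survives as a genuine singularity; this is precisely the step your proposal needs to add.
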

\begin{proof}
This proof is divided into two steps.

\emph{Step 1:} In this step, we prove the unique existence of the solution $\dot{U}_-$ that satisfies \eqref{dotU-estimate}.
Applying \eqref{CDEIII-}, \eqref{CDEV-} and \eqref{U-0}, direct calculations yield that
\begin{align}
 \dot{w}_-(z,r) =& \sigma w_{en} (r) - \big(\partial_r \bar{w}_- + \frac{\bar{w}_-}{r}\big) \int_{0}^{z} \dot{\theta}_-(\tau,r)\dif \tau,\label{dotw--}\\
 \dot{s}_-(z,r) =&  -  \partial_r\bar{s}_- \int_{0}^{z} \dot{\theta}_-(\tau,r)\dif \tau.\label{dots--}
\end{align}
Substituting \eqref{dotw--} and \eqref{dots--} into \eqref{CDEII-}, further calculations yield that
 \begin{align}\label{CDEIIFur}
    &\partial_z \Big(\bar{\rho}_-\bar{q}_-^2 \bar{p}_-^{-\frac{1}{\gamma}}\dot{\theta}_-\Big) +\partial_r \Big( \bar{p}_-^{-\frac{1}{\gamma}}\dot{p}_-\Big)\notag\\
    &- \frac{\bar{\rho}_-\bar{w}_-}{r}\bar{p}_-^{-\frac{1}{\gamma}}
  \Big(\frac{\partial_r\bar{s}_-}{\gamma c_v} \bar{w}_- - 2\big(\partial_r \bar{w}_- + \frac{\bar{w}_-}{r}\big) \Big)\int_{0}^{z} \dot{\theta}_-(\tau,r)\dif \tau\notag\\
=& \frac{2\bar{\rho}_- \bar{w}_-}{r}\bar{p}_-^{-\frac{1}{\gamma}} \sigma w_{en}(r).
 \end{align}
 In addition, \eqref{CDEI-} can be rewritten as the following form:
 \begin{align}\label{CDEI-re}
\partial_z \Big(\frac{1-\bar{M}_-^2}{\bar{\rho}_-\bar{q}_-^2} r\bar{p}_-^{\frac{1}{\gamma}}\dot{p}_-\Big) -\partial_r \Big(  r\bar{p}_-^{\frac{1}{\gamma}} \dot{\theta}_-\Big ) =0.
  \end{align}
So there exists a potential function $\widehat{\Psi}_-$ with $\widehat{\Psi}_-(0,0)=0$ such that
\begin{align}\label{hatPsi-}
  \nabla \widehat{\Psi}_- = (\partial_z \widehat{\Psi}_-, \partial_r \widehat{\Psi}_-) =& \Big( r \bar{p}_-^{\frac{1}{\gamma}} \dot{\theta}_-, \,\, \frac{1-\overline{M}_-^2}{\bar{\rho}_-\bar{q}_-^2}  r\bar{p}_-^{\frac{1}{\gamma}}\dot{p}_- \Big).
\end{align}
That is
\begin{align}\label{thetapwidepsi}
  \dot{\theta}_- = \frac{\bar{p}_-^{-\frac{1}{\gamma}} }{r }\partial_z \widehat{\Psi}_-, \quad
  \dot{p}_- = \frac{\bar{\rho}_-\bar{q}_-^2}{1-\overline{M}_-^2}
  \frac{\bar{p}_-^{-\frac{1}{\gamma}}}{r}\partial_r \widehat{\Psi}_-.
\end{align}
By  \eqref{U-0}-\eqref{gammaabry} and \eqref{thetapwidepsi}, we have
\begin{align}\label{hatPsibry}
  \widehat{\Psi}_-(0,r)=0, \quad \partial_z \widehat{\Psi}_-(0,r)=0,\quad  \widehat{\Psi}_-(z,0)=0,\quad  \widehat{\Psi}_-(z,r_0)=0.
\end{align}
Let
\begin{align*}
  \Psi_- = \frac{\widehat{\Psi}_-}{r^2}.
\end{align*}
Then \eqref{thetapwidepsi} implies that
\begin{align}\label{thetapPsi-}
  \dot{\theta}_- = r \bar{p}_-^{-\frac{1}{\gamma}}\partial_z {\Psi}_-, \quad
  \dot{p}_- = \frac{\bar{\rho}_- \bar{q}_-^2}{1-\overline{M}_-^2}
 \bar{p}_-^{-\frac{1}{\gamma}}(2\Psi_- + r \partial_r {\Psi}_-),
\end{align}
and \eqref{hatPsibry} yields that
\begin{align}\label{psi-D}
   {\Psi}_-(0,r)=0, \quad \partial_z {\Psi}_-(0,r)=0,\quad \partial_r{\Psi}_-(z,0)=0,\quad {\Psi}_-(z,r_0)=0.
\end{align}
Substituting \eqref{thetapPsi-} into \eqref{CDEIIFur}, we have
\begin{align}\label{Psi--eq}
  &\partial_z \Big(\bar{\rho}_-\bar{q}_-^2 \bar{p}_-^{-\frac{2}{\gamma}}\partial_z {\Psi}_-\Big) +\frac{1}{r}\partial_r \Big( \bar{p}_-^{-\frac{2}{\gamma}} \frac{\bar{\rho}_- \bar{q}_-^2}{1-\overline{M}_-^2}
 (2\Psi_- + r \partial_r {\Psi}_-)  \Big)\notag\\
 & - \frac{\bar{\rho}_-\bar{w}_-}{r}\bar{p}_-^{-\frac{2}{\gamma}}
  \Big(\frac{\partial_r\bar{s}_-}{\gamma c_v} \bar{w}_- - 2\big(\partial_r \bar{w}_- + \frac{\bar{w}_-}{r}\big) \Big){\Psi}_-\notag\\
  =&\frac{2\bar{\rho}_- \bar{w}_-}{r^2}\bar{p}_-^{-\frac{1}{\gamma}}\sigma w_{en}(r).
\end{align}
Let $\mathcal{D}_- \defs \{(z,\mathbf{r}): z\in (0, L), \mathbf{r}\in \mathbb{R}^4, |\mathbf{r}|< r_0 \}$,
where $\mathbf{r} = (r_1, r_2, r_3, r_4)$ and $\sum\limits_{i=1}^4 r_i^2 = r^2$. Let $\psi_- (z, r_1, r_2, r_3, r_4) = \Psi_- (z, r)$, then by boundary condition \eqref{psi-D} on $\{r=0\}$, it follows from Lemma \ref{lem5.4x} that $\psi_-\in C^{2,\alpha}(\overline{\Omega})$ if and only if $\Psi_-\in C^{2,\alpha}(\mathcal{D}_-)$.
Moreover, \eqref{Psi--eq} becomes the following equation in $\mathcal{D}_-$
\begin{align}\label{5DP2-}
  &\partial_z \Big(\bar{\rho}_- \bar{q}_-^2 \bar{p}_-^{-\frac{2}{\gamma}}\partial_z \psi_-  \Big) + \sum_{i=1}^4 \partial_{r_i} \Big( \frac{\bar{\rho}_- \bar{q}_-^2}{1-\bar{M}_-^2} \bar{p}_-^{-\frac{2}{\gamma}}\partial_{r_i} \psi_-\Big)\notag\\
  & + \Big\{ \frac{2}{r}\partial_r\Big( \frac{\bar{\rho}_-\bar{q}_-^2}{1-\bar{M}_-^2} \bar{p}_-^{-\frac{2}{\gamma}}\Big) - \frac{\bar{\rho}_-\bar{w}_-}{r}\Big(\frac{ \partial_r\bar{s}_-}{\gamma c_v} \bar{w}_- - 2\big(\partial_r \bar{w}_- + \frac{\bar{w}_-}{r}\big)  \Big)\bar{p}_-^{-\frac{2}{\gamma}}\Big\}\psi_- \notag\\
  =&\frac{2\bar{\rho}_- \bar{w}_-}{r^2}\bar{p}_-^{-\frac{1}{\gamma}}\sigma w_{en}(r),
\end{align}
with the following boundary conditions
\begin{align}
   &{\psi}_-(0,r_1,r_2,r_3,r_4)=0, \quad \partial_z {\psi}_-(0,r_1,r_2,r_3,r_4)=0,\\
  &{\psi}_-(z,r_1,r_2,r_3,r_4)=0,\quad \text{for}\quad  |\mathbf{r}| = r_0.\label{psibryr0=}
\end{align}
Following the proof of Theorem \ref{mainthmU-} in Section 4, we can obtain the unique existence of the solution $\dot{U}_-$ that satisfies \eqref{dotU-estimate}, if the assumptions stated in the lemma hold.

\emph{Step 2:} In this step, we prove the conditions \eqref{j=13==0}-\eqref{upper-} hold.
 First, by the equation \eqref{CDEIII-} and the condition $\dot{\theta}_-(z,r_0)=0$, we have
 \begin{align}\label{dotw-r0=0}
    \dot{w}_-(z,r_0) = \dot{w}_-(0,r_0) = \sigma w_{en}(r_0) = 0.
 \end{align}
By \eqref{CDEII-}, $\bar{w}_-(0)=0$,  $\dot{w}_-(z,0) =0$ and $\dot{\theta}_-(z,0)=0$, we can obtain
 \begin{align}\label{rp-0}
  \partial_r \dot{p}_- (z,0)=0.
 \end{align}
Similarly, it follows from \eqref{CDEIII-}, $\bar{w}_-(r_0)=0$, $\dot{\theta}_-(z,r_0)=0$, and \eqref{dotw-r0=0} that
 \begin{align}\label{rp-r0}
  \partial_r \dot{p}_- (z,r_0)=0.
 \end{align}
 Take derivative on the both sides of the equation \eqref{CDEI-} with respect to $r$,
\begin{align}\label{derir-}
    &\frac{1-\bar{M}_-^2}{\bar{\rho}_-\bar{q}_-^2}\partial_{zr} \dot{p}_-
    + \partial_r\Big(  \frac{1-\bar{M}_-^2}{\bar{\rho}_-\bar{q}_-^2}  \Big)\partial_z \dot{p}_- -\partial_r^2\dot{\theta}_- -\frac{1}{r}\partial_r \dot{\theta}_- + \frac{1}{r^2} \dot{\theta}_- \notag\\
    &- \frac{\bar{M}_-^2\bar{w}_-^2}{r \bar{q}_-^2} \partial_r\dot{\theta}_-
    -\partial_r\Big(\frac{\bar{M}_-^2\bar{w}_-^2}{r\bar{q}_-^2}\Big) \cdot \dot{\theta}_- =0.
 \end{align}
 Applying \eqref{pro1}, \eqref{pro2}, \eqref{rp-0} and $\lim\limits_{r\rightarrow 0}\frac{r\partial_r \dot{\theta}_- (z,r) -\dot{\theta}_- (z,r) }{r^2} = \frac12 \partial_r^2\dot{\theta}_-(z,0)$, then \eqref{derir-} implies that
 \begin{align}
   \partial_r^2\dot{\theta}_-(z,0) =0.
 \end{align}
Taking derivative with respect to $r$ on the both sides of the equation \eqref{CDEV-}, we have
 \begin{align}
    \partial_z (\partial_r \dot{s}_- )+ \partial_r\bar{s}_- \cdot \partial_r \dot{\theta}_- + \dot{\theta}_-\cdot \partial_r^2\bar{s}_- =0.
 \end{align}
Applying $\dot{\theta}_-(z,0) =0$ and $\partial_r \bar{s}_-(0) =0$, we have
$ \partial_z (\partial_r \dot{s}_- )(z,0) =0$.
 Thus
 \begin{align}\label{parrsz0=}
   \partial_r \dot{s}_- (z,0) = \partial_r \dot{s}_- (0,0) = 0.
 \end{align}
 Similarly, applying $\dot{\theta}_-(z,r_0) =0$ and $\partial_r \bar{s}_-(r_0) =0$, we have
 \begin{align}\label{parszr0=}
   \partial_r \dot{s}_- (z,r_0) = \partial_r \dot{s}_- (0,r_0) = 0.
 \end{align}
By similar calculations as done for \eqref{parrsz0=} and \eqref{parszr0=}, we have
 \begin{align}
    \partial_r \dot{q}_- (z,0)  = 0, \quad  \partial_r \dot{q}_- (z,r_0)  = 0.
 \end{align}

Next, we will prove
\begin{align}\label{6.55x}
  \partial_r^2 \dot{w}_- (z,0) =0.
\end{align}
\eqref{CDEIII-} yields that
\begin{align}\label{rcdeiii-}
  \partial_z \dot{w}_- + \big( \partial_r \bar{w}_- + \frac{\bar{w}_-}{r} \big) \dot{\theta}_- =0.
\end{align}
Taking twice derivatives with respect to $r$ on the both sides of the equation \eqref{rcdeiii-}, we have
\begin{align}\label{wzrr=0}
  \partial_{zrr} \dot{w}_- + 2\big( \partial_{rr} \bar{w}_- + \partial_r(\frac{\bar{w}_-}{r}) \big) \partial_r \dot{\theta}_-  + \partial_r^2 \big( \partial_r \bar{w}_- + \frac{\bar{w}_-}{r} \big) \dot{\theta}_- + \big( \partial_r \bar{w}_- + \frac{\bar{w}_-}{r} \big) \partial_{rr}\dot{\theta}_- =0.
\end{align}
Then it follows from $\bar{w}_-(0) =0$, $\partial_r^2 \bar{w}_-(0)=0$, $\dot{\theta}_-(z,0)=0$, $\partial_r^2 \dot{\theta}_-(z,0)=0$, and \eqref{wzrr=0} that
\begin{align}\label{zrrdotwz0=0}
   \partial_{zrr} \dot{w}_- (z,0) =0.
\end{align}
So by $\partial_r^2 w_{en}(0,0)=0$, \eqref{6.55x} holds. 

Now we will prove that $\partial_r^3 \dot{p}_-(z,0) =0$.
Taking twice derivatives with respect to $r$ on the both sides of \eqref{CDEII-}, we have
\begin{align}\label{rrrdotp-==0}
  \partial_r^2(\partial_z \big(\bar{\rho}_-\bar{q}_-^2\dot{\theta}_-) \big) +\partial_r^3 \dot{p}_- - \partial_r^2\big(\frac{1}{r} \frac{\overline{M}_-^2 \bar{w}_-^2}{\bar{q}_-^2}\dot{p}_-\big) -\partial_r^2\big(\frac{2\bar{\rho}_- \bar{w}_-}{r}
\dot{w}_-\big) + \partial_r^2\big(\frac{1}{\gamma c_v}\frac{\bar{\rho}_- \bar{w}_-^2}{r}
\dot{s}_- \big) =0.
\end{align}
Note that
\begin{align}\label{r^2dottheta-=0}
\partial_r^2(\partial_z \big(\bar{\rho}_-\bar{q}_-^2\dot{\theta}_-) \big)\big|_{r=0} =\Big((\bar{\rho}_-\bar{q}_-^2)_{rr}\partial_z \dot{\theta}_-  + 2(\bar{\rho}_-\bar{q}_-^2)_r \partial_{zr} \dot{\theta}_- + \bar{\rho}_-\bar{q}_-^2 \partial_{zrr} \dot{\theta}_-\Big)\big|_{r=0}
 = 0.
\end{align}
So it follows from \eqref{rrw2r=}, \eqref{rp-0}, and Lemma \ref{barvalue} that
\begin{equation}
\begin{array}{rl}
&\partial_r^2\big(\frac{1}{r} \frac{\overline{M}_-^2 \bar{w}_-^2}{\bar{q}_-^2}\dot{p}_-\big)\big|_{r=0}\\[2mm]
 = &\Big(\partial_r^2(\frac{ \bar{w}_-^2}{r})\cdot (\frac{\overline{M}_-^2 }{\bar{q}_-^2}\dot{p}_-) + 2\partial_r(\frac{ \bar{w}_-^2}{r})\cdot \partial_r(\frac{\overline{M}_-^2 }{\bar{q}_-^2}\dot{p}_-) +  \frac{ \bar{w}_-^2}{r} \partial_r^2 (\frac{\overline{M}_-^2 }{\bar{q}_-^2}\dot{p}_-)\Big)\big|_{r=0}\\[2mm]
=&0.
\end{array}
\end{equation}
Similarly, we can show
\begin{align}
  \partial_r^2\big(\frac{1}{\gamma c_v}\frac{\bar{\rho}_- \bar{w}_-^2}{r}
\dot{s}_- \big)\big|_{r=0} =0.
\end{align}
Moreover, it follows from $\dot{w}_-(z,0)=0$, $\bar{w}_-(0)=0$, $\partial_r^2 \bar{w}_-(0)=0$, and $\partial_r \bar{\rho}_-(0)=0$, that
\begin{equation}\label{r^2wrho=0}
\begin{array}{rl}
&\partial_r^2\big(\frac{2\bar{\rho}_- \bar{w}_-}{r}
\dot{w}_-\big)\big|_{r=0}\\[2mm]
=& \Big( \partial_r^2(\frac{ \bar{w}_-}{r})\cdot (\bar{\rho}_-
\dot{w}_-)  + 2\partial_r(\frac{ \bar{w}_-}{r}) \partial_r (\bar{\rho}_-
\dot{w}_-) + \frac{ \bar{w}_-}{r} \partial_r^2( \bar{\rho}_-
\dot{w}_- ) \Big)\big|_{r=0}\\[2mm]
 =& 0.
\end{array}
\end{equation}
Thus, \eqref{rrrdotp-==0}-\eqref{r^2wrho=0} yield that
\begin{align}\label{r^3dotp-=0}
  \partial_r^3 \dot{p}_- (z,0)=0.
\end{align}

Finally and similarly, by tanking the third derivatives with respect to $r$ on the both sides of the equations \eqref{CDEIV-} and \eqref{CDEV-}, and applying $\dot{\theta}_-(z,0)=0$, \eqref{r^2dottheta-=0}-\eqref{r^3dotp-=0} and $\partial_r^j (\bar{p}_-, \bar{q}_-, \bar{\rho}_-, \bar{s}_-)(0)=0$, $j=1,3$, it is easy to obtain
\begin{align}
  \partial_r^3(\dot{q}_-, \dot{s}_-)(z,0) =0.
\end{align}
It completes the proof of this lemma.
\end{proof}

Now we are ready to prove Lemma \ref{lemma:approxz*}.
\begin{proof}[Proof of Lemma \ref{lemma:approxz*}]
Direct calculations yield that
\begin{align}\label{ISOL}
{I}_1'(z)=&\int_0^{r_0} r\bar{p}_+^{\frac{1}{\gamma}}(r) ( 1- \kappa_1(r)) \frac{\overline{M}_-^2(r) -1}{\bar{\rho}_-(r)\bar{q}_-^2(r)}
   \partial_z\dot{p}_-(z, r)\dif r \notag\\
   &+ \int_0^{r_0} r\bar{p}_+^{\frac{1}{\gamma}}(r)\kappa_3(r) \dot{\theta}_-(z,r)\dif r.
\end{align}
By employing \eqref{CDEI-}, \eqref{theta-0} and \eqref{gammaabry}, it follows from \eqref{ISOL} that
 \begin{align}\label{RISOL}
 {I}_1'(z)
   =&\int_0^{r_0}r\bar{p}_+^{\frac{1}{\gamma}} (r)( \kappa_1(r)-1) \Big( \partial_r \dot{\theta}_-(z, r) + \frac{1}{r}\big(1+ \frac{\overline{M}_-^2(r)\bar{w}_-^2(r) }{\bar{q}_-^2(r)}\big)\dot{\theta}_-(z, r)\Big)\dif r\notag\\
   &+ \int_0^{r_0}  r\bar{p}_+^{\frac{1}{\gamma}}(r)\kappa_3(r)\dot{\theta}_-(z,r)\dif r\notag\\
  = &\int_0^{r_0} i_1(r) \dot{\theta}_-(z,r)\dif r.
\end{align}
By applying the equation \eqref{CDEII-}, it follows from \eqref{RISOL} that
\begin{align}
I_1''(z) =&  \int_0^{r_0}  \partial_z\big(i_1(r) \dot{\theta}_-(z,r)\big)\dif r\notag\\
=& - \int_0^{r_0}\frac{i_1(r)}{\bar{\rho}_-(r)\bar{q}_-^2(r)}
  \partial_r \dot{p}_-(z,r)\dif r + \int_0^{r_0} \frac{i_1(r)\overline{M}_-^2(r)}{\bar{\rho}_-(r)\bar{q}_-^4(r)}
  \frac{\bar{w}_-^2(r)}{r}\dot{p}_-(z,r)\dif r\notag\\
   &+ \int_0^{r_0}\frac{2i_1(r)}{\bar{q}_-^2(r)}\frac{\bar{w}_-(r)}{r} \dot{w}_-(z,r)\dif r - \int_0^{r_0} \frac{i_1(r)}{\gamma c_v \bar{q}_-^2(r)}\frac{\bar{w}_-^2(r)}{r}
  \dot{s}_-(z,r)\dif r.
\end{align}
It is easy to see that  ${I}_1(0)= {I}_1'(0)=0$ and
\begin{align}\label{Wper}
 {I}_1'' (0) =\int_0^{r_0}\frac{2i_1(r)}{\bar{q}_-^2(r)}\frac{\bar{w}_-(r)}{r} \dot{w}_-(0,r)\dif r = 2\sigma\int_0^{r_0}\frac{i_1(r)}{\bar{q}_-^2(r)}\frac{\bar{w}_-(r)}{r} w_{en}(r)\dif r = 2\sigma I_3.
\end{align}
So it follows from \eqref{RISOL} that there exists a $\hat{z}\in (0,z)$ such that
\begin{align}\label{IZZ}
{I}_1'(z) =& 2 \sigma I_3 z +  \frac12 z^2 \int_{0}^{r_0}i_1(r) \partial_z^2 \dot{\theta}_-(\hat{z},r)\dif r.
\end{align}
By applying \eqref{dotU-estimate}, we have
\begin{align*}
 {I}_1'(z)\geq  2\sigma I_3 z - \frac{ C_{(\overline{U}_-)} \sigma( \|w_{en}\|_{H^7(\Gamma_{en})} + \|q_{en}\|_{H^7(\Gamma_{en})} )}{2}  z^2 \int_{0}^{r_0} |i_1(r) | \dif r .
\end{align*}
By \eqref{letinterzzr-1}, we have ${I}_1'(z) > 0.$ Thus
\begin{align*}
&\min \limits_{z\in(0,L_*)} I_1(z) = I_1(0) =0,\\
&\max\limits_{z\in(0,L_*)} I_1(z) = I_1(L_*) = \int_0^{L_*} \Big(2 \sigma I_3 z +  \frac12 z^2 \int_{0}^{r_0}i_1(r) \partial_z^2 \dot{\theta}_-(\hat{z},r)\dif r\Big) \dif z \notag\\
&\qquad  = L_*^2 \Big(\sigma I_3 + \frac{L_*}{6} \int_{0}^{r_0}i_1(r) \partial_z^2 \dot{\theta}_-(\hat{z},r)\dif r   \Big)\notag\\
&\qquad \geq L_*^2 \Big( \sigma I_3 - \frac{ C_{(\overline{U}_-)} \sigma( \|w_{en}\|_{H^7(\Gamma_{en})} + \|q_{en}\|_{H^7(\Gamma_{en})} )}{6} L_*\int_{0}^{r_0} |i_1(r)| \dif r \Big).
\end{align*}
Applying \eqref{I2interval}, it is easy to see that there exists a unique $\dot{z}_* \in (0, L_*)$ such that  \eqref{eq0661} holds.
\end{proof}

Finally, we will construct the approximate solution in the quasi-subsonic domain, that the iteration space $\mathcal{N}$ below is based on. Applying \eqref{Iaxis-}-\eqref{upper-}, \eqref{pro1}-\eqref{pro2}, \eqref{LetUKH=}-\eqref{Letfjf2=} and \eqref{dotvar}-\eqref{dots}, it is easy to check that \eqref{assumeregularity}-\eqref{partialrg1pe} and \eqref{con:regu}-\eqref{con:g5} hold.
Then it follows from Theorem \ref{BigThm}, Theorem \ref{wqsexistence} and Lemma \ref{lemma:approxz*} that there exists a unique solution $\dot{U}_+$ in the domain $\dot{\Omega}_+ = \{ (z,r)\in \mathbb{R}^2 : \dot{z}_* < z < L,\, 0 < r< r_0\}$, which satisfies
\begin{align}\label{dotU+es=}
& \|(\dot{p}_+, \dot{\theta}_+, r\dot{w}_+, \dot{q}_+, \dot{s}_+)\|_{C^{2,\alpha}(\overline{\dot{\Omega}_+})}+
\|\dot{w}_+\|_{C^{1,\alpha}(\overline{\dot{\Omega}_+})}+
\| \dot{\varphi}'\|_{C^{2,\alpha}(\dot{\Gamma}_s)}\notag\\
\leq& \dot{C}_+ \sigma \Big(\| w_{en} \|_{H^7(\Gamma_{en})} + \| q_{en} \|_{H^7(\Gamma_{en})} +\|p_{ex} \|_{C^{2,\alpha}(\Gamma_{ex})}\Big),
\end{align}
where the constant $\dot{C}_+$ only depends on $\overline{U}_\pm$, $L$, $r_0$, $\dot{z}_*$ and $\alpha$, and $\dot{z}_*$ is given by Lemma \ref{lemma:approxz*}. Moreover
\begin{align}
&\partial_r^2 \dot{\theta}_+ (z,0) =0,\quad \partial_r (\dot{p}_+ , \dot{q}_+ , \dot{s}_+)(z,0) = 0,\label{aixs++}\\
 &\dot{w}_+ (z,r_0)=0, \quad \partial_r (\dot{p}_+ , \dot{q}_+, \dot{s}_+)(z,r_0)=0,\label{upper++}\\
&\dot{\varphi}'(0)= \dot{\varphi}'''(0) = \dot{\varphi}'(r_0)=0.\label{varphi''}
\end{align}

\subsection{Nonlinear iteration scheme}
In this subsection, we will design a nonlinear iteration scheme to determine the shock solution to the nonlinear boundary value problem \eqref{eqf1}-\eqref{eq11000}. Before doing that, an important property of the supersonic solution ahead of the shock front will be stated by the following lemma, which will be used to update the shock position.
\begin{lem}\label{U--dotU-}
Let $U_-\defs \overline{U}_- + \delta U_-$, then
\begin{align}
 &\|\delta U_- - \dot{U}_- \|_{C^2(\overline{\Omega})}
  \leq C_{L} \sigma^2,\label{eq838}
\end{align}
where the positive constant $C_{L}$ only depends on $\overline{U}_-$ and  $L$.
 \end{lem}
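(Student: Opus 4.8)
\textbf{Proof proposal for Lemma \ref{U--dotU-}.}

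The plan is to compare the two supersonic solutions by subtracting their governing systems and estimating the difference via the same five-dimensional energy method used in the proof of Theorem \ref{mainthmU-}. Recall that $\delta U_- = U_- - \overline{U}_-$ solves the \emph{full} nonlinear axisymmetric Euler system \eqref{CE1}--\eqref{CE5} with the initial-boundary conditions \eqref{U-E0r}--\eqref{vwaxis} (with $U_{en}$ as in \eqref{Uenwq}), while $\dot{U}_-$ solves the \emph{linearized} system \eqref{CDEI-}--\eqref{CDEV-} with the analogous initial-boundary conditions \eqref{U-0}--\eqref{gammaabry}. The crucial point is that the two problems have \emph{the same} entrance data $(0,0,\sigma w_{en},\sigma q_{en},0)$ on $\Gamma_{en}$ (the $\overline{U}_-$-part having been subtracted off in both), and the same boundary conditions $\theta=0$ on $\Gamma_w$ and $\theta=w=0$ on $\Gamma_a$. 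Hence $V\defs \delta U_- - \dot{U}_-$ satisfies a linear hyperbolic system, obtained by writing the nonlinear Euler system as $\overline{L}(\delta U_-) + \mathcal{Q}(\delta U_-) = 0$ where $\overline{L}$ is the linearization at $\overline{U}_-$ and $\mathcal{Q}$ collects all quadratic-and-higher terms; subtracting $\overline{L}(\dot{U}_-)=0$ gives $\overline{L}(V) = -\mathcal{Q}(\delta U_-)$, a linear system for $V$ with zero initial-boundary data and a source term that is quadratic in $\delta U_-$.

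The key steps, in order: (1) Write the nonlinear system \eqref{CE1}--\eqref{CE5} in the characteristic/decomposed form and Taylor-expand each equation around $\overline{U}_-$ to exhibit $\overline{L}(\delta U_-) = -\mathcal{Q}(\delta U_-)$, verifying that $\overline{L}$ coincides with the operator in \eqref{CDEI-}--\eqref{CDEV-} and that $\mathcal{Q}$ is smooth and vanishes to second order at the origin; since by Theorem \ref{mainthmU-} one has $\|\delta U_-\|_{C^3(\overline{\Omega})} \le C_-\sigma(\|w_{en}\|_{H^7} + \|q_{en}\|_{H^7})$, it follows that $\|\mathcal{Q}(\delta U_-)\|_{C^2(\overline{\Omega})} \le C\sigma^2$. (2) Transfer the problem for $V$ to the five-dimensional domain $\mathcal{D}_-$ exactly as in the proof of Theorem \ref{mainthmU-}: introduce $\mathbf{r}=(r_1,r_2,r_3,r_4)$, pass to stream-function/potential variables so the leading part becomes a (5-d) wave-type equation, and use Lemma \ref{lem5.4x} together with the vanishing of the relevant odd $r$-derivatives of $V$ on the axis (which hold because they hold separately for $\delta U_-$ by the argument in the proof of Theorem \ref{mainthmU-} and for $\dot U_-$ by \eqref{j=13==0}--\eqref{Iaxis-} in Lemma \ref{U-esti}). (3) Run the standard energy estimate for the resulting linear wave equation with zero Cauchy data and an $H^7$-bounded right-hand side of size $O(\sigma^2)$; the compatibility conditions needed for $H^7$-regularity of $V$ follow by differentiating the equation for $V$ and using that both $\delta U_-$ and $\dot U_-$ satisfy compatibility conditions to the seventh order. (4) Conclude via Sobolev embedding that $\|V\|_{C^2(\overline{\Omega})} \le C_L\sigma^2$, with $C_L$ depending only on $\overline{U}_-$ and $L$.

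The main obstacle I anticipate is step (2): verifying that the difference $V$ genuinely inherits the axis-regularity structure (vanishing of $\partial_r V$, $\partial_r^3 V$ of the appropriate components at $r=0$, and the matching conditions that make the 5-d reformulation legitimate) so that Lemma \ref{lem5.4x} applies and no spurious singularity is introduced along $\{r=0\}$ when one subtracts a solution of the nonlinear system from a solution of the linearized one. This requires checking that the quadratic source $\mathcal{Q}(\delta U_-)$, after passage to the potential formulation, has the right parity and decay in $r$ near the axis; one expects this because $\mathcal{Q}$ is built from the same smooth nonlinearities whose axis behavior was already controlled in the proof of Theorem \ref{mainthmU-}, but it must be made explicit. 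A secondary technical point is bookkeeping the loss of derivatives: one needs $\|\mathcal{Q}(\delta U_-)\|_{H^7}$, hence effectively $\|\delta U_-\|_{H^7}$ bounds (not merely $C^3$), which are available from the energy estimate in the proof of Theorem \ref{mainthmU-} before the embedding step. Once these are in place, the quadratic smallness $O(\sigma^2)$ of the source and the homogeneity of the initial-boundary data for $V$ make the estimate \eqref{eq838} immediate.
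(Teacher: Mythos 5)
Your proposal matches the paper's strategy. The paper's own proof is essentially one sentence: it invokes \eqref{CDE1}--\eqref{CDE5} (the characteristic form of the nonlinear Euler system satisfied by $U_-=\overline{U}_-+\delta U_-$), \eqref{CDEI-}--\eqref{CDEV-} (the linear system satisfied by $\dot{U}_-$ with the same boundary data), and the $C^3$-bounds from Theorem~\ref{mainthmU-} and Lemma~\ref{U-esti}, and then directly writes down the estimate
$\|\delta U_- - \dot{U}_-\|_{C^2}\leq C\big(\|\partial_z\delta U_-\|_{C^2}\|\delta U_-\|_{C^3}+\|\partial_r\delta U_-\|_{C^2}\|\delta U_-\|_{C^3}+\|\delta U_-\|_{C^3}^2\big)\lesssim\sigma^2$,
which is precisely your quadratic-source bound followed by the linear well-posedness estimate in compressed form. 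What you add --- the explicit decomposition $\overline{L}(V)=-\mathcal{Q}(\delta U_-)$, the verification that the axis parity conditions pass to $V$ so Lemma~\ref{lem5.4x} applies, and the remark that the $H^7$-level energy estimate from the proof of Theorem~\ref{mainthmU-} (rather than the already-embedded $C^3$ bound) is what feeds the source into the 5-d wave argument --- is exactly the unstated content the paper is leaning on. One very small inaccuracy: you would run the energy estimate for $V$ at a slightly lower Sobolev level than $H^7$, since $\mathcal{Q}(\delta U_-)$ involves first derivatives of $\delta U_-$ and so sits one rung below $\delta U_-$ itself; but as you only need $V\in C^2$ (one derivative less than the $C^3$ obtained for $\delta U_-$), the bookkeeping closes. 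In short, your plan is a faithful, more explicit version of the paper's argument, not a different route.
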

\begin{proof}
By applying the equations \eqref{CDE1}-\eqref{CDE5}, \eqref{CDEI-}-\eqref{CDEV-}, Theorem \ref{mainthmU-} and Lemma \ref{U-esti}, we have
\begin{align}
 & \| \delta U_- - \dot{U}_- \|_{C^2(\overline{\Omega})}\notag\\
 \leq&C\Big( \|\partial_z \delta U_- \|_{C^2(\overline{\Omega})}\cdot \|\delta U_- \|_{C^3(\overline{\Omega})} + \|\partial_r \delta U_- \|_{C^2(\overline{\Omega})}\cdot \|\delta U_- \|_{C^3(\overline{\Omega})} +\|\delta U_- \|_{C^3(\overline{\Omega})}^2\Big) \notag\\
 \leq& C_L \sigma^2( \|w_{en}\|_{H^7(\Gamma_{en})} + \|q_{en}\|_{H^7(\Gamma_{en})} )^2,
\end{align}
where the positive constant $C_{L}$ only depends on $\overline{U}_-$ and $L$.
\end{proof}

Now we can introduce the iteration scheme. Given a state $U=\overline{U}_+  + \delta U$ and the slope of the shock front $\varphi' = \delta\varphi'$, based on \eqref{thmsolvabilityeq} in Theorem \ref{BigThm}, we know the shock position $\varphi(r_0)={z}_*$
must satisfy the following identity
 \begin{align}\label{Rsolvability}
 \int_0^{r_0} \frac{1- \overline{M}_+^2}{\bar{\rho}_+\bar{q}_+^2}r \bar{p}_+^{\frac{1}{\gamma}} \Big(\sigma p_{ex} (r) - g_1({z}_*, r)\Big)\dif r = \int \int_{\dot{\Omega}_+} r \bar{p}_+^{\frac{1}{\gamma}}f_1(\delta U,\varphi)\dif z \dif r.
\end{align}
Take $K\defs \dot{z}_*$ in the transformation \eqref{Kfixed}, then denote the domain $\Omega_+^{\dot{z}_*}$ by $\dot{\Omega}_+$.
Define
\begin{equation*}
\mathcal{S}(\delta U, \delta\varphi')\defs
 \left\{
  \begin{aligned}
 &\delta \theta(z,0) = 0,\, \delta w(z,0) = 0,\, \partial_r ( \delta p, \delta q, \delta s) (z,0)=0,\, \partial_r^2\delta \theta(z,0) = 0, \\
 &\delta \theta(z,r_0) = 0,\,\delta w(z,r_0) = 0,\, \partial_r ( \delta p, \delta q, \delta s) (z,r_0) =0,\\
&\delta\varphi'(0)=\delta\varphi'''(0)= \delta\varphi'(r_0)= 0
 \end{aligned}
  \right\}
\end{equation*}
and
\begin{equation}
 \mathscr{N}\defs 
 \left\{
  \begin{aligned}
&( \delta U, \delta\varphi',{z}_*):\,( \delta U, \delta\varphi')\in\mathcal{S}(\delta U, \delta\varphi'),\,\mbox{\eqref{Rsolvability} holds},\,  |\dot{z}_*-{z}_*|< C \sigma,\\
&
 \|( r\delta w,\delta q,\delta s)\|_{C^{2,\alpha}(\overline{\dot{\Omega}_+})} +  \|\delta w\|_{C^{1,\alpha}(\overline{\dot{\Omega}_+})} + \| \delta\varphi'\|_{C^{2,\alpha}(\dot{\Gamma}_s)} < C\sigma,\\
&\|(\delta p-\dot{p}_+, \delta \theta-\dot{\theta}_+)\|_{C^{2,\alpha}(\overline{\dot{\Omega}_+})}
+ \|\delta w - \dot{w}_+\|_{C^{0,\alpha}(\overline{\dot{\Omega}_+})} +\| \delta \varphi' - \dot{\varphi}'  \|_{C^{1,\alpha}(\overline{\dot{\Omega}_+})}\notag\\
  & +\| (r\delta w - r\dot{w}_+, \delta q -\dot{q}_+, \delta s -\dot{s}_+)\|_{C^{1,\alpha}(\overline{\dot{\Omega}_+})} \leq \frac12 \sigma^{\frac32}
\end{aligned}
  \right\}
\end{equation}
where the positive constant $C$ depends only on $\overline{U}_{\pm}$, $p_{en}$, $w_{en}$, $p_{ex}$, $L$, $r_0$, $\dot{z}_*$ and $\alpha$, and will be given later.

For $(\delta U;  \delta \varphi',{z}_*)\in \mathscr{N}$ with  $\dot{\Omega}_+=\Omega_+^{{z}_*}$,
define the iteration mapping
\begin{align}\label{6.63x}
  \mathcal{T}: (\delta U; \delta \varphi',{z}_*) \mapsto (\delta {U}^*; \delta {\varphi^*}',\varphi^*(r_0)),
\end{align}
where $ (\delta {U}^*; \delta {\varphi^*}')$ is solved by problem \eqref{eqf1}-\eqref{eq11000} with \eqref{eqf22}-\eqref{f2itera}, and $\vec{\mathcal{F}}\defs(f_1, f_2, f_3, f_4, f_5)$ and $\vec{\mathcal{G}} \defs (g_1, g_2, g_3, g_4, g_5)$ are functions of $(\delta U; \delta \varphi')$ and $\varphi(r_0)={z}_*$. Then $\varphi^*(r_0)$ is solved by \eqref{Rsolvability} with $(\delta U; \delta \varphi',z_*)$ being replaced by $(\delta {U}^*; \delta {\varphi^*}',\varphi^*(r_0))$.
The mapping can be rewritten by an operator $\mathcal{L}$ as
\begin{align}\label{defineoperator}
  (\delta {U}^*; \delta {\varphi^*}'; \varphi^*(r_0)) = \mathcal{L}(H(z,r); \vec{\mathcal{F}}(\delta U, \varphi); \vec{\mathcal{G}}(\delta U, \delta U_-, \varphi); p_{ex}).
\end{align}
Obviously, \eqref{dotU+es=}-\eqref{varphi''} yield that $(\dot{U}_+, \dot{\varphi}',\dot{z}_*)\in \mathscr{N}$, so the function space $\mathscr{N}$ is non-empty.
We will prove that the mapping $ \mathcal{T}$ maps $\mathscr{N}$ into $\mathscr{N}$, and then the mapping is contractive, to finish the proof of Theorem \ref{mainthmU} by the Banach fixed point theorem.

First, we will prove that the iteration mapping $\mathcal{T}$ is well defined.
\begin{lem}\label{lemwelldefined}
There exists a positive constant $\sigma_1$ with $0<\sigma_1 \ll 1$, such that for any $0< \sigma \leq \sigma_1$, if $(\delta U;  \delta \varphi',{z}_*)\in \mathscr{N}$, then $(\delta {U}^*; \delta {\varphi^*}',\varphi^*(r_0))\in \mathscr{N}$.
 \end{lem}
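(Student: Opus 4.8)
\textbf{Proof proposal for Lemma \ref{lemwelldefined}.}

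The plan is to verify the membership $(\delta U^*;\delta{\varphi^*}',\varphi^*(r_0))\in\mathscr{N}$ by checking the three groups of defining constraints of $\mathscr{N}$ in turn: (i) the structural conditions collected in $\mathcal{S}(\delta U^*,\delta{\varphi^*}')$ together with the boundary conditions on the axis, the wall, and the shock front; (ii) the solvability identity \eqref{Rsolvability} and the closeness $|\dot{z}_*-\varphi^*(r_0)|<C\sigma$ of the updated shock position; (iii) the a priori bounds, namely the $C\sigma$-sized bounds for $(r\delta w^*,\delta q^*,\delta s^*)$, $\delta w^*$ and $\delta{\varphi^*}'$, and the refined $\tfrac12\sigma^{3/2}$-sized bound measuring the distance of $(\delta U^*,\delta{\varphi^*}')$ from the approximate solution $(\dot U_+,\dot\varphi')$.

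First I would check that $(\delta U;\delta\varphi',z_*)\in\mathscr{N}$ implies that the source terms $\vec{\mathcal{F}}=(f_1,\dots,f_5)$ in \eqref{deff1}--\eqref{itemf5} and the shock data $\vec{\mathcal{G}}=(g_1,\dots,g_5)$ in \eqref{5.35xw}--\eqref{varphi} satisfy the hypotheses \eqref{F1F20r0}--\eqref{partialrg1pe} of Theorem \ref{BigThm} and \eqref{con:regu}--\eqref{con:g5} of Theorem \ref{wqsexistence}. The regularity claims $f_1,f_2\in C^{1,\alpha}$, $f_3,f_4,f_5\in C^{2,\alpha}$ follow because $\delta U\in C^{2,\alpha}$, $\delta\varphi'\in C^{2,\alpha}$, $H\in C^{2,\alpha}$ (from the definition of $H$ and the bound on $\delta\varphi'$, $\delta\theta$), while $f_1,f_2$ lose one derivative through the terms $\partial_z\delta p$, $\partial_r\delta p$, $\partial_z\delta\theta$; here one also uses Lemma \ref{U--dotU-} and Lemma \ref{U-esti} for the quasi-supersonic trace $\delta U_-$ on the shock. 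The vanishing conditions $\partial_r f_1(z,0)=\partial_r f_1(z,r_0)=0$, $f_2(z,0)=f_2(z,r_0)=0$, $f_3(z,0)=f_3(z,r_0)=0$, $\partial_r f_j(z,0)=\partial_r f_j(z,r_0)=0$ for $j=4,5$, $H(z,0)=H(z,r_0)=0$, and the analogous conditions for $g_j$ on $\{r=0\}$ and $\{r=r_0\}$ all follow from the structural conditions in $\mathcal{S}(\delta U,\delta\varphi')$, the properties of the background solution in Lemma \ref{barvalue}, and the vanishing of $\bar w_\pm$ at $r=0,r_0$ from \eqref{assumew0r0=0}; in particular $f_2(z,0)=0$ uses $\bar w_+(0)=0$ together with $\delta\theta(z,0)=0$ and $\delta w(z,0)=0$, exactly as in the computation after \eqref{5.95x}. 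With these verified, Theorem \ref{BigThm} produces $(\delta p^*,\delta\theta^*)\in C^{2,\alpha}(\overline{\dot\Omega_+})$ provided the solvability condition \eqref{thmsolvabilityeq} holds — and this is arranged precisely by the definition \eqref{6.63x}, which solves $\varphi^*(r_0)=z_*^*$ from \eqref{Rsolvability}, so the existence of $\delta p^*,\delta\theta^*$ is conditional on first solving that scalar equation for $z_*^*$; one checks $I_1$-type monotonicity (as in Lemma \ref{lemma:approxz*}, but now with the full nonlinear right-hand side $f_1(\delta U,\varphi)$ instead of the linearized one) guarantees a unique root $z_*^*$ near $\dot z_*$, giving $|\dot z_*-\varphi^*(r_0)|<C\sigma$. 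Then Theorem \ref{wqsexistence} solves the three transport equations for $(r\delta w^*,\delta q^*,\delta s^*)\in C^{2,\alpha}$ and $\delta w^*\in C^{1,\alpha}$, and \eqref{eq:g5} gives $\delta{\varphi^*}'$; the structural conditions of $\mathcal{S}$ for the starred quantities are exactly the conclusions \eqref{rqsz0r0}, \eqref{eq:deltavar2} of Theorem \ref{wqsexistence} together with the last line of Lemma \ref{raiseRegularity}.

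The a priori bounds are then obtained by combining estimate \eqref{eq083} from Theorem \ref{BigThm} and \eqref{est:rwqs} from Theorem \ref{wqsexistence}, bounding $\|f_i\|$ and $\|g_j\|$ in terms of $\sigma(\|w_{en}\|_{H^7}+\|q_{en}\|_{H^7}+\|p_{ex}\|_{C^{2,\alpha}})$ plus quadratic remainders: schematically $\|f_i\|\le C\sigma^2$ because every $f_i$ is at least quadratic in the perturbation (it is the difference of the nonlinear operator and its linearization, plus terms carrying the factor $\varphi-K=O(\sigma)$), and $\|g_j-\dot g_j\|\le C\sigma^2$ likewise because $G_j^\sharp$ in \eqref{5.29xw}--\eqref{xxl} is the quadratic remainder of the Rankine--Hugoniot conditions and $\delta U_--\dot U_-=O(\sigma^2)$ by Lemma \ref{U--dotU-}. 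Feeding these into the linear estimates gives the $C\sigma$ bounds in $\mathscr{N}$ directly, and for the refined bound one writes $(\delta U^*-\dot U_+,\delta{\varphi^*}'-\dot\varphi')=\mathcal{L}$ applied to the \emph{differences} of the data $(H-0,\vec{\mathcal{F}}-0,\vec{\mathcal{G}}-\dot{\vec{\mathcal{G}}},0)$, which are $O(\sigma^2)$; since $\sigma^2\le\tfrac12\sigma^{3/2}$ for $\sigma$ small, choosing $\sigma_1$ so that the constant from the linear theory times $(\|w_{en}\|_{H^7}+\|q_{en}\|_{H^7}+\|p_{ex}\|_{C^{2,\alpha}})^2$ is $\le\tfrac12$ closes the estimate. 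The main obstacle — and the step I would spend the most care on — is the solvability equation for the updated position $z_*^*$: one must show that the nonlinear analogue of the map $z\mapsto I_1(z)$ in \eqref{eq0661} is still strictly monotone on $(0,L_*)$ uniformly for $(\delta U,\delta\varphi',z_*)\in\mathscr{N}$, using that the leading contribution to its derivative comes from the large-swirl term $I_3>0$ in \eqref{LetW0E>0-1} (which is $O(\sigma)$) while the $f_1$-contribution and the perturbation of the kernel are $O(\sigma^2)$; this is where the hypothesis $r_0\le r_*$ and the sign condition \eqref{LetW0E>0-1} are genuinely needed, and where the nonlocal integral term $\int_K^z\delta\theta\,d\tau$ appearing in $f_2$ (hence indirectly in $g_1$) must be controlled carefully. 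Once monotonicity and the quantitative $O(\sigma^2)$ control of all data differences are in hand, membership in $\mathscr{N}$ follows by the bookkeeping just described.
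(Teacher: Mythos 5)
Your overall bookkeeping --- verifying the hypotheses of Theorems \ref{BigThm} and \ref{wqsexistence}, the $O(\sigma^2)$ bounds for $\|f_i\|$ and $\|g_j-\dot g_j\|$ via Lemmas \ref{U--dotU-} and \ref{barvalue}, and the refined $\tfrac12\sigma^{3/2}$ bound by applying the linear operator $\mathcal{L}$ to differences of data --- matches the paper's Step~1 closely and is correct in spirit. However, there is a genuine gap in your treatment of the solvability step, and it is a circularity that would break the argument if carried out as you describe.

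You write that ``the existence of $\delta p^*,\delta\theta^*$ is conditional on first solving that scalar equation for $z_*^*$,'' and then propose to verify monotonicity of a nonlinear analogue of $I_1$ to get a unique root $z_*^*$. This reverses the order in which the paper resolves things, and your version is circular: the scalar equation \eqref{Rsolvability} for $\varphi^*(r_0)$ involves the \emph{starred} quantities $f_1(\delta U^*,\varphi^*)$ and $g_1$, which you cannot have before solving for $(\delta U^*,\delta{\varphi^*}')$. In the paper's Step~1, the solvability condition \eqref{thmsolvabilityeq} needed to invoke Theorem \ref{BigThm} for $(\delta p^*,\delta\theta^*)$ is exactly \eqref{Rsolvability} with the data of the \emph{incoming} iterate $(\delta U,\delta\varphi',z_*)$ --- and ``$\eqref{Rsolvability}$ holds'' is built into the definition of $\mathscr{N}$, so this is automatic. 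Thus $(\delta U^*,\delta{\varphi^*}')$ are produced first, unconditionally. Only afterward (Step~2) does one solve the scalar equation for $\varphi^*(r_0)$, whose purpose is to make the \emph{next} iterate satisfy \eqref{Rsolvability}; this is done via the implicit function theorem applied to $I^*$ in \eqref{I*}, not via global monotonicity. The computation $\partial I^*/\partial(\delta z_*)(0;0,0,\dot U_-)=I_1'(\dot z_*)+O(1)\sigma^{3/2}\neq0$ is a local nondegeneracy statement tied to the known root $\dot z_*$ from Lemma \ref{lemma:approxz*}, and it is simpler and more robust than proving uniform monotonicity of the full nonlinear $I^*$ on $(0,L_*)$, which your sketch would require and which is not established in the paper.

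A secondary point: you assert $\|f_i\|\le C\sigma^2$ uniformly, but in Step~2 the relevant estimate \eqref{f2r} actually gives $C(\sigma^{3/2}|\delta z_*|+\sigma^2)$ for the $f_1$-integral, with the $\sigma^{3/2}$ factor coming from $\|\partial_z(\delta p^*-\dot p_+)\|$; this mixed estimate, together with \eqref{intint=0} to kill the leading $O(\sigma)$ term of the $\dot p_+$-contribution, is what makes the implicit function theorem applicable with the correct scaling. That cancellation is not captured by a bare ``quadratic remainder'' argument and is worth making explicit when you redo the solvability step.
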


\begin{proof} We divide the proof into two steps.

\smallskip
\emph{Step 1}. In the first step, we will solve $(\delta {U}^*; \delta {\varphi^*}')$.
It follows from  $(\delta {U}; \delta{\varphi}',{z}_*)\in \mathscr{N}$ that
\begin{align}
&\sum_{i=1}^2 \|f_i\|_{C^{1,\alpha}(\overline{\dot{\Omega}_+})} + \sum_{j=3}^5 \| f_j \|_{C^{2,\alpha}(\overline{\dot{\Omega}_+})} + \sum_{j=1}^{5}\| g_j\|_{C^{2,\alpha} (\dot{\Gamma}_s)} <C\sigma,\label{fifjin}\\
  &H(z,0)=H(z,r_0)=0, \quad \partial_r f_1(z,0) =0,\quad f_2(z,0) =f_2(z,r_0)= 0,\label{deltaUdeduce1}\\
  &f_3(z,0)=f_3(z,r_0) =0.\label{deltaUdeduce2}
\end{align}
So by Theorem \ref{BigThm} and Theorem \ref{wqsexistence},  there exists a unique solution $(\delta U^*, \delta {\varphi^*}')$ to the linearized problem \eqref{defineoperator} and
\begin{align}\label{mathcalS=*}
  (\delta {U}^*, \delta {\varphi^*}')\in \mathcal{S}(\delta U^*, \delta{\varphi^*}').
\end{align}

Moreover, it follows from Theorem \ref{BigThm} and Theorem \ref{wqsexistence} again that
\begin{align}\label{U*ES}
  &\|(\delta p^*,\delta \theta^* , \delta q^*, \delta s^*)\|_{C^{2,\alpha} (\overline{\dot{\Omega}_+})}+ \|r\delta w^*\|_{C^{2,\alpha} (\overline{\dot{\Omega}_+})} +  \|\delta w^*\|_{C^{1,\alpha} (\overline{\dot{\Omega}_+})} + \|\delta {\varphi^*}' \|_{C^{2,\alpha} (\dot{\Gamma}_s)}\notag \\
  \leq & C \Big(\sum_{i=1}^2 \|f_i\|_{C^{1,\alpha}(\overline{\dot{\Omega}_+})} + \sum_{j=3}^5 \| f_j \|_{C^{2,\alpha}(\overline{\dot{\Omega}_+})} + \sum_{j=1}^{5}\| g_j\|_{C^{2,\alpha} (\dot{\Gamma}_s)} + \sigma \|p_{ex}\|_{C^{2,\alpha}(\Gamma_{ex})}  \Big)\notag\\
 < & C\sigma.
\end{align}
Moreover, direct calculations yield that
\begin{align}
 & \|\delta p^* -\dot{p}_+\|_{C^{2,\alpha}(\overline{\dot{\Omega}_+})}
  + \|\delta \theta^* -\dot{\theta}_+\|_{C^{2,\alpha} (\overline{\dot{\Omega}_+})}\notag\\
  \leq &  C \Big(\| f_1 \|_{C^{1,\alpha}(\overline{\dot{\Omega}_+})}+ \| f_2 - \dot{f}_2\|_{C^{1,\alpha} (\overline{\dot{\Omega}_+})} + \sum_{j=1}^4\| g_j - \dot{g}_j\|_{C^{2,\alpha} (\dot{\Gamma}_s)}\Big)\label{esptheta1}
\end{align}
\begin{align}
  & \|r\delta w^* -r\dot{w}_+\|_{C^{1,\alpha} (\overline{\dot{\Omega}_+})} + \|\delta q^* -\dot{q}_+\|_{C^{1,\alpha} (\overline{\dot{\Omega}_+})}+ \|\delta s^* -\dot{s}_+\|_{C^{1,\alpha} (\overline{\dot{\Omega}_+})}\notag\\
\leq & C \Big(\sum_{j=3}^5\| f_j \|_{C^{2,\alpha}(\overline{\dot{\Omega}_+})} + \sum_{j=1}^4\| g_j - \dot{g}_j\|_{C^{2,\alpha} (\dot{\Gamma}_s)} +\|\delta \theta^* -\dot{\theta}_+\|_{C^{2,\alpha} (\overline{\dot{\Omega}_+})}\notag\\
& \quad + \|\delta \theta\|_{C^{2,\alpha}(\overline{\dot{\Omega}_+})}\big(\|\partial_r(r w^*)\|_{C^{1,\alpha}(\overline{\dot{\Omega}_+})} + \|\partial_r q^*\|_{C^{1,\alpha}(\overline{\dot{\Omega}_+})} + \|\partial_r s^*\|_{C^{1,\alpha}(\overline{\dot{\Omega}_+})}     \big)\Big)\notag\\
< & C \Big(\sum_{j=3}^5\| f_j \|_{C^{2,\alpha}(\overline{\dot{\Omega}_+})} + \sum_{j=1}^4\| g_j - \dot{g}_j\|_{C^{2,\alpha} (\dot{\Gamma}_s)} +\|\delta \theta^* -\dot{\theta}_+\|_{C^{2,\alpha} (\overline{\dot{\Omega}_+})}\Big)+C\sigma^2
\end{align}
and
\begin{align}
\|\delta {\varphi^*}' -\dot{\varphi}' \|_{C^{1,\alpha} (\dot{\Gamma}_s)}\leq \| g_5 - \dot{g}_5\|_{C^{1,\alpha} (\dot{\Gamma}_s)} 
\label{esvar4}
\end{align}
where the positive constant $C$ only depends on $\overline{U}_{\pm}$, $w_{en}$, $q_{en}$, $L$ and $\alpha$.

 By \eqref{deff1}, it is easy to see that
 \begin{align}\label{f1es}
&\|f_1(\delta U, \delta \varphi', \delta z_{*})\|_{C^{1,\alpha}(\overline{\dot{\Omega}_+})}\notag\\
\leq& C\Big(\big(\|\partial_z \delta p \|_{C^{1,\alpha}(\overline{\dot{\Omega}_+})} + \|\partial_r \delta p \|_{C^{1,\alpha}(\overline{\dot{\Omega}_+})} \big)  \|\delta U \|_{C^{2,\alpha}(\overline{\dot{\Omega}_+})}  + \|\partial_r \delta \theta \|_{C^{1,\alpha}(\overline{\dot{\Omega}_+})} \Big)\notag\\
& + C\Big(\|\delta \theta \|_{C^{2,\alpha}(\overline{\dot{\Omega}_+})}\|(\delta p, \delta q ,\delta s)\|_{C^{2,\alpha}(\overline{\dot{\Omega}_+})} + \|\delta \theta \|_{C^{2,\alpha}(\overline{\dot{\Omega}_+})}^2 \Big)\notag\\
& + C  \Big(\|(\partial_z\delta p, \partial_z \delta \theta) \|_{C^{1,\alpha}(\overline{\dot{\Omega}_+})} \|\delta \varphi'\|_{C^{2,\alpha}(\dot{\Gamma}_s)} +  |\varphi(r_0) - \dot{z}_*| \|\partial_z\delta p \|_{C^{1,\alpha}(\overline{\dot{\Omega}_+})} \Big)\notag\\
 <& C \sigma^2.
 \end{align}
For $j=3,4,5$, we have
\begin{align}
  &\|f_j(\delta U, \delta \varphi', \delta z_{*})\|_{C^{2,\alpha} (\dot{\Omega}_+)}\leq  C \big\|H(z,r)-\delta \theta\big\|_{C^{2,\alpha} (\overline{\dot{\Omega}_+})}\notag\\
  =& \Big\|\frac{-\delta z_* + \int_r^{r_0} \delta \varphi' (\tau)\dif \tau - (z-L) \delta \varphi'\tan\delta \theta }{L - \dot{z}_* + (z - L)\delta \varphi'\tan\delta\theta}\tan\delta \theta+ (\tan\delta\theta -\delta \theta)\Big\|_{C^{2,\alpha} (\overline{\dot{\Omega}_+})}\notag\\
  \leq &C \Big( \big(  |\delta z_*| + \|\delta\varphi'\|_{L^\infty(\dot{\Gamma}_s)}  \big) \|\delta \theta\|_{C^{2,\alpha} (\overline{\dot{\Omega}_+})}  + \big( \|\delta \theta\|_{C^{2,\alpha} (\overline{\dot{\Omega}_+})} \big)^2\Big)\notag\\
< &  C \sigma^2.\label{6.73x}
\end{align}

Next, let us consider the estimates of $\| g_j - \dot{g}_j\|_{C^{2,\alpha} (\dot{\Gamma}_s)} $, $(j= 1, \cdots, 4)$. 
By \eqref{5.35xw} and \eqref{RHinitial}, we have
 \begin{align}\label{gj-dotgj=}
   g_i - \dot{g}_i = \sum_{j=1}^{4} a_{ij} (G_j^{\sharp} -\dot{G}_j) \qquad \text{for}\quad i=1,\cdots, 4.
 \end{align}
Thus, it suffices to estimate $(G_j^{\sharp} -\dot{G}_j)$. By \eqref{5.29xw} and \eqref{initialRH}, we have  for $j=1,2,3,4$
\begin{align}\label{eq0104}
&G_j^{\sharp} -  \dot{G}_j
\notag\\
=& \alpha_{j}^+\cdot \delta U - G_j \big(U, U_-(\varphi' , \varphi(r_0))\big)-{\mathbf{\alpha}}_j^- \cdot {\dot{U}}_-
  \notag\\
   =& \Big(\alpha_j^+\cdot \delta U  - \alpha_j^-\cdot \delta U_-\big( \varphi(r_0) - \int_{r}^{r_0}\delta \varphi'(\tau)\dif \tau,r \big) - G_j \big(U, U_-(\varphi(r_0) - \int_{r}^{r_0}\delta \varphi'(\tau)\dif \tau,r) \big)\Big)
   \notag\\
   &+\alpha_j^-\cdot \Big(\delta U_-\big(\varphi(r_0) - \int_{r}^{r_0}\delta \varphi'(\tau)\dif \tau,r\big) - \dot{U}_-(\varphi(r_0),r)\Big)\notag\\
   & + \alpha_j^- \cdot \dot{U}_-(\varphi(r_0),r) - {\mathbf{\alpha}}_j^- \cdot {\dot{U}}_-(\dot{z}_*, r)\notag\\
   =& \frac{1}{2} (\delta U, \delta U_-) \int_{0}^{1} D^2 G_j (\overline{U}_+ + t \delta U; \overline{U}_- + t\delta U_-)\dif t (\delta U, \delta U_-)^{\top}\notag\\
   &+ \alpha_j^-\cdot \Big\{ \Big(\delta U_-\big(\varphi(r_0) - \int_{r}^{r_0}\delta \varphi'(\tau)\dif \tau,r\big)- \dot{U}_-\big(\varphi(r_0)- \int_{r}^{r_0}\delta \varphi'(\tau)\dif \tau,r \big)\Big)\notag\\
    &\qquad\qquad  +\Big(\dot{U}_-(\varphi(r_0) - \int_{r}^{r_0}\delta \varphi'(\tau)\dif \tau
     ,r) - \dot{U}_-(\varphi(r_0),r)\Big) \Big\}\notag\\
     & + \alpha_j^- \cdot \big( \dot{U}_-(\varphi(r_0),r) - {\dot{U}}_-(\dot{z}_*, r) \big).
\end{align}
So it follows from \eqref{eq837}, \eqref{alpha1}-\eqref{alpha5}, \eqref{alpha-1}-\eqref{alpha-5}, \eqref{dotU-estimate}, \eqref{eq838} and the fact that $(\delta U ;  \delta \varphi',z_*)\in  \mathscr{N}$ that  for $j=1,2,3,4$
\begin{align}
\| g_j - \dot{g}_j\|_{C^{2,\alpha} (\dot{\Gamma}_s)}
\leq & C \Big(\|\partial_z\dot{U}_-\|_{C^{1,\alpha}(\overline{\Omega})}\big(|\varphi(r_0) - \dot{z}_*| + \|\delta \varphi'\|_{C^{2,\alpha}(\dot{\Gamma}_s)} \big) + \| \delta U_- - \dot{U}_- \|_{C^{1,\alpha}(\overline{\Omega})}\Big)\notag\\
& + C\big( \|\delta U\|_{C^{2,\alpha}(\overline{\dot{\Omega}_+})}^2  + \|\delta U_-\|_{C^{2,\alpha}(\overline{\Omega})}^2\big)\notag\\
< & C\sigma^2,\label{eq0105}
\end{align}
where the positive constant $C$ only depends on $\overline{U}_{\pm}$, $L$ and $r_0$.

Then let us consider the estimate of $\| g_5 - \dot{g}_5\|_{C^{2,\alpha} (\dot{\Gamma}_s)}$. By \eqref{varphi} and \eqref{dotvar}, we have
\begin{align*}
&g_5 - \dot{g}_5\notag\\
 =& \frac{ \bar{\rho}_+\bar{q}_+^2 \delta \theta^* - \big( \bar{\rho}_+\bar{q}_+^2  \delta \theta - [\bar{p}]\delta \varphi' - G_5(U, U_-(\varphi', \varphi(r_0));\varphi')  \big)}{[\bar{p}]} -  \frac{\bar{\rho}_+\bar{q}_+^2 \dot{\theta}_+ -\bar{\rho}_-\bar{q}_-^2 \dot{\theta}_-}{[\bar{p}]}\notag\\
 =& \frac{ \bar{\rho}_+\bar{q}_+^2 (\delta \theta^* - \dot{\theta}_+)}{[\bar{p}]} + \frac{([\bar{p}] - [p]) - [\rho q^2 \sin^2\theta]}{[\bar{p}]}\delta \varphi' + \frac{\rho q^2 \cos\theta \sin\theta - \bar{\rho}_+\bar{q}_+^2  \delta \theta}{[\bar{p}]}\notag\\
 &-\frac{(\rho_- q_-^2 \cos\theta_- \sin\theta_- -  \bar{\rho}_-\bar{q}_-^2 \dot{\theta}_-)(\varphi(r_0), r)+ \bar{\rho}_-\bar{q}_-^2 \big(\dot{\theta}_- (\varphi(r_0),r) - \dot{\theta}_- (\dot{z}_*,r)\big)}{[\bar{p}]}.
\end{align*}
So 
\begin{align}
& \| g_5 - \dot{g}_5\|_{C^{2,\alpha} (\dot{\Gamma}_s)}\notag\\
\leq & C \Big(\|\delta \theta^* -\dot{\theta}_+\|_{C^{2,\alpha} (\overline{\dot{\Omega}_+})}  +\big(\|\delta p\|_{C^{2,\alpha} (\overline{\dot{\Omega}_+})} + \|\delta \theta\|_{C^{2,\alpha} (\overline{\dot{\Omega}_+})}^2   \big)  \|\delta \varphi'\|_{C^{2,\alpha}(\dot{\Gamma}_s)}\Big)\notag\\
&+ C \Big(  \|\delta \theta\|_{C^{2,\alpha} (\overline{\dot{\Omega}_+})} \|(\delta p, \delta q,  \delta s)\|_{C^{2,\alpha} (\overline{\dot{\Omega}_+})} + \|\delta \theta\|_{C^{2,\alpha} (\overline{\dot{\Omega}_+})}^2 + \| \delta \theta_- - \dot{\theta}_- \|_{C^{1,\alpha}(\overline{\Omega})}  \Big)\notag\\
& + C \Big(  \|\dot{\theta}_-\|_{C^{2,\alpha} (\overline{\Omega})} \|(\delta p_-, \delta q_-,  \delta s_-)\|_{C^{2,\alpha} (\overline{\Omega})} + \|\partial_z\dot{\theta}_-\|_{C^{1,\alpha}(\overline{\Omega})}|\varphi(r_0) - \dot{z}_*| \Big)\notag\\
< & C \Big(\|\delta \theta^* -\dot{\theta}_+\|_{C^{2,\alpha} (\overline{\dot{\Omega}_+})} + \sigma^2\Big),\label{6.77x}
\end{align}
where the positive constant $C$ only depends on $\overline{U}_{\pm}$, $L$ and $r_0$.

Finally, let us consider the estimate of $\|f_2 - \dot{f}_2\|_{C^{1,\alpha}(\overline{\dot{\Omega}_+})} $.
By \eqref{itemf2}, \eqref{f2itera}, \eqref{Letfjf2=} and \eqref{RHinitial}, we have
\begin{align}
&f_2 - \dot{f}_2\notag\\
=& {f}_2^{\sharp}(\delta U, \varphi)
 + \frac{2\bar{\rho}_+ \bar{w}_+}{r^2}\Big\{ R(\dot{z}_*;z,r)g_2(\dot{z}_*, R(\dot{z}_*;z,r)) -  r\dot{g}_2(\dot{z}_*, r)
+ \int_{\dot{z}_*}^{z} f_3(\tau, R(\tau;z,{r}) )\dif \tau\notag\\
& -  \int_{\dot{z}_*}^{z} \Big( R(\tau;z,{r})\partial_r \bar{w}_+(R(\tau;z,{r})) + \bar{w}_+(R(\tau;z,{r})) \Big)\delta \theta (\tau, R(\tau;z,{r}) )  \dif \tau \notag\\
&+\Big(r \partial_r \bar{w}_+(r) + \bar{w}_+(r)\Big)\int_{\dot{z}_*}^{z}\delta \theta (\tau, r ) \dif \tau  \Big\}\notag\\
&-\frac{1}{\gamma c_v}\frac{\bar{\rho}_+ \bar{w}_+^2}{r}
\Big\{ g_4(\dot{z}_*, R(\dot{z}_*;z,r) ) - \dot{g}_4 (\dot{z}_*, r) + \int_{\dot{z}_*}^{z} f_5(\tau, R(\tau;z,{r}) )\dif \tau\notag\\
&- \int_{\dot{z}_*}^{z} \partial_r\bar{s}_+( R(\tau;z,{r}) ) \delta \theta(\tau, R(\tau;z,{r}) )
  -\partial_r\bar{s}_+(r )\delta \theta(\tau, r)\dif \tau  \Big\}.\label{6.78x}
\end{align}

Because $(\delta U ;  \delta \varphi',z_*)\in  \mathscr{N}$, it is easy to see that
\begin{align}\label{f2sharpsigma2}
  \|f_2^{\sharp}\|_{C^{1,\alpha}(\overline{\dot{\Omega}_+})} < C \sigma^2.
\end{align}

Next, let us consider
\begin{align}\label{Rg-rdotg=}
  &R(\dot{z}_*;z,r) g_2(\dot{z}_*, R(\dot{z}_*;z,r)) -  r \dot{g}_2(\dot{z}_*, r)\notag\\
 =& g_2(\dot{z}_*, R(\dot{z}_*;z,r))\big(R(\dot{z}_*;z,r) - R(z;z,r)    \big)\notag\\
 & + r \Big( {g}_2 (\dot{z}_*, R(\dot{z}_*;z,r))   - \dot{g}_2 (\dot{z}_*, R(z;z,r))\Big) \notag\\
 =& g_2(\dot{z}_*, R(\dot{z}_*;z,r))\big(R(\dot{z}_*;z,r) - R(z;z,r)    \big)\notag\\
 & + r \Big( \dot{g}_2 (\dot{z}_*, R(\dot{z}_*;z,r))   - \dot{g}_2 (\dot{z}_*, R(z;z,r))\Big)\notag\\
 & + r \Big( {g}_2 (\dot{z}_*, R(\dot{z}_*;z,r))   - \dot{g}_2 (\dot{z}_*, R(\dot{z}_*;z,r))\Big) \notag\\
=& \Big(r \int_0^1 \dot{g}_2' \big(\dot{z}_*, \tau R(\dot{z}_*;z,r)+(1-\tau) R(z;z,r)   \big)\dif \tau  -  g_2(\dot{z}_*, R(\dot{z}_*;z,r))    \Big)\notag\\
& \times \int_{\dot{z}_*}^z \frac{(L- \varphi)\tan\delta \theta}{L - \dot{z}_* + (z - L)\varphi'\tan\delta \theta}(\tau, R(\tau;z,r))\dif \tau\notag\\
& + r \Big( {g}_2 (\dot{z}_*, R(\dot{z}_*;z,r))   - \dot{g}_2 (\dot{z}_*, R(\dot{z}_*;z,r))\Big).
\end{align}
So it follows from $(\delta U ;  \delta \varphi',z_*)\in  \mathscr{N}$, \eqref{dotw} and \eqref{eq0105} that
\begin{align}
&\| R(\dot{z}_*;z,r) g_2(\dot{z}_*, R(\dot{z}_*;z,r)) -  r \dot{g}_2(\dot{z}_*, r)\|_{C^{1,\alpha} (\dot{\Gamma}_s)}\notag\\
 \leq & C\big(  \sigma \|\partial_r w_{en}\|_{H^6([0, r_0])} + \| \dot{U}_-\|_{C^{2,\alpha}(\overline{\Omega})} + \| \delta{U}_-\|_{C^{2,\alpha}(\overline{\Omega})}+ \| \delta{U}\|_{C^{2,\alpha}(\overline{\dot{\Omega}_+})} \big) \|\dot{\theta}\|_{C^{2,\alpha}(\dot{\Gamma}_s)}\notag\\
 & + \| g_2- \dot{g}_2\|_{C^{2,\alpha} (\dot{\Gamma}_s)}\notag\\
 < & C\sigma^2.
\end{align}
Similarly, we can show that
\begin{align}
 \| g_4(\dot{z}_*, R(\dot{z}_*;z,r)) -  \dot{g}_4(\dot{z}_*, r)\|_{C^{1,\alpha} (\dot{\Gamma}_s)} <  C\sigma^2.
\end{align}

Then let us consider
\begin{align}\label{Rrwtheta}
   & \Big( R(\tau;z,r)\partial_r \bar{w}_+(R(\tau;z,r)) + \bar{w}_+(R(\tau;z,r))\Big)\delta \theta (\tau, R(\tau;z,r) ) \notag\\
  & -\Big(r \partial_r \bar{w}_+(r) + \bar{w}_+(r)\Big)\delta \theta (\tau, r )\notag\\
   =& \Big( r \partial_r \bar{w}_+(r) + \bar{w}_+(r)\Big)\Big( \delta \theta (\tau, R(\tau;z,r) ) - \delta \theta (\tau, r )\Big)\notag\\
   &+ \Big(R(\tau;z,r) \partial_r \bar{w}_+(R(\tau;z,r)) + \bar{w}_+(R(\tau;z,r))- r\partial_r \bar{w}_+(r) -\bar{w}_+(r)\Big)
\delta \theta (\tau, R(\tau;z,r) ).
\end{align}
Notice that
\begin{align}
  \| \delta \theta (\tau, R(\tau;z,r) ) - \delta \theta (\tau, r )\|_{C^{2,\alpha} (\overline{\dot{\Omega}_+})} 
 \leq & C \|\partial_r \theta  \|_{C^{1,\alpha} (\overline{\dot{\Omega}_+})} |R(\tau;z,r) - R(z;z,r)|\notag\\
 \leq &  C \|\partial_r \theta  \|_{C^{1,\alpha} (\overline{\dot{\Omega}_+})}\|\theta  \|_{C^{2,\alpha} (\overline{\dot{\Omega}_+})} \notag\\
< & C \sigma^2
\end{align}
and
\begin{align}
&\|R(\tau;z,r)\partial_r \bar{w}_+(R(\tau;z,r)) + \bar{w}_+(R(\tau;z,r))- r\partial_r \bar{w}_+(r) -\bar{w}_+(r)\|_{L^\infty(\overline{\dot{\Omega}_+})} \notag\\
\leq& C \Big( \|\partial_r^2 \bar{w}_+\|_{L^\infty(0,1)} + \|\partial_r \bar{w}_+\|_{L^\infty([0, r_0])} \Big) |R(\tau;z,r) - R(z;z,r)|\notag\\
\leq &C \|\theta  \|_{C^{2,\alpha} (\overline{\dot{\Omega}_+})} \notag\\
 < &  C \sigma.\label{termf2es}
\end{align}
Thus, \eqref{Rrwtheta}-\eqref{termf2es} yield that
\begin{align}
   & \Big\|\Big( R(\tau;z,r)\partial_r \bar{w}_+(R(\tau;z,r)) + \bar{w}_+(R(\tau;z,r))\Big)\delta \theta (\tau, R(\tau;z,r) ) \notag\\
   &\quad- \Big( r \partial_r \bar{w}_+(r) +\bar{w}_+(r)\Big)\delta \theta (\tau, r )\Big\|_{C^{1,\alpha} (\overline{\dot{\Omega}_+})}\notag\\
  < & C \sigma^2.
\end{align}
Similarly, we have
\begin{align}\label{sR-dot}
  \| \partial_r\bar{s}_+( R(\tau;z,r) ) \delta \theta(\tau, R(\tau;z,r) )  - \partial_r\bar{s}_+(r ) \delta \theta(\tau, r)\|_{C^{1,\alpha} (\dot{\Omega}_+)}
   <  C \sigma^2.
\end{align}
Therefore, it follows from \eqref{6.73x}, \eqref{6.78x}-\eqref{sR-dot} that
\begin{align}\label{f2es}
  \|f_2 - \dot{f}_2 \|_{C^{1,\alpha}(\dot{\Omega}_+)} <  C \sigma^2.
\end{align}

Hence, it follows from \eqref{esptheta1}-\eqref{6.73x}, \eqref{eq0105}-\eqref{6.77x} and \eqref{f2es} that
\begin{align}\label{U-dotUsigma32}
 & \|\delta p^* -\dot{p}_+\|_{C^{2,\alpha}(\overline{\dot{\Omega}_+})}
  + \|\delta \theta^* -\dot{\theta}_+\|_{C^{2,\alpha} (\overline{\dot{\Omega}_+})} + \|r\delta w^* -r\dot{w}_+\|_{C^{1,\alpha} (\overline{\dot{\Omega}_+})}\notag\\
   &+ \|\delta q^* -\dot{q}_+\|_{C^{1,\alpha} (\overline{\dot{\Omega}_+})}+ \|\delta s^* -\dot{s}_+\|_{C^{1,\alpha} (\overline{\dot{\Omega}_+})}+
\|\delta {\varphi^*}' -\dot{\varphi}' \|_{C^{1,\alpha} (\dot{\Gamma}_s)} \notag\\
\leq& C \sigma^2 < \frac12 \sigma^{\frac32},
\end{align}
for sufficiently small $\sigma>0$.

\smallskip
\emph{Step 2}. In this step, we will show that there exists a constant $\varphi^*(r_0)$ to the equation \eqref{Rsolvability} for $(\delta {U}^*; \delta {\varphi^*}')$ with the following estimate:
\begin{equation}\label{d}
  |\varphi^*(r_0) -\dot{z}|\leq C \sigma,
\end{equation}
where the constant $C$ only depends on $\overline{U}_{\pm}$, $\dot{z}_*$, $L$, $r_0$ and $\alpha$.

Let $\delta z_*:=\varphi^*(r_0) -\dot{z}_*$. Define
\begin{align}\label{I*}
{I}^*(\delta z_*;\delta {U}^*,\delta {\varphi^*}', \delta U_-)
\defs &
 \int_0^{r_0} \frac{1-\bar{M}_+^2}{\bar{\rho}_+\bar{q}_+^2} r  \bar{p}_+^{\frac{1}{\gamma}} \Big(\sigma p_{ex} (r) - g_1(\dot{z}_*+\delta z_*, r)\Big)\dif r\notag\\
&-\int \int_{\dot{\Omega}_+} r  \bar{p}_+^{\frac{1}{\gamma}} f_1(\delta U^*, \delta {\varphi^*}',\dot{z}_*+\delta z_*)\dif z \dif r.
\end{align}
First, it easily follows from Lemma \ref{lemma:approxz*} that
\begin{equation}\label{6.92x}
{I}^*(0; \delta \dot{U}_+, \dot{\varphi}', \dot{U}_-) = 0.
\end{equation}
By \eqref{gj-dotgj=}-\eqref{eq0105}, we have
\begin{align}\label{g1z*dotg1}
  g_1(\dot{z}_*+\delta z_*, r) =& \sum_{j=1}^{4} a_{1j} G_j^{\sharp}(\dot{z}_*+\delta z_*, r)  \notag\\
  =& \sum_{j=1}^{4} a_{1j}\dot{G}_j(\dot{z}_*+\delta z_*, r) + \sum_{j=1}^{4} a_{1j} \big(G_j^{\sharp}(\dot{z}_*+\delta z_*, r) - \dot{G}_j(\dot{z}_*+\delta z_*, r) \big)\notag\\
  =& \dot{g}_1(\dot{z}_*+\delta z_*, r)+  \sum_{j=1}^{4} a_{1j} \big(G_j^{\sharp}(\dot{z}_*+\delta z_*, r) - \dot{G}_j(\dot{z}_*+\delta z_*, r) \big),
\end{align}
where
\begin{align}
&\|G_j^{\sharp}(\dot{z}_*+\delta z_*, r) - \dot{G}_j(\dot{z}_*+\delta z_*, r) \|_{C^{2,\alpha} ([0,r_0])}\notag\\
\leq & C \Big(\|\partial_z\dot{U}_-\|_{C^{1,\alpha}(\overline{\Omega})}\|\delta {\varphi^*}'\|_{C^{2,\alpha}(\dot{\Gamma}_s)} + \| \delta U_- - \dot{U}_- \|_{C^{1,\alpha}(\overline{\Omega})}\Big)\notag\\
& + C\big( \|\delta U^*\|_{C^{2,\alpha}(\overline{\dot{\Omega}_+})}^2  + \|\delta U_-\|_{C^{2,\alpha}(\overline{\Omega})}^2\big)\notag\\
< & C\sigma^2,
\end{align}
where the positive constant $C$ only depends on $\overline{U}_{\pm}$, $L$ and $r_0$.


Next, for $f_1$ (see its definition in \eqref{deff1} with $K=\dot{z}_*$), notice that
\begin{align}\label{termoff1}
  &\frac{\delta z_*}{L-\varphi} \frac{1-M^2 \cos^2 \theta}{\rho q^2} \partial_z \delta p \notag\\
  =&\frac{\delta z_*}{L-\varphi} \Big(\frac{1-M^2 \cos^2 \theta}{\rho q^2} - \frac{1-\overline{M}_+^2}{\bar{\rho}_+ \bar{q}_+^2} \Big) \partial_z \delta p + \frac{\delta z_*}{L-\varphi}\frac{1-\overline{M}_+^2}{\bar{\rho}_+ \bar{q}_+^2}\partial_z(\delta p - \dot{p}_+)\notag\\
  & -\frac{\delta z_*\int_r^{r_0} \delta \varphi'(\tau)\dif \tau}{(L-\varphi)(L-\varphi(r_0))}\frac{1-\overline{M}_+^2}{\bar{\rho}_+ \bar{q}_+^2}\partial_z \dot{p}_+ + \frac{\delta z_*}{L-\varphi(r_0)}\frac{1-\overline{M}_+^2}{\bar{\rho}_+ \bar{q}_+^2}\partial_z \dot{p}_+
\end{align}
and
\begin{align}\label{intint=0}
  \int \int_{\dot{\Omega}_+}r  \bar{p}_+^{\frac{1}{\gamma}} \frac{1-\overline{M}_+^2}{\bar{\rho}_+ \bar{q}_+^2}\partial_z \dot{p}_+\dif z \dif r =   \int \int_{\dot{\Omega}_+} \partial_r \big(  r\bar{p}_+^{\frac{1}{\gamma}} \dot{\theta}_+\big) \dif z \dif r=0,
\end{align}
where \eqref{intint=0} comes from \eqref{eqf1} with $f_1=0$ and boundary conditions \eqref{GAMMA4theta} and \eqref{GAMMA2}. Here we drop $~^*$ in \eqref{termoff1} for the notational simplicity. By \eqref{termoff1} and \eqref{intint=0} with estimates \eqref{mathcalS=*}-\eqref{U*ES} and \eqref{U-dotUsigma32}, we have that  for sufficiently small $\delta z_*$,  
\begin{align}\label{f2r}
 &\Big\|\int \int_{\dot{\Omega}_+} r \bar{p}_+^{\frac{1}{\gamma}} f_1(\delta U^*, \delta {\varphi^*}', \delta z_{*})\dif z \dif r \Big\|_{C^{1,\alpha}(\overline{\dot{\Omega}_+})}\notag\\
 \leq & C\Big(\big(\|\partial_z \delta p^* \|_{C^{1,\alpha}(\overline{\dot{\Omega}_+})} + \|\partial_r \delta p^* \|_{C^{1,\alpha}(\overline{\dot{\Omega}_+})} \big)  \|\delta U^* \|_{C^{2,\alpha}(\overline{\dot{\Omega}_+})}  + \|\partial_r \delta \theta^* \|_{C^{1,\alpha}(\overline{\dot{\Omega}_+})} \Big)\notag\\
& + C\Big(\|\delta \theta^* \|_{C^{2,\alpha}(\overline{\dot{\Omega}_+})}\|(\delta p^*, \delta q^* ,\delta s^*)\|_{C^{2,\alpha}(\overline{\dot{\Omega}_+})} + \|\delta \theta^* \|_{C^{2,\alpha}(\overline{\dot{\Omega}_+})}^2 \Big)\notag\\
& + C \Big( \|(\partial_z\delta p^*, \partial_z \delta \theta^*) \|_{C^{1,\alpha}(\overline{\dot{\Omega}_+})} \|\delta {\varphi^*}'\|_{C^{2,\alpha}(\dot{\Gamma}_s)} + \|\delta {\varphi^*}'\|_{C^{2,\alpha}(\dot{\Gamma}_s)}  \|\partial_z\dot{p}_+ \|_{C^{1,\alpha}(\overline{\dot{\Omega}_+})}\Big)\notag\\
&+ C \|\partial_z(\delta p^* - \dot{p}_+)\|_{C^{2,\alpha}(\overline{\dot{\Omega}_+})} | \delta z_*|\notag\\
< & C\big(\sigma^{\frac32}\cdot |\delta z_*| + \sigma^2\big),
\end{align}
where the positive constant $C$ only depends on $\overline{U}_{\pm}$, $L$ and $r_0$.

Applying  Lemmas \ref{lemma:approxz*} and \ref{U--dotU-} and estimtes \eqref{6.92x}-\eqref{f2r}, \eqref{I*} yields that
\begin{align}\label{rRI*}
&{I}^*(\delta z_*; \delta {U}^*,\delta {\varphi^*}', \delta U_-)\notag\\
=&{I}^*(\delta z_*; \delta {U}^*,\delta {\varphi^*}', \delta U_-)-{I}^*(0; \delta \dot{U}_+, \dot{\varphi}', \dot{U}_-)\notag\\
=&\int_0^{r_0} \frac{1-\overline{M}_+^2}{\bar{\rho}_+\bar{q}_+^2} r \bar{p}_+^{\frac{1}{\gamma}}\Big( \dot{g}_1 (\dot{z}_*, r) - \dot{g}_1 (\dot{z}_* +\delta z_*, r)\Big)\dif r + O(1)\sigma^{\frac32}\cdot \delta z_* + O(1) \sigma^2\notag\\
= &\int_0^{r_0} \frac{\overline{M}_+^2 -1}{\bar{\rho}_+\bar{q}_+^2} r \bar{p}_+^{\frac{1}{\gamma}}\partial_z \dot{p}_+ (\dot{z}_*, r)\dif r  \delta z_* + O(1)\sigma  (\delta z_*)^2 + O(1)\sigma^{\frac32}\cdot \delta z_* + O(1) \sigma^2\notag\\
=& I_1'(\dot{z}_*) \delta z_* + O(1)\sigma  (\delta z_*)^2 + O(1)\sigma^{\frac32}\cdot \delta z_* + O(1) \sigma^2 ,
\end{align}
where $O(1)$ are bounded functions in $C^{1,\alpha}$, which only depend on $\overline{U}_{\pm}$, $L$ and $r_0$.
Then \eqref{rRI*} implies that, as long as $\sigma$ is small enough, we have
\begin{align}
  \frac{\partial {I}^*}{\partial{\delta z_*}}(0; 0, 0, \dot{ U}_-) = I_1'(\dot{z}_*) + O(1)\sigma^{\frac32}
  \neq 0.
\end{align}
By applying the implicit function theorem, for sufficiently small $\sigma>0$, there exists a solution $\delta z_{*}$ satisfying ${I}^*(\delta z_*;\delta {U}^*,\delta {\varphi^*}', \delta U_-)=0$, that is, $\delta z_{*}$ satisfies equation \eqref{Rsolvability}, and estimate
\begin{align}
   |\delta z_*| \leq \Big|\frac{O(1) \sigma^2}{ I_1'(z_*)} \Big| \leq C\sigma,
 \end{align}
where constant $C$ only depends on $\overline{U}_{\pm}$, $w_{en}$, $q_{en}$, $L$, $r_0$ and $\gamma$. So \eqref{d} follows.

Therefore, \eqref{mathcalS=*}-\eqref{U*ES} and \eqref{U-dotUsigma32}-\eqref{d} yield that $(\delta {U}^*; \delta {\varphi^*}',\varphi^*(r_0))\in \mathscr{N}$.
\end{proof}

Finally, Theorem \ref{mainthmU} will be proven if we show that the mapping $\mathcal{T}$ is contractive in $\mathscr{N}$, which will be done in the following lemma.

\begin{lem}\label{lemcontrac}
    There exists a positive constant $\sigma_3$ with $0<\sigma_3 \ll 1$, such that for any $0< \sigma \leq \sigma_3$, the mapping $\mathcal{T}$ defined in \eqref{6.63x} is contractive.
  \end{lem}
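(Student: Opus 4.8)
\textbf{Proof proposal for Lemma \ref{lemcontrac}.}
The plan is to compare two iterations. Take two points $(\delta U^{(a)};\delta\varphi'^{(a)},z_*^{(a)})$ and $(\delta U^{(b)};\delta\varphi'^{(b)},z_*^{(b)})$ in $\mathscr{N}$, and set
$(\delta U^{*(a)};\delta\varphi'^{*(a)},\varphi^{*(a)}(r_0))=\mathcal{T}(\delta U^{(a)};\delta\varphi'^{(a)},z_*^{(a)})$ and similarly for $(b)$. I would introduce the difference notation $\llbracket v\rrbracket\defs v^{(a)}-v^{(b)}$ for any quantity $v$, and aim to show
\[
\|\llbracket\delta U^*\rrbracket\|_{*}+\|\llbracket\delta\varphi'^*\rrbracket\|_{C^{1,\alpha}(\dot\Gamma_s)}+|\llbracket\varphi^*(r_0)\rrbracket|
\leq \frac12\Big(\|\llbracket\delta U\rrbracket\|_{*}+\|\llbracket\delta\varphi'\rrbracket\|_{C^{1,\alpha}(\dot\Gamma_s)}+|\llbracket z_*\rrbracket|\Big),
\]
where $\|\cdot\|_*$ denotes the natural product norm appearing in $\mathscr{N}$ but with H\"older exponents lowered by one step (e.g. $C^{1,\alpha}$ for $\delta p,\delta\theta$, $C^{0,\alpha}$ for $r\delta w,\delta q,\delta s$, $C^0$ for $\delta w$), so that the fixed-point argument runs in a lower-regularity completion of $\mathscr{N}$ while the uniform bounds keep us inside $\mathscr{N}$. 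This is the standard device: $\mathscr{N}$ is compact in the weaker norm, $\mathcal{T}$ is continuous and contractive there, and the limit inherits the higher regularity by the uniform estimates already proved in Lemma \ref{lemwelldefined}.

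The execution breaks into the same pieces as Lemma \ref{lemwelldefined}. First, since $\llbracket\delta U^*\rrbracket$ and $\llbracket\delta\varphi'^*\rrbracket$ solve the \emph{linear} problem \eqref{eqf1}-\eqref{eq11000} with \eqref{eqf22}-\eqref{f2itera} whose right-hand data are the differences $\llbracket H\rrbracket$, $\llbracket\vec{\mathcal{F}}\rrbracket$, $\llbracket\vec{\mathcal{G}}\rrbracket$, $\llbracket p_{ex}\rrbracket=0$, the linear estimates of Theorem \ref{BigThm} and Theorem \ref{wqsexistence} give
\[
\|\llbracket\delta U^*\rrbracket\|_*+\|\llbracket\delta\varphi'^*\rrbracket\|_{C^{1,\alpha}(\dot\Gamma_s)}
\leq C\Big(\sum_i\|\llbracket f_i\rrbracket\|+\sum_j\|\llbracket g_j\rrbracket\|_{C^{1,\alpha}(\dot\Gamma_s)}\Big).
\]
Then I would estimate each $\|\llbracket f_i\rrbracket\|$ and $\|\llbracket g_j\rrbracket\|$ by differencing the explicit formulas \eqref{deff1}, \eqref{itemf2}, \eqref{f2itera} for the $f_i$ and \eqref{5.35xw}, \eqref{varphi} for the $g_j$. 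Because every term in these formulas is at least quadratic in the perturbation (products like $\partial_z\delta p\cdot\delta U$, $\delta\theta\cdot\delta U$, $\delta\theta^2$, $H\cdot\partial_r\bar w_+$ with $H$ itself $O(\sigma)$, and the R.-H. remainders $G_j^\sharp$ which are $O(2)$ by the second-order Taylor expansion in \eqref{eq0104}), differencing produces one factor that is a difference and one factor bounded by $C\sigma$ using the $\mathscr{N}$-bounds; the nonlocal integral terms $\int_K^z\delta\theta\,d\tau$ and the characteristic-flow terms $R(\tau;z,r)$ are handled exactly as in Lemma \ref{lemwelldefined}, noting $\|\llbracket R\rrbracket\|\leq C(\|\llbracket\delta\theta\rrbracket\|+|\llbracket z_*\rrbracket|)$ from the ODE for $R$ and the dependence of $K=\dot z_*$-shift versus $z_*$. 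The supersonic input contributes $\|\llbracket\delta U_-\rrbracket\|$, which is controlled by $\|\llbracket w_{en}\rrbracket\|+\|\llbracket q_{en}\rrbracket\|=0$ for fixed boundary data (the two iterates share the same $w_{en},q_{en},p_{ex}$), so in fact $\delta U_-=\dot U_-$ is the \emph{same} for both iterates and drops out entirely. Collecting, one obtains $\sum\|\llbracket f_i\rrbracket\|+\sum\|\llbracket g_j\rrbracket\|\leq C\sigma\big(\|\llbracket\delta U\rrbracket\|_*+\|\llbracket\delta\varphi'\rrbracket\|_{C^{1,\alpha}(\dot\Gamma_s)}+|\llbracket z_*\rrbracket|\big)$.

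Second, for the shock-position component I would difference the defining relation \eqref{Rsolvability}, i.e. ${I}^*(\delta z_*^{(a)};\dots)={I}^*(\delta z_*^{(b)};\dots)=0$, and use the expansion \eqref{rRI*}: the $\delta z_*$-derivative of $I^*$ at the background is $I_1'(\dot z_*)+O(\sigma^{3/2})\neq0$, so the implicit function theorem yields
\[
|\llbracket\varphi^*(r_0)\rrbracket|\leq \frac{1}{|I_1'(\dot z_*)|}\Big(C\|\llbracket\delta U^*\rrbracket\|_*+C\|\llbracket\delta\varphi'^*\rrbracket\|+C\sigma|\llbracket z_*\rrbracket|\Big)\leq C\sigma\big(\cdots\big),
\]
again because every term in $I^*$ beyond the linear $I_1'(\dot z_*)\,\delta z_*$ piece is quadratically small. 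Combining the two bounds and choosing $\sigma_3$ so small that $C\sigma_3<\tfrac12$ gives the contraction. \textbf{The main obstacle} I anticipate is bookkeeping the difference estimates for $\llbracket f_2\rrbracket$: $f_2$ (via \eqref{f2itera}) contains the integral-type nonlocal terms, the characteristic curves $R(\tau;z,r)$, and the boundary data $g_2,g_4$ evaluated along those curves, so differencing requires simultaneously controlling $\llbracket R\rrbracket$, $\llbracket g_2\rrbracket$, $\llbracket g_4\rrbracket$, and $\llbracket\delta\theta\rrbracket$ in the right norms — this is precisely the coupling flagged in Remark \ref{rem:5.1x}, and is where the loss-of-one-derivative in the contraction norm is essential (one differentiates $\delta\theta$ once inside $R$ and once more in $\partial_r$, so only the $C^{2,\alpha}$ bound from $\mathscr{N}$, used as an $O(\sigma)$ coefficient, keeps the $C^{1,\alpha}$ difference estimate closed). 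All other pieces are routine repetitions of the estimates in Lemma \ref{lemwelldefined} with one factor replaced by a difference.
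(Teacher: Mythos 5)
Your proposal matches the paper's proof of Lemma \ref{lemcontrac}: difference the two iterates, estimate the differenced source and boundary terms by exploiting the quadratic structure (each product retains one $O(\sigma)$ factor from the $\mathscr{N}$-bounds and one difference factor), apply the linear solvability estimates of Theorems \ref{BigThm} and \ref{wqsexistence}, and extract the shock-position difference from the $I^*$ functional via the implicit-function-theorem calculation --- precisely the paper's \eqref{ee} and \eqref{eq306}. The one minor deviation is your choice to drop \emph{all} components' H\"older exponents by one step, whereas the paper's contraction norm in \eqref{ee} keeps $(\delta p,\delta\theta)$ at $C^{2,\alpha}$ and lowers only the transport variables and $\delta\varphi'$; this is possible because the Schauder gain in Theorem \ref{BigThm} still lands in $C^{2,\alpha}$ when the source differences are only $C^{1,\alpha}$, and it spares one from having to state and verify a lower-regularity version of that theorem, which your uniform drop would implicitly require.
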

\begin{proof}

Suppose that $(\delta U_k, \delta \varphi_k^{'}, z_k)\in \mathscr{N}$ for $k=1,2$. By Lemma \ref{lemwelldefined}, there exist $(\delta {U}_k^{*}, \delta \varphi_k^{*'}, z_{*k})\in \mathscr{N}$ with estimate \eqref{d}.
Finally, by a similar argument as done in Lemma \ref{lemwelldefined} to establish estimate \eqref{U-dotUsigma32} via replacing $(\delta U^*, \delta {\varphi^{*}}',\varphi^*(r_0))$ and $(\delta \dot{U}^+, \delta \dot{\varphi}^{'},\dot{z})$ by $(\delta {U}_k^{*}, \delta \varphi_k^{*'}, z_{*k})$ for $k=1,2$, we can prove for sufficiently small $\sigma$, it holds that
  \begin{align}\label{ee}
     &\| (\delta p_2^* - \delta p_1^*,\delta \theta_2^* - \delta \theta_1^*)\|_{C^{2,\alpha}(\overline{\dot{\Omega}_+})} + \| ( \delta q_2^* - \delta q_1^*,\delta s_2^* - \delta s_1^*,r\delta w_2^* - r\delta w_1^*)\|_{C^{1,\alpha}(\overline{\dot{\Omega}_+})} \notag\\
     & + \| \delta w_2^* - \delta w_1^*\|_{C^{0,\alpha}(\overline{\dot{\Omega}_+})} + \| \delta \varphi_{2}^{*'} - \delta \varphi_{1}^{*'}\|_{C^{1,\alpha}(\dot{\Gamma}_s)}
      \notag \\
     \leq& C\sigma\Big(\| (\delta p_2 - \delta p_1,\delta \theta_2 - \delta \theta_1)\|_{C^{2,\alpha}(\overline{\dot{\Omega}_+})}+\| (\delta q_2 - \delta q_1,\delta s_2- \delta s_1, r\delta w_2 - r\delta w_1)\|_{C^{1,\alpha}(\overline{\dot{\Omega}_+})} \notag\\
     & \qquad + \| \delta w_2 - \delta w_1\|_{C^{0,\alpha}(\overline{\dot{\Omega}_+})} + \| \delta {\varphi_2}' - \delta {\varphi_1}'\|_{C^{1,\alpha}(\dot{\Gamma}_s)}+|z_1- z_2|\Big).
   \end{align}
Because the proof is long and similar, we omit the details for the shortness. To conclude the proof of this lemma, we need to estimate $|z_{*2} - z_{*1}|$. Let $\delta z_{*k}=z_{*k}-\dot{z}$. Notice that
 \begin{align}\label{I*Z1Z2}
  0 =& {I}^*(\delta z_{*2}, \delta {U}^*_2,\delta {\varphi^*_2}', \delta U_-) - {I}^*(\delta z_{*1},  \delta {U}^*_1,\delta {\varphi_1^*}', \delta U_-)
  \notag \\
 = &{I}^*(\delta z_{*2}, \delta U^*_2 , \delta {\varphi^*_2}' , \delta U_-) - {I}^*(\delta z_{*1}, \delta U^*_2 , \delta {\varphi^*_2}' , \delta U_-)
  \notag \\
& + {I}^*(\delta z_{*1}, \delta U^*_2 , \delta {\varphi^*_2}' , \delta U_-) - {I}^*(\delta z_{*1}, \delta U^*_1 , \delta {\varphi^*_1}', \delta U_-)
 \notag \\
= &\int_{0}^{1} \frac{\partial{I}^* }{\partial (\delta z_*)} (\delta z_{*t}, \delta U^*_2,\delta {\varphi^*_2}', \delta U_-)\dif t (\delta z_{*2} - \delta z_{*1})
 \notag \\
& + \int_{0}^{1} \nabla_{(\delta U^*, \delta {\varphi^*}')} {I}^* (\delta z_{*1} , \delta U^*_t, \delta {\varphi^*_t}' , \delta U_-)\dif t \cdot (\delta U^*_2 - \delta U^*_1, \delta {\varphi^*_2}' - \delta {\varphi^*_1}' ),
\end{align}
where
\begin{align*}
 \delta \xi_{*t} : = t \delta z_{*2} + (1 - t) \delta z_{*1}\quad
    \delta U^*_t : = t \delta U^*_2 + (1 - t) \delta U^*_1 \quad
  \delta {\varphi^*_t}' : = t \delta {\varphi^*_2}' + (1 - t)\delta {\varphi^*_1}'.
\end{align*}
Similarly as done in step 2 of the proof of Lemma  \ref{lemwelldefined}, we have
\begin{align}\label{eq305}
\frac{\partial {I}^* }{\partial (\delta z_*)} (\delta z_{*t}, \delta U^*_2,\delta {\varphi^*_2}', \delta U_-)
 = \frac{\partial {I}^* }{\partial (\delta z_*)}(0 , 0, 0, \dot{U}_-) 
+ O(1) \sigma^{\frac32} + O(1) \sigma^2.
\end{align}
Moreover, it is easy to see that
\begin{align}\label{6.105x}
  \nabla_{(\delta U^*, \delta {\varphi^*}')} I^* (\delta z_{*1} , \delta U^*_t, \delta {\varphi^*_t}' , \delta U_-) = O(1)\sigma,
\end{align}
where $O(1)$ only depends on $ {I}'(\dot{z}_*)$, $\dot{z}_*$, $\overline{U}_\pm$, $L$, $p_{ex}$ ,$w_{en}$, $q_{en}$ and $\alpha$.
Therefore, \eqref{I*Z1Z2}-\eqref{6.105x} yield that
\begin{align}\label{eq306}
&|\delta z_{*2} - \delta z_{*1}| \notag\\
 \leq& C \Big(\| (\delta p^*_2 - \delta p^*_1,\delta \theta^*_2 - \delta \theta^*_1)\|_{C^{2,\alpha}(\overline{\dot{\Omega}_+})}+\| (\delta q^*_2 - \delta q^*_1,\delta s^*_2- \delta s^*_1, r\delta w^*_2 - r\delta w^*_1)\|_{C^{1,\alpha}(\overline{\dot{\Omega}_+})} \notag\\
     & \qquad + \| \delta w^*_2 - \delta w^*_1\|_{C^{0,\alpha}(\overline{\dot{\Omega}_+})} + \| \delta {\varphi^*_2}' - \delta {\varphi^*_1}'\|_{C^{1,\alpha}(\dot{\Gamma}_s)}\Big),
\end{align}
where the constant $C$ only depends on $\dot{z}_*$, $\overline{U}_\pm$, $L$, $p_{ex}$ ,$w_{en}$, $q_{en}$ and $\alpha$.
Therefore, it follows from \eqref{ee} and  \eqref{eq306} that there exists  a positive constant $\sigma_3$ with $0<\sigma_3 \ll 1$, such that for any $0< \sigma \leq \sigma_3$,
\begin{align*}
     &\| (\delta p_2^* - \delta p_1^*,\delta \theta_2^* - \delta \theta_1^*)\|_{C^{2,\alpha}(\overline{\dot{\Omega}_+})} + \| ( \delta q_2^* - \delta q_1^*,\delta s_2^* - \delta s_1^*,r\delta w_2^* - r\delta w_1^*)\|_{C^{1,\alpha}(\overline{\dot{\Omega}_+})} \notag\\
     & + \| \delta w_2^* - \delta w_1^*\|_{C^{0,\alpha}(\overline{\dot{\Omega}_+})} + \| \delta \varphi_{2}^{*'} - \delta \varphi_{1}^{*'}\|_{C^{1,\alpha}(\dot{\Gamma}_s)}+| z_{*2} - z_{*1}|
      \notag \\
     \leq& \frac12\Big(\| (\delta p_2 - \delta p_1,\delta \theta_2 - \delta \theta_1)\|_{C^{2,\alpha}(\overline{\dot{\Omega}_+})}+\| (\delta q_2 - \delta q_1,\delta s_2- \delta s_1, r\delta w_2 - r\delta w_1)\|_{C^{1,\alpha}(\overline{\dot{\Omega}_+})} \notag\\
     & \qquad + \| \delta w_2 - \delta w_1\|_{C^{0,\alpha}(\overline{\dot{\Omega}_+})} + \| \delta {\varphi_2}' - \delta {\varphi_1}'\|_{C^{1,\alpha}(\dot{\Gamma}_s)}+|z_1- z_2|\Big).
   \end{align*}
So the mapping $\mathcal{T}$ defined in \eqref{6.63x} is contractive.
\end{proof}

{\bf Acknowledgments.}
The research of Beixiang Fang was supported in part by National Key R\&D Program of China under Grant No. 2024YFA1013302, Natural Science
Foundation of China under Grant No. 12331008, and the Fundamental Research Funds for the Central Universities. The research of Xin Gao was supported in part by
Scholarship of Shanghai Jiao Tong University.
The research of Wei Xiang was supported in part by the
Research Grants Council of the HKSAR, China (Project No. CityU 11300021, CityU 11311722, CityU 11305523 and CityU 11305625), and in part by the Research Center for Nonlinear Analysis of the Hong Kong Polytechnic University and the PolyU Internal project P0045335.
The research of Qin Zhao was supported in part by Natural Science Foundation of China under Grant Nos. 12101471 and 12272297.

{\bf Data Availability.}
Data sharing not applicable to this article as no datasets were generated or analysed during
the current study.

{\bf Conflict of Interest.}
The authors declare no conflict of interest.

\end{document}